\newtheorem{thm}[equation]{{Theorem}}
\newtheorem{cor}[equation]{{Corollary}}
\newtheorem{lem}[equation]{{Lemma}}
\newtheorem{prop}[equation]{{Proposition}}
\newtheorem{conv}[equation]{{Convention}}
\theoremstyle{definition}
\newtheorem{defn}[equation]{Definition}
\newtheorem{rem}[equation]{{Remark}}
\newcommand{\oldbullet}{{[-]}}
\newcommand{\quash}[1]{}
\newcommand{\Zp}{{\mathbb{Z}_p}}
\newcommand{\Fp}{{\mathbb {F}_p}}
\newcommand{\OK}{{\mathcal{O}_K}}
\newcommand{\pK}{{{\varpi}_K}}
\newcommand{\eK}{{e_K}}
\newcommand{\EK}{{E_K}}
\newcommand{\Sz}{{\Sk[z]}}
\newcommand{\Szz}{{\Sk[z_0,z_1]}}
\newcommand{\ef}{h}
\renewcommand{\k}{{k}}
\newcommand{\Sk}{{\mathbb{S}_{W(\k)}}}
\newcommand{\thh}{{\mathrm{THH}}}
\newcommand{\tp}{{\mathrm{TP}}}
\newcommand{\tc}{{\mathrm{TC}}}
\newcommand{\can}{{\mathrm{can}}}
\newcommand*{\biglimit}{{\mathrm{lim}}}
\newcommand*{\bigcolimit}{{\mathrm{colim}}}
\DeclareSymbolFontAlphabet{\mathbb}{AMSb} 
\DeclareSymbolFontAlphabet{\mathbbl}{bbold}
\numberwithin{equation}{section}
\title{Topological Cyclic Homology of Local Fields}
\author{Ruochuan Liu \thanks{R. Liu is partially supported by the National Natural Science Foundation of China under agreement No. NSFC-11725101 and the Tencent Foundation.} and Guozhen Wang  \thanks{G. Wang is partially supported by the Shanghai Rising-Star Program under agreement No. 20QA1401600 and Shanghai Pilot Program for Basic Research-FuDan University 21TQ1400100(21TQ002) and the National Natural Science Foundation of China under agreement No. NSFC-11801082.}}
\date{}
\begin{document}

\maketitle
\thispagestyle{empty}

\abstract{We introduce a new approach to determining the structure of topological cyclic homology by means of a descent spectral sequence. We carry out the computation for a $p$-adic local field with $\Fp$-coefficients, including the case $p=2$ which was only covered by motivic methods except in the totally unramified case.}
\tableofcontents


\section{Introduction}


We fix a prime number $p$. Let $K$ be a $p$-adic local field, i.e. a finite extension of $\mathbb{Q}_p$. In this paper, we introduce the \emph{descent spectral sequence} to determine the structure of topological cyclic homology groups $\tc_*(\OK)$ and its variants $\tc^-_*(\OK)$ and $\tp_*(\OK)$. In contrast to all earlier methods, we do not make any additional assumption on $p$ or $K$. In fact, we carry out the computation in the modulo $p$ case, and obtain the structure of $\tc_*(\OK;\mathbb{F}_p)$, which in turn determines the mod $p$ algebraic $K$-theory of $\OK$ by the cyclotomic trace map. Moreover, our computation implies that the natural map
\[
\tc(\OK;\mathbb{Z}_p)\rightarrow \mathrm{L}_{\text{K}(1)}\tc(\OK;\mathbb{Z}_p), 
\] 
where  $\mathrm{L}_{\text{K}(1)}$ denotes the Bousfield localization functor (cf. \cite[\S3.1]{handbook}), is $0$-truncated (see Remark \ref{R:0-trun} for more details). Consequently,  one may completely determine the homotopy type of $\tc(\OK;\mathbb{Z}_p)$ because the homotopy type of $\mathrm{L}_{\text{K}(1)}\tc(\OK;\mathbb{Z}_p)$ is well-understood.

In fact, our approach for determining the topological cyclic homology may apply to more general setup. In a forthcoming paper \cite{LW}, we will treat the case of locally complete intersection schemes over $\mathbb{Z}_p$. 


We fix a uniformizer $\pK$ of $K$ throughout. Let $\k$ be the residue field of $\OK$, and let $\Sk$ be the corresponding spherical Witt vectors (cf. \cite[\S5.2]{Lur}).
We consider  $\OK$ as an $E_\infty$-algebra over the spherical polynomial algebra $\Sz$
via the composition 
\[
\Sz\to W(k)[z]\to \OK,  
\] 
where the second map is the $W(k)$-algebra morphism sending $z$ to $\pK$. The topological Hochschild homology $\thh(\OK/\Sz)$ of $\OK$ over $\Sz$, which has the structure of an $E_\infty$-algebra in cyclotomic spectra, is introduced by Bhatt-Morrow-Scholze \cite[\S11]{bms2}. Using $\thh(\OK/\Sz)$, one may further define the periodic topological cyclic homology $\tp(\OK/\Sz)$ and the negative topological cyclic homology $\tc^-(\OK/\Sz)$ (See $\S2$ for more details). 

The descent spectral sequence for $\thh(\OK)$ is obtained via descent along the base change map
in cyclotomic spectra
\begin{equation}\label{descent}
\thh(\OK/\Sk)\to \thh(\OK/\Sz).
\end{equation}
It turns out that the resulting augmented cosimplicial $E_\infty$-algebra in cyclotomic spectra 
\[
\thh(\OK/\Sk) \rightarrow  \thh(\OK/\Sz^{\otimes\oldbullet}), 
\] 
where the tensor products in the target are taken relative to $\Sk$, is a limit diagram
\[
\thh(\OK/\Sk) \simeq \biglimit_{[n]\in \Delta}\thh(\OK/\Sz^{\otimes[n]}).
\] 
This in turn gives rise to the descent spectral sequence for $\thh(\OK)$, which is of homology type and in the second quadrant, 
\[
E^1_{i, j}(\thh(\OK))=\thh_{j}(\OK/\Sz^{\otimes [-i]}) \Rightarrow \thh_{i+j}(\OK/\Sk).
\]
Similarly, we get the descent spectral sequences 
\[
E^1_{i,j}(\tc^{-}(\OK))=\tc^-_{j}(\OK/\Sz^{\otimes [-i]}) \Rightarrow \tc^-_{i+j}(\OK/\Sk)
\]
and
\[
E^1_{i,j}(\tp(\OK))=\tp_{j}(\OK/\Sz^{\otimes [-i]}) \Rightarrow \tp_{i+j}(\OK/\Sk).
\]
Combining these two spectral sequences we obtain the descent spectral sequence for $\tc(\OK)$:
\[
E^2_{i,j}(\tc(\OK)) \Rightarrow \tc_{i+j}(\OK/\Sk),
\]
where $E^2_{i,j}(\tc^-(\OK)), E^2_{i,j}(\tp(\OK))$ and $E^2_{i,j}(\tc(\OK))$ are related by a long exact sequence induced from the fiber sequence
\[
\tc(\OK/\Sk) \rightarrow \tc^-(\OK/\Sk) \xrightarrow{\can-\varphi} \tp(\OK/\Sk).
\]
(See \S \ref{s5} for more details). 

The descent spectral sequence is an analogue of the Adams spectral sequence in the  $\infty$-category of cyclotomic spectra. Indeed, as in the case of standard Adams spectral sequence,  the $E^2$-term of the descent spectral sequence for $\thh(\OK)$ (resp. $\tp(\OK)$) may be identified with the cobar complex for $\thh_*(\OK/\Sz)$ (resp. $\tp_*(\OK/\Sz)$) with respect to the Hopf algebroid 
\[
(\thh_*(\OK/\Sz), \thh_*(\OK/\Szz)) 
\]
(resp. $(\tp_0(\OK/\Sz), \tp_0(\OK/\Szz))$). 

In terms of geometric language, the simplicial scheme $$[n]\mapsto \textrm{Spec}(\tp_0(\OK/\Sz^{\otimes[n]}))$$ forms a groupoid scheme, and there is the associated stack
\begin{equation}\label{X}
\mathcal{X} = \bigcolimit_{[n]\in \Delta^{op}}\mathrm{Spec}(\tp_0(\OK/\Sz^{\otimes[n]})).
\end{equation}
It follows that $E^2_{i,j}(\tp(\OK))$ may be identified with the coherent cohomology $\mathrm{H}^{j-i}(\mathcal{X}, \mathcal{O}_{\mathcal{X}}(\frac{j}{2}))$ with the understanding that this group is zero for $j$ odd, and the aforementioned cobar complex is nothing but the \v{C}ech complex with respect to the covering $$\mathrm{Spec}(\tp_0(\OK/\Sz))\rightarrow \mathcal{X}$$ for determining the said cohomology.

To understand the structure of these Hopf algebroids, we make use of the theory of $\delta$-rings. 
Recall that a $\delta$-ring structure on a $p$-torsion free commutative ring $A$ is equivalent to the datum of a ring map $\varphi: A\to A$ lifting the Frobenius on $A/p$;
the corresponding $\delta$-structure is given by 
\[
\delta(x) = \frac{\varphi(x) - x^p}{p}.
\]
Note that there is a Frobenius lift $\varphi$ on $W(\k)[z_0, \dots, z_n]$ which is the Frobenius on $W(\k)$ and sends $z_i$ to $z_i^p$. This makes $W(\k)[z_0,\dots, z_n]$ into a $\delta$-ring. 

By \cite[\S11]{bms2}, we know that the cyclotomic Frobenius on $\tp_0(\OK/\Sz)$ is a Frobeninus lift. Moreover, as a $\delta$-ring, $\tp_0(\OK/\Sz)$ is isomorphic to the $\EK(z)$-adic completion of $W(\k)[z]$; here $\EK(z)$ is a minimal polynomial of $\pK$ over $W(\k)$, normalized such that $E_K(0)=p$.

It remains to determine $\thh_0(\OK/\Szz)$ and $\tp_0(\OK/\Szz)$, which constitutes a key step of the paper. Let $D_{W(\k)[z_0,z_1]}((\EK(z_0),z_0-z_1))$ be the divided power envelope of $(\EK(z_0),z_0-z_1)$ in $W(\k)[z_0,z_1]$, and equip it with the topology induced by the Nygaard filtration $\mathcal{N}^{\geq\bullet}$. In \S3, we will prove the following result.

\begin{thm}

The cyclotomic Frobenius on $\tp_0(\OK/\Szz)$ is a Frobenius lift. Moreover, $\tp_0(\OK/\Szz)$  is isomorphic to the closure of the subring of $D_{W(\k)[z_0,z_1]}((\EK(z_0),z_0-z_1))^\wedge_\mathcal{N}$ generated by $W(\k)[z_0,z_1]$ and $\{\delta^k( {\varphi(z_0-z_1)\over \varphi(\EK(z_0))})\}_{k\geq0}$.
\end{thm}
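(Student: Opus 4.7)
My plan is to reduce the theorem to a statement about prismatic envelopes by invoking the Bhatt--Morrow--Scholze identification of $\tp_0$ with (Frobenius-twisted) prismatic cohomology \cite[\S11]{bms2}, and then to identify the resulting prismatic envelope explicitly with the claimed subring of the Nygaard-completed divided power envelope.

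First, I would apply a relative version of the computation in \cite[\S11]{bms2} to identify $\tp_0(\OK/\Szz)$ with the relative prismatic cohomology $\Prism_{\OK/W(\k)[z_0,z_1]}$ as a $\delta$-ring, with the cyclotomic Frobenius corresponding to the prismatic Frobenius $\varphi$. Since the prismatic Frobenius is a tautological lift of the mod $p$ Frobenius, this yields the first assertion. For the ring structure, observe that $\OK \cong W(\k)[z_0,z_1]/(\EK(z_0),z_0-z_1)$, with kernel cut out by the regular sequence $(\EK(z_0),z_0-z_1)$: both $z_0$ and $z_1$ map to $\pK$, and $\EK$ is the minimal polynomial relation. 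Since $\EK(z_0)$ is a distinguished element of the $\delta$-ring $W(\k)[z_0,z_1]$, general prismatic theory computes the prismatic cohomology as the prismatic envelope $W(\k)[z_0,z_1]\{(z_0-z_1)/\EK(z_0)\}^\wedge$, i.e.\ the initial $\delta$-algebra over $W(\k)[z_0,z_1]$ in which $z_0-z_1$ becomes a multiple of $\EK(z_0)$, completed $(p,\EK(z_0))$-adically.

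To compare this with the divided power description, I would construct a $\delta$-algebra map from the prismatic envelope into $D_{W(\k)[z_0,z_1]}((\EK(z_0),z_0-z_1))^\wedge_\mathcal{N}$ sending the formal quotient $(z_0-z_1)/\EK(z_0)$ to $\varphi(z_0-z_1)/\varphi(\EK(z_0))$. This is well defined because, inside the divided power envelope, $\varphi(\EK(z_0)) = \EK(z_0)^p + p\,\delta(\EK(z_0)) = p\bigl((p-1)!\,\gamma_p(\EK(z_0)) + \delta(\EK(z_0))\bigr)$ is $p$ times a unit relative to the Nygaard filtration, and similarly $\varphi(z_0-z_1) = (z_0-z_1)^p + p\,\delta(z_0-z_1)$ is $p$ times a well-defined element, so their ratio makes sense. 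Iterating $\delta$ then produces precisely the generators $\delta^k(\varphi(z_0-z_1)/\varphi(\EK(z_0)))$, and the image of the prismatic envelope is the closure of the subring described in the theorem.

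The main obstacle will be verifying that this map is an isomorphism onto the stated closure. Injectivity amounts to a flatness argument, reducing to the regularity of the sequence $(\EK(z_0),z_0-z_1)$ and classical properties of divided power envelopes of regular-sequence ideals. Matching the two Nygaard filtrations --- the one defined intrinsically on the prismatic envelope via $\varphi$ and the distinguished element $\EK(z_0)$, and the one inherited from the Nygaard-completed divided power envelope --- will require careful tracking of the Frobenius twist, which is precisely what forces $\varphi$ to appear in the explicit generators.
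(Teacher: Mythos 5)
Your proposal rests on a first step that is not available: the identification of $\tp_0(\OK/\Szz)$ with the relative prismatic cohomology (equivalently, the prismatic envelope $W(\k)[z_0,z_1]\{(z_0-z_1)/\EK(z_0)\}^\wedge$) as a $\delta$-ring. The computation in \cite[\S 11]{bms2} that you invoke treats only the one-variable base $\Sz$ and produces the Breuil--Kisin prism $W(\k)[[z]]$; there is no ``relative version'' for the two-variable base $\Szz$ in the literature, and the general BMS comparison between $\tp$ and prismatic cohomology is proved via quasisyntomic descent from perfectoid bases, which does not apply to $\thh$ relative to a polynomial base like $\Szz$. Indeed, the paper states the relation between its Hopf-algebroid stack and the prismatization only as a conjecture. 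Since the first assertion of the theorem --- that the cyclotomic Frobenius on $\tp_0(\OK/\Szz)$ is a Frobenius lift, i.e.\ that $\tp_0(\OK/\Szz)$ is a $\delta$-ring at all --- is deduced in your outline from this identification, the argument is circular at its foundation: you are assuming essentially the statement to be proved. The paper instead proves the Frobenius-lift property directly, by showing that the Tate spectral sequence for $\tp_*(\OK/\Szz)$ degenerates, constructing the elements $h=\varphi(z_0-z_1)/\varphi(\EK(z_0))$ and $f^{(k)}$, $\delta^k(h)$ by an explicit induction inside $\tp_0(\OK/\Szz)$ (Proposition \ref{P:f(k)}), proving that the subring they generate is dense for the $\mathcal{N}$-topology (Lemma \ref{L:dense}), and then using a continuity-plus-density argument to conclude that $\varphi(a)-a^p\in(p)$ for all $a$ (Proposition \ref{P:tp-delta}).

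A second, smaller but still load-bearing issue is the Frobenius twist you flag at the end. The prismatic envelope contains the untwisted element $(z_0-z_1)/\EK(z_0)$, whereas in $\tp_0(\OK/\Szz)$ the element $z_0-z_1$ is \emph{not} divisible by $\EK(z_0)$; only $\varphi(z_0-z_1)$ is divisible by $\varphi(\EK(z_0))$ (this is the content of Lemma \ref{L:nyg-frob}, which requires the degeneration of the Tate spectral sequence and the multiplicative structure of Theorem \ref{T:rtp}(6)). So even granting a prismatic description, one must still prove that the natural map $\tp_0(\OK/\Szz)\to \mathrm{HP}_0(\OK/W(\k)[z_0,z_1])$ is injective and strict for the Nygaard filtrations and identify its image; the paper does this via the HKR-type Theorem \ref{T:HKR} together with an associated-graded comparison (Lemma \ref{L:inj}). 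Your closing paragraph acknowledges these points as ``obstacles'' but does not supply arguments for them, and they are precisely where the mathematical content of the theorem lies.
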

\noindent In the course of the proof, we also establish a variant of the classical Hochschild-Kostant-Rosenberg theorem (Appendix A),  which might be of  some independent interest. Moreover, we prove that for all $n\geq1$, the Tate spectral sequence for $\tp(\OK/\Sz^{\otimes[n]})$ collapses at the $E^2$-term. Therefore, $(\thh_*(\OK/\Sz),\thh_*(\OK/\Szz))$ is canonically isomorphic to the associated graded Hopf algebroid of $$(\tp_0(\OK/\Sz),\tp_0(\OK/\Szz))$$
with respect to the Nygaard filtration. (See \S \ref{s3}, \S \ref{s4} for more details).

The explicit description of $(\thh_*(\OK/\Sz),\thh_*(\OK/\Szz))$ allows us to determine the $E^2$-term of the descent spectral sequence for $\thh(\OK)$ by the standard relative injective resolution; this was carry out in \S \ref{s5}. 

Note that $E^1(\tp(\OK))$ is naturally endowed with the Nygaard filtration, which is inherited from the Nygaard filtration on $\tp(\OK/\Sz^{\otimes\oldbullet})$. Moreover, the associated graded is isomorphic to $E^1(\thh(\OK))[\sigma^\pm]$ by the fact that the Tate spectral sequence for $\tp(\OK/\Sz^{\otimes[n]})$ collapses at the $E^2$-term. Hence the spectral sequence associated with the filtered chain complex $E^1(\tp(\OK))$ takes the form
\[
E^2(\thh(\OK))[\sigma^\pm] \Rightarrow E^2(\tp(\OK)).
\]
We call this spectral sequence the \emph{algebraic Tate spectral sequence}.
Similarly, we construct the \emph{algebraic homotopy fixed points spectral sequence}
\[
E^2(\thh(\OK))[v] \Rightarrow E^2(\tc^-(\OK)),
\]
which is the spectral sequence  associated with the Nygaard filtration on $E^1(\tc^-(\OK))$.

Using these two spectral sequences, we conclude that both the descent spectral sequences for $\tc^-(\OK)$ and $\tp(\OK)$ collapse at the $E^2$-term for degree reasons. 

However, extra complication occurs when applying the above approach to determining the $E^2$-terms of the mod $p$ descent spectral sequences.  To remedy this, we introduce a refinement of the Nygaard filtration on $\tp_*(\OK/\Sz^{\otimes \oldbullet};\Fp)$, and determine all the differentials of the \emph{refined algebraic Tate spectral sequence} in \S\ref{s6}. Using the explicit description of refined algebraic Tate differentials, we determine the $E^2$-terms of the descent spectral sequences for $\tc_*^-(\OK;\mathbb{F}_p)$ and $\tp_*(\OK;\mathbb{F}_p)$ in \S\ref{S:e2tc-tp}. In \S\ref{S:e2tc},  we determine the $E^2$-term of the descent spectral sequence for $\tc_*(\OK;\mathbb{F}_p)$. Moreover, we show that all mod $p$ descent spectral sequences collapse at the $E^2$-term.
\begin{rem}
In terms of geometric language, the collapse of the descent spectral sequences for $\tp(\OK)$ and $\tc^-(\OK)$ at the $E^2$-term is a formal consequence of the fact that the stack $\mathcal{X}$ has coherent cohomology in degrees $0$ and $1$ only. Hence the problem of determining the homotopy groups $\tp_*(\OK/\Sk)$ is reduced to the purely algebraic problem of determining the coherent cohomology of the stack $\mathcal{X}$. This is the reason that we can avoid the problems inherent in homotopy theory concerning reduction mod $p$ and introduce the refined Nygaard filtration. Note that the latter has no topological analogue.
\end{rem}

\quash{

In \S\ref{S:bott}, applying the Bott elements in $\text{K}_2(\mathbb{Q}_p(\zeta_{p^n}))$ and the descent spectral sequences for $\tp(\OK; \mathbb{Z}/p^{n})$ and $\tc^-(\OK; \mathbb{Z}/p^{n})$ for all $n\geq1$, we conclude that, under the normalization specified in Theorem \ref{bokperiodicity}, the constant term of 
$E_K(z)$ is equal to $p$.

\begin{rem}
 It might be possible to determine the constant term of 
$E_K(z)$ directly using results on 
$\tp(\mathbb{F}_p)$ and  $\tc^-(\mathbb{F}_p)$, but we do not know how to do that.
\end{rem}
}

Finally, we conclude our main result

\begin{thm}\label{T:main}
Let $d=[K(\zeta_p):K]$ and $f_K = [\k:\mathbb{F}_p]$.
Then we explicitly construct 
\[
\beta \in E^2_{0,2d}(\tc(\OK);\Fp),\quad \lambda \in E^2_{-1,0}(\tc(\OK);\Fp), \quad \gamma\in E^2_{-1,2d+2}(\tc(\OK);\Fp), 
\]
and 
\[
\alpha_{i,l}^{(j)} \in E^2_{-1,2j}(\OK;\mathbb{F}_p), \quad 1\leq i\leq e_K, \quad1\leq j\leq d, \quad 1\leq l\leq f_K
\] 
such that as $\Fp[\beta]$-modules,
\[
E^2_{0,*}(\tc(\OK);\Fp)= \Fp[\beta], \quad E^2_{-2,*}(\tc(\OK);\Fp)= \Fp[\beta]\{\lambda\gamma\},
\]
\[
E^2_{-1,*}(\tc(\OK);\Fp)= \Fp[\beta]\{\lambda, \gamma\} \oplus \mathbb{F}_p[\beta]\{\alpha^{(j)}_{i,l}|1\leq i\leq \eK, 1\leq j\leq d, 1\leq l\leq f_K\},
\]
and
\[
E^2_{i,*}(\tc(\OK);\Fp)=0
\]
for $i\neq0,-1,-2$. Moreover,  both $E^2_{0,*}(\tc(\OK);\Fp)$ and $E^2_{-2,*}(\tc(\OK);\Fp)$ are concentrated in even degrees. 
Consequently, the descent spectral sequence converging to $\tc(\OK;\mathbb{F}_p)$ collapses at the $E^2$-term.
\end{thm}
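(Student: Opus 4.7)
The plan is to deduce $E^2_{*,*}(\tc(\OK);\Fp)$ from the $E^2$-terms of $\tc^-(\OK;\Fp)$ and $\tp(\OK;\Fp)$, already computed in Section \ref{S:e2tc-tp}, by means of the fiber sequence
\[
\tc(\OK/\Sk) \to \tc^-(\OK/\Sk) \xrightarrow{\can-\varphi} \tp(\OK/\Sk).
\]
As recorded in Section \ref{s5}, the three descent spectral sequences sit in a compatible fiber sequence on $E^1$-pages (a short exact sequence of chain complexes in the index $i$ for each fixed $j$), which produces a long exact sequence
\[
\cdots \to E^2_{i,j}(\tc(\OK);\Fp) \to E^2_{i,j}(\tc^-(\OK);\Fp) \xrightarrow{\can-\varphi} E^2_{i,j}(\tp(\OK);\Fp) \to E^2_{i-1,j}(\tc(\OK);\Fp) \to \cdots.
\]

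Since $E^2_{i,*}(\tc^-(\OK);\Fp)$ and $E^2_{i,*}(\tp(\OK);\Fp)$ are both supported on the two columns $i\in\{0,-1\}$, the long exact sequence immediately yields $E^2_{i,*}(\tc(\OK);\Fp)=0$ outside $i\in\{0,-1,-2\}$, together with
\[
E^2_{0,*}(\tc(\OK);\Fp) = \ker(\can-\varphi)|_{i=0}, \qquad E^2_{-2,*}(\tc(\OK);\Fp) = \mathrm{coker}(\can-\varphi)|_{i=-1},
\]
and a short exact sequence expressing $E^2_{-1,*}(\tc(\OK);\Fp)$ as an extension of $\ker(\can-\varphi)|_{i=-1}$ by $\mathrm{coker}(\can-\varphi)|_{i=0}$. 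I would then identify the generators directly from the explicit description of the refined algebraic Tate spectral sequence in Section \ref{s6}: $\beta$ arises as the canonical $\Fp[\beta]$-generator of $\ker(\can-\varphi)|_{i=0}$ in degree $2d$; the classes $\lambda$ and $\gamma$ come from $\mathrm{coker}(\can-\varphi)|_{i=0}$ in degrees $0$ and $2d+2$; the product $\lambda\gamma$ generates $\mathrm{coker}(\can-\varphi)|_{i=-1}$; and the $d e_K f_K$ classes $\alpha^{(j)}_{i,l}$ form a basis of $\ker(\can-\varphi)|_{i=-1}$, indexed by the ramification, Frobenius-lift, and residue-field data. The $\Fp[\beta]$-module structure is inherited from the corresponding structures on $\tc^-$ and $\tp$ in the previous section.

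For the collapse, note that the descent spectral sequence is of homology type, with differentials $d_r\colon E^r_{i,j}\to E^r_{i-r,j+r-1}$. Since at $E^2$ the complex is supported on the three columns $i\in\{0,-1,-2\}$, the only differential that could possibly be nonzero is $d_2\colon E^2_{0,j}\to E^2_{-2,j+1}$. But $E^2_{0,*}(\tc(\OK);\Fp)=\Fp[\beta]$ is concentrated in even total degrees and $E^2_{-2,*}(\tc(\OK);\Fp)=\Fp[\beta]\{\lambda\gamma\}$ is also concentrated in even total degrees (since $|\lambda\gamma|=2d+2$ is even), so $d_2$ shifts parity and must vanish. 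The main obstacle is the middle step: tracking the cyclotomic Frobenius through the refined Nygaard filtration with enough precision to show that the lift of $\beta$ really lies in $\ker(\can-\varphi)$ on the $i=0$ column, that no extra kernel appears beyond $\Fp[\beta]$ there, and that the $d e_K f_K$ classes $\alpha^{(j)}_{i,l}$ do exhaust the kernel on the $i=-1$ column with no further contributions hidden in the refined filtration.
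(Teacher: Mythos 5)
Your skeleton is the same as the paper's: the paper realizes $E^1(\tc(\OK);\Fp)$ as the total complex of $E^1(\tc^-(\OK);\Fp)\xrightarrow{\can-\varphi}E^1(\tp(\OK);\Fp)$, obtains the two-column auxiliary spectral sequence $\tilde E^2_{i,k,j}$ recording $\ker$ and $\mathrm{coker}$ of $\can-\varphi$ on the $E^2$-terms, and your closing parity argument for the vanishing of $d^2\colon E^2_{0,j}\to E^2_{-2,j+1}$ is exactly the paper's degeneration argument. However, what you label ``the main obstacle'' is not a technical footnote but the entire content of the proof, and you have not supplied it. Concretely, the paper needs: (i) the explicit formula $\varphi(z^n\sigma^j)=\bar\mu^{-pj}z^{p(n-e_Kj)}\sigma^j$ on the zero column, combined with Hilbert 90 for $\k/\Fp$, to show $\ker(\can-\varphi)|_{i=0}=\Fp[\beta]$ and that the cokernel there is free of rank one on $\lambda$ (Propositions \ref{P:e200-tc}, \ref{P:e201-tc}); (ii) the computation that $\varphi$ sends a class detected by $z_0^{n-1}\sigma^j dz$ to one detected by $-\bar\mu^{-p(j-1)}z_0^{p(n-e_K(j-1))-1}\sigma^j dz$ (Proposition \ref{P:frob}), whence the trichotomy of Corollary \ref{C:frob-fil} on whether $\varphi$ raises, preserves, or lowers the refined Nygaard filtration relative to the critical value $\frac{pj-1}{p-1}-\frac1{e_K}$; and (iii) the three-region analysis and the count of Lemma \ref{l99} showing $\ker(\can-\varphi)|_{i=-1}$ is spanned by $\gamma$ and exactly $e_K$ classes $\alpha^{(j)}_i$ for each $1\le j\le d$. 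Without (i)--(iii) the identification of the generators, their degrees, and the ranks over $\Fp[\beta]$ is unproved.

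Two further points. First, your bookkeeping for $\gamma$ is wrong: $\gamma$ lies in $E^2_{-1,2d+2}(\tc(\OK);\Fp)$ and is constructed in Lemma \ref{L:gamma} as an element of $\ker(\can-\varphi)$ on the $i=-1$ column, not of $\mathrm{coker}(\can-\varphi)$ on the $i=0$ column; the latter is concentrated in internal degrees $2dm$ by Proposition \ref{P:e20}, so for $d>1$ it cannot contribute in degree $2d+2$ at all. Second, the theorem as stated sets $d=[K(\zeta_p):K]$, whereas the computation naturally produces $d$ as the minimal integer with $p-1\mid e_Kd$ and $\mathrm{N}_{\k/\Fp}(\bar\mu)^d=1$; the equality of these two quantities is a separate arithmetic argument (Proposition \ref{P:d}) that your proposal does not address.
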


\begin{rem}
Using Theorem \ref{T:main}, we may completely determine $\tc_*(\OK; \Fp)$. In fact, it turns out that the collapsing descent spectral sequence for $\tc(\OK;\mathbb{F}_p)$
does not have hidden additive extensions unless $p=2$ and $[K:\mathbb{Q}_2]$ is odd. See Theorems \ref{T:oddp} and \ref{T:evenp} for more details. 
\end{rem}

\begin{rem}\label{R:0-trun}
In fact, after base change to $K(\zeta_p)$,  (up to a scalar and up to higher filtrations) $\beta$ is equal to the $d$-th power of a Bott element (see Remark \ref{R:bott}). It follows that $\mathrm{L}_{\text{K}(1)}\tc(\OK;\mathbb{F}_p)$ may be identified with $\tc(\OK;\mathbb{F}_p)[\beta^{-1}]$. Thus Theorem \ref{T:main} implies that 
\[
\tc(\OK;\mathbb{F}_p) \rightarrow \mathrm{L}_{\text{K}(1)}\tc(\OK;\mathbb{F}_p)
 \]  
is $0$-truncated. Consequently, both 
\[
\tc(\OK;\mathbb{Z}_p) \rightarrow \mathrm{L}_{\text{K}(1)}\tc(\OK;\mathbb{Z}_p)
\] 
and 
\[
\text{K}(\OK;\mathbb{Z}_p) \rightarrow \mathrm{L}_{\text{K}(1)}\text{K}(\OK;\mathbb{Z}_p)
\] 
are $0$-truncated as well. Note that the last statement is one formulation of the Lichtenbaum-Quillen conjecture for $K$ (cf. \cite{Tho}).
\end{rem}
\begin{rem}
The $E^2$-terms of the standard Adams spectral sequences for $\mathbb{S}$, which are obtained via the descent along $\mathbb{S}\to \mathrm{H}\mathbb{F}_p$ or $\mathbb{S}\to \mathrm{MU}$, are far from being understood, let alone the abutments. In contrast, we are able to completely determine the $E^2$-term of the descent spectral sequence converging to $\tc_*(\OK;\mathbb{F}_p)$, and in addition, all the $E^2$-differentials are zero.
\end{rem}

\begin{rem}
In this paper we only consider the case of $p$-adic local fields. For the case of local fields of characteristic $p$,  our approach applies equally well. Moreover, one may apply the results in \cite[\S8]{bms2} in place of our constructions in Section \ref{s3} to simplify the process. 
\end{rem}
In \S\ref{S:motivic}, we observe that the descent spectral sequence converging to $\tc_*(\OK;\mathbb{F}_p)$ is reminiscent of the motivic spectral sequence converging to $\text{K}_*(K,\Fp)$. We expect that the descent spectral sequence will provide some incarnation of the motivic spectral sequence in the $p$-adic setting. 
\\

Topological cyclic homology is an important tool for understanding algebraic $K$-theory. The case of $p$-adic local fields has been extensively studied by many people. For example, the case $p$ odd and $\eK=1$ is determined in \cite{BM} and \cite{Tsa}; the case $p$ odd and $\eK$ arbitrary is determined in \cite{HM}. The case $p=2$ and $\eK=1$ is determined in \cite{Rog}. The case $p=2$ and $\eK$ arbitrary follows from the corresponding computation for $2$-completed algebraic $K$-theory \cite{2-adic}, which in turn is based on the work on the $2$-adic Lichtenbaum-Quillen conjecture \cite{RW}. 

These prior works \cite{BM}, \cite{Tsa}, \cite{HM}, \cite{Rog} all adopt a common strategy, which is different form ours; the difference may be summarized by the following diagram 
$$
\xymatrix{
& E^2(\thh(\OK))[\sigma^\pm] \ar@{=>}[ld]_{\text{descent }} \ar@{=>}[rd]^{\text{\; \; \text{algebraic Tate}}} & \\
\thh_*(\OK)[\sigma^\pm] \ar@{=>}[rd]_{\text{Tate }} && E^2(\tp(\OK)). \ar@{=>}[ld]^{\text{descent}}\\
& \tp_*(\OK) &
}
$$
More precisely, in these works one starts with a descent style spectral sequence for $\thh(\OK)$, which collapses at the $E^2$-term in consideration of degrees. Then one applies the Tate spectral sequence to determine the structure of $\tp_*(\OK)$. The hard part is to determine the Tate differentials, and the main technique for doing this is to inductively determine the structures of the Tate spectral sequences for all finite subgroups of the circle group $\mathbb{T}$.

Our approach proceeds in another direction. We first run the (mod $p$) algebraic Tate spectral sequence to determine the $E^2$-term of the descent spectral sequence for $\tp(\OK)$. Since the cobar complex can be described explicitly thanks to the determination of the Hopf algebroid $(\tp_0(\OK/\Sz),\tp_0(\OK/\Szz))$,  the computation of algebraic Tate differentials is purely algebraic. It follows that the descent spectral sequence for $\tp(\OK)$ collapses at the $E^2$-term in consideration of degrees. Indeed, it turns out that the structure of the algebraic Tate spectral sequence is similar to the structure of the Tate spectral sequence (see Remark \ref{r91}). That is to say, using the descent spectral sequence and the Nygaard filtration, we transform  the problem of determining the Tate differentials, which is topological in nature, into a purely algebraic problem which in turn can be solved by hand.

One particular merit of our method is that it allows us to handle all finite extensions  $K/\mathbb{Q}_2$, which was out of reach with earlier methods. In fact, the earlier methods cannot work with $\mathbb{F}_2$-coefficients multiplicatively as the Moore spectrum does not have a multiplication in this case due to the non-vanishing of the Toda bracket $\langle 2,\eta,2\rangle$; if one goes to  $\mathbb{Z}/4$-coefficients, then the various spectral sequences will depend on finer information (as encoded in the coefficients of $E_K(z)$) of the structure of $K$. On the other hand, in our method, all the nontrivial differentials appear in the algebraic Tate spectral sequence, which is algebraic in nature, and there is no difficulty in making modulo $2$ reductions. In other words, by introducing the descent spectral sequence, the obstruction for the multiplicativity of the Moore spectrum goes to a higher filtration, hence does not affect the determinations of the $E^2$-terms.

As indicated in the above diagram, our approach consists of two steps. The first step is to determine the algebraic Tate differentials, which is purely algebraic. The second step is to determine the descent spectral sequence for $\tp$.  As mentioned at the beginning,  we will apply this approach to investigate topological cyclic homology of locally complete intersection schemes over $\mathbb{Z}_p$. It turns out that for those schemes, the descent spectral sequence for $\tp$ is no longer degenerate; the abutment filtrations (which is conjectured to be the motivic filtration of Bhatt-Morrow-Scholze \cite{bms2}) are bounded by the number of generators of the sheaf of regular functions.


\subsection*{Relation with other works}
The present work started with an attempt to determine the structure of $\tc_*(\OK)$ using the motivic-like spectral sequence introduced by Bhatt-Morrow-Scholze relating the prisms and topological cyclic homology \cite[Theorem 1.12]{bms2}. In fact,  one may resolve $\OK$ by perfectoids in the quasi-syntomic site, and obtain a cosimplicial object similar to $\thh(\OK/\Sz^{\otimes\oldbullet})$, but having $p$-fractional powers of $z_i$'s in the base.
Moreover, the $E^2$-term of the resulting spectral sequence has the descent spectral sequence as a subcomplex consisting of terms with integer exponents. We conjecture that the the second pages of these two spectral sequences are quasi-isomorphic, i.e. the subcomplex consisting of terms with non-integer exponents is acyclic. \\

\noindent In \cite{KN}, Krause-Nikolaus also introduce a descent style spectral sequence to determine the topological Hochschild homology of quotients of DVRs. Their work also recover the main result of Lindenstrauss-Madsen \cite{LM} as ours (Corollary \ref{C:thh-sz}).\\

\noindent Recently, Bhatt-Lurie \cite{BL} and Drinfeld \cite{Dri20} introduced the stacky reformulation of the prismatic cohomology, called the ``\emph{prismatization}".  We believe that the stacks of Bhatt-Lurie and Drinfeld should be closely related to the stacks defined by our Hopf algebroids.  For example, we conjecture that one may identify the prismatization of $\mathrm{Spf}(\OK)$, i.e. the Cartier–Witt stack $\mathrm{WCart}_{\OK}$ constructed in  \cite{BL}, with the stack $\mathcal{X}$ given by (\ref{X}).

\subsection*{Notation and conventions}
We fix a prime $p$. Let $K$ be a finite extension of $\mathbb{Q}_p$ with residue field $\k$. Denote by $K_0=W(\k)[1/p]$ the maximal unramified subextension of $K$ over $\mathbb{Q}_p$. Let $\eK$  and $f_K$ be the ramification index and inertia degree of $K$ over $\mathbb{Q}_p$ respectively. Let $\pK$ be a uniformizer of $\OK$, and let $E_K(z)$ be the minimal polynomial of $\varpi_K$ over $K_0$, normalized such that $E(0)=p$. 
Let $\mu$ denote the leading coefficient of $\EK(z)$, and put $\tilde{\mu}=-\frac{\mu^p}{\delta(E_K(z_0))}$.

We equip $W(\k)[z_0, \dots, z_n]$ with the Frobenius lift $\varphi$ which is the Frobenius on $W(\k)$ and sends $z_i$ to $z_i^p$. This makes $W(\k)[z_0,\dots, z_n]$ into a $\delta$-ring.

In this paper, all spectral sequences with two indices (descent spectral sequence, Tate spectral sequence, and homotopy fixed point spectral sequence) are homology type spectral sequences with differentials 
\[
d^r: E^r_{i,j} \to E^{r}_{i-r, j+r-1}.
\]
\textbf{Warning:} 
Throughout this paper, all Nygaard filtrations involved only jump at even degrees. For our purpose, we rescale the index of Nygaard filtrations by $2$ after Convention \ref{Conv:nygaard}.

\subsection*{Acknowledgements}
Both authors are very grateful to Lars Hesselholt for suggesting this project, and for his tremendous help during preparation of this paper. Especially, he proposed to us using the descent along the base change map (\ref{descent}) in topological Hochschild homology. 
Thanks also to Benjamin Antieau, Dustin Clausen, Bj{\o}rn Ian Dundas, Jingbang Guo, Marc Levine, Thomas Nikolaus, Paul Arne {\O}stv{\ae}r, John Rognes and Longke Tang for valuable discussions and suggestions. Finally, we would like to thank anonymous referees for many helpful comments that helped us correct and improve earlier versions of this paper.

\section{Cyclotomic structures on THH}
Let $E$ be an $E_\infty$-algebra in spectra, and let $A$ be an $E_\infty$-algebra over $E$. 
Recall that the topological Hochschild homology of $A$ over $E$ is defined by the cyclic bar construction over the base.
\begin{defn}
The topological Hochschild homology of $A$ over $E$ is defined as
\[
\thh(A/E) = A^{\otimes_E\mathbb{T}}
\]
in the $\infty$-category of $E_\infty$-algebras in spectra.  It is universal among the objects of $\mathbb{T}$-equivariant $E_\infty$-algebras over $E$ equipped with a (non-equivariant) map from $A$. Denote $\thh(A/\mathbb{S})$ by $\thh(A)$.
\end{defn}

The universal property of $\thh$ implies the following multiplicative property
\begin{equation}\label{E:rthh-mult}
\thh(A_1/E_1)\otimes_{\thh(A_2/E_2)}\thh(A_3/E_3)\simeq \thh(A_1\otimes_{A_2}A_3/E_1\otimes_{E_2}E_3).
\end{equation}
In general, $\thh(A/E)$ may not have cyclotomic structures. For example, the Hochschild homology $\mathrm{HH}(-) = \thh(-/\mathbb{Z})$ is not cyclotomic (\cite[III.1.10]{NS}). However, we may put more conditions on $E$ to obtain a natural cyclotomic structure on $\thh(A/E)$.
\begin{lem}\label{l3}
The following statements are true.

\begin{enumerate}
\item[(1)]
Let $E$ be an $E_\infty$-algebra in cyclotomic spectra such that the underlying $\mathbb{T}$-action is trivial. 
Then a lift of the augmentation map
\[
\thh(E)\rightarrow E
\]
to a map of $E_\infty$-algebras in cyclotomic spectra naturally determines a lift of the functor $\thh(-/E)$ to a functor from $E_\infty$-algebras over $E$ to $E_\infty$-algebras in cyclotomic spectra. 

\item[(2)]
 Moreover, suppose we have a commutative diagram of $E_\infty$-algebras in cyclotomic spectra
$$
\xymatrix{
\thh(E_1)\ar[d] \ar[r] & E_1 \ar[d]\\
\thh(E_2) \ar[r] & E_2
}
$$
with trivial $\mathbb{T}$-actions on $E_1$ and $E_2$ such that it extends to a commutative diagram of $E_\infty$-algebras in spectra
$$
\xymatrix{
\thh(E_1)\ar[d] \ar[r] & E_1 \ar[r]\ar[d] & A_1 \ar[d]\\
\thh(E_2) \ar[r] &  E_2 \ar[r] & A_2.
}
$$
If we equip $\thh(A_1/E_1)$ and $\thh(A_2/E_2)$ with the structure of  $E_\infty$-algebras in cyclotomic spectra given by (1), then the natural map $\thh(A_1/E_1) \rightarrow \thh(A_2/E_2)$ is a map of $E_\infty$-algebras in cyclotomic spectra.

\end{enumerate}
\end{lem}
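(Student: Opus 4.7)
The plan is to express $\thh(A/E)$ as a relative tensor product in cyclotomic spectra and then invoke the symmetric monoidal structure of the $\infty$-category of cyclotomic spectra to produce the lift. Concretely, applying the multiplicative property (\ref{E:rthh-mult}) with $A_1 = A$, $E_1 = \mathbb{S}$, $A_2 = E$, $E_2 = \mathbb{S}$, $A_3 = E_3 = E$, and noting that $\thh(E/E) \simeq E$, one obtains the base-change identification
\[
\thh(A/E) \;\simeq\; \thh(A) \otimes_{\thh(E)} E,
\]
where $E$ is viewed as a $\thh(E)$-algebra via the augmentation $\thh(E) \to E$ arising from $\mathbb{T} \to \mathrm{pt}$.

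For (1), both $\thh(A)$ and $\thh(E)$ carry their canonical cyclotomic structures as $E_\infty$-algebras in cyclotomic spectra, and by hypothesis the augmentation $\thh(E) \to E$ refines to a map in this $\infty$-category; the triviality of the $\mathbb{T}$-action on $E$ is what allows this refinement to exist. Since the $\infty$-category of $E_\infty$-algebras in cyclotomic spectra admits relative tensor products and the forgetful functor to $E_\infty$-algebras in spectra preserves them, forming $\thh(A) \otimes_{\thh(E)} E$ inside cyclotomic spectra yields an $E_\infty$-algebra in cyclotomic spectra whose underlying $E_\infty$-ring is $\thh(A/E)$. Functoriality in $A$ is immediate: a map $A \to A'$ of $E_\infty$-algebras over $E$ induces a map $\thh(A) \to \thh(A')$ of $E_\infty$-algebras in cyclotomic spectra compatible with the $\thh(E)$-algebra structures, hence a map of relative tensor products.

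For (2), the given commutative diagram of $E_\infty$-algebras in cyclotomic spectra (together with its extension to $A_1, A_2$) produces a commutative square
\[
\xymatrix{
\thh(A_1) \otimes_{\thh(E_1)} E_1 \ar[r] \ar[d] & \thh(A_2) \otimes_{\thh(E_1)} E_1 \ar[d] \\
\thh(A_2) \otimes_{\thh(E_2)} E_2 & \thh(A_2) \otimes_{\thh(E_2)} E_2 \ar@{=}[l]
}
\]
in cyclotomic spectra, whose total composite, under the base-change identification, is the natural map $\thh(A_1/E_1) \to \thh(A_2/E_2)$. The commutativity and cyclotomic enhancement of this map follow from the assumed commutativity of the hypothesized diagram together with the cyclotomic naturality of $\thh$ applied to $A_1 \to A_2$ and $E_1 \to E_2$.

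The main subtlety will be verifying that the cyclotomic structure on $\thh(A/E)$ produced by this base-change formula agrees with any other natural cyclotomic structure one might expect from the cyclic bar description. This reduces to the symmetric monoidality of the $\thh$-functor, together with the standard fact that $\thh$ of the relative tensor product $A \otimes_E E \simeq A$, computed as the two-sided bar construction, is computed by the relative tensor product of $\thh$'s—a formal consequence of the universal property of $\thh$ as $(-) \otimes \mathbb{T}$ in $E_\infty$-algebras and the compatibility of this tensoring with colimits along $\thh(E) \to E$ in cyclotomic spectra.
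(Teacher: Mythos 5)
Your proof is correct and follows essentially the same route as the paper: both rewrite $\thh(A/E)$ as the relative tensor product $\thh(A)\otimes_{\thh(E)}E$ via the multiplicative property \eqref{E:rthh-mult}, and then lift this pushout to $E_\infty$-algebras in cyclotomic spectra using that the forgetful functor is symmetric monoidal and preserves small colimits, with (2) following by naturality. The only difference is that you spell out (2) and the compatibility check in more detail than the paper, which simply says ``Part (2) follows immediately.''
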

\begin{proof}
Part (1) is essentially \cite[Construction 11.5]{bms2}. In fact, by (\ref{E:rthh-mult}), we get
\[
\thh(X/E)\simeq \thh(X)\otimes_{\thh(E)}E
\]
 in the $\infty$-category of $E_\infty$-algebras in spectra. 
Since the forgetful functor from $E_\infty$-algebras in cyclotomic spectra to $E_\infty$-algebras in spectra is symmetric monoidal and preserves small colimits, we may lift $\thh(X/E)$ as the pushout of $$\thh(X)\leftarrow \thh(E)\rightarrow E$$ in the $\infty$-category of $E_\infty$-algebras in cyclotomic spectra. Part (2) follows immediately.
\end{proof}

\begin{defn}
When the condition of Lemma \ref{l3}(1) holds, we set negative cyclic homology
\[
\tc^{-}(-/E)=\thh(-/E)^{h\mathbb{T}}
\] 
and the periodic cyclic homology
\[
\tp(-/E)=(\thh(-/E)^{t\mathbb{T}}.
\]
As in the case $E=\mathbb{S}$, for any prime $l$, the cyclotomic structure on $\thh(-/E)$ induces the Frobenius 
\begin{equation}\label{E:frob}
\varphi_l:\thh(-/E)\simeq \Phi^l\thh(-/E)\to\thh(-/E)^{tC_l}.
\end{equation}
Moreover, there is the canonical map
\begin{equation}\label{E:can}
\can: \tc^{-}(-/E)\simeq (\thh(-/E)^{hC_l})^{h(\mathbb{T}/C_l)}=(\thh(-/E)^{hC_l})^{h\mathbb{T}}\to(\thh(-/E)^{tC_l})^{h\mathbb{T}}.
\end{equation}

The topological cyclic homology is defined by the fiber sequence
\begin{equation*}\label{E:tc}
\tc(-/E)\to\tc^-(-/E)\xrightarrow{\prod_{l}(\varphi_l^{h\mathbb{T}}-\can)} \prod_{l}(\thh(-/E)^{tC_l})^{h\mathbb{T}}.
\end{equation*}
\end{defn}
~\\
Using the argument of \cite[Lemma II 4.2]{NS}, we have
\begin{equation*}\label{E:ns}
\tp(-/E;\mathbb{Z}_p)\simeq (\thh(-/E)^{tC_p})^{h\mathbb{T}}.
\end{equation*}
Taking $p$-completion of $\varphi_l^{h\mathbb{T}}$ and $\mathrm{can}$ for $l=p$, we get
\[
\varphi: \tc^{-}(-/E; \Zp)\to \tp(-/E; \Zp), \quad \can: \tc^{-}(-/E; \Zp)\to \tp(-/E; \Zp)
\]
and the fiber sequence
\[
\tc(-/E;\Zp)\to \tc^{-}(-/E; \Zp)\xrightarrow{\varphi-\can} \tp(-/E; \Zp).
\]
As in the case $E=\mathbb{S}$, there are the homotopy fixed point spectral sequence
\begin{equation}\label{E:homo-fix-spec}
E^2_{i,j}=\thh_*(-/E)[v]\Rightarrow \tc^{-}_{i+j}(-/E)
\end{equation}
and the Tate spectral sequence
\begin{equation}\label{E:tate-spec}
E^2_{i,j}=\thh_*(-/E)[\sigma^{\pm1}]\Rightarrow \tp_{i+j}(-/E),
\end{equation}
where $\thh_j(-/E)$ has degree $(0,j)$, $|v|=(-2, 0), |\sigma|=(2, 0)$, and $\can(v)=\sigma^{-1}$. The \emph{Nygaard filtration} $\mathcal{N}^{\geq j}$ is defined to be the filtration 
$E^{\infty}_{*, j}$ on the abutment; it is multiplicative as the Tate spectral sequence is multiplicative. When the Tate spectral sequence collapses at the $E^2$-term, we denote by $p_j$ the natural projection 
\begin{equation}\label{E:proj}
\mathcal{N}^{\geq j}(\tp_0(-/E))\to \thh_{j}(-/E). 
\end{equation}

Recall that by \cite[Proposition 11.3]{bms2}, $\mathbb{S}[z]$ admits an $E_\infty$-cyclotomic structure over $\thh(\mathbb{S}[z])$ in which the $\mathbb{T}$-action is trivial and the Frobenius sends $z$ to $z^p$.
\begin{prop}\label{p4}The following statments are true. 
\begin{enumerate}
\item[(1)]
There is a functorial $E_\infty$-cyclotomic structure on $\thh(-/\Sk)$.
\item[(2)]
There is a functorial $E_\infty$-cyclotomic structure on $\thh(-/\Sz)$.

\end{enumerate}
\end{prop}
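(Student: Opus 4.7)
The plan is to reduce both parts to Lemma \ref{l3}(1). In each case, it suffices to equip the relevant base ring $E \in \{\Sk, \Sz\}$ with a cyclotomic $E_\infty$-structure with trivial $\mathbb{T}$-action together with a lift of the augmentation $\thh(E) \to E$ to a morphism of $E_\infty$-algebras in cyclotomic spectra; the functorial cyclotomic structure on $\thh(-/E)$ is then supplied by Lemma \ref{l3}(1).

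For (1), I would build the required data out of the universal property of spherical Witt vectors \cite[\S5.2]{Lur}, which guarantees that any $p$-complete $E_\infty$-ring $A$ equipped with a ring homomorphism $W(\k) \to \pi_0 A$ lifts essentially uniquely to an $E_\infty$-ring map $\Sk \to A$. Taking $A = \Sk^{tC_p}$ with the Witt-vector Frobenius on $\pi_0$ yields the Frobenius $\varphi: \Sk \to \Sk^{tC_p}$, giving $\Sk$ a cyclotomic $E_\infty$-structure with trivial $\mathbb{T}$-action. The augmentation $\thh(\Sk) \to \Sk$ is the canonical projection from the cyclic bar construction, and the uniqueness clause of the universal property ensures that it upgrades to a morphism of cyclotomic $E_\infty$-algebras and is functorial in $\k$.

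For (2), I would combine (1) with \cite[Proposition 11.3]{bms2}. Since $\Sz \simeq \Sk \otimes_{\mathbb{S}} \mathbb{S}[z]$ and both factors now carry cyclotomic $E_\infty$-structures with trivial $\mathbb{T}$-actions and lifted augmentations, the tensor product inherits such data: as recalled in the proof of Lemma \ref{l3}, the forgetful functor out of cyclotomic $E_\infty$-algebras is symmetric monoidal and colimit-preserving, and tensoring trivial $\mathbb{T}$-actions yields a trivial one. The multiplicativity property (\ref{E:rthh-mult}) of $\thh$ then identifies the resulting augmentation $\thh(\Sz) \to \Sz$ with the tensor product of the two given augmentations, and a second application of Lemma \ref{l3}(1) concludes.

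The main obstacle lies in step (1): rigidifying the datum of a Frobenius lift on $\pi_0 = W(\k)$ to a genuine morphism of $E_\infty$-algebras in cyclotomic spectra, functorially in $\k$. I expect the cleanest implementation to proceed entirely through the universal property of $\Sk$, along the lines of the construction of the cyclotomic structure on $\mathbb{S}[z]$ given in \cite[Proposition 11.3]{bms2}.
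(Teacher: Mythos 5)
Your proposal is correct and follows essentially the same route as the paper: equip $\Sk$ with a cyclotomic $E_\infty$-structure via the universal property of spherical Witt vectors, lift the augmentation $\thh(\Sk)\to\Sk$ to a cyclotomic map, feed this into Lemma \ref{l3}(1), and obtain the $\Sz$-case by tensoring with the $\mathbb{S}[z]$-structure of \cite[Proposition 11.3]{bms2}. The only cosmetic difference is in step (1): the paper defines the Frobenius as the unique $E_\infty$-automorphism of $\Sk$ lifting Frobenius on $\pi_0$ and invokes \cite[IV.1.2]{NS} to identify the resulting cyclotomic structure with the one induced from $\thh(\Sk)$, whereas you build the map $\Sk\to\Sk^{tC_p}$ directly and justify the lift of the augmentation by the uniqueness clause of the same universal property — both are valid.
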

\begin{proof}
We set the Frobenius on $\Sk$ to be the unique $E_\infty$-automorphism inducing the Frobenius on $\pi_0$. It follows that the resulting cyclotomic structure on $\Sk$ agrees with the $p$-completion of the cyclotomic structure on $\thh(\Sk)$ via the augmentation map \cite[IV.1.2]{NS}.
 This yields (1) by Lemma \ref{l3}.

For (2), note that
\[
\Sz \simeq \Sk\otimes_\mathbb{S}\mathbb{S}[z]
\] 
in the $\infty$-category of $E_\infty$-algebras in spectra. We then define the cyclotomic structure on $\Sz$ using the cyclotomic structures on $\Sk$ and $\mathbb{S}[z]$, and the monoidal structure on the $\infty$-category of $E_\infty$-algebras in cyclotomic spectra. We conclude (2) by (1) and Lemma \ref{l3}.
\end{proof}

\begin{conv}
Henceforth we equip $\thh(-/\Sk)$ and $\thh(-/\Sz)$ with the cyclotomic structures given by Proposition \ref{p4}.
\end{conv}

\begin{rem}\label{R:Sk-complete}
Since $\Sk$ is equivalent to the $p$-completion of $\thh(\Sk)$, it follows that 
\[
\thh(\OK/\Sk)\simeq \thh(\OK)\otimes_{\thh(\Sk)}\Sk
\]
is isomorphic to the $p$-completion of $\thh(\OK)$. Similarly, $\thh(\OK/\Sk[z])$ is isomorphic to the $p$-completion of $\thh(\OK/\mathbb{S}[z])$. 
\end{rem}

\begin{rem}\label{R:tc-tp-complete}
By the previous remark, 
we see that $\tc(\OK/\Sk)$, $\tc(\OK/\Sz)$, $\tc^-(\OK/\Sk)$, $\tc^-(\OK/\Sz)$, $\tp(\OK/\Sk)$ and $\tp(\OK/\Sz)$ are isomorphic to $p$-completions of $\tc(\OK)$, $\tc(\OK/\mathbb{S}[z])$, $\tc^-(\OK)$, $\tc^-(\OK/\mathbb{S}[z])$, $\tp(\OK)$ and 
$\tp(\OK/\mathbb{S}[z])$ respectively.
\end{rem}

Note that the composite
\[
\mathbb{S}[z] \rightarrow  \thh(-/\mathbb{S}[z]), 
\]
which is a map of $E_\infty$-algebras in cyclotomic spectra, induces 
\[
i_C: \mathbb{S}[z]^{h\mathbb{T}}\to \tc^-_0(-/\mathbb{S}[z]; \Zp),\quad i_P: (\mathbb{S}[z]^{tC_p})^{h\mathbb{T}}\to \tp_0(-/\mathbb{S}[z];\Zp).
\] 
Recall that $\mathbb{S}[z]$ is equipped with the trivial $\mathbb{T}$-action.
In the following, when the context is clear, we abusively use  $z$ to denote the its images under $i_C$ and $\can\circ i_C$.
\begin{prop}\label{P:frob-can}
We have
$\varphi(z) = z^p.$
\end{prop}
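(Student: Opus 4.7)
The plan is to deduce the identity from the naturality of the cyclotomic Frobenius together with the explicit description of the cyclotomic structure on $\mathbb{S}[z]$ recalled from \cite[Proposition 11.3]{bms2}. By that construction, the $\mathbb{T}$-action on $\mathbb{S}[z]$ is trivial and the cyclotomic Frobenius
\[
\varphi_p: \mathbb{S}[z] \simeq \Phi^p\mathbb{S}[z] \to \mathbb{S}[z]^{tC_p}
\]
induces on $\pi_0$ the map sending $z$ to $z^p$. Since the augmentation $\mathbb{S}[z]\to\thh(-/\mathbb{S}[z])$ is a map of $E_\infty$-algebras in cyclotomic spectra by the very construction in Proposition \ref{p4} (and Lemma \ref{l3}), applying $(-)^{h\mathbb{T}}$ and taking $p$-completions yields a commutative square
\[
\xymatrix{
\mathbb{S}[z]^{h\mathbb{T}} \ar[r]^-{i_C} \ar[d]_{\varphi_p^{h\mathbb{T}}} & \tc^{-}(-/\mathbb{S}[z];\mathbb{Z}_p) \ar[d]^{\varphi} \\
(\mathbb{S}[z]^{tC_p})^{h\mathbb{T}} \ar[r]^-{i_P} & \tp(-/\mathbb{S}[z];\mathbb{Z}_p),
}
\]
where the right vertical arrow is the $\mathrm{THH}$-Frobenius obtained from (\ref{E:frob}) after taking $h\mathbb{T}$ and $p$-completing.

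The first step I would carry out is to verify, on $\pi_0$, that the left vertical map sends (the image of) $z$ to (the image of) $z^p$; this is immediate from the multiplicativity of $\varphi_p^{h\mathbb{T}}$ together with the mod $p$ identity on $\pi_0$ induced by the cyclotomic structure of $\mathbb{S}[z]$. The second step is to chase the element $z \in \pi_0(\mathbb{S}[z]^{h\mathbb{T}})$ around the diagram: going right-then-down produces $\varphi(i_C(z)) = \varphi(z) \in \tp_0(-/\mathbb{S}[z];\mathbb{Z}_p)$, while going down-then-right produces $i_P(z^p) = (i_P(z))^p$. By the convention in the paragraph preceding the statement, $i_P(z)$ is precisely what we are calling $z \in \tp_0(-/\mathbb{S}[z];\mathbb{Z}_p)$, so the two expressions agree and yield $\varphi(z) = z^p$.

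The only nontrivial input is the compatibility of the square, which rests entirely on the fact that $\mathbb{S}[z]\to\thh(-/\mathbb{S}[z])$ is a morphism of $E_\infty$-algebras in cyclotomic spectra; everything else is formal. I therefore expect the main (very mild) obstacle to be simply bookkeeping the identifications: making sure that the abusive use of the letter $z$ for both the element in $\mathbb{S}[z]^{h\mathbb{T}}$ and its image in $\tp_0$ (through $\can\circ i_C$, which by construction agrees with $i_P$ under the projection $\mathbb{S}[z]^{h\mathbb{T}}\to(\mathbb{S}[z]^{tC_p})^{h\mathbb{T}}$) is consistent. Once this identification is made, the formula $\varphi(z)=z^p$ follows at once.
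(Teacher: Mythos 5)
Your proposal is correct and follows essentially the same route as the paper: both arguments rest on the explicit cyclotomic Frobenius $z\mapsto z^p$ on $\mathbb{S}[z]$ from \cite[Proposition 11.3]{bms2}, the fact that $\mathbb{S}[z]\to\thh(-/\mathbb{S}[z])$ is a map of $E_\infty$-algebras in cyclotomic spectra, and the resulting commutativity relations $\varphi\circ i_C=i_P\circ \varphi^{h\mathbb{T}}_p$ and $\can\circ i_C=i_P\circ \can$, followed by the same diagram chase on the element $z$.
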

\begin{proof}
Recall that the Frobenius $\varphi_p$ on $\mathbb{S}[z]$ is the composite
\[
\mathbb{S}[z]\xrightarrow{z\mapsto z^p}\mathbb{S}[z]\rightarrow \mathbb{S}[z]^{hC_p}\xrightarrow{\can}\mathbb{S}[z]^{tC_p}.
\]
It follows that the composite 
\[
\varphi_p^{h\mathbb{T}}: \mathbb{S}[z]^{h\mathbb{T}}\xrightarrow{z\mapsto z^p}\mathbb{S}[z]^{h\mathbb{T}}\rightarrow \mathbb{S}[z]^{h\mathbb{T}}\xrightarrow{\can}(\mathbb{S}[z]^{tC_p})^{h\mathbb{T}}
\]
satisfies $\varphi_p^{h\mathbb{T}}(z)=\can(z)^p$. On the other hand, it is straightforward to see 
\[
\varphi\circ i_C=i_P\circ \varphi^{h\mathbb{T}}_p\quad\text{and}\quad \can\circ i_C=i_P\circ \can.
\]
The desired result follows.

\end{proof}




Now we specialize to the case of $\OK$. Firstly recall the B\"{o}kstedt periodicity 
$$\thh_*(\k) = \k[u],$$
where $u\in \thh_2(\k)$ is the B\"{o}kstedt element. Recall the following result from \cite{NS}. 

\begin{thm}\label{bokperiodicity}
\begin{enumerate}

\item[(1)]
Both the Tate spectral sequence for $\tp_*(\mathbb{F}_p)$ and the homotopy fixed point spectral sequence for $\tc^-_*(\mathbb{F}_p)$ collapse at the $E^2$-term.
Consequently, 
\[
\tp_*(\mathbb{F}_p) = \tp_0(\mathbb{F}_p)[\sigma_{\mathbb{F}_p}^{\pm1}], \qquad |\sigma_{\mathbb{F}_p}|=2,
\]
and the canonical map  $\mathrm{can}: \tc^-_j(\mathbb{F}_p) \rightarrow \tp_j(\mathbb{F}_p)$
is an isomorphism for $j\leq0$.

\item[(2)] 
We have $\tp_0(\mathbb{F}_p) = \mathbb{Z}_p$
and $p_0: \tp_0(\mathbb{F}_p)\to \thh_0(\mathbb{F}_p)$ is the natural projection
$\mathbb{Z}_p\to\mathbb{F}_p$. Moreover, the cyclotomic Frobenius on $\tc^-_0(\mathbb{F}_p)$ is the identity map under the isomorphism $\mathrm{can}: \tc^-_0(\mathbb{F}_p) \cong \tp_0(\mathbb{F}_p)$. 

\item[(3)]For any $u_{\mathbb{F}_p}\in \tc^-_2(\mathbb{F}_p)$ lifting the B\"{o}kstedt element under $p_0$, there exists a unique $v_{\mathbb{F}_p}\in \tc^-_{-2}(\mathbb{F}_p)$ such that
\[
\tc^-_*(\mathbb{F}_p) = \tc^{-}_0(\mathbb{F}_p)[u_{\mathbb{F}_p},v_{\mathbb{F}_p}]/(u_{\mathbb{F}_p}v_{\mathbb{F}_p}-p).
\]

\item[(4)]For any $(u_{\mathbb{F}_p}, v_{\mathbb{F}_p})$ in (3),  
\[
\varphi(u_{\mathbb{F}_p})=\can(v_{\mathbb{F}_p})^{-1}.
\] 

\end{enumerate}
\end{thm}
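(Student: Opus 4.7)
The plan is to reduce everything to B\"okstedt periodicity $\thh_*(\Fp)=\Fp[u]$ combined with the Nikolaus-Scholze identification of the cyclotomic structure on $\thh(\Fp)$. For part (1), the $E^2$-pages of the Tate and homotopy fixed point spectral sequences are $\Fp[u][\sigma^{\pm 1}]$ and $\Fp[u][v]$, both concentrated in even total degrees. Since each $d^r$ lowers total degree by one, all differentials vanish for parity reasons and both spectral sequences collapse at $E^2$. Under these collapses $\can$ sends $v\mapsto \sigma^{-1}$ and acts as the identity on graded pieces in non-positive total degree, so $\can:\tc^-_j(\Fp)\to\tp_j(\Fp)$ is an isomorphism for $j\leq 0$ by passing from associated graded to filtered abutments.

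For parts (2) and (3), the associated graded of $\tp_0(\Fp)$ with respect to the Nygaard filtration is $\Fp$ in each filtration degree by part (1). Combined with $p$-completeness and multiplicativity of the Tate spectral sequence, this forces $\tp_0(\Fp)\cong\Zp$, with $p_0$ identified as the reduction $\Zp\twoheadrightarrow \Fp$; this matches the Nikolaus-Scholze computation. The cyclotomic Frobenius on $\tc^-_0(\Fp)=\Zp$ must then be a $\Zp$-algebra endomorphism of $\Zp$ lifting the Frobenius on $\Fp$, hence the identity. For part (3), given any lift $u_{\Fp}$ of the B\"okstedt class and any lift $v\in\tc^-_{-2}(\Fp)$ of the corresponding $E^\infty$ generator, the product $u_{\Fp}\cdot v\in\tc^-_0(\Fp)=\Zp$ projects to a generator of the graded piece $E^\infty_{-2,2}=\Fp$. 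Since the Nygaard filtration on $\Zp$ is the $p$-adic filtration, this product is a unit times $p$, and a unique rescaling of $v$ produces $v_{\Fp}$ with $u_{\Fp}v_{\Fp}=p$. The ring presentation $\tc^-_*(\Fp)=\Zp[u_{\Fp},v_{\Fp}]/(u_{\Fp}v_{\Fp}-p)$ then follows from the multiplicative structure of the collapsing spectral sequence.

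The main obstacle is part (4). Applying $\can$ to $u_{\Fp}v_{\Fp}=p$ and using $\can(v_{\Fp})=\sigma^{-1}_{\Fp}$ gives $\can(u_{\Fp})=p\sigma_{\Fp}$. Since $\tp_2(\Fp)=\Zp\cdot \sigma_{\Fp}$, I write $\varphi(u_{\Fp})=c\cdot \sigma_{\Fp}$ with $c\in\Zp$. The Nikolaus-Scholze theorem that the cyclotomic Frobenius $\thh(\Fp)\to\thh(\Fp)^{tC_p}$ is a $p$-complete equivalence on connective covers will imply that $c\in\Zp^{\times}$, because $\varphi(u_{\Fp})$ must project to a generator of the degree-$2$ part of $\pi_*(\thh(\Fp)^{tC_p})/p$. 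To upgrade this to $c=1$, I will exploit the invariance of the identity under rescaling: replacing $u_{\Fp}$ by $\alpha u_{\Fp}$ (for $\alpha\in\Zp^{\times}$) rescales $v_{\Fp}$ by $\alpha^{-1}$, so both $\varphi(u_{\Fp})$ and $\can(v_{\Fp})^{-1}$ scale by $\alpha$ and the desired equality is independent of the lift chosen. It then suffices to verify $\varphi(u_{\Fp})=\can(v_{\Fp})^{-1}$ for one canonical normalization, which can be read off from the explicit description of the cyclotomic Frobenius on $\thh(\Fp)$ in \cite{NS}.
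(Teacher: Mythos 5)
The paper does not actually prove this theorem: its ``proof'' is a one-line citation to \cite[Corollary IV.4.16]{NS} and the discussion following it. Your proposal instead tries to rederive the statement from B\"okstedt periodicity plus formal spectral-sequence arguments, and this is where a genuine gap appears. The step ``the associated graded of $\tp_0(\Fp)$ is $\Fp$ in each filtration degree; combined with $p$-completeness and multiplicativity, this forces $\tp_0(\Fp)\cong\Zp$'' is not a valid deduction. A complete multiplicative filtration with associated graded ring $\Fp[x]$ is equally consistent with $\tp_0(\Fp)\cong\Fp[[x]]$ (all extensions split, $p=0$) as with $\Zp$ (all extensions nonsplit, $x$ detecting $p$). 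Resolving this multiplicative extension --- equivalently, proving that $u_{\Fp}v_{\Fp}$ equals a unit times $p$ rather than lying in higher Nygaard filtration --- is precisely the nontrivial content of the Nikolaus--Scholze computation; it rests on the Segal-conjecture-type input that $\varphi\colon\thh(\Fp)\to\thh(\Fp)^{tC_p}$ is an equivalence on connective covers together with $\pi_0(\mathbb{S}^{tC_p})^{\wedge}_p\cong\Zp$. You invoke that input only later, and only to conclude that the constant $c$ in $\varphi(u_{\Fp})=c\sigma_{\Fp}$ is a unit. Your subsequent sentence ``since the Nygaard filtration on $\Zp$ is the $p$-adic filtration, this product is a unit times $p$'' presupposes exactly what needs to be shown.

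The rest of the proposal is sound but ultimately circular in the same way the paper is: part (1) (degeneration by parity and the identification of $\can$ in nonpositive degrees) is correct and standard; the rescaling-invariance argument for part (4) is a nice observation and does correctly reduce (4) to a single normalization (note that lifts of the B\"okstedt element differ exactly by units $\alpha\equiv 1\bmod p$, and $\varphi=\mathrm{id}$ on $\pi_0$ makes both sides scale by $\alpha$); but the final verification for that one normalization, like the ring structure in (2)--(3), is simply read off from \cite{NS}. So either state the theorem as a citation, as the paper does, or supply the extension argument explicitly; as written, the proposal presents the hardest step as a formal consequence of degeneration, which it is not.
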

\begin{proof}
See Corollary IV.4.16 \cite{NS} and the discussion after that.
\end{proof}

\begin{defn}\label{EK}
Let $\EK(z)$ be the minimal polynomial of $\pK$ over $K_0$, normalized such that  $E_K(0)=p$.
\end{defn}

\begin{thm}\label{T:rtp}
\begin{enumerate}
\item[(1)]We have 
\[
\thh_*(\OK/\Sz) = \OK[u],
\]
where $u\in \thh_2(\OK/\Sz)$ is any lift of the B\"{o}kstedt element in $\thh_2(\k)$.
\item[(2)] The Tate spectral sequence for $\tp_*(\OK/\Sz)$ collapses at the $E^2$-term. Consequently, 
\[
\tp_*(\OK/\Sz) = \tp_0(\OK/\Sz)[\sigma^{\pm1}]
\]
with $|\sigma|=2$.

\item[(3)] 
We have 
\[
\tp_0(\OK/\Sz) = W(\k)[[z]],
\]
and $p_0: \tp_0(\OK/\Sz)\to \thh_0(\OK/\Sz)$ is the $W(k)$-algebra morphism
\[
W(\k)[[z]]\xrightarrow{z\mapsto \pK} \OK.
\]

\item[(4)] The homotopy fixed point spectral sequence for $\tc^-_*(\OK/\Sz)$ collapses at the $E^2$-term. Consequently, the canonical map  
\[
\mathrm{can}: \tc^-_j(\OK/\Sz) \rightarrow \tp_j(\OK/\Sz)
\]
induces an isomorphism $\tc^-_j(\OK/\Sz) \cong \mathcal{N}^{\geq j}\tp_j(\OK/\Sz)$
for all $j\in \mathbb{Z}$. In particular, 
\[
\mathrm{can}: \tc^-_j(\OK/\Sz) \rightarrow \tp_j(\OK/\Sz)
\]
is an isomorphism for $j\leq0$. 
\item[(5)] 
Under (3) and (4), the cyclotomic Frobenius 
\[
\varphi:\tc^-_0(\OK/\Sz)\rightarrow \tp_0(\OK/\Sz)
\]
is the map
$W(\k)[[z]]\rightarrow W(\k)[[z]]$
which is the Frobenius on $W(\k)$ and sends $z$ to $z^p$.

\item[(6)] For any $u_{\mathbb{F}_p}$ given in Theorem \ref{bokperiodicity}, there exist unique $u\in \tc^-_2(\OK/\Sz), v\in \tc^-_{-2}(\OK/\Sz)$ and $\sigma\in \tp_2(\OK/\Sz)$
such that $u$ lifts $u_{\mathbb{F}_p}$, $\varphi(u) = \sigma$, $\can(v) = \sigma^{-1}$,  
$\tp_*(\OK/\Sz) = \tp_0(\OK/\Sz)[\sigma^{\pm1}]$ and
\[
\tc^-_*(\OK/\Sz) = \tc^{-}_0(\OK/\Sz)[u,v]/(uv-\EK(z)).
\]
As a consequence, under (3), the Nygaard filtration on $\tp_0(\OK/\Sz)$ is given by 
\begin{equation}\label{E:nyg-fil}
\mathcal{N}^{\geq 2j}\tp_0(\OK/\Sz)=\mathcal{N}^{\geq 2j-1}\tp_0(\OK/\Sz)=(\EK(z))^j,\quad  j\geq0. 
\end{equation}
\end{enumerate}
\end{thm}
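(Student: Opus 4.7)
The plan is to prove the six parts in order, concentrating the substantive content in (1) and (3); parts (2), (4), (5), (6) then follow by parity-style collapse arguments, the canonical map identification, and Proposition \ref{P:frob-can}.

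For (1), I would apply the relative HKR computation developed in Appendix A to the presentation $\OK = W(\k)[z]/(E_K(z))$, in which $E_K(z)$ is a regular element of the polynomial $\delta$-ring $W(\k)[z]$. This yields $\thh_*(\OK/\Sz) \cong \OK[u]$ as graded commutative algebras, with $u$ in degree $2$. To identify $u$ as any lift of Bökstedt's element, I would use the residue-field reduction $\thh(\OK/\Sz)\to \thh(\k/\Sz)$ and compare with $\thh_*(\k) = \k[u]$; any lift of the Bökstedt element to $\thh_2(\OK/\Sz)$ then generates the polynomial algebra over $\OK$ by Nakayama. With (1) in hand, parts (2) and (4) reduce to a parity argument: the Tate $E^2$-term $\OK[u][\sigma^{\pm 1}]$ with $|u|=(0,2)$ and $|\sigma|=(2,0)$ is supported on bidegrees $(i,j)$ with $i$ and $j$ both even, while $d^r$ shifts bidegree by $(-r,r-1)$, so the parities of $i-r$ and $j+r-1$ are never simultaneously even; hence $d^r = 0$ for all $r \geq 2$. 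The same reasoning applied to the HFPSS $E^2 = \OK[u][v]$ gives its collapse. The identification $\can: \tc^-_j \cong \mathcal{N}^{\geq j}\tp_j$ is then immediate, since on $E^2$-terms $\can$ is the inclusion $\OK[u][v] \hookrightarrow \OK[u][\sigma^{\pm 1}]$ sending $v \mapsto \sigma^{-1}$, which cuts out precisely $E^\infty_{*,\geq j}$.

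For (3) I would invoke the prismatic identification $\tp_0(\OK/\Sz) \cong \Prism_{\OK/W(\k)[z]}^{\wedge}$ of \cite[\S 11]{bms2}. Since $(W(\k)[z], (E_K(z)))$ is already a bounded prism presenting $\OK$ via its distinguished element, the envelope is the $(p,E_K)$-adic completion $W(\k)[[z]]$, and the projection $p_0$ is the Hodge–Tate specialization $z \mapsto \pK$. Part (5) then follows at once: $\varphi$ on $\tc^-_0(\OK/\Sz) = W(\k)[[z]]$ is a continuous Frobenius lift, which agrees with the Witt-vector Frobenius on $W(\k)$ by naturality from $\thh(-/\Sk)$ and sends $z \mapsto z^p$ by Proposition \ref{P:frob-can}; these two pieces of data determine it uniquely. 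For (6), I would take any lift $u \in \tc^-_2(\OK/\Sz)$ of the given $u_{\mathbb{F}_p}$, set $\sigma := \varphi(u) \in \tp_2(\OK/\Sz)$, and use (4) to define $v \in \tc^-_{-2}(\OK/\Sz)$ as the unique element with $\can(v) = \sigma^{-1}$. The relation $uv = E_K(z)$ is then forced by computing $\can(uv) = u \sigma^{-1}$ in the Nygaard layer $\mathcal{N}^{\geq 2}\tp_0 / \mathcal{N}^{\geq 4}\tp_0$ and matching with $E_K(z)$ via (3) together with Theorem \ref{bokperiodicity}(4); uniqueness of $u,v,\sigma$ is pinned down by the HFPSS collapse once $u_{\mathbb{F}_p}$ is fixed. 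Formula (\ref{E:nyg-fil}) then follows by induction, multiplying by $uv = E_K(z)$.

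The principal obstacle is (1): establishing that $\thh_*(\OK/\Sz)$ is multiplicatively (not merely additively) a polynomial algebra on a single degree-$2$ generator, which is exactly the content of the HKR-variant developed in Appendix A. Once this is in place and the BMS2 prismatic input for (3) is invoked, every other part of the theorem reduces to a parity argument and the formal compatibility between $\can$, $\varphi$, and the Nygaard filtration.
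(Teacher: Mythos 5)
Your parity arguments for (2) and (4), the identification of $\can$ on $E^\infty$-terms, and the derivation of (5) from Proposition \ref{P:frob-can} are all fine (the paper in fact simply cites \cite[Proposition 11.10]{bms2} for parts (1)--(5)). However, your route to (1) would fail. The HKR variant of Appendix A, applied to $\OK=W(\k)[z]/(\EK(z))$, computes Hochschild homology over the \emph{discrete} base and yields $\mathrm{HH}_*(\OK/W(\k)[z])\cong\Gamma_{\OK}(I/I^2)=\OK\langle t_{\EK(z)}\rangle$, a divided power algebra --- not the polynomial algebra $\OK[u]$. These differ integrally, and the distinction is essential to this paper: Lemma \ref{L:inj} shows the natural map $\thh_*(\OK/\Sz)\to\mathrm{HH}_*(\OK/W(\k)[z])$ is injective but far from surjective. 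The polynomial structure of $\thh_*(\OK/\Sz)$ is genuinely topological input: it comes from B\"okstedt periodicity for $\k$ via the base change along $\mathbb{S}[z]\to\mathbb{S}$, $z\mapsto 0$, which identifies $\thh(\OK/\Sz)\otimes_{\mathbb{S}[z]}\mathbb{S}$ with $\thh(\k)$, followed by a $z$-adic filtration argument. Your appeal to Nakayama shows only that a lift of the B\"okstedt element generates; it does not rule out divided-power relations.

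There is a second gap in (6). Taking an arbitrary lift $u$ of $u_{\mathbb{F}_p}$ and setting $\sigma=\varphi(u)$, $v=\can^{-1}(\sigma^{-1})$ does \emph{not} force $uv=\EK(z)$: what the BMS2 argument gives is $uv=\lambda(z)\EK(z)$ for some unit $\lambda(z)\in W(\k)[[z]]^{\times}$, and the comparison with $\tc^-_*(\k)$ via $z\mapsto 0$ together with Theorem \ref{bokperiodicity}(3),(4) pins down only the constant term $\lambda(0)=1$. Your computation in the layer $\mathcal{N}^{\geq 2}/\mathcal{N}^{\geq 4}$ likewise controls $uv$ only modulo higher Nygaard filtration. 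To get $uv=\EK(z)$ on the nose one must \emph{choose} the lift: write $\lambda(z)=\varphi(b(z))/b(z)$ with $b(0)=1$ (solvable by the $z$-adically convergent product $b=\prod_{i\geq0}\varphi^i(\lambda)^{-1}$) and rescale $u\mapsto b(z)u$, $\sigma\mapsto\varphi(b(z))\sigma$, $v\mapsto\varphi(b(z))^{-1}v$; uniqueness is then the observation that $\lambda(0)=1$ together with $\varphi(\lambda)=\lambda$ forces $\lambda=1$. This normalization step is precisely the content of part (6) beyond the cited result, and it is missing from your proposal.
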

\begin{proof}
By Remark \ref{R:tc-tp-complete}, we see that all the statements except (6) follow immediately from \cite[Proposition 11.10]{bms2}. In fact,  the argument given in \emph{loc. cit.} is enough to show the following statement:
\begin{enumerate}
\item[(6)'] For any $\tilde{u}\in \tc^-_{2}(\OK/\Sz)$ lifting the $u$ given in (1), there exist $\tilde{v}\in \tc^-_{-2}(\OK/\Sz)$ and $\tilde{\sigma}\in \tp_2(\OK/\Sz)$
such that 
\[
\can(\tilde{v})=\tilde{\sigma}^{-1}, \quad \varphi(\tilde{u}) = \tilde{\sigma},  \quad \tp_*(\OK/\Sz) = \tp_0(\OK/\Sz)[\tilde{\sigma}^{\pm1}]
\]
and 
\[
\tc^{-}_*(\OK/\Sz) = \tp_0(\OK/\Sz)[\tilde{u}, \tilde{v}]/(\tilde{u}\tilde{v}- \lambda(z)\EK(z))
\]
for some $\lambda(z)\in W(k)[[z]]^\times$.
\end{enumerate}
In the following, we give a proof of (6) based on (6)'. 
By Lemma \ref{l3}, the commutative diagram
$$
\xymatrix{
\Sz \ar[r]^{z\mapsto 0} \ar[d]_{z\mapsto \pK} & \Sk\ar[d] \\
\OK \ar[r]& \k
}$$
induces a map of $E_\infty$-algebras in cyclotomic spectra $\thh(\OK/\Sz)\rightarrow \thh(\k)$. By (3) and (4),  the induced map 
\[
\tc^-_0(\OK/\Sz)\rightarrow \tc^-_0(\k)
\]
is the $W(k)$-algebra morphism $W(\k)[[z]]\xrightarrow{z\mapsto 0} W(\k)$, which is surjective. Moreover, by (6)', $\tc^-_2(\OK/\Sz)$ is free of rank 1 over $\tc^-_0(\OK/\Sz)$. Hence 
\[
\tc^-_2(\OK/\Sz)\rightarrow \tc^-_2(\k)
\]
is surjective as well. 

Now take a lift $\tilde{u}\in \tc^-_2(\OK/\Sz)$ of the image of $u_{\mathbb{F}_p}$, which is given by Theorem
\ref{bokperiodicity}, in $\tc^-_2(\k)$.
Using (6)', we have $\tilde{v}, \tilde{\sigma}$ such that
\[
\can(\tilde{v})=\tilde{\sigma}^{-1}, \quad \varphi(\tilde{u}) = \tilde{\sigma},  \quad \tp_*(\OK/\Sz) =\tp_0(\OK/\Sz)[\tilde{\sigma}^{\pm1}]
\]
and 
\[
\tc^{-}_*(\OK/\Sz) = \tp_0(\OK/\Sz)[\tilde{u}, \tilde{v}]/(\tilde{u}\tilde{v}- \lambda(z)\EK(z))
\]
for some $\lambda(z)\in W(k)[[z]]^\times$. Now by the constructions of $E_K(z)$ and $\tilde{u}$, and applying Theorem \ref{bokperiodicity}, we deduce that 
$\lambda(z)$ has constant term $1$. It follows that there exists $b(z)\in W(k)[[z]]$ with constant term $1$ such that $\lambda(z) = {\varphi(b(z))\over b(z)}$. Then we set $u = b(z)u'$, $\sigma=\varphi(b(z))\sigma'$, $v=\varphi(b(z))^{-1}v'$. 

For the uniqueness, suppose $(u', v', \sigma')$ is another choice, it follows that there exists $\lambda(z)\in W(k)[[z]]^\times$ such that $u'=\lambda(z)u, v'=\lambda(z)^{-1}v, \sigma'=\lambda(z)\sigma$. Thus the conditions that $u'$ lifts $u_{\mathbb{F}_p}$ and $\varphi(u')=\sigma'$ imply that $\lambda(0)=1$ and $\varphi(\lambda)=\lambda$, yielding $\lambda=1$. The rest of (6) follows immediately.

\end{proof}

\begin{conv}\label{R:choice}
Henceforth we fix $u, v, \sigma$ as in Theorem \ref{T:rtp}(6). 

\end{conv}


\section{Structure of $\tp_0(\OK/\Szz)$} \label{s3}
This section is devoted to determining the structure of $\tp_0(\OK/\Szz)$. Here we regard $\OK$ as an $\Szz$-algebra via the map $\Szz\xrightarrow{z_0, z_1\mapsto \pK} \OK$.

\begin{prop}
The topological Hochschild homology $\thh(\OK/\Szz)$ has a natural structure of $E_\infty$-algebra in cyclotomic spectra.
\end{prop}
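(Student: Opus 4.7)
The plan is to mirror the proof of Proposition \ref{p4}(2), producing an $E_\infty$-cyclotomic structure on $\Szz$ (with trivial $\mathbb{T}$-action) together with a lift of its augmentation from $\thh(\Szz)$, and then invoking Lemma \ref{l3}(1) with $E=\Szz$ and $A=\OK$.

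First I would exhibit the underlying decomposition
\[
\Szz \simeq \Sk \otimes_{\mathbb{S}} \mathbb{S}[z_0] \otimes_{\mathbb{S}} \mathbb{S}[z_1]
\]
in the $\infty$-category of $E_\infty$-algebras in spectra. Each of the three tensor factors carries a canonical $E_\infty$-cyclotomic structure with trivial $\mathbb{T}$-action: the structure on $\Sk$ was fixed in the proof of Proposition \ref{p4}(1) as the $p$-completion of the cyclotomic structure on $\thh(\Sk)$, while the structures on $\mathbb{S}[z_i]$ come from \cite[Proposition 11.3]{bms2}, with Frobenius sending $z_i \mapsto z_i^p$. Since the forgetful functor from $E_\infty$-algebras in cyclotomic spectra to $E_\infty$-algebras in spectra is symmetric monoidal and preserves the property of having trivial $\mathbb{T}$-action (as the tensor product of trivial $\mathbb{T}$-actions is trivial), this tensor product can be performed in the $\infty$-category of $E_\infty$-algebras in cyclotomic spectra, endowing $\Szz$ with the desired cyclotomic structure.

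Next I would assemble a lift of the augmentation $\thh(\Szz) \to \Szz$ to a map of $E_\infty$-algebras in cyclotomic spectra. Such lifts are available factor by factor: for $\Sk$ it arises from the identification of $\Sk$ with the $p$-completion of $\thh(\Sk)$ (see the proof of Proposition \ref{p4}(1)), and for $\mathbb{S}[z_i]$ it is supplied by \cite[Proposition 11.3]{bms2}. Because $\thh$ is symmetric monoidal on $E_\infty$-algebras in cyclotomic spectra, the tensor product of these three lifts yields the required lift $\thh(\Szz) \to \Szz$ of $E_\infty$-cyclotomic algebras.

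With both inputs in place, Lemma \ref{l3}(1) applied to $E=\Szz$ and $A=\OK$ produces a functorial $E_\infty$-cyclotomic lift of $\thh(-/\Szz)$, and in particular the desired structure on $\thh(\OK/\Szz)$. The argument is essentially formal once Proposition \ref{p4} and \cite[Proposition 11.3]{bms2} are granted, so there is no serious obstacle; the only point requiring attention is the compatibility of taking tensor products with the triviality of the $\mathbb{T}$-action, which follows directly from the symmetric monoidal structure. As a byproduct, Lemma \ref{l3}(2) applied to the evident square relating $\Sk \to \Sz$ and $\Sz \to \Szz$ (sending $z$ to $z_0$, say) shows that the natural maps $\thh(\OK/\Sk) \to \thh(\OK/\Sz) \to \thh(\OK/\Szz)$ are maps of $E_\infty$-algebras in cyclotomic spectra, which will be needed for the subsequent descent arguments.
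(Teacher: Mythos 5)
Your proof is correct, but it takes a different route from the paper's. The paper does not touch the base $\Szz$ at all: it writes
\[
\thh(\OK/\Szz)\simeq \thh(\OK/\mathbb{S}_{W(\k)}[z_0])\otimes_{\thh(\OK/\Sk)}\thh(\OK/\mathbb{S}_{W(\k)}[z_1])
\]
via the multiplicative property (\ref{E:rthh-mult}) and simply observes that both tensor factors and the base of the relative tensor product already carry $E_\infty$-cyclotomic structures by Proposition \ref{p4}, so the symmetric monoidal structure on cyclotomic spectra does the rest. You instead run the ``base-first'' version: you equip $\Szz\simeq\Sk\otimes_{\mathbb{S}}\mathbb{S}[z_0]\otimes_{\mathbb{S}}\mathbb{S}[z_1]$ itself with a cyclotomic structure with trivial $\mathbb{T}$-action, lift the augmentation $\thh(\Szz)\to\Szz$ factor by factor, and then invoke Lemma \ref{l3}(1) with $E=\Szz$. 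Both arguments are sound and produce equivalent structures (Lemma \ref{l3}(1) realizes $\thh(\OK/\Szz)$ as the pushout $\thh(\OK)\otimes_{\thh(\Szz)}\Szz$ in cyclotomic spectra, which agrees with the paper's relative tensor product). What your approach buys is functoriality in the $\Szz$-algebra and a uniform treatment that extends verbatim to $\mathbb{S}_{W(\k)}[z_0,\dots,z_n]$ for all $n$ (which the paper needs later, cf.\ Remark \ref{R:tate-n-variable}); what the paper's approach buys is brevity, since Proposition \ref{p4}(2) is already in hand and no new structure on the base needs to be constructed. Your closing observation that Lemma \ref{l3}(2) makes $\thh(\OK/\Sk)\to\thh(\OK/\Sz)\to\thh(\OK/\Szz)$ maps of $E_\infty$-algebras in cyclotomic spectra is correct and is indeed what the descent construction in \S\ref{s5} relies on.
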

\begin{proof}
Recall that in \cite{NS} the $\infty$-category of cyclotomic spectra is promoted to a symmetric monoidal $\infty$-category.
By the multiplicative property of $\thh$ (\ref{E:rthh-mult}), we have
\[
\thh(\OK/\Szz) \simeq \thh(\OK/\mathbb{S}_{W(\k)}[z_0])\otimes_{\thh(\OK/\Sk)}\thh(\OK/\mathbb{S}_{W(\k)}[z_1]).
\]
The cyclotomic structures on $\thh(\OK/\mathbb{S}_{W(\k)}[z_i]), i=0, 1$, and the symmetric monoidal structure on the $\infty$-category of $E_\infty$-algebras in cyclotomic spectra give rise to the $E_\infty$-cyclotomic structure on $\thh(\OK/\Szz)$. 
\end{proof}

For $\heartsuit\in\{\thh, \tc, \tc^-, \tp\}$, the left unit $\eta_L$ and right unit $\eta_R$ are the maps 
\[
\heartsuit(\OK/\Sz)\to \heartsuit(\OK/\Szz)
\] 
induced by $z\mapsto z_0$ and $z\mapsto z_1$ respectively. 
For $?\in\{z, u, v, \sigma\}$, we denote by $?_0$ and $?_1$ the images of $?$ under the left and right units respectively. In the following, we regard $\thh_*(\OK/\Szz)$ as an $\OK[u_0]$-module via $\eta_L$. 

In the following, for a commutative ring $R$ and an ideal $I\subset R$, denote by $D_R(I)$ the divided power envelope of $I$ in $R$.  We equip it with the \emph{Nygaard filtration} $\mathcal{N}^{\geq \bullet}$ where $\mathcal{N}^{\geq 2j}D_R(I)=\mathcal{N}^{\geq 2j-1}D_R(I)$ is the $R$-submodule generated by $I^{[l]}$ for all $l\geq j$. For an $R$-module $M$, denote by $\Gamma_R(M)$ the divided power envelope of $\mathrm{Sym}_R(M)$ with respect to the ideal generated by $M\subset \mathrm{Sym}_R(M)$. Recall that if $M$ is a free $R$-module with a basis  $x_1,\dots, x_n$,  then $\Gamma_R(M)$ is just the divided power algebra $R\langle x_1,...,x_n\rangle$. 

To proceed, we need a variant of the classical Hochschild-Kostant-Rosenberg theorem, whose proof is given in the appendix. 
\begin{thm}\label{T:HKR}(Theorem \ref{T:AHKR})
Let $R$ be a commutative ring over $\mathbb{Z}_p$, and let $I$ be a locally complete intersection ideal of $R$. Let $A=R/I$. Suppose $R$ is $I$-separated and $A$ is $p$-torsion free. Then  as filtered rings, the periodic cyclic homology $\mathrm{HP}_0(A/R)$ is canonically isomorphic to the completion of $D_R(I)$ with respect to the Nygaard filtration. Moreover, the Tate spectral sequence for $\mathrm{HP}_0(A/R)$ collapses at the $E^2$-term. Consequently,  
there is a canonical isomorphism of graded rings 
\[
\mathrm{HH}_*(A/R) \cong \Gamma_A( I/I^2).
\]
\end{thm}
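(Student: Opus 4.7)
The plan is to leverage the standard locally complete intersection identifications. Since $I$ is lci, the cotangent complex satisfies $L_{A/R} \simeq (I/I^2)[1]$, and $M := I/I^2$ is a finitely generated projective $A$-module. All claims are Zariski-local on $\mathrm{Spec}(R)$, so I would first reduce to the case where $I = (f_1,\dots,f_n)$ is generated by a Koszul-regular sequence and $M$ is free with basis $\bar f_1,\dots,\bar f_n$. A convenient concrete model is the Koszul / simplicial polynomial resolution $P_\bullet \to A$ over $R$; one can compute $\mathrm{HH}(A/R)$ and $\mathrm{HP}(A/R)$ as the geometric realizations of $\mathrm{HH}(P_\bullet/R)$ and $\mathrm{HP}(P_\bullet/R)$ respectively, to which the classical smooth HKR theorem applies levelwise.

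For the graded-ring statement, I would invoke derived HKR: $\mathrm{HH}(A/R) \simeq \mathrm{LSym}_A(L_{A/R}[1]) \simeq \mathrm{LSym}_A(M[2])$. Because $M$ is a projective $A$-module placed in even positive cohomological degree, derived symmetric powers on $M[2]$ identify with divided powers on $M$, concentrated in the expected degrees: $\mathrm{LSym}_A^n(M[2]) \simeq (\Gamma^n_A M)[2n]$. Summing over $n$ gives $\mathrm{HH}_*(A/R) \cong \Gamma_A(I/I^2)$, with $I/I^2$ placed in degree $2$. The $p$-torsion-freeness of $A$ ensures that divided powers behave classically and that no extra derived corrections intervene.

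For $\mathrm{HP}_0(A/R)$, the strategy is to use the known identification of $\mathrm{HP}$ with the $2$-periodic Hodge-completed derived de Rham cohomology $\widehat{L\Omega}_{A/R}$, together with the fact that the Tate/Hodge filtration on $\mathrm{HP}$ matches the Hodge filtration on $L\Omega$. In the lci setting, Illusie's theorem identifies $\widehat{L\Omega}_{A/R}$ with the Nygaard-completed divided power envelope $D_R(I)^\wedge_{\mathcal N}$, matching the Hodge filtration with the Nygaard filtration. Combining these yields the claimed isomorphism of filtered rings $\mathrm{HP}_0(A/R) \cong D_R(I)^\wedge_{\mathcal N}$. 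For Tate degeneration, I would compare associated gradeds: the associated graded of the Hodge filtration on $\mathrm{HP}_0$ recovers $\mathrm{HH}_*$, and the associated graded of $D_R(I)^\wedge_{\mathcal N}$ is precisely $\bigoplus_n \Gamma^n_A(I/I^2)$. These match the size of the $E^2$-page of the Tate spectral sequence (from step 2), forcing $E^2 = E^\infty$.

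The hard part, I expect, is matching the various filtrations with full precision, rather than just producing abstract equivalences of underlying objects. Concretely: (a) identifying the $S^1$-equivariant / Tate filtration on $\mathrm{HP}$ with the HKR (Hodge) filtration coming from $L\Omega$, and verifying this is compatible with the divided power / Nygaard structure on the algebra side; (b) handling the Nygaard completion carefully so that the filtered equivalence is a genuine equivalence of complete filtered rings, not just of associated gradeds; and (c) using the $p$-torsion-freeness in the right places, since it is what lets one work with classical divided power envelopes rather than their derived analogues. Once these compatibilities are in place, the collapse of the Tate spectral sequence and the identification of $\mathrm{HH}_*$ follow from the matching of associated gradeds.
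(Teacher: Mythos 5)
Your proposal is essentially the alternative route that the authors themselves flag in the remark immediately following the statement of Theorem \ref{T:HKR} in \S3: combine the motivic filtration on $\mathrm{HP}$ (Antieau, BMS) with the lci computation of Hodge-completed derived de Rham cohomology as a divided power envelope (Bhatt's Theorem 3.27 --- the result you attribute to Illusie). The paper's own proof in Appendix A is deliberately much more elementary: for $I=(a)$ it takes the explicit Koszul DG-model $D=R[x]/(x^2)$ with $d(x)=a$, computes the normalized Hochschild complex by hand (finding $b(1\otimes x^{\otimes n})=0$, so $\mathrm{HH}_{2n}(A/R)\cong A$ on the class of $1\otimes x^{\otimes n}$), runs Connes' $B$-operator in the $(b,B)$-bicomplex to show $a^n$ is homologous to $n!\otimes x^{\otimes n}$, and thereby constructs the map $D_R(I)\to\mathrm{HP}_0(A/R)$ directly, with $p$-torsion-freeness giving uniqueness; the regular-sequence case follows by multiplicativity and the general case by Zariski descent (the reverse order from your reduction, but equivalent). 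What the paper's approach buys is a self-contained, filtration-explicit argument in which the divided powers are realized by concrete cycles, sidestepping exactly the compatibility checks you correctly identify as the hard part of your route --- matching the Tate filtration with the HKR/Hodge filtration and the Nygaard/PD filtration, and controlling derived versus classical divided power envelopes. What your approach buys, if one grants those cited comparisons in the required generality and verifies the filtration compatibilities, is brevity and a conceptual explanation for why the answer is $D_R(I)^{\wedge}_{\mathcal{N}}$. So your outline is sound, but as written it defers the substantive work to the compatibilities in your points (a)--(c), which is precisely the content the appendix supplies by direct computation.
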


\begin{rem}
One may obtain a similar result by combing the motivic filtration for $\mathrm{HP}$ developed in \cite[\S5]{bms2} (see also \cite{An}) and the computation about the derived de Rham cohomology for lci maps (\cite[Theorem 3.27]{Bha}).
\end{rem}

Let $I$ be the kernel of the $W(k)$-algebra map $W(\k)[z_0,z_1]\xrightarrow{z_0, z_1\mapsto \varpi}\OK$, and let $t_{z_0-z_1}$ denote the image of $z_0-z_1$ in $\thh_2(\OK/\Szz)$ under the isomorphism
\[
I/I^2 \cong \mathrm{HH}_2(\OK/W(\k)[z_0,z_1])=\thh_2(\OK/\Szz).
\]
given by Theorem \ref{T:HKR}. 
\begin{lem}\label{L:grad-thhzz}
The graded algebra associated to the filtration on $\thh_*(\OK/\Szz)$ defined by powers of $u_0$ is isomorphic to 
$\OK[u_0]\otimes_{\OK}\OK\langle t_{z_0-z_1}\rangle$.
\end{lem}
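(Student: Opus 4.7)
The strategy is to compute the quotient $\thh_*(\OK/\Szz)/u_0\thh_*$ directly via the multiplicative property of $\thh$ and the HKR Theorem~\ref{T:HKR}, and then deduce the full associated graded by a non-zero-divisor argument for $u_0$.

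First, by Theorem~\ref{T:rtp}(1), $\thh_*(\OK/\Sz) = \OK[u]$, so the cofiber $\thh(\OK/\Sz)/u$ identifies with $\tau_{\leq 0}\thh(\OK/\Sz) = \OK$. Since the left unit $\eta_L$ makes $\thh(\OK/\Szz)$ a $\thh(\OK/\Sz)$-algebra with $u$ acting as $u_0$, we have
\[
\thh(\OK/\Szz)/u_0 \;\simeq\; \thh(\OK/\Szz) \otimes_{\thh(\OK/\Sz)} \OK.
\]
Applying the multiplicative property (\ref{E:rthh-mult}) with $(A_1, A_2, A_3) = (\OK, \OK, \OK)$ and $(E_1, E_2, E_3) = (\Szz, \Sz, \OK)$, where $E_3 = \OK$ is an $\Sz$-algebra via $z \mapsto \pK$, yields $\thh(\OK/\Szz)/u_0 \simeq \thh(\OK/\Szz \otimes_\Sz \OK)$. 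Writing $\Szz \simeq \Sz \otimes_\Sk \Sk[z_1]$ and using $\OK \otimes_\Sk \Sk[z_1] \simeq \OK \otimes_{\mathbb{S}} \mathbb{S}[z_1] \simeq H(\OK[z_1])$ (because $\OK$ is a discrete Eilenberg--MacLane spectrum) identifies the target with $\thh(\OK/H(\OK[z_1]))$.

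Second, since $H(\OK[z_1])$ is discrete, $\thh(\OK/H(\OK[z_1]))$ reduces to classical Hochschild homology $\mathrm{HH}(\OK/\OK[z_1])$, where $\OK$ is an $\OK[z_1]$-algebra via $z_1 \mapsto \pK$. Applying Theorem~\ref{T:HKR} to $R = \OK[z_1]$, $A = \OK$, and $I = (z_1 - \pK)$ (a principal ideal generated by the regular element $z_1 - \pK$, so $I/I^2 = \OK \cdot t_{z_1 - \pK}$) gives
\[
\thh_*(\OK/\Szz)/u_0\thh_* \;\cong\; \mathrm{HH}_*(\OK/\OK[z_1]) \;\cong\; \OK\langle t_{z_1-\pK}\rangle \;\cong\; \OK\langle t_{z_0-z_1}\rangle,
\]
where the last identification uses compatibility of HKR classes under the base change $W(\k)[z_0, z_1] \to \OK[z_1]$ sending $z_0 \mapsto \pK$, so that $t_{z_0-z_1} \mapsto -t_{z_1-\pK}$ up to sign.

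Finally, for each $m \geq 0$, fix a lift $\tilde{t}_{z_0-z_1}^{[m]} \in \thh_{2m}(\OK/\Szz)$ of the basis element $t_{z_0-z_1}^{[m]}$ of the quotient computed above. Granting that $u_0$ is a non-zero-divisor on $\thh_*(\OK/\Szz)$, these lifts form a free $\OK[u_0]$-module basis of $\thh_*(\OK/\Szz)$ by a degree-wise argument, yielding
\[
\mathrm{gr}_{u_0}\thh_*(\OK/\Szz) \;\cong\; \OK[u_0] \otimes_{\OK} \OK\langle t_{z_0-z_1}\rangle
\]
as graded rings. The main obstacle is verifying the non-zero-divisor property of $u_0$: this should follow from the explicit $\OK[u_0]$-module structure inherited from $\eta_L$ together with a flatness argument showing that the candidate lifts $\tilde{t}_{z_0-z_1}^{[m]}$ freely generate $\thh_*(\OK/\Szz)$ over $\OK[u_0]$, so that in particular there is no unexpected $u_0$-torsion.
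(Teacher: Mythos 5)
Your reduction of $\thh(\OK/\Szz)/u_0$ to $\thh(\OK/\OK[z_1])$ via the multiplicative property, and the identification of the latter's homotopy with $\OK\langle t_{z_0-z_1}\rangle$ via Theorem \ref{T:HKR}, is exactly the paper's route. The gap is in your final step: you explicitly \emph{grant} that $u_0$ is a non-zero-divisor, and the fix you sketch is circular --- you propose to establish that $u_0$ acts injectively by showing that the lifts $\tilde{t}^{[m]}_{z_0-z_1}$ freely generate $\thh_*(\OK/\Szz)$ over $\OK[u_0]$, but the degree-wise argument that they freely generate already presupposes that multiplication by $u_0$ has no kernel (without that, the extension
\[
\thh_{2m-2}(\OK/\Szz)\xrightarrow{\;u_0\;}\thh_{2m}(\OK/\Szz)\longrightarrow \OK\, t^{[m]}_{z_0-z_1}\longrightarrow 0
\]
only gives a surjection onto the associated graded, not an isomorphism). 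So as written the proof does not close.

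The missing observation is that the injectivity of $u_0$ is forced by the \emph{evenness} of what you have already computed. The cofiber sequence $\Sigma^2\thh(\OK/\Szz)\xrightarrow{u_0}\thh(\OK/\Szz)\to \thh(\OK/\Szz)/u_0$ gives a long exact sequence in homotopy in which the boundary map out of $\pi_{2m-1}(\thh(\OK/\Szz)/u_0)$ hits the kernel of $u_0\colon \thh_{2m-2}\to\thh_{2m}$; since $\pi_*(\thh(\OK/\Szz)/u_0)\cong \OK\langle t_{z_0-z_1}\rangle$ is concentrated in even degrees, these odd groups vanish and $u_0$ is injective. (Equivalently, as the paper phrases it, the $u_0$-Bockstein spectral sequence collapses because everything is in even degree.) With that one line inserted, your lifting argument goes through and the associated graded is $\OK[u_0]\otimes_{\OK}\OK\langle t_{z_0-z_1}\rangle$ as claimed.
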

\begin{proof}
By Theorem \ref{T:rtp}(1), we have
\begin{eqnarray}\label{E:mod-u}
\begin{split}
\thh(\OK/\Szz)/(u_0)&\simeq \thh(\OK/\Szz)\otimes_{\thh(\OK/\mathbb{S}_{W(k)}[z_0])}\thh(\OK/\OK)\\
&\simeq \thh(\OK/\OK[z_1]). 
\end{split}
\end{eqnarray}
Since $\thh_*(\OK/\OK[z]) =\mathrm{HH}_*(\OK/\OK[z]) =  \OK\langle t \rangle$ for any generator $t\in \mathrm{HH}_2(\OK/\OK[z])$, 
we deduce that the $u_0$-Bockstein spectral sequence collapses since everything is concentrated in even degrees. Hence the associated graded algebra is isomorphic to 
$\OK[u_0]\otimes_{\OK}\OK\langle t\rangle$. Note that under the isomorphism (\ref{E:mod-u}), $t_{z_0-z_1}$ maps to a generator of $\thh_2(\OK/\OK[z])$. This yields the desired result. 
\end{proof}

The following result follows immediately. 

\begin{cor}\label{C:grad-thhzz}
The graded ring $\thh_*(\OK/\Szz)$ is a $p$-torsion free integral domain. 
\end{cor}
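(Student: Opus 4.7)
The plan is to deduce the corollary from Lemma \ref{L:grad-thhzz} by a standard filtration argument, after verifying that the displayed associated graded ring is itself a $p$-torsion-free integral domain.

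First I would analyze $\OK[u_0]\otimes_{\OK}\OK\langle t_{z_0-z_1}\rangle$. As an $\OK$-module, the divided power algebra $\OK\langle t_{z_0-z_1}\rangle$ is free with basis $t_{z_0-z_1}^{[k]}$ for $k\geq 0$, hence $p$-torsion-free. Tensoring with $K=\OK[1/p]$ gives $K\langle t_{z_0-z_1}\rangle\cong K[t_{z_0-z_1}]$ under $t_{z_0-z_1}^{[k]}\mapsto t_{z_0-z_1}^k/k!$, so the inclusion
\[
\OK\langle t_{z_0-z_1}\rangle\hookrightarrow K[t_{z_0-z_1}]
\]
exhibits $\OK\langle t_{z_0-z_1}\rangle$ as an integral domain. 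Adjoining a polynomial variable $u_0$ preserves both properties, so the graded ring in Lemma \ref{L:grad-thhzz} is a $p$-torsion-free integral domain.

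Next I would verify that the $u_0$-adic filtration $F^i\thh_*(\OK/\Szz) = u_0^i\thh_*(\OK/\Szz)$ is separated. Since $|u_0|=2$ and $\thh_n(\OK/\Szz)=0$ for $n<0$, one has $F^i\cap\thh_n(\OK/\Szz) = u_0^i\thh_{n-2i}(\OK/\Szz)=0$ for $i>n/2$; thus in each internal degree the filtration is eventually zero, hence $\bigcap_i F^i = 0$.

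Finally I would invoke the standard fact that if a filtered ring has separated filtration and its associated graded is a $p$-torsion-free integral domain, then so is the ring itself: given nonzero $x,y$, one picks the unique $i,j$ with $x\in F^i\setminus F^{i+1}$ and $y\in F^j\setminus F^{j+1}$, notes that the images $\bar x,\bar y$ in the graded ring are nonzero, and concludes $\overline{xy}=\bar x\bar y\neq 0$ in $\mathrm{gr}^{i+j}$, so $xy\neq 0$; the same reasoning with $y=p$ gives $p$-torsion-freeness. I do not anticipate a serious obstacle — the main point is simply arranging Lemma \ref{L:grad-thhzz} in a form where this filtration argument applies, and the only thing that really needs care is the separatedness, which is immediate from connectivity.
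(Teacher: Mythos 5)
Your proof is correct and is exactly the argument the paper intends: the paper simply states that the corollary ``follows immediately'' from Lemma \ref{L:grad-thhzz}, and the implicit reasoning is the filtration argument you spell out (the associated graded $\OK[u_0]\otimes_{\OK}\OK\langle t_{z_0-z_1}\rangle$ is a $p$-torsion-free domain, and the $u_0$-filtration is separated in each degree by connectivity). Your write-up just makes explicit the details the paper leaves to the reader.
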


\begin{cor}\label{C:degen-zz}
Both the Tate spectral sequence for $\tp_*(\OK/\Szz)$ and the homotopy fixed point spectral sequence for $\tc^{-}_*(\OK/\Szz)$ collapse at the $E^2$-term. Consequently, 
both $\tc^{-}_*(\OK/\Szz)$ and $\tp_*(\OK/\Szz)$ are concentrated in even degrees, and the canonical map 
\[
\can:  \tc^{-}_{j}(\OK/\Szz)\rightarrow\tp_j(\OK/\Szz)
\] 
induces an isomorphism $\tc^{-}_{j}(\OK/\Szz)\cong \mathcal{N}^{\geq j}\tp_j(\OK/\Szz)$ for all $j\in\mathbb{Z}$. In particular, 
\[
\can: \tc^{-}_{j}(\OK/\Szz)\to \tp_{j}(\OK/\Szz)
\] 
is an isomorphism for $j\leq0$.
\end{cor}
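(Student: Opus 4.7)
The plan is to reduce everything to a parity argument once evenness of $\thh_*(\OK/\Szz)$ is in hand. First, by Lemma \ref{L:grad-thhzz}, the associated graded of the $u_0$-adic filtration on $\thh_*(\OK/\Szz)$ is $\OK[u_0]\otimes_\OK \OK\langle t_{z_0-z_1}\rangle$, which is concentrated in even degrees; assuming the filtration is separated (which is immediate from the proof of the lemma via the $u_0$-Bockstein spectral sequence), this forces $\thh_*(\OK/\Szz)$ itself to vanish in odd total degrees.

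Consequently, both $E^2$-pages are supported in bidegrees $(i,j)$ with $i$ and $j$ both even: for the Tate spectral sequence, $E^2_{*,*}=\thh_*(\OK/\Szz)[\sigma^{\pm 1}]$ with $|\sigma|=(2,0)$, and for the homotopy fixed point spectral sequence, $E^2_{*,*}=\thh_*(\OK/\Szz)[v]$ with $|v|=(-2,0)$. Since $d^r:E^r_{i,j}\to E^r_{i-r,j+r-1}$ shifts the bidegree by $(-r,r-1)$, a potentially nonzero $d^r$ would simultaneously require $r$ to be even (to preserve the parity of $i$) and $r-1$ to be even (to preserve the parity of $j$), which is impossible. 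Hence every differential $d^r$ for $r\geq 2$ vanishes, so both spectral sequences collapse at $E^2$, and both $\tp_*(\OK/\Szz)$ and $\tc^-_*(\OK/\Szz)$ are concentrated in even total degrees.

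Finally, I would extract the statement about $\can$ by comparing the two collapsed spectral sequences. The homotopy fixed point spectral sequence is the upper-half-plane portion of the Tate spectral sequence under the identification $\can(v)=\sigma^{-1}$, and on $E^\infty$-pages the map $\can$ is precisely the inclusion of the $v$-polynomial piece into the Laurent $\sigma^{\pm 1}$-piece. By the definition of the Nygaard filtration as the row filtration $E^\infty_{*,\geq j}$ on the Tate abutment, this inclusion identifies $\tc^-_j(\OK/\Szz)$ with $\mathcal{N}^{\geq j}\tp_j(\OK/\Szz)$. The connectivity of $\thh(\OK/\Szz)$ forces $E^\infty_{i',j'}=0$ for $j'<0$, so for $j\leq 0$ every contribution to $\tp_j(\OK/\Szz)$ automatically sits in a row $j'\geq 0\geq j$; thus $\mathcal{N}^{\geq j}\tp_j(\OK/\Szz)=\tp_j(\OK/\Szz)$ and $\can$ is an isomorphism in these degrees. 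The only real subtlety I anticipate is the careful bookkeeping needed to match $\can$ with the half-plane inclusion at the level of filtered abutments, but this follows the same template already used in Theorem \ref{T:rtp}(4), whose proof can be mimicked verbatim once the evenness and collapse above are established.
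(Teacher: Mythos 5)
Your proof is correct and follows essentially the same route as the paper: the paper likewise deduces evenness of $\thh_*(\OK/\Szz)$ from Lemma \ref{L:grad-thhzz}, concludes degeneration of both spectral sequences by the same parity argument, and leaves the identification of $\can$ with the inclusion $\tc^-_j\cong\mathcal{N}^{\geq j}\tp_j$ (exactly as in Theorem \ref{T:rtp}(4)) as immediate. The only cosmetic slip is calling the homotopy fixed point spectral sequence the ``upper-half-plane'' part of the Tate spectral sequence --- in the paper's indexing, with $|v|=(-2,0)$, it is the half with $i\leq 0$ --- but your subsequent description as the $v$-polynomial piece inside the $\sigma^{\pm1}$-Laurent piece is the correct one.
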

\begin{proof}
By Lemma \ref{L:grad-thhzz}, $\thh_*(\OK/\Szz)$ is concentrated in even degrees. It follows that both the Tate spectral sequence and the homotopy fixed point spectral sequence degenerate at the $E^2$-term. The rest of the corollary follows immediately. 
\end{proof}

\begin{rem}\label{R:tate-n-variable}
In general, for $n\geq0$, we may regard $\OK$ as an $\mathbb{S}_{W(\k)}[z_0, \dots, z_n]$-module by sending all $z_i$ to $\pK$. Using the argument of Lemma \ref{L:grad-thhzz} inductively, one easily shows that  $\thh(\OK/\mathbb{S}_{W(\k)}[z_0, \dots, z_n])$ is concentrated in even degrees.  Consequently, Corollary \ref{C:degen-zz} generalizes to this case. 
\end{rem}

\begin{lem}\label{L:grad-tpzz}
The graded algebra associated to the Nygaard filtration of $\tp_0(\OK/\Szz)$ is isomorphic to $\thh_*(\OK/\Szz)$. 
\end{lem}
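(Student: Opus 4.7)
The plan is to read off the result directly from the collapse of the Tate spectral sequence established in Corollary \ref{C:degen-zz}. Recall that by definition the Nygaard filtration on $\tp_0(\OK/\Szz)$ is the abutment filtration (indexed by the $j$-coordinate) of the Tate spectral sequence
\[
E^2_{i,j} = \thh_*(\OK/\Szz)[\sigma^{\pm 1}] \Rightarrow \tp_*(\OK/\Szz),
\]
where $\thh_j$ lives in bidegree $(0,j)$ and $\sigma$ in bidegree $(2,0)$. By Corollary \ref{C:degen-zz} this spectral sequence collapses at $E^2$, so $E^\infty \cong E^2$ as bigraded algebras, and the associated graded of $\tp_*(\OK/\Szz)$ with respect to the abutment filtration is $\thh_*(\OK/\Szz)[\sigma^{\pm 1}]$.

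Next I would restrict to total degree $0$. A term of bidegree $(i,j)$ contributes to $\tp_0$ precisely when $i+j=0$, and since $\thh_*(\OK/\Szz)$ lives in non-negative degrees (Lemma \ref{L:grad-thhzz}), only the terms $\sigma^{-m} \cdot x$ with $x \in \thh_{2m}(\OK/\Szz)$, $m \geq 0$, appear. Moreover, Corollary \ref{C:degen-zz} shows $\thh_*(\OK/\Szz)$ is concentrated in even degrees, so the Nygaard filtration on $\tp_0(\OK/\Szz)$ jumps only at even indices, and the projection $p_{2m}$ of (\ref{E:proj}) furnishes an isomorphism
\[
\gr^{\mathcal{N}}_{2m}\tp_0(\OK/\Szz) \;\cong\; \thh_{2m}(\OK/\Szz)
\]
sending the class of $\sigma^{-m}x$ to $x$. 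Summing over $m \geq 0$ yields an isomorphism of graded abelian groups $\gr^{\mathcal{N}}\tp_0(\OK/\Szz) \cong \thh_*(\OK/\Szz)$.

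Finally, multiplicativity is immediate from the multiplicative structure on the Tate spectral sequence: the product $(\sigma^{-m}x)\cdot(\sigma^{-n}y) = \sigma^{-(m+n)}(xy)$ in $\tp_0$ corresponds under the identification above to $xy \in \thh_{2(m+n)}(\OK/\Szz)$, matching the THH multiplication. Since the Nygaard filtration is multiplicative and the Tate spectral sequence collapses, this identification respects the ring structures, giving an isomorphism of graded algebras.

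There is no substantive obstacle here, as the key inputs -- collapse of the Tate spectral sequence at $E^2$ and concentration of $\thh_*(\OK/\Szz)$ in even degrees -- are both supplied by Corollary \ref{C:degen-zz}. The only bookkeeping required is tracking the bidegree shift $\sigma^{-m}x \leftrightarrow x$ so that the Nygaard grading on the left matches the THH grading on the right.
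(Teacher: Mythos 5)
Your proof is correct and follows exactly the route the paper takes: the paper's own proof is the one-line observation that the statement follows from Corollary \ref{C:degen-zz}, and your write-up simply spells out the standard details (abutment filtration, restriction to total degree $0$, multiplicativity) of that same argument. Your explicit identification $\sigma^{-m}x \leftrightarrow x$ also correctly exhibits the dependence on the choice of $\sigma$ that the paper notes in the remark following the lemma.
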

\begin{proof}
This follows from Corollary \ref{C:degen-zz}.
\end{proof}
\begin{rem}
The isomorphism given by Lemma \ref{L:grad-tpzz} is not canonical: it depends on the choice of the generator $\sigma$. By Theorem \ref{T:rtp} and Theorem \ref{bokperiodicity}, our choice of $\sigma$ is fixed up to a unit in $\mathbb{Z}_p$ (Convention \ref{R:choice}).
\end{rem}

The following two results follow immediately from the fact that the Nygaard filtration on $\tp_0(\OK/\Szz)$ is separated  .

\begin{cor}\label{C:tpzz-nyg}
For $a\in \tp_0(\OK/\Szz)$, it has Nygaard filtration $j$ if and only if $pa$ has Nygaard filtration $j$.
\end{cor}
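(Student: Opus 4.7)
The plan is to prove the equivalence in two stages, with the reverse direction being the substantive one. The forward implication $a \in \mathcal{N}^{\geq j} \Rightarrow pa \in \mathcal{N}^{\geq j}$ is immediate since the Nygaard filtration is multiplicative, so $pa = p\cdot a$ stays in $\mathcal{N}^{\geq j}$.

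For the reverse direction, I would argue by contrapositive. Suppose $a \in \tp_0(\OK/\Szz)$ with $a \notin \mathcal{N}^{\geq j}$; the goal is to show $pa \notin \mathcal{N}^{\geq j}$. Using the hypothesis that the Nygaard filtration on $\tp_0(\OK/\Szz)$ is separated (so $\bigcap_n \mathcal{N}^{\geq n} = 0$) together with the fact that it is bounded below, there exists a largest integer $j' < j$ such that $a \in \mathcal{N}^{\geq j'}$ but $a \notin \mathcal{N}^{\geq j'+1}$. Hence the image $\bar{a}$ of $a$ in the graded quotient $\mathcal{N}^{\geq j'}/\mathcal{N}^{\geq j'+1}$ is nonzero.

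The key input is now to pass to the associated graded. By Lemma \ref{L:grad-tpzz}, the associated graded ring for the Nygaard filtration on $\tp_0(\OK/\Szz)$ is isomorphic to $\thh_*(\OK/\Szz)$, and by Corollary \ref{C:grad-thhzz} the latter is a $p$-torsion free integral domain. Consequently $p\bar{a} \neq 0$ in $\mathcal{N}^{\geq j'}/\mathcal{N}^{\geq j'+1}$, which means $pa \notin \mathcal{N}^{\geq j'+1}$. Since $j' + 1 \leq j$, we have $\mathcal{N}^{\geq j} \subseteq \mathcal{N}^{\geq j'+1}$, so $pa \notin \mathcal{N}^{\geq j}$, completing the contrapositive.

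The main (and only) obstacle is locating the right conceptual input: the whole argument reduces to $p$-torsion freeness of the associated graded, which is already in hand via Lemma \ref{L:grad-tpzz} and Corollary \ref{C:grad-thhzz}. So the proof should be essentially a one-paragraph deduction; the only subtlety to watch is that separation of the filtration is genuinely needed to produce the largest $j'$ with $a \in \mathcal{N}^{\geq j'}$ (otherwise $a$ could live in $\bigcap_n \mathcal{N}^{\geq n}$ without ever being detected in an associated graded piece).
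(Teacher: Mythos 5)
Your proof is correct and is essentially the argument the paper has in mind: the paper merely asserts that the corollary "follows immediately" from separatedness of the Nygaard filtration, the implicit second ingredient being exactly what you use, namely that the associated graded is $\thh_*(\OK/\Szz)$ (Lemma \ref{L:grad-tpzz}), which is $p$-torsion free (Corollary \ref{C:grad-thhzz}), so that multiplication by $p$ preserves the exact leading filtration degree. Your write-up just makes explicit the reduction to the associated graded that the paper leaves to the reader.
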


\begin{cor}\label{C:tpzz-integral}
The ring $\tp_0(\OK/\Szz)$ is a $p$-torsion free integral domain.
\end{cor}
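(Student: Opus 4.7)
The plan is to deduce this corollary purely formally from Lemma~\ref{L:grad-tpzz}, Corollary~\ref{C:grad-thhzz}, and the separatedness of the Nygaard filtration on $\tp_0(\OK/\Szz)$ noted immediately above, by invoking the standard principle that a separated filtered ring whose associated graded ring is a $p$-torsion free integral domain is itself a $p$-torsion free integral domain.

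First I would assemble the inputs: Lemma~\ref{L:grad-tpzz} identifies the associated graded of $(\tp_0(\OK/\Szz), \mathcal{N}^{\geq\bullet})$ with the graded ring $\thh_*(\OK/\Szz)$, and Corollary~\ref{C:grad-thhzz} says this graded ring is a $p$-torsion free integral domain. The Nygaard filtration is multiplicative (inherited from the multiplicativity of the Tate spectral sequence), so the associated graded multiplication computes leading symbols of products.

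Next I would argue the domain property. Given nonzero $a, b \in \tp_0(\OK/\Szz)$, separatedness lets me pick the largest integers $i, j$ with $a \in \mathcal{N}^{\geq i}$ and $b \in \mathcal{N}^{\geq j}$, so that the images $\bar a \in \mathrm{gr}^i$ and $\bar b \in \mathrm{gr}^j$ are nonzero. By the domain property of the associated graded, $\bar a \bar b \neq 0$ in $\mathrm{gr}^{i+j}$, which forces $ab \notin \mathcal{N}^{\geq i+j+1}$, and in particular $ab \neq 0$. For $p$-torsion freeness the quickest path is to invoke Corollary~\ref{C:tpzz-nyg} directly: if $pa = 0$ then $pa$ has Nygaard filtration $+\infty$, so by the equality of filtration degrees of $a$ and $pa$, so does $a$, and separatedness gives $a = 0$.

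There is essentially no obstacle here; this is a short formal consequence of the preceding lemma and corollaries. The only mild thing to keep track of is that the filtration is multiplicative and separated, both of which are already in hand, and that indices are read modulo the rescaling convention for $\mathcal{N}^{\geq\bullet}$, which does not affect the argument.
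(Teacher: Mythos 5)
Your proof is correct and matches the paper's intent: the paper simply asserts that the corollary "follows immediately" from separatedness of the Nygaard filtration, and your argument is the standard fleshing-out of that claim, using Lemma \ref{L:grad-tpzz} and Corollary \ref{C:grad-thhzz} to see the associated graded is a $p$-torsion free domain and then lifting both properties via leading symbols. No issues.
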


Henceforth we identify $\tc_0^{-}(\OK/\Szz)$ with $\tp_0(\OK/\Szz)$ via the canonical map, and regard the cyclotomic Frobenius $\varphi$ as a map on $\tp_0(\OK/\Szz)$.  By Proposition \ref{P:frob-can}, we first have
\begin{equation}\label{E:frob-can-lr}
\varphi(z_0)=z_0^p, \qquad
\varphi(z_1)=z_1^p.
\end{equation}

\begin{lem}\label{L:nyg-frob}
If $a\in \mathcal{N}^{\geq 2j}\tp_0(\OK/\Szz)$, then $\varphi(a)$ is divisible by $\varphi(\EK(z_0))^j$.
\end{lem}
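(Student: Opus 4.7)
The plan is to lift the problem from $\tp_0(\OK/\Szz)$ to $\tc^-_{2j}(\OK/\Szz)$, where the Frobenius interacts with the Nygaard filtration through the relation $\can(u)=\EK(z)\sigma$ of Theorem~\ref{T:rtp}(6). The first step is to transfer the relevant one-variable identities via the left unit. Since $\eta_L: \thh(\OK/\Sz) \to \thh(\OK/\Szz)$ is a map of $E_\infty$-algebras in cyclotomic spectra, it commutes with $\can$ and $\varphi$ and preserves the Nygaard filtration. Applying $\eta_L$ to Theorem~\ref{T:rtp}(6) yields
\[
\can(u_0)=\EK(z_0)\sigma_0, \qquad \varphi(u_0)=\sigma_0,
\]
and transports the fact that $\sigma$ is a unit of Nygaard filtration $0$ to the corresponding statement for $\sigma_0$ in $\tp_*(\OK/\Szz)$.

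Given $a\in\mathcal{N}^{\geq 2j}\tp_0(\OK/\Szz)$, I would combine Corollary~\ref{C:degen-zz}, which provides $\can: \tc^-_{2j}(\OK/\Szz)\xrightarrow{\sim}\mathcal{N}^{\geq 2j}\tp_{2j}(\OK/\Szz)$, with the identification $\mathcal{N}^{\geq 2j}\tp_0 \cong \mathcal{N}^{\geq 2j}\tp_{2j}$ obtained by multiplication with the unit $\sigma_0^j$, to define
\[
\tilde{a} := \can^{-1}(a\sigma_0^j) \in \tc^-_{2j}(\OK/\Szz).
\]
Computing $\can(u_0^j\cdot\can^{-1}(a)) = a\,\EK(z_0)^j\sigma_0^j = \EK(z_0)^j\can(\tilde{a})$ and invoking the injectivity of $\can$ on $\tc^-_{2j}$ yields the key identity
\[
u_0^j\cdot\can^{-1}(a) = \EK(z_0)^j\,\tilde{a} \quad\text{in } \tc^-_{2j}(\OK/\Szz).
\]
Applying the ring map $\varphi: \tc^-_* \to \tp_*$ and using $\varphi(u_0)=\sigma_0$ then gives
\[
\sigma_0^j\,\varphi(a) = \varphi(\EK(z_0))^j\,\varphi(\tilde{a})
\]
in $\tp_{2j}(\OK/\Szz)$. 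Since $\sigma_0$ is a unit, this rearranges to $\varphi(a) = \varphi(\EK(z_0))^j\cdot(\sigma_0^{-j}\varphi(\tilde{a}))$ with $\sigma_0^{-j}\varphi(\tilde{a})\in\tp_0(\OK/\Szz)$, giving the required divisibility.

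The only substantive (yet modest) point to check is the naturality along $\eta_L$: that it preserves $\can$, $\varphi$, the Nygaard filtration, and the unit status of $\sigma_0$ at filtration level $0$. All of this follows from the cyclotomic compatibility constructed in Proposition~\ref{p4} and Lemma~\ref{l3}, together with the $2$-periodicity of $\tp$. Once these naturality statements are granted, the lemma reduces to the short algebraic manipulation above inside $\tc^-_{2j}(\OK/\Szz)$.
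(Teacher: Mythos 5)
Your proof is correct and follows essentially the same route as the paper's: both use Corollary \ref{C:degen-zz} to identify $\mathcal{N}^{\geq 2j}\tp_0(\OK/\Szz)$ with $\tc^-_{2j}(\OK/\Szz)$ via $\sigma_0^{j}$ and $\can$, and then exploit the relations $\varphi(u_0)=\sigma_0$, $\can(u_0)=\EK(z_0)\sigma_0$, $u_0v_0=\EK(z_0)$ from Theorem \ref{T:rtp}(6) transported along $\eta_L$. The paper simply applies $\varphi$ to $a=\sigma_0^{-j}a_0$ directly and rewrites $\varphi(v_0)^j=\varphi(\EK(z_0))^j\sigma_0^{-j}$, which is the same manipulation you perform after first recording the identity $u_0^j\can^{-1}(a)=\EK(z_0)^j\tilde{a}$ in $\tc^-_{2j}$.
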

\begin{proof}
By Corollary \ref{C:degen-zz}, the Tate spectral sequence for $\tp_*(\OK/\Szz)$ collapses at the $E^2$-term. We may write $a=\sigma_0^{-j}a_0$ for some 
\[
a_0\in \mathcal{N}^{\geq 2j}\tp_{2j}(\OK/\Szz)=\tc^-_{2j}(\OK/\Szz).
\]
 Hence by Theorem \ref{T:rtp},
\[
\varphi(a) = \varphi(\sigma_0^{-1})^{j}\varphi(a_0) =\varphi(v_0)^j\varphi(u_0)^j\sigma_0^{-j}\varphi(a_0)=
\varphi(\EK(z_0))^j\sigma_0^{-j}\varphi(a_0),
\]
yielding the desired result.
\end{proof}

\begin{rem}\label{R:phi-z1z2}
By Theorem \ref{T:rtp}, $\EK(z)$ has Nygaard filtration $2$ in $\tp_0(\OK/\Sz)$. Hence $\EK(z_i)$ has Nygaard filtration $2$ in $\tp_0(\OK/\Szz)$. By Lemma \ref{L:nyg-frob}, $\varphi(\EK(z_i))$ is divisible by $\varphi(\EK(z_{1-i}))$ for $i=0,1$. Thus $\varphi(\EK(z_0))\varphi(\EK(z_1))^{-1}$ is a unit in  $\tp_0(\OK/\Szz)$.
\end{rem}

\begin{defn}\label{D:n-pn}
For a ring $R$ equipped  with a multiplicative decreasing filtration $\mathcal{N}^{\geq\bullet}$, we call the topology on $R$ defined by the filtration $\mathcal{N}^{\geq\bullet}$ the \emph{$\mathcal{N}$}-topology. We define the \emph{$(p,\mathcal{N})$-topology} on $R$ to be the topology in which $\{(p^j)+\mathcal{N}^{\geq j}\}_{j\geq0}$ forms a basis of open neighborhoods of $0$. 
\end{defn}

Note that in general neither the $\mathcal{N}$ nor the $(p,\mathcal{N})$-topology is adic topology.
Clearly $R$ becomes a topological ring under either the $\mathcal{N}$ or the $(p,\mathcal{N})$-topology. 

\begin{rem}
By Theorem \ref{T:rtp}, it is straightforward to see that $\tp_0(\OK/\Sz)$ is complete and separated under either the $\mathcal{N}$ or the $(p, \mathcal{N})$-topology. Moreover, the cyclotomic Frobenius is continuous with respect to the $(p,\mathcal{N})$-topology, but not the $\mathcal{N}$-topology.
\end{rem}

\begin{lem}\label{L:top-complete}
Both the $\mathcal{N}$ and $(p,\mathcal{N})$-topology on $\tp_0(\OK/\Szz)$ are complete and separated.
\end{lem}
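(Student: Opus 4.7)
Set $R = \tp_0(\OK/\Szz)$. The plan is to first establish completeness and separatedness of the $\mathcal{N}$-topology from the strong convergence of the Tate spectral sequence, and then bootstrap to the $(p,\mathcal{N})$-topology by combining this with the $p$-adic completeness of $R$.

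By Corollary \ref{C:degen-zz}, the Tate spectral sequence for $\tp_*(\OK/\Szz)$ collapses at $E^2$, yielding short exact sequences
\[
0 \to \mathcal{N}^{\geq 2l+2}R \to \mathcal{N}^{\geq 2l}R \to \thh_{2l}(\OK/\Szz) \to 0
\]
for each $l\geq 0$. By definition the Nygaard filtration is induced by a genuine tower of spectra (the truncation tower underlying the Tate spectral sequence), so $R$ fits into the Milnor exact sequence for this tower. Because $\thh_*(\OK/\Szz)$ is concentrated in even degrees (Corollary \ref{C:degen-zz}), the odd $\pi$-groups of the tower vanish, which forces the $\lim^1$-term to vanish; combined with the Mittag-Leffler condition given by the surjections $R/\mathcal{N}^{\geq 2l+2}R\twoheadrightarrow R/\mathcal{N}^{\geq 2l}R$ extracted from the short exact sequences above, this gives
\[
R \;=\; \lim_l R/\mathcal{N}^{\geq 2l}R,
\]
establishing both completeness and separatedness of the $\mathcal{N}$-topology.

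For the $(p,\mathcal{N})$-topology, first observe that each graded piece $\thh_{2l}(\OK/\Szz)$ is a finitely generated $\OK$-module: its further associated graded for the $u_0$-filtration is free of finite rank by Lemma \ref{L:grad-thhzz}. Thus every quotient $R/\mathcal{N}^{\geq 2l}R$ is an iterated extension of finitely generated $\OK$-modules, hence is itself a finitely generated $\OK$-module, and in particular is $p$-adically complete and separated. Combining this with the $\mathcal{N}$-completeness just proved and using cofinality of the diagonal in the poset of pairs $(l,k)$ yields
\[
R \;=\; \lim_l R/\mathcal{N}^{\geq 2l}R \;=\; \lim_l \lim_k R/(\mathcal{N}^{\geq 2l}R+(p^k)) \;=\; \lim_l R/(\mathcal{N}^{\geq 2l}R+(p^l)),
\]
which is exactly completeness plus separatedness of the $(p,\mathcal{N})$-topology.

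The main obstacle is the strong convergence step for the $\mathcal{N}$-topology: one must identify the Nygaard filtration on $\pi_0$ with the filtration coming from an actual tower of spectra and verify the vanishing of $\lim^1$. This is where the vanishing of the odd $\thh$-groups is essential; everything else is bookkeeping with finitely generated $\OK$-modules.
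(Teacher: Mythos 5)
Your proof is correct and follows essentially the same route as the paper: degeneration of the Tate spectral sequence plus concentration in even degrees gives completeness and separatedness for the $\mathcal{N}$-topology, and $p$-completeness of the graded pieces $\thh_*(\OK/\Szz)$ upgrades this to the $(p,\mathcal{N})$-topology via the same double-limit manipulation. The one step you should phrase more carefully is the claim that the Nygaard filtration on $\tp_0(\OK/\Szz)$ is ``by definition induced by a genuine tower of spectra'' with limit $\tp(\OK/\Szz)$: the Tate construction is not the limit of the truncation tower underlying its spectral sequence (that limit is, roughly, the homotopy fixed points), so the Milnor sequence cannot be applied to $\tp$ directly, and you yourself flag this identification as the main obstacle without actually supplying it. The repair, which is exactly what the paper does, is Corollary \ref{C:degen-zz}: since everything is concentrated in even degrees, $\can\colon \tc^-_0(\OK/\Szz)\to\tp_0(\OK/\Szz)$ is an isomorphism identifying the Nygaard filtrations, and the filtration on $\tc^-_0$ \emph{is} realized by the genuine tower computing $\thh(\OK/\Szz)^{h\mathbb{T}}$, where your $\lim^1$/Milnor argument applies verbatim. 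With that substitution the first half is complete, and your second half (finite generation of each $\thh_{2l}(\OK/\Szz)$ over $\OK$, hence $p$-adic completeness and separatedness of each $R/\mathcal{N}^{\geq 2l}R$, then cofinality of the diagonal) is the same argument as the paper's, just with the $p$-completeness of the graded pieces justified by finite generation rather than asserted directly.
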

\begin{proof}
The assertion for the $\mathcal{N}$-topology follows from the isomorphism 
\[
\tp_0(\OK/\Szz)\cong \tc^{-}_0(\OK/\Szz)
\]
given by Corollary \ref{C:degen-zz} and the fact that $\tc^{-}_*(\OK/\Szz)$ are all complete with respect to the $\mathcal{N}$-topology. For the $(p, \mathcal{N})$-topology, we first note that by Lemma \ref{L:grad-thhzz},  $\thh_*(\OK/\Szz)$ are all $p$-complete. By degeneration of the Tate spectral sequence, this implies that for each $j\geq0$, 
\[
\tp_0(\OK/\Szz)/\mathcal{N}^{\geq j}\tp_0(\OK/\Szz)
\]  
is $p$-complete and separated. Hence
 the $(p,\mathcal{N})$-completeness (resp. separateness) follows from the $\mathcal{N}$-completeness (resp, separateness). 
\end{proof}

\begin{lem}\label{L:frob-con}
The cyclotomic Frobenius on $\tp_0(\OK/\Szz)$ is continuous with respect to the $(p, \mathcal{N})$-topology.
\end{lem}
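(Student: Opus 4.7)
The plan is to verify directly that for every target neighborhood $(p^j)+\mathcal{N}^{\geq j}$ of $0$, the preimage under $\varphi$ contains a source neighborhood $(p^N)+\mathcal{N}^{\geq N}$ for some $N=N(j)$. Since $\varphi$ is a ring homomorphism, continuity at $0$ upgrades to continuity everywhere; and since $\varphi(p)=p$ disposes of the $(p^N)$ summand for $N\geq j$, the substantive content is to track how $\varphi$ moves elements of high Nygaard filtration.

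My first step is to pin down the image of $E_K(z_0)$ under $\varphi$. Inside $\tp_0(\OK/\Sz)=W(\k)[[z]]$, which is a $\delta$-ring with $\varphi$ as its Frobenius lift, the universal identity $\varphi(x)\equiv x^p\pmod{p}$ applied to $x=E_K(z)$ gives $\varphi(E_K(z))=E_K(z)^p+p\cdot g(z)$ for some $g(z)\in W(\k)[[z]]$. Pushing this forward along the left unit $\eta_L$, which is filtered for the Nygaard filtration because it is induced by a map of $E_\infty$-algebras in cyclotomic spectra, I obtain an analogous decomposition inside $\tp_0(\OK/\Szz)$, with $E_K(z_0)\in\mathcal{N}^{\geq 2}$ by Theorem \ref{T:rtp}(6) combined with $\eta_L$-compatibility.

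The second step is to estimate powers of $\varphi(E_K(z_0))$. Expanding by the binomial theorem and using multiplicativity of $\mathcal{N}$, the $k$-th mixed term of $\varphi(E_K(z_0))^{j'}$ lies simultaneously in $(p^{j'-k})$ and in $\mathcal{N}^{\geq 2pk}$. A simple pigeonhole against the threshold $j$ (splitting the range $0\leq k\leq j'$ at $k=j$) should show that $j'=2j$ suffices, so that $\varphi(E_K(z_0))^{2j}\in(p^j)+\mathcal{N}^{\geq j}$.

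Finally, I would combine these with Lemma \ref{L:nyg-frob}: for $c\in\mathcal{N}^{\geq 4j}$, the lemma forces $\varphi(c)$ to be divisible by $\varphi(E_K(z_0))^{2j}$, which by Step 2 lies in the ideal $(p^j)+\mathcal{N}^{\geq j}$ (the multiplicativity of $\mathcal{N}$ being essential here). Setting $N=4j$ then yields $\varphi\bigl((p^{4j})+\mathcal{N}^{\geq 4j}\bigr)\subseteq (p^j)+\mathcal{N}^{\geq j}$, as desired. The only potentially delicate point is Step 2, where one must verify that the $p$-adic and Nygaard filtrations simultaneously shrink fast enough; but this is really just the quantitative content of the $\delta$-ring identity $\varphi\equiv(-)^p\pmod p$, which trades a unit of $p$-valuation for a factor-of-$p$ jump in Nygaard degree, enough to overcome the degradation inherent in Lemma \ref{L:nyg-frob}.
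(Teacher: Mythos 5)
Your argument is correct and is essentially the paper's proof: the paper also deduces continuity from Lemma \ref{L:nyg-frob} together with the congruence $\varphi(\EK(z_0))\equiv \EK(z_0)^p \pmod p$ coming from the $\delta$-ring structure on $\tp_0(\OK/\Sz)$ pushed forward along $\eta_L$. Your explicit bookkeeping (taking $N=4j$ and splitting the binomial expansion at $k=j$) is just a quantitatively careful version of the paper's one-line assertion $\varphi\bigl((p^{2j})+\mathcal{N}^{\geq 2j}\bigr)\subset (p^{2j})+\mathcal{N}^{\geq 2j}$, and if anything is the more precise statement.
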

\begin{proof}

By Lemma \ref{L:nyg-frob}, we have $\varphi((p^{2j})+\mathcal{N}^{\geq 2j})\subset (p^{2j})+\mathcal{N}^{\geq 2j}$. The desired result follows. 
\end{proof}

In the rest of this section, we give an explicit description of $\tp_0(\OK/\Szz)$.
To this end, we make use of the theory of $\delta$-rings. 
\begin{defn}
A $\delta$-ring is a pair $(R,\delta)$ where $R$ is a commutative ring and $\delta: R\to R$ is a map of sets with $\delta(0) = \delta(1) = 0$, satisfying the following two identities
\begin{equation*}\label{E:delta}
\delta(xy)=x \delta(y)+y \delta(x)+p\delta(x)\delta(y), \quad \delta(x+y)=\delta(x)+\delta(y)+ \frac{x^p+y^p-(x+y)^p}{p}.
\end{equation*}
\end{defn}
When $R$ is $p$-torsion free, a $\delta$-ring structure on $R$ is equivalent to the datum of a ring map $\varphi: R\to R$ lifting the Frobenius on $R/p$;
the corresponding $\delta$-structure is given by 
\[
\delta(x) = \frac{\varphi(x) - x^p}{p}.
\]
By Theorem \ref{T:rtp}, it is clear that the cyclotomic Frobenius on $\tp_0(\OK/\Sz)$ gives rise to a 
$\delta$-ring structure. Moreover, the congruence 
\[
\varphi(E_K(z)^{p^j})\equiv E_K(z)^{p^{j+1}} \mod p^{j+1}
\] 
implies that $\delta$ is continuous with respect to the $(p,\mathcal{N})$-topology. In the following, we will show that the same properties hold for $\tp_0(\OK/\Szz)$ as well. 

Using Theorem \ref{T:rtp}, we deduce that 
\[
p_0: \tp_0(\OK/\Szz)\rightarrow \thh_0(\OK/\Szz)=\OK
\]
sends $z_i$ to $\pK$. It follows that  $z_0-z_1$ lies in
\[
  \ker(p_0)=\mathcal{N}^{\geq2}\tp_0(\OK/\Szz).
\]
By Lemma \ref{L:nyg-frob}, there exists $\ef\in \tp_0(\OK/\Szz)$ such that
\[
\ef\varphi(\EK(z_0))=\varphi(z_0-z_1)=z_0^p-z_1^p.
\]
For $k\geq0$, we inductively define $\delta^k(h), f^{(k)}\in \tp_0(\OK/\Szz)[1/p]$ by setting 
\begin{equation}
\delta^0(h)=h, \quad \delta^{k+1}(h)=\frac{\varphi(\delta^k(h))-\delta^k(h)^p}{p}  
\end{equation}
and
\begin{equation}\label{E:induc-odd}
f^{(0)} =z_0-z_1, \quad f^{(k+1)} = {-(f^{(k)})^p + \delta^{k}(\ef)\EK(z_0)^{p^{k+1}}\over p}.
\end{equation}

\begin{prop}\label{P:f(k)}
For each $k\geq0$, we have $\delta^k(h)\in \tp_0(\OK/\Szz)$, $f^{(k)}\in  \mathcal{N}^{\geq 2p^{k}}\tp_0(\OK/\Szz)$ and
\begin{equation}\label{E:phi-delta}
\delta^{k}(\ef)\varphi(\EK(z_0))^{p^k}=\varphi(f^{(k)})
\end{equation}
\end{prop}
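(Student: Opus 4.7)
The plan is to argue by simultaneous induction on $k$, establishing the three assertions together. The base case $k = 0$ is immediate: $\delta^0(\ef) = \ef$ lies in $\tp_0(\OK/\Szz)$ by Lemma \ref{L:nyg-frob} applied to $z_0 - z_1 \in \mathcal{N}^{\geq 2}$, the element $f^{(0)} = z_0 - z_1$ belongs to $\mathcal{N}^{\geq 2}$ because both $z_i$ map to $\pK$ under $p_0$, and identity (\ref{E:phi-delta}) for $k = 0$ is the defining relation $\ef\cdot\varphi(E_K(z_0)) = \varphi(z_0 - z_1)$.

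For the inductive step, assume the three statements hold for $k$. Working in $\tp_0(\OK/\Szz)[1/p]$ and applying $\varphi$ to the recursive definition of $f^{(k+1)}$, the inductive identity $\varphi(f^{(k)}) = \delta^k(\ef)\varphi(E_K(z_0))^{p^k}$ yields after simplification
\[
p\,\varphi(f^{(k+1)}) = \bigl(\varphi(\delta^k(\ef)) - \delta^k(\ef)^p\bigr)\varphi(E_K(z_0))^{p^{k+1}} = p\,\delta^{k+1}(\ef)\,\varphi(E_K(z_0))^{p^{k+1}},
\]
so identity (\ref{E:phi-delta}) for $k+1$ holds in $\tp_0(\OK/\Szz)[1/p]$, and will hold in $\tp_0(\OK/\Szz)$ once the two sides are shown to be integral.

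The crux is integrality of $f^{(k+1)}$. Using the $\delta$-ring structure on $W(\k)[z_0, z_1] \subset \tp_0(\OK/\Szz)$, one writes $\varphi(E_K(z_0))^{p^k} = E_K(z_0)^{p^{k+1}} + p\beta_k$ with $\beta_k \in W(\k)[z_0, z_1]$; substituting into the defining equation for $f^{(k+1)}$ and using the inductive identity gives
\[
p\bigl(f^{(k+1)} + \delta^k(\ef)\beta_k\bigr) = \varphi(f^{(k)}) - (f^{(k)})^p
\]
in $\tp_0(\OK/\Szz)$. Hence $f^{(k+1)} \in \tp_0(\OK/\Szz)$ is equivalent to the Frobenius congruence $\varphi(f^{(k)}) \equiv (f^{(k)})^p \pmod{p}$. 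Granting this, the inductive bound on $f^{(k)}$ together with $E_K(z_0) \in \mathcal{N}^{\geq 2}$ (Theorem \ref{T:rtp}(6)) places $pf^{(k+1)}$ in $\mathcal{N}^{\geq 2p^{k+1}}$, and Corollary \ref{C:tpzz-nyg} upgrades this to $f^{(k+1)} \in \mathcal{N}^{\geq 2p^{k+1}}$; Lemma \ref{L:nyg-frob} then shows $\varphi(f^{(k+1)})$ is divisible by $\varphi(E_K(z_0))^{p^{k+1}}$ in the integral domain $\tp_0(\OK/\Szz)$ (Corollary \ref{C:tpzz-integral}), and the formal identity above identifies the quotient with $\delta^{k+1}(\ef)$.

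The main obstacle is therefore the Frobenius congruence $\varphi(f^{(k)}) \equiv (f^{(k)})^p \pmod{p}$ in $\tp_0(\OK/\Szz)$. For $k = 0$ it reduces to the standard congruence in the $\delta$-ring $W(\k)[z_0, z_1]$; in general it is equivalent to showing that $(\tp_0(\OK/\Szz), \varphi)$ is itself a $\delta$-ring, the upgrade of the $\delta$-ring structure on $\tp_0(\OK/\Sz)$ announced in the preamble to this section. I would handle this as a companion fact, either invoking a general principle that the cyclotomic Frobenius on $\tp_0$ of a cyclotomic $E_\infty$-ring is a Frobenius lift modulo $p$ (in the spirit of \cite{bms2}), or by a direct argument using continuity of $\varphi$ in the $(p,\mathcal{N})$-topology (Lemma \ref{L:frob-con}) together with its explicit behavior on $W(\k)[z_0, z_1]$. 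With this in place, the congruence holds automatically and the induction closes.
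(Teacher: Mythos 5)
Your induction skeleton and the algebraic manipulations match the paper's proof: the reduction of the integrality of $f^{(k+1)}$ to the congruence $\varphi(f^{(k)})\equiv (f^{(k)})^p \pmod p$, the use of Corollary \ref{C:tpzz-nyg} to upgrade $pf^{(k+1)}\in\mathcal{N}^{\geq 2p^{k+1}}$ to $f^{(k+1)}\in\mathcal{N}^{\geq 2p^{k+1}}$, and the use of Lemma \ref{L:nyg-frob} together with integrality of $\tp_0(\OK/\Szz)$ to extract $\delta^{k+1}(h)$ are all exactly as in the paper. However, the step you yourself flag as the crux is left genuinely open, and neither of your proposed fallbacks works. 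Invoking the $\delta$-ring structure on all of $\tp_0(\OK/\Szz)$ is circular: in this paper that statement is Proposition \ref{P:tp-delta}, whose proof rests on Lemma \ref{L:dense} and on the containment (\ref{E:h-f}), both of which are consequences of the very proposition you are proving. There is also no general principle giving the Frobenius-lift property for $\tp_0$ of an arbitrary relative $\thh$ with cyclotomic structure --- the paper has to prove it by hand for the base $\Szz$, and does so only after this proposition. Finally, a continuity argument from the explicit behavior of $\varphi$ on $W(\k)[z_0,z_1]$ fails because $W(\k)[z_0,z_1]$ is not dense in $\tp_0(\OK/\Szz)$ for the $(p,\mathcal{N})$-topology: its closure misses the divided-power classes $t^{[p^k]}_{z_0-z_1}$, which is precisely why the elements $f^{(k)}$ must be adjoined in Lemma \ref{L:dense}.

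The missing idea is a strengthening of the induction hypothesis. The paper proves at each stage that
\[
f^{(k)}\in W(\k)[z_0,z_1][h,\dots,\delta^{k-1}(h)],
\]
via the identity $f^{(l+1)}=\delta(f^{(l)})-\delta^l(h)\,\delta(\EK(z_0)^{p^l})$ of (\ref{E:l-l+1}). Each generator $x$ of that subring has $\delta(x)$ already known to be integral at that stage of the induction ($\delta(z_i)=0$, and $\delta^{j+1}(h)\in\tp_0(\OK/\Szz)$ for $j\leq k-1$ is part of the inductive hypothesis), so $\varphi(x)\equiv x^p\pmod p$ holds for each generator and hence, in the $p$-torsion-free ring $\tp_0(\OK/\Szz)$, for the polynomial $f^{(k)}$. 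This supplies exactly the congruence you need and closes the induction without ever appealing to a global Frobenius lift, which is then deduced afterwards from the density of this subring. As written, your proof does not close.
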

\begin{proof}
We will proceed by induction on $k$ to show that $\delta^k(h)\in \tp_0(\OK/\Szz)$, 
\begin{equation}\label{E:h-f}
f^{(k)}\in  W(\k)[z_0,z_1][h,\dots,\delta^{k-1}(h)]\cap \mathcal{N}^{\geq 2p^{k}}\tp_0(\OK/\Szz)
\end{equation}
and
\[
\delta^{k}(\ef)\varphi(\EK(z_0))^{p^k}=\varphi(f^{(k)}).
\]
The initial case is obvious. Now suppose for some $l\geq0$, the claim holds for  $k=l$.  Using (\ref{E:phi-delta}) for $k=l$, we get 
\begin{equation}\label{E:l-l+1}
\begin{split}
f^{(l+1)} &= {-(f^{(l)})^p + \delta^{l}(\ef)\EK(z_0)^{p^{l+1}}\over p}\\
&= {(\varphi(f^{(l)})- (f^{(l)})^p)+ (\delta^{l}(\ef)\EK(z_0)^{p^{l+1}}- \delta^{l}(\ef)\varphi(\EK(z_0))^{p^{l}})\over p}\\
&=\delta(f^{(l)})-\delta^l(h)\delta(\EK(z_0)^{p^l}).
\end{split}
\end{equation}
Since $\tp_0(\OK/\Sz)$ is a $\delta$-ring,  we have $\delta(\EK(z_0)^{p^l})\in \tp_0(\OK/\Szz)$. By the inductive hypothesis, we conclude $f^{(l+1)}\in  W(\k)[z_0,z_1][h,\dots,\delta^{l}(h)]$. Moreover, by the inductive hypothesis and Remark \ref{R:phi-z1z2}, $pf^{(l+1)}$ has Nygaard filtration $\geq 2p^{l+1}$. Hence $f^{(l+1)}$ has Nygaard filtration $\geq 2p^{l+1}$ by Corollary \ref{C:tpzz-nyg}.

To show (\ref{E:phi-delta}) for $k=l+1$, applying $\varphi$ to (\ref{E:induc-odd}) for $k=l$ and using the inductive hypothesis, we get
\begin{eqnarray*}
\varphi(f^{(l+1)}) &=& {-\varphi(f^{(l)})^p + \varphi(\delta^{l}(h))\varphi(\EK(z_0))^{p^{l+1}}\over p}\\
&=& {-\delta^{l}(h)^p\varphi(\EK(z_0))^{p^{l+1}} + \varphi(\delta^{l}(h))\varphi(\EK(z_0))^{p^{l+1}}\over p}\\
&=& \delta^{l+1}(h))\varphi(\EK(z_0))^{p^{l+1}}.
\end{eqnarray*}
Finally, using (\ref{E:phi-delta}) for $k=l+1$ and Lemma \ref{L:nyg-frob}, we deduce that $\delta^{l+1}(h)\in \tp_0(\OK/\Szz)$. This completes the proof. 
\end{proof}

\begin{lem}\label{L:dense}
The sub-$W(\k)[z_0,z_1]$-algebra $R\subset \tp_0(\OK/\Szz)$ generated by the family of elements $\{f^{(k)}| k\geq0 \}$ is dense with respect to the $\mathcal{N}$-topology.
\end{lem}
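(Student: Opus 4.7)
The strategy is to use that $\tp_0(\OK/\Szz)$ is complete and separated in the $\mathcal{N}$-topology (Lemma \ref{L:top-complete}), so that density of $R$ is equivalent to the assertion $R + \mathcal{N}^{\geq 2N} = \tp_0(\OK/\Szz)$ for every $N\geq 0$. I would argue this by induction on $N$: by Corollary \ref{C:degen-zz}, the inductive step amounts to showing that $R \cap \mathcal{N}^{\geq 2N}$ surjects under $p_{2N}$ onto $\thh_{2N}(\OK/\Szz)$. By Lemma \ref{L:grad-thhzz}, this module admits a further filtration by powers of $u_0$ whose associated graded is $\bigoplus_{a+b=N}\OK\cdot u_0^a\,t_{z_0-z_1}^{[b]}$. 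Since $\EK(z_0)\in W(\k)[z_0,z_1]\subset R$ satisfies $p_2(\EK(z_0))=u_0$ by Theorem \ref{T:rtp}, the element $\EK(z_0)^a$ realizes $u_0^a$, and after a secondary induction on the $u_0$-filtration (cleaning up $u_0^{a+1}$-error terms using elements of higher $a$) it suffices to exhibit, for each $b\geq 0$, an element of $R$ whose image in $\thh_{2b}(\OK/\Szz)/(u_0)$ is a unit multiple of $t_{z_0-z_1}^{[b]}$.

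The central claim is that for every $k\geq 0$, the image of $f^{(k)}$ in $\thh_{2p^k}(\OK/\Szz)/(u_0)$ is a unit multiple of $t_{z_0-z_1}^{[p^k]}$; I verify it by induction on $k$. The base case $k=0$ is the definition of $t_{z_0-z_1}$. For the inductive step, apply
\[
f^{(k+1)} = \delta(f^{(k)}) - \delta^k(\ef)\,\delta(\EK(z_0)^{p^k})
\]
obtained in (\ref{E:l-l+1}). The correction term vanishes modulo $(u_0)$ since $\delta(\EK(z_0)^{p^k})$ already lies in $\tp_0(\OK/\Sz)$, whose image in $\thh_{2p^{k+1}}(\OK/\Sz)=\OK\cdot u^{p^{k+1}}$ sits inside the $u_0$-ideal. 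For the main term, the factorization $\varphi(f^{(k)})=\delta^k(\ef)\,\varphi(\EK(z_0))^{p^k}$ from (\ref{E:phi-delta}), together with $p_{2p}(\varphi(\EK(z_0)))\equiv 0\pmod{(u_0)}$, implies $p_{2p^{k+1}}(\varphi(f^{(k)}))\equiv 0\pmod{(u_0)}$. Using $p$-torsion-freeness of $\thh_*(\OK/\Szz)$ (Corollary \ref{C:grad-thhzz}) and the identity $p\,\delta(f^{(k)}) = \varphi(f^{(k)})-(f^{(k)})^p$, we obtain
\[
p_{2p^{k+1}}(\delta(f^{(k)})) \equiv -\frac{1}{p}\bigl(p_{2p^k}(f^{(k)})\bigr)^p \pmod{(u_0)}.
\]
Since $\bigoplus_b\thh_{2b}(\OK/\Szz)/(u_0)\cong\OK\langle t_{z_0-z_1}\rangle$ is a torsion-free divided power algebra, the relation $y^p = p!\,y^{[p]}$ and the inductive hypothesis give
\[
p_{2p^{k+1}}(\delta(f^{(k)}))\equiv -c_k^p\,\frac{(p^{k+1})!}{p\,(p^k)!^p}\,t_{z_0-z_1}^{[p^{k+1}]}\pmod{(u_0)},
\]
and a direct valuation count shows $\frac{(p^{k+1})!}{p\,(p^k)!^p}\in\mathbb{Z}_p^\times$, closing the induction on $k$.

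Granted this identification, the remainder is purely algebraic: in $\OK\langle t\rangle$, the subring generated by $\{t^{[p^k]}\}_{k\geq 0}$ is the full algebra, because the relation $t^{[p^j]}\cdot t^{[m]} = \binom{p^j+m}{m}\,t^{[p^j+m]}$ has unit coefficient in $\mathbb{Z}_p$ whenever $0<m<p^j$ by Kummer's theorem, enabling an inductive construction of every $t^{[n]}$ from the $t^{[p^k]}$'s (which come from the $f^{(k)}$'s). Combined with the powers $\EK(z_0)^a$ and the $W(\k)[z_0,z_1]$-coefficients available in $R$, this recovers every monomial $u_0^a\,t_{z_0-z_1}^{[b]}$ in the image of $R$, closing the primary induction. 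The principal technical obstacle is the graded computation in the second paragraph: carefully tracking how the $\delta$-structure interacts with both the Nygaard and $u_0$-filtrations, and with the divided-power structure on $\thh_*(\OK/\Szz)$ inherited via Theorem \ref{T:HKR}.
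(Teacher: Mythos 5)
Your overall strategy coincides with the paper's: reduce density to surjectivity of $p_{2j}$ on $R\cap\mathcal{N}^{\geq 2j}$, realize $u_0^a$ by powers of $\EK(z_0)$, show that $f^{(k)}$ hits a unit multiple of $t^{[p^k]}_{z_0-z_1}$ modulo $(u_0)$, and generate the remaining divided powers combinatorially. However, the central inductive step as written does not typecheck. The elements $\delta(f^{(k)})$, $\varphi(f^{(k)})=\delta^k(\ef)\varphi(\EK(z_0))^{p^k}$ and $\delta^k(\ef)\delta(\EK(z_0)^{p^k})$ do \emph{not} lie in $\mathcal{N}^{\geq 2p^{k+1}}$: since $\varphi(\EK(z_0))=\EK(z_0)^p+p\,\delta(\EK(z_0))$ with $\delta(\EK(z_0))$ a unit, $\varphi(\EK(z_0))$ has Nygaard filtration $0$ (not $2p$), and likewise $\delta(\EK(z_0)^{p^k})$ has filtration $0$; only the difference $f^{(k+1)}=\delta(f^{(k)})-\delta^k(\ef)\delta(\EK(z_0)^{p^k})$ lands in $\mathcal{N}^{\geq 2p^{k+1}}$. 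Consequently the expressions $p_{2p^{k+1}}(\varphi(f^{(k)}))$, $p_{2p^{k+1}}(\delta(f^{(k)}))$ and ``$p_{2p}(\varphi(\EK(z_0)))$'' are undefined, and you cannot evaluate the two summands of (\ref{E:l-l+1}) separately in $\mathrm{Gr}^{2p^{k+1}}$. (The heuristic that $\varphi$ multiplies the Nygaard filtration by $p$ is only valid modulo $p$, and you are dividing by $p$.)

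The repair is to run the induction on the defining recursion (\ref{E:induc-odd}) rather than on (\ref{E:l-l+1}): $pf^{(k+1)}=-(f^{(k)})^p+\delta^k(\ef)\EK(z_0)^{p^{k+1}}$, where now all three terms lie in $\mathcal{N}^{\geq 2p^{k+1}}$ and, by Lemma \ref{L:delta-h}, the correction term lies in $\mathcal{N}^{\geq 2p^{k+1}+2}$ and hence vanishes in $\mathrm{Gr}^{2p^{k+1}}$ on the nose (no reduction modulo $(u_0)$ is even needed at this stage). This gives $p\cdot p_{2p^{k+1}}(f^{(k+1)})=-\bigl(p_{2p^k}(f^{(k)})\bigr)^p$ exactly, and your unit computation $v_p\bigl((p^{k+1})!/(p\,((p^k)!)^p)\bigr)=0$ together with $p$-torsion-freeness then closes the induction; this is precisely the paper's argument. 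One smaller point: the rule $t^{[p^j]}t^{[m]}=\binom{p^j+m}{m}t^{[p^j+m]}$ restricted to $0<m<p^j$ only produces the $t^{[n]}$ whose base-$p$ digits are $0$ or $1$ (already for $p=2$, $t\cdot t=2t^{[2]}$ fails to be a unit multiple); for general $n=\sum j_ip^i$ with $0\leq j_i\leq p-1$ one should instead use that $\prod_i(t^{[p^i]})^{j_i}$ equals $t^{[n]}$ up to a unit of $\mathbb{Z}_p$, an identity equivalent to $v_p(n!)=(n-s_p(n))/(p-1)$, as in the paper.
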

\begin{proof}
It suffices to show that for every $j\geq0$, the projection 
\begin{equation*}\label{E:surj}
p_{2j}: R\cap\mathcal{N}^{\geq 2j}\tp_0(\OK/\Szz)\rightarrow \thh_{2j}(\OK/\Szz),
\end{equation*}
which is induced by (\ref{E:proj}), is surjective. 

Firstly, by Theorem \ref{T:rtp}, we see that $p_2(E_K(z))=u$, yielding
\[
p_2(E_K(z_0)^j)=u^j_0
\] 
by functoriality of the Tate spectral sequence. To conclude, by Lemma \ref{L:grad-thhzz}, it suffices to show that $p_{2j}(R)$ contains $t^{[j]}_{z_0-z_1}$ for all $j\geq0$. 

By the commutative diagram
$$\xymatrix{
\mathcal{N}^{\geq 2}\tp_0(\OK/\Szz) \ar[r]\ar[d] & \thh_{2}(\OK/\Szz) \ar[d]^\cong &  \\
\mathcal{N}^{\geq 2}\mathrm{HP}_0(\OK/W(\k)[z_0,z_1])\ar[r] & \mathrm{HH}_{2}(\OK/W(\k)[z_0,z_1]) & 
}$$
one immediately checks that $f^{(0)}$ and $t_{z_0-z_1}$ have the same image in $\mathrm{HH}_{2}(\OK/W(\k)[z_0,z_1])$. Hence $p_2(f^{(0)})=(t_{z_0-z_1})$. For $k\geq0$, we have 
\[
-(f^{(k)})^p \equiv pf^{(k+1)} \mod \EK(z_0)^{p^{k+1}}
\]
by (\ref{E:induc-odd}). By induction, we deduce that for all $k\geq0$, $t^{[p^k]}_{z_0-z_1}$ lies in the image of $R\cap\mathcal{N}^{\geq 2p^k}\tp_0(\OK/\Szz)$. Note that in the divided power algebra $\OK\langle t\rangle$, for $j=j_0+pj_1+\cdots+j_kp^{k}$ with $0\leq j_i\leq p-1$, 
$t^{[j]}$ is equal to $t^{j_0}(t^{[p]})^{j_1}\cdots (t^{[p^k]})^{j_k}$ up to a unit of $\mathbb{Z}_p$.  It follows that  the image of $R\cap\mathcal{N}^{\geq 2j}\tp_0(\OK/\Szz)$ contains $t^{[j]}_{z_0-z_1}$ for all $j\geq0$. 
\end{proof}

\begin{rem}
In Corollary \ref{C:u-ext}, we will prove that $p_{2p^k}(f^{(k)})$ is equal to $t^{[p^k]}_{z_0-z_1}$ up to a unit of $\mathbb{Z}_p$. 
\end{rem}

\begin{prop}\label{P:tp-delta}
The cyclotomic Frobenius on $\tp_0(\OK/\Szz)$ is a Frobenius lift, making $\tp_0(\OK/\Szz)$ a $\delta$-ring.
Moreover, $\delta$ is continuous with respect to the $(p, \mathcal{N})$-topology on $\tp_0(\OK/\Szz)$.
\end{prop}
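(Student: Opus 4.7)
The plan is to verify the Frobenius-lift condition $\varphi(x)\equiv x^p\pmod{p}$ on a convenient dense subring of $\tp_0(\OK/\Szz)$ and then propagate the property to the whole ring by a continuity-and-completeness argument in the $(p,\mathcal{N})$-topology. Since $\tp_0(\OK/\Szz)$ is $p$-torsion free by Corollary \ref{C:tpzz-integral}, this will simultaneously produce a $\delta$-structure given by $\delta(x)=(\varphi(x)-x^p)/p$.

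First I would introduce the sub-$W(\k)[z_0,z_1]$-algebra $R_0\subset \tp_0(\OK/\Szz)$ generated by $\{\delta^k(h)\}_{k\geq 0}$ and $\{f^{(k)}\}_{k\geq 0}$, all of which lie in $\tp_0(\OK/\Szz)$ by Proposition \ref{P:f(k)}. Since $R_0$ contains the subring $R$ from Lemma \ref{L:dense}, $R_0$ is dense in the $\mathcal{N}$-topology, and hence in the $(p,\mathcal{N})$-topology. The Frobenius-lift property is then checked on generators: $\varphi(z_i)=z_i^p$; the recursion defining $\delta^{k+1}(h)$ yields $\varphi(\delta^k(h))-\delta^k(h)^p=p\,\delta^{k+1}(h)$; and for $f^{(k)}$, equation (\ref{E:phi-delta}) combined with $\varphi(\EK(z_0))=\EK(z_0^p)\equiv\EK(z_0)^p\pmod{p}$ and (\ref{E:induc-odd}) gives $\varphi(f^{(k)})\equiv(f^{(k)})^p\pmod{p}$. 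Since both $\varphi$ and the $p$-th power map descend to ring homomorphisms modulo $p$, agreement on generators propagates to all of $R_0$.

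To extend the property from $R_0$ to $\tp_0(\OK/\Szz)$, for $x\in \tp_0(\OK/\Szz)$ I would pick $x_n\in R_0$ with $x-x_n\in \mathcal{N}^{\geq n}$ using density. The $p$-th power map is $(p,\mathcal{N})$-continuous because $(p^j)+\mathcal{N}^{\geq j}$ is an ideal, and $\varphi$ is $(p,\mathcal{N})$-continuous by Lemma \ref{L:frob-con}; hence $\varphi(x_n)-x_n^p\to \varphi(x)-x^p$ in the $(p,\mathcal{N})$-topology. Writing $\varphi(x_n)-x_n^p=p w_n$ with $w_n\in \tp_0(\OK/\Szz)$, the crucial step is to show that $\{w_n\}$ itself is $(p,\mathcal{N})$-Cauchy: if $p(w_n-w_m)=p^j a+b$ with $b\in \mathcal{N}^{\geq j}$, then $b\in p\tp_0(\OK/\Szz)$, and Corollary \ref{C:tpzz-nyg} together with $p$-torsion freeness allow us to write $b=p b'$ with $b'\in \mathcal{N}^{\geq j}$, forcing $w_n-w_m\in (p^{j-1})+\mathcal{N}^{\geq j-1}$. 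By the $(p,\mathcal{N})$-completeness of Lemma \ref{L:top-complete}, $w_n\to w\in \tp_0(\OK/\Szz)$, and passing to the limit gives $\varphi(x)-x^p=pw\in p\tp_0(\OK/\Szz)$.

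The continuity of $\delta$ follows from the same package: if $x-x'\in (p^j)+\mathcal{N}^{\geq j}$, then the ideal property and Lemma \ref{L:frob-con} yield $(\varphi(x)-x^p)-(\varphi(x')-x'^p)\in (p^j)+\mathcal{N}^{\geq j}$, and the same division-by-$p$ argument lands $\delta(x)-\delta(x')$ in $(p^{j-1})+\mathcal{N}^{\geq j-1}$. I expect the main obstacle to be the bookkeeping in the division-by-$p$ step, that is, turning a congruence mod $(p^j)+\mathcal{N}^{\geq j}$ for $p(w_n-w_m)$ into a slightly weaker congruence for $w_n-w_m$; everything else is a formal combination of the density (Lemma \ref{L:dense}), continuity (Lemma \ref{L:frob-con}), and completeness (Lemma \ref{L:top-complete}) results already in hand.
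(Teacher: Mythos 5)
Your proposal is correct and is essentially the paper's argument: the paper likewise verifies the Frobenius-lift property on the dense subring generated by the $\delta^k(h)$ (using \eqref{E:h-f} and Lemma \ref{L:dense}), notes that $\phi(a)=\varphi(a)-a^p$ is $(p,\mathcal{N})$-continuous, and concludes because $(p)$ is closed — the closedness being exactly your Cauchy-sequence/division-by-$p$ step via Corollary \ref{C:tpzz-nyg} and Lemma \ref{L:top-complete}. Your version merely unpacks the paper's ``$\phi^{-1}((p))$ is closed and contains a dense set'' into an explicit limit argument, and spells out the continuity of $\delta$ which the paper leaves implicit.
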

\begin{proof}
We equip $\tp_0(\OK/\Szz)$ with the $(p, \mathcal{N})$-topology. Put $\phi(a)=\varphi(a)-a^p$ for $a\in \tp_0(\OK/\Szz)$. By Lemma \ref{L:frob-con},  $\phi$ is continuous. On the other hand, by Corollary \ref{C:tpzz-nyg}, it is straightforward to see that the map
\[
\tp_0(\OK/\Szz)\xrightarrow{a\mapsto pa} \tp_0(\OK/\Szz)
\]  
induces an embedding. Using Lemma \ref{L:top-complete}, we deduce that $(p)\subset \tp_0(\OK/\Szz)$ is a closed ideal. It remains to show that $\mathrm{Im}(\phi)\subset (p)$. Put $R'=W(k)[\{\delta^k(h)\}_{k\geq0}]$. Then $R'$ is stable under $\delta$. That is, $\phi(R')\subset (p)$.  Note that $R'$ is dense by (\ref{E:h-f}) and Lemma \ref{L:dense}. We therefore conclude that $\phi^{-1}((p))$, which is a closed subset of $\tp_0(\OK/\Szz)$, is forced to be $\tp_0(\OK/\Szz)$. 
\end{proof}

To describe the structure of $\tp_0(\OK/\Szz)$, we compare it with the periodic cyclic homology $\mathrm{HP}_0(\OK/W(\k)[z_0,z_1])$.  Let $I$ be the kernel of $W(\k)[z]\xrightarrow{z\mapsto \pK}\OK$ (resp. $W(\k)[z_0,z_1]\xrightarrow{z_0, z_1\mapsto \pK}\OK$), and let $t_{\EK(z)}$ (resp. $t_{\EK(z_i)}$) denote the image of $\EK(z)$ (resp. $\EK(z_i)$) in $\mathrm{HH}_2(\OK/\Sz)$ under the isomorphism 
\[
I/I^2 \cong \mathrm{HH}_2(\OK/W(\k)[z])\quad (\text{resp. $I/I^2 \cong \mathrm{HH}_2(\OK/W(\k)[z_0,z_1])$}) 
\]
given by Theorem \ref{T:HKR}.

The following result follows from Theorem \ref{T:HKR} immediately.
\begin{cor}\label{C:hh-hp}
The following statements are true.
\begin{enumerate}
\item[(1)]
As filtered rings, $\mathrm{HP}_0(\OK/W(\k)[z])$ and $\mathrm{HP}_0(\OK/W(\k)[z_0,z_1])$ are canonically isomorphic to $D_{W(\k)[z]}((\EK(z)))^\wedge_\mathcal{N}$ and $D_{W(\k)[z_0,z_1]}((\EK(z_0), z_0-z_1))^\wedge_\mathcal{N}$ respectively.

\item[(2)]As graded rings,  $\mathrm{HH}_*(\OK/W(\k)[z])=\OK\langle t_{\EK(z)}\rangle$ and 
\[
\mathrm{HH}_*(\OK/W(\k)[z_0,z_1]) =\OK\langle t_{\EK(z_0)}, t_{z_0-z_1} \rangle.
\]

\end{enumerate}
\end{cor}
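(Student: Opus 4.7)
The plan is to apply Theorem \ref{T:HKR} directly in the two cases $(R, I) = (W(\k)[z],\, (\EK(z)))$ and $(R, I) = (W(\k)[z_0,z_1],\, (\EK(z_0),\, z_0 - z_1))$. In each case the quotient $R/I$ is canonically $\OK$: for the first, this is the defining property of $\EK$ as the minimal polynomial of $\pK$ normalized so that $\EK(0) = p$, so the surjection $z \mapsto \pK$ has kernel exactly $(\EK(z))$; for the second, killing $z_0 - z_1$ first identifies $R/(z_0 - z_1)$ with $W(\k)[z_0]$ and reduces to the first case. Hence the ring $A$ in Theorem \ref{T:HKR} is $\OK$, which is $p$-torsion free as required.

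Next, I would verify that $I$ is a locally complete intersection ideal in each case. In the first, $I = (\EK(z))$ is principal with generator a non-zero-divisor in the domain $W(\k)[z]$, hence trivially lci. In the second, it suffices to exhibit a regular sequence of generators: one checks that $(z_0 - z_1,\, \EK(z_0))$ is regular, because $z_0 - z_1$ is a non-zero-divisor in $W(\k)[z_0, z_1]$ with quotient $W(\k)[z_0]$ (via $z_1 \mapsto z_0$), in which $\EK(z_0)$ remains a non-zero-divisor. The $I$-separatedness hypothesis is immediate in both cases from a degree-in-$z$ argument, since $\EK$ is a non-constant polynomial and $R$ is a Noetherian domain.

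Granting these hypotheses, Theorem \ref{T:HKR} supplies the filtered isomorphism $\mathrm{HP}_0(\OK/R) \cong D_R(I)^\wedge_\mathcal{N}$ in each case, which is precisely statement (1). For statement (2), it remains only to compute $I/I^2$ as an $\OK$-module. In the first case it is free of rank one, with basis the class of $\EK(z)$, which under the identification of Theorem \ref{T:HKR} maps to $t_{\EK(z)} \in \mathrm{HH}_2(\OK/W(\k)[z])$. In the second case, since $I$ is generated by a regular sequence of length two, $I/I^2$ is free of rank two over $\OK$ with basis the classes of $\EK(z_0)$ and $z_0 - z_1$, mapping respectively to $t_{\EK(z_0)}$ and $t_{z_0 - z_1}$. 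Substituting into $\mathrm{HH}_*(\OK/R) \cong \Gamma_\OK(I/I^2)$ and using that the divided power algebra on a free $\OK$-module of rank $n$ is $\OK\langle x_1, \dots, x_n\rangle$ yields the stated formulas. The only point that demands any real attention is the regular-sequence verification in the two-variable case, but this is routine, so the whole corollary is an unpacking of Theorem \ref{T:HKR}.
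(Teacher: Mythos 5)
Your proposal is correct and is exactly the paper's argument: the paper simply states that the corollary "follows from Theorem \ref{T:HKR} immediately," and your write-up supplies the routine verification of the hypotheses (lci via the regular sequence $(z_0-z_1,\EK(z_0))$, $I$-separatedness, $p$-torsion freeness of $\OK$, and the identification of $I/I^2$) that the paper leaves implicit.
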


Using Definition \ref{D:n-pn}, we consider the $\mathcal{N}$ and $(p,\mathcal{N})$-topologies for $\mathrm{HP}_0(\OK/W(\k)[z])$ and $\mathrm{HP}_0(\OK/W(\k)[z_0,z_1])$. 

 \begin{lem}\label{L:hp-complete}
Both $\mathrm{HP}_0(\OK/W(\k)[z])$ and $\mathrm{HP}_0(\OK/W(\k)[z_0, z_1])$ are complete and separated with respect to the $\mathcal{N}$ and $(p,\mathcal{N})$-topologies.
\end{lem}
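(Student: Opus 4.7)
The plan is to parallel the proof of Lemma \ref{L:top-complete}, replacing its topological inputs by the purely algebraic description of $\mathrm{HP}_0$ furnished by Corollary \ref{C:hh-hp}.

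First I would dispose of the $\mathcal{N}$-topology by an immediate appeal to Corollary \ref{C:hh-hp}(1): by definition, $\mathrm{HP}_0(\OK/W(\k)[z])$ and $\mathrm{HP}_0(\OK/W(\k)[z_0,z_1])$ are the $\mathcal{N}$-adic completions of $D_{W(\k)[z]}((\EK(z)))$ and $D_{W(\k)[z_0,z_1]}((\EK(z_0), z_0-z_1))$ respectively, so $\mathcal{N}$-completeness and separateness are automatic.

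For the $(p,\mathcal{N})$-topology, the main step is to verify that every finite Nygaard quotient $\mathrm{HP}_0/\mathcal{N}^{\geq 2j}$ is $p$-adically complete and $p$-torsion free. By Corollary \ref{C:hh-hp}(2) together with the collapsing Tate spectral sequence (Theorem \ref{T:HKR}), each graded piece $\mathcal{N}^{\geq 2i}/\mathcal{N}^{\geq 2(i+1)}$ is identified with $\mathrm{HH}_{2i}$, which is a free $\OK$-module of finite rank (rank $1$ in the single-variable case, rank $i+1$ in the two-variable case). Since $\OK$ is $p$-complete and $p$-torsion free, so is each graded piece, and a finite iterated extension of such pieces yields the desired $p$-completeness and $p$-torsion freeness of $\mathrm{HP}_0/\mathcal{N}^{\geq 2j}$.

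Granted this, the $(p,\mathcal{N})$-completeness of $\mathrm{HP}_0$ follows as in Lemma \ref{L:top-complete}: a $(p,\mathcal{N})$-Cauchy sequence descends to a $p$-Cauchy sequence in each $\mathrm{HP}_0/\mathcal{N}^{\geq 2j}$, converges there by $p$-completeness, and these compatible limits assemble via $\mathcal{N}$-completeness to a limit in $\mathrm{HP}_0$. For separateness, any element $x\in\bigcap_j\bigl((p^j)+\mathcal{N}^{\geq j}\bigr)$ has $p$-divisible image in each $\mathrm{HP}_0/\mathcal{N}^{\geq 2j}$, hence zero image by $p$-torsion freeness plus $p$-completeness; so $x\in\bigcap_j\mathcal{N}^{\geq 2j}=0$ by the already-established $\mathcal{N}$-separateness. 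There is no real obstacle: the content becomes formal once the graded pieces of the Nygaard filtration on $\mathrm{HP}_0$ are pinned down as free $\OK$-modules of finite rank, and that is precisely what Corollary \ref{C:hh-hp}(2) supplies.
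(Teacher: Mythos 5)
Your proof is correct and follows essentially the same route as the paper: the $\mathcal{N}$-statement is read off from Corollary \ref{C:hh-hp}(1), and the $(p,\mathcal{N})$-statement is reduced (exactly as in Lemma \ref{L:top-complete}) to the $p$-completeness and separateness of the associated graded pieces, which Corollary \ref{C:hh-hp}(2) identifies as finite-rank free $\OK$-modules. You merely spell out the assembly of Cauchy sequences more explicitly than the paper does.
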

\begin{proof}
For the $\mathcal{N}$-topology, it is an immediate consequence of Corollary \ref{C:hh-hp}(1). For the $(p,\mathcal{N})$-topology, as in the proof of Lemma \ref{L:top-complete}, it suffices to show that the associated graded algebras, which are isomorphic to $\mathrm{HH}_*(\OK/W(\k)[z])$ and $\mathrm{HH}_*(\OK/W(\k)[z_0, z_1])$ respectively,  are all $p$-complete and separated. This in turn follows immediately from Corollary \ref{C:hh-hp}(2). 
\end{proof} 

\begin{lem}\label{L:inj}
Both the natural maps
\[
\tp_0(\OK/\Sz) \rightarrow \mathrm{HP}_0(\OK/W(\k)[z])
\]
and
\[
\tp_0(\OK/\Szz) \rightarrow \mathrm{HP}_0(\OK/W(\k)[z_0,z_1])
\]
are injective and strict with respect to the Nygaard filtrations. Moreover, both maps are embeddings with respect to the $(p,\mathcal{N})$-topology.

\end{lem}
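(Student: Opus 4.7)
The plan is to prove injectivity and strictness on the Nygaard filtration by passing to associated gradeds, and then to deduce the $(p,\mathcal{N})$-topology statement from a torsion bound on cokernels of Nygaard quotients. Both sides of each map are Nygaard-complete and separated (source by Theorem \ref{T:rtp} and Lemma \ref{L:top-complete}; target by Corollary \ref{C:hh-hp} and Lemma \ref{L:hp-complete}); the comparison map is filtered and induces, by naturality, the classical comparison $\thh_* \to \mathrm{HH}_*$ on Nygaard associated gradeds. A standard filtered-ring argument then shows that injectivity of the graded map, combined with Nygaard separatedness of the source, yields both injectivity and strictness of the original map.

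For the one-variable case, the graded map reads
\[
\OK[u] = \thh_*(\OK/\Sz) \longrightarrow \mathrm{HH}_*(\OK/W(\k)[z]) = \OK\langle t_{\EK(z)}\rangle.
\]
From $uv = \EK(z)$ and $\can(v) = \sigma^{-1}$ (Theorem \ref{T:rtp}(6)) one obtains $p_2(\EK(z)) = u$, and the image of $u$ in $\mathrm{HH}_2$ is $t_{\EK(z)}$ by the HKR identification of Corollary \ref{C:hh-hp}. Hence the map sends $u^j$ to $j!\, t_{\EK(z)}^{[j]}$, which is injective because $\OK$ is $p$-torsion free.

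For the two-variable case, after reducing analogously to injectivity of
\[
\thh_*(\OK/\Szz) \longrightarrow \OK\langle t_{\EK(z_0)}, t_{z_0-z_1}\rangle,
\]
I would perform a secondary reduction: filter the source by powers of $u_0$, whose associated graded is $\OK[u_0]\otimes_\OK\OK\langle t_{z_0-z_1}\rangle$ by Lemma \ref{L:grad-thhzz}, and filter the target by divided-power-degree in $t_{\EK(z_0)}$. Both secondary filtrations are exhaustive and separated for homological-degree reasons. Under $u_0 \mapsto t_{\EK(z_0)}$ the secondary-graded map becomes the one-variable map tensored with the identity on $\OK\langle t_{z_0-z_1}\rangle$, hence is injective by the one-variable step.

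The $(p,\mathcal{N})$-topology embedding then follows from Nygaard strictness together with the observation that the cokernel of $\tp_0/\mathcal{N}^{\geq 2j} \hookrightarrow \mathrm{HP}_0/\mathcal{N}^{\geq 2j}$ is $p^M$-torsion for some explicit $M = v_p((j-1)!)$, coming from the factorials in the graded formula $u^l \mapsto l!\, t^{[l]}$: given any source neighborhood $(p^{j'}) + \mathcal{N}^{\geq 2j'}\tp_0$ of zero, the subspace neighborhood obtained by intersecting $(p^{j'+M}) + \mathcal{N}^{\geq 2j'}\mathrm{HP}_0$ with $\tp_0$ sits inside it. The main obstacle is organizing the secondary reduction in the two-variable case: one must verify carefully that the $u_0$-adic filtration on the Nygaard-graded source matches cleanly with the divided-power-degree filtration on the target under the comparison map, which is subtle because $\thh_*(\OK/\Szz)$ is not manifestly a divided power algebra and one must track the factor $j!$ through the secondary grading.
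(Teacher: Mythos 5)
Your proof is correct and follows essentially the same route as the paper's: reduce to the Nygaard associated gradeds, where the one-variable map is $u^j\mapsto j!\,t_{\EK(z)}^{[j]}$ up to units and the two-variable map is handled by the same secondary filtration (the paper phrases it as a successive extension of the pieces $\OK u_0^{l}t_{z_0-z_1}^{[j-l]}\to \OK t_{\EK(z_0)}^{[l]}t_{z_0-z_1}^{[j-l]}$), with the $(p,\mathcal{N})$-statement reduced to the graded maps being embeddings for the $p$-adic topology. The only slip is cosmetic: the power of $p$ killing the cokernel of $\tp_0/\mathcal{N}^{\geq 2j}\to \mathrm{HP}_0/\mathcal{N}^{\geq 2j}$ is the cumulative $\sum_{l<j}v_p(l!)$ rather than $v_p((j-1)!)$, but only its finiteness is used.
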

\begin{proof}
Since both maps are compatible with the Nygaard filtration, for the first assertion, we are reduced to show that the induced maps on associated graded algebras
\begin{equation}\label{E:thhz-inj}
\thh_*(\OK/\Sz)\rightarrow \mathrm{HH}_*(\OK/W(\k)[z])
\end{equation}
and
\begin{equation}\label{E:thhzz-inj}
\thh_*(\OK/\Szz) \rightarrow \mathrm{HH}_*(\OK/W(\k)[z_0,z_1])
\end{equation}
are injective. For the second assertion, it is sufficient to show that both (\ref{E:thhz-inj}) and (\ref{E:thhzz-inj}) are embeddings under the $p$-adic topology.

Firstly, note that under the identification
$\mathrm{HH}_2(\OK/W(\k)[z])=\thh_2(\OK/\Sz)$,  
$t_{\EK(z)}$ maps to $u$ up to a unit of $\OK$. 
By Theorem \ref{T:rtp}(1) and Corollary \ref{C:hh-hp}(2), we deduce that (\ref{E:thhz-inj}) induces an embedding under the $p$-adic topology. 

By Lemma \ref{L:grad-thhzz}, we deduce that $\thh_{2j}(\OK/\Szz)$ is a successive extension of $\OK u_0^{2l}t_{z_0-z_1}^{[2j-2l]}$ for $l=0, 1, \dots, j$. On the other hand, by Corollary \ref{C:hh-hp}(2), we see that $\mathrm{HH}_{2j}(W(\k)/\Szz)$ is a successive extension of $\OK t_{\EK(z_0)}^{[2l]}t_{z_0-z_1}^{[2j-2l]}$ for $j=0, 1, \dots, k$.  Since for each $0\leq l\leq j$, 
\[
\OK u_{0}^{2l}t_{z_0-z_1}^{[2j-2l]}\to \OK t_{\EK(z_0)}^{[2l]}t_{z_0-z_1}^{[2j-2l]}
\] 
is an embedding under the $p$-adic topology, we conclude that (\ref{E:thhzz-inj}) is an embedding under the $p$-adic topology as well.  
\end{proof}

\begin{cor}
The periodic topological cyclic homology group $\tp_0(\OK/\Szz)$ is isomorphic to the closure of the subring of $D_{W(\k)[z_0,z_1]}((\EK(z_0),z_0-z_1))^\wedge_\mathcal{N}$ generated by $W(\k)[z_0,z_1]$ and $\{\iota(\delta^k(h))\}_{k\geq0}$ under either the $\mathcal{N}$-topology or the $(p, \mathcal{N})$-topology.
\end{cor}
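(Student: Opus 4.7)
The plan is to combine the density result of Lemma~\ref{L:dense} with the topological embedding provided by Lemma~\ref{L:inj}. Let $D := D_{W(\k)[z_0,z_1]}((\EK(z_0), z_0-z_1))^\wedge_\mathcal{N}$. By Corollary~\ref{C:hh-hp} and Lemma~\ref{L:inj}, the canonical map $\iota\colon \tp_0(\OK/\Szz) \hookrightarrow D$ is an injective ring homomorphism that is strict with respect to the Nygaard filtrations and is a topological embedding for the $(p,\mathcal{N})$-topology; strictness for the Nygaard filtration makes $\iota$ a topological embedding for the $\mathcal{N}$-topology as well. By Lemma~\ref{L:top-complete} and Lemma~\ref{L:hp-complete}, both $\tp_0(\OK/\Szz)$ and $D$ are complete and separated for the two topologies, so $\iota(\tp_0(\OK/\Szz))$ is a closed subring of $D$ under either topology.

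The core step is to identify, inside $\tp_0(\OK/\Szz)$, a dense subring built from $\{\delta^k(\ef)\}_{k\geq 0}$. Let $R := W(\k)[z_0,z_1][\{f^{(k)}\}_{k\geq 0}]$ and $R'' := W(\k)[z_0,z_1][\{\delta^k(\ef)\}_{k\geq 0}]$, both viewed as subrings of $\tp_0(\OK/\Szz)$. I claim $R \subseteq R''$. Starting from $f^{(0)} = z_0 - z_1 \in W(\k)[z_0,z_1]$ and invoking the identity $f^{(l+1)} = \delta(f^{(l)}) - \delta^l(\ef)\,\delta(\EK(z_0)^{p^l})$ from (\ref{E:l-l+1}), together with $\delta(z_i) = 0$ and the sum/product formulas of a $\delta$-ring, a straightforward induction shows $f^{(l)} \in W(\k)[z_0,z_1][\ef,\delta(\ef),\ldots,\delta^{l-1}(\ef)] \subset R''$; here one uses also that $W(\k)[z_0]$ is a $\delta$-subring, so $\delta(\EK(z_0)^{p^l}) \in W(\k)[z_0]$. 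By Lemma~\ref{L:dense}, $R$ is dense in $\tp_0(\OK/\Szz)$ under the $\mathcal{N}$-topology, hence so is the larger ring $R''$.

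The $(p,\mathcal{N})$-topology is coarser than the $\mathcal{N}$-topology on any of our rings, because $\mathcal{N}^{\geq j} \subseteq (p^j) + \mathcal{N}^{\geq j}$; hence $R''$ is also dense in $\tp_0(\OK/\Szz)$ for the $(p,\mathcal{N})$-topology. Combining this with the closed embedding from the first paragraph, the closure of $\iota(R'')$ in $D$ under either topology equals $\iota(\tp_0(\OK/\Szz))$, which is the claim. The only place requiring genuine content beyond previously established results is the inductive verification that $R \subseteq R''$ via (\ref{E:l-l+1}); this is formal once one recognizes, via Proposition~\ref{P:tp-delta}, that $\tp_0(\OK/\Szz)$ is a $\delta$-ring, so that $\delta(f^{(l)})$ is expressible as a polynomial in $z_0, z_1, \ef, \delta(\ef), \ldots, \delta^l(\ef)$ with $W(\k)$-coefficients.
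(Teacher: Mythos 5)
Your proof is correct and takes essentially the same route as the paper, which simply cites the combination of Lemmas \ref{L:top-complete}, \ref{L:dense}, \ref{L:hp-complete} and \ref{L:inj}; you have merely spelled out how they fit together. The one step you work out by hand, the inclusion of $W(\k)[z_0,z_1][\{f^{(k)}\}]$ in $W(\k)[z_0,z_1][\{\delta^k(\ef)\}]$, is already recorded as \eqref{E:h-f} in Proposition \ref{P:f(k)} and is used for exactly this purpose in the proof of Proposition \ref{P:tp-delta}.
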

\begin{proof}
This follows from the combination of Lemma \ref{L:top-complete}, Lemma \ref{L:dense}, Lemma \ref{L:hp-complete}, Lemma \ref{L:inj}.
\end{proof}






\section{Hopf algebroid}\label{s4}
We recall that a Hopf algebroid object in a symmetric monoidal category (with colimits) is a cogroupoid object in commutative algebras, i.e. (following \cite[Definition 6.1.2.7]{HTT}) a cosimplicial object $A:\Delta\rightarrow \mathrm{CAlg}$ such that
\begin{equation}\label{E:algebroid}
\begin{split}
\text{for any partition } [n] = S\cup S'~\text{with}~S\cap S' =\{s\}, \\
\text{the canonical map}~A^S \otimes_{A^{\{s\}}}A^{S'}\to A^{[n]}~\text{is an equivalence}.
\end{split}
 \end{equation}
In the following, by abuse of notation, we will refer to a Hopf algebroid $A$ by the pair $(A^{[0]}, A^{[1]})$. The goal of this section is to show that the pairs $(\tp_0(\OK/\Sz), \tp_0(\OK/\Szz))$ and $(\thh_*(\OK/\Sz), \thh_*(\OK/\Szz))$ form Hopf algebroids in appropriate categories.

We first recall some basics on (complete) filtered modules and graded modules. By a filtered ring we mean a commutative ring $R$ equipped with a decreasing filtration $\mathcal{F}^{\geq\bullet}$ on additive subgroups indexed by $\mathbb{Z}$ such that $\mathcal{F}^{\geq i}R\cdot \mathcal{F}^{\geq j}R\subset \mathcal{F}^{\geq i+j}R$. For our purpose, we assume that $R$ is a complete filtered ring, i.e. $R$ is complete and separated with respect to the topology defined by the filtration. Moreover, we assume that $R$ is a non-negative filtered ring. That is, $\mathcal{F}^{\geq i}R=R$ for all $i\leq 0$. Consequently, all $\mathcal{F}^{\geq i}R$ are ideals of $R$. 

By a filtered $R$-module we mean an $R$-module $M$ equipped with a decreasing filtration $\mathcal{F}^{\geq\bullet}$ on additive subgroups indexed by $\mathbb{Z}$ such that $\mathcal{F}^{\geq i}R\cdot \mathcal{F}^{\geq j}M\subset \mathcal{F}^{\geq i+j}M$. By our assumption on $R$, we see that all $\mathcal{F}^{\geq i}M$ are $R$-submodules of $M$. We say $M$ is non-negative if $\mathcal{F}^{\geq i}M=M$ for all $i\leq 0$.  For non-negative filtered $R$-modules $M_1, M_2$, their tensor product in the category of filtered $R$-modules is defined to be $M_1\otimes_{R}M_2$ equipped with the tensor product filtration. We say a filtered $R$-module $M$ complete if it is complete and separated with respect to the topology defined by the filtration. By taking completion with respect to the filtration, we obtain a functor $M\mapsto \hat{M}$ from the category of filtered $R$-modules to the category of complete filtered $R$-modules. For non-negative complete filtered $R$-modules $M_1, M_2$, their completed tensor product $M_1\hat{\otimes}_{R}M_2$ is defined to be the completion of the filtered $R$-module $M_1{\otimes}_{R}M_2$.

By a graded ring we mean a commutative ring $S$ together with a decomposition
$S=\oplus_{i\in\mathbb{Z}}S^{i}$
of additive subgroups such that $S^i\cdot S^j\subset S^{i+j}$. For our purpose, in the following we assume that $S$ is non-negative. That is, $S^i=0$ for all $i<0$. 

By a graded $S$-module we mean an $S$-module $N$ together with a decomposition $N=\oplus_{i\in\mathbb{Z}}N^{i}$ of additive subgroups such that $S^{i}\cdot N^j\subset N^{ i+j}$. We say $N$ is non-negative if $N^i=0$ for all $i<0$.  For non-negative graded $S$-modules $N_1, N_2$, we may define their tensor product $N_1\otimes_{S}N_2$ in the category of graded $S$-modules by setting
\[
(N_1\otimes_{S}N_2)^{i}=\oplus_{j+k=i}N_1^j\otimes_{S_0}N_2^k.
\]

By taking associated graded, we obtain a functor from the category of filtered $R$-modules to the category of graded $S$-modules, where
\[
S=\mathrm{Gr}(R):=\oplus_{i\in\mathbb{Z}}\mathcal{F}^{\geq i}R/\mathcal{F}^{\geq i+1}R
\]
is a non-negative graded ring, and 
\[
N=\mathrm{Gr}(M):=\oplus_{i\in\mathbb{Z}}\mathcal{F}^{\geq i}M/\mathcal{F}^{\geq i+1}M
\]
is a graded $S$-module. If $M$ is non-negative, then so is $\mathrm{Gr}(M)$. Moreover, it is clear that $\mathrm{Gr}(M)=\mathrm{Gr}(\hat{M})$.

\begin{defn}
Let $M$ be a complete filtered $R$-module with a filtration $\mathcal{N}^{\geq \bullet}$. We say $M$ is free and locally finite over $R$ if there exists a family of elements $\{m_i\}_{i\in I}\subset M$ such that the following two conditions hold.
\begin{enumerate}
\item
 For any $j\in\mathbb{Z}$, there are only finitely many $i\in I$ such that $m_i\not\in \mathcal{N}^{\geq j}M$.
 \item The induced morphism $\oplus_{i\in I}Rx_i\xrightarrow{x_i\mapsto m_i}M$ of filtered $R$-modules becomes an isomorphism after taking completion.
\end{enumerate}
\end{defn}

\begin{defn}
We say a graded $S$-module $N$ is free and locally finite if there exists a family of homogeneous elements $\{n_i\}_{i\in I}\subset N$ such that the following conditions hold.
\begin{enumerate}
\item
 For any $j\in\mathbb{Z}$, there are only finitely many $i\in I$ such that for some $k\leq j$, the image of $m_i$ in $N^{k}$ is non-zero.
 \item The induced morphism $\oplus_{i\in I}Sx_i\xrightarrow{x_i\mapsto n_i}N$ of graded $S$-modules is an isomorphism.
\end{enumerate}
\end{defn}

For a complete filtered $R$-module $M$, one easily checks that $M$ is free and locally finite over $R$ if and only if the associated graded module $\mathrm{Gr}(M)$ is free and locally finite over $S=\mathrm{Gr}(R)$.
Moreover, if $M_1$ and $M_2$ are non-negative, free and locally finite over $R$. Then $M_1\hat{\otimes}_{R}M_2$ is free and locally finite over $R$ as well. Moreover, $\mathrm{Gr}(M_1\hat{\otimes}_{R}M_2)$ is canonically isomorphic to $
\mathrm{Gr}(M_1)\otimes_{S}\mathrm{Gr}(M_2)$ as graded $S$-modules.



\begin{prop} 
The following statements are true.
\begin{enumerate}
\item[(1)]Both
$d^0$ and $d^1$ (which are just $\eta_R$ and $\eta_L$ defined in \S3 respectively) exhibit $\tp_0(\OK/\Szz)$ as
a free and locally finite filtered $\tp_0(\OK/\Sz)$-module.
\item[(2)]Both
$d^0$ and $d^1$ (which are just $\eta_R$ and $\eta_L$ defined in \S3 respectively) exhibit $\thh_*(\OK/\Szz)$ as
a free and locally finite graded $\thh_*(\OK/\Sz)$-module.
\end{enumerate}
\end{prop}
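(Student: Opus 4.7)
The strategy is to deduce (1) from (2) via passage to associated gradeds, and to prove (2) by lifting the explicit basis produced in Lemma \ref{L:grad-thhzz}.

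For the reduction (1)$\Leftarrow$(2): by the general principle recorded just before the proposition, a non-negative complete filtered module over $R$ is free and locally finite iff its associated graded is free and locally finite over $\mathrm{Gr}(R)$. Lemma \ref{L:top-complete} ensures $\tp_0(\OK/\Szz)$ is complete for the Nygaard filtration; Lemma \ref{L:grad-tpzz} and Theorem \ref{T:rtp}(6) identify its Nygaard associated graded with $\thh_*(\OK/\Szz)$, and similarly the Nygaard associated graded of $\tp_0(\OK/\Sz)$ with $\thh_*(\OK/\Sz)$. By functoriality of the Nygaard filtration, these identifications intertwine $\eta_L$ and $\eta_R$ on $\tp$ with their $\thh$-analogues, so (1) reduces to (2).

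For (2), I treat $\eta_L$ first; the $\eta_R$ case follows from the $\mathbb{Z}/2$-symmetry of $\Szz$ swapping $z_0$ and $z_1$, which induces an involution of $\thh_*(\OK/\Szz)$ intertwining $\eta_L$ with $\eta_R$ and $u_0$ with $u_1$. Recall $\thh_*(\OK/\Sz)=\OK[u]$ with $\eta_L(u)=u_0$. Consider the $u_0$-adic filtration $F^k := u_0^k\thh_*(\OK/\Szz)$: by Lemma \ref{L:grad-thhzz} its associated graded is $\OK[u_0]\otimes_\OK\OK\langle t_{z_0-z_1}\rangle$, a free graded $\OK[u_0]$-module with homogeneous basis $\{t_{z_0-z_1}^{[j]}\}_{j\geq 0}$. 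Pick any homogeneous lifts $\tilde{t}_j\in\thh_{2j}(\OK/\Szz)$ of $t_{z_0-z_1}^{[j]}$ modulo $F^1$; the existence of such lifts follows directly from Lemma \ref{L:grad-thhzz}.

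The induced $\OK[u_0]$-linear map
\[
\Phi\colon\bigoplus_{j\geq 0}\OK[u_0]\,\tilde{t}_j\longrightarrow \thh_*(\OK/\Szz)
\]
sends $u_0^k\tilde{t}_j$ to an element reducing to $u_0^k t_{z_0-z_1}^{[j]}$ in $F^k/F^{k+1}$, so $\mathrm{gr}_F(\Phi)$ is an isomorphism. In each fixed total degree $2N$, non-negativity of the grading together with $|u_0|=2$ forces $F^{N+1}\cap\thh_{2N}(\OK/\Szz)=0$, making the $u_0$-filtration finite there; a standard induction on filtration degree then upgrades $\mathrm{gr}_F(\Phi)$ being an iso to $\Phi$ being an iso in degree $2N$. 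Local finiteness is immediate since $\tilde{t}_j$ has degree $2j$, so only $\tilde{t}_0,\dots,\tilde{t}_N$ contribute in degree $2N$. The only mild obstacle is the lift step, but it is painless precisely because the $u_0$-adic filtration is finite in each fixed total degree.
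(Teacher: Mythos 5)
Your proposal is correct and follows essentially the same route as the paper: deduce (1) from (2) by passing to Nygaard associated gradeds, and prove (2) by lifting the basis coming from the $u_0$-adic associated graded of Lemma \ref{L:grad-thhzz}, using that this filtration is finite in each total degree. The only cosmetic differences are that you justify the reduction to $\eta_L$ by the explicit $z_0\leftrightarrow z_1$ involution and spell out the graded Nakayama-type induction, where the paper is terser.
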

\begin{proof}
Since the associated graded algebra of $(\tp_0(\OK/\Sz)), \tp_0(\OK/\Szz))$ is canonically isomorphic to $(\thh_*(\OK/\Sz), \thh_*(\OK/\Szz))$, we have (2) implies (1). For (2), 
We only need to treat the case of $\eta_L$. By Lemma \ref{L:grad-thhzz}, we see that $\thh_*(\OK/\Szz)/(u_0)$ is a free $\OK$-module with a basis of degrees $0, 2, 4,\dots$ respectively. Using Corollary \ref{C:grad-thhzz}, we may further deduce that such a basis lifts to a basis of $\thh_*(\OK/\Szz)$ over $\thh_*(\OK/\Sz)$ with the same degrees. Hence $\thh_*(\OK/\Szz)$ is free and locally finite over $\thh_*(\OK/\Sz)$ via $\eta_L$. 
\end{proof}

\begin{cor}\label{C:flat}
We consider $\tp_0(\OK/\Szz)$ (resp. $\thh_*(\OK/\Szz)$) as a filtered $\tp_0(\OK/\Sz)$-module (resp. graded $\thh_*(\OK/\Sz)$-module) via either $\eta_L$ or $\eta_R$. Then it is completely flat.
\end{cor}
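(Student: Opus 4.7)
The plan is to deduce both statements from the previous proposition by the familiar principle that free modules are flat, combined with the fact that for locally finite data, the completed tensor product is well behaved under the associated graded functor.

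First I would treat the graded case, which is essentially immediate. Since the previous proposition exhibits $\thh_*(\OK/\Szz)$ as a free graded $\thh_*(\OK/\Sz)$-module via either unit map, the functor $\thh_*(\OK/\Szz)\otimes_{\thh_*(\OK/\Sz)}(-)$ on non-negative graded modules is a degree-wise direct sum of copies of the identity, and hence is exact on short exact sequences of graded modules. This is the standard argument for freeness implying flatness, applied internally to the graded category.

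Next I would handle the filtered case by reducing it to the graded case through the associated graded functor. The key observation, recorded in the preparatory discussion just before the proposition, is that if $M$ is a free and locally finite complete filtered module over a complete filtered ring $R$, then for any non-negative complete filtered $R$-module $N$ the natural map
\[
\mathrm{Gr}(M\hat{\otimes}_R N)\longrightarrow \mathrm{Gr}(M)\otimes_{\mathrm{Gr}(R)}\mathrm{Gr}(N)
\]
is an isomorphism. Applied with $M=\tp_0(\OK/\Szz)$, $R=\tp_0(\OK/\Sz)$, this identifies $\mathrm{Gr}$ of the completed tensor product with $\thh_*(\OK/\Szz)\otimes_{\thh_*(\OK/\Sz)}\mathrm{Gr}(N)$. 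Given a strict short exact sequence of non-negative complete filtered $\tp_0(\OK/\Sz)$-modules, passage to $\mathrm{Gr}$ produces a short exact sequence of non-negative graded $\thh_*(\OK/\Sz)$-modules, which remains exact after tensoring with the free graded module $\thh_*(\OK/\Szz)$ by the graded case. Since strictness and exactness of a sequence of complete filtered modules can be detected on the associated graded, this transports back to strict exactness of the original sequence after completed tensor product with $\tp_0(\OK/\Szz)$, proving complete flatness.

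The one point requiring genuine care is the compatibility of $\mathrm{Gr}$ with the completed tensor product, which is where the local finiteness hypothesis is crucial: in each filtration degree only finitely many of the free generators of $\tp_0(\OK/\Szz)$ over $\tp_0(\OK/\Sz)$ contribute, so the completed direct sum reduces to a finite direct sum degree-wise and no subtle completion phenomena can obstruct exactness. Everything else is formal manipulation with filtered and graded modules as set up earlier.
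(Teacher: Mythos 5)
Your argument is correct and matches the paper's intent: the corollary is stated without proof there, being regarded as an immediate consequence of the preceding proposition (freeness and local finiteness via either unit) together with the formal discussion of $\mathrm{Gr}$ and completed tensor products that precedes it. Your reduction of the filtered case to the graded case via the associated graded functor, with local finiteness guaranteeing degree-wise finite sums so that completion causes no trouble, is exactly the standard argument the authors are implicitly invoking.
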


For $0\leq i\leq n$, consider the natural maps
\begin{equation}\label{E:tensor-thh-n}
\thh_*(\OK/\mathbb{S}_{W(\k)}[z_0,\dots, z_i])\otimes_{\thh_*(\OK/\mathbb{S}[z_i])}\thh_*(\OK/\mathbb{S}_{W(\k)}[z_i, \dots, z_n])\to \thh_*(\OK/\mathbb{S}_{W(\k)}[z_0,\dots, z_n])
\end{equation}
and
\begin{equation*}
\tp_0(\OK/\mathbb{S}[z_0,\dots, z_i])\otimes_{\tp_0(\OK/\mathbb{S}[z_i])}\tp_j(\OK/\mathbb{S}_{W(\k)}[z_i, \dots, z_n]) \to \tp_j(\OK/\mathbb{S}_{W(\k)}[z_0, \dots, z_n]).
\end{equation*}
By Remark \ref{R:tate-n-variable}, the Tate spectral sequence for $\tp_*(\OK/\mathbb{S}[z_0,\dots,z_n])$ degenerates at the $E^2$-term. It follows that the Nygaard filtration on $\tp_j(\OK/\mathbb{S}[z_0,\dots,z_n])$ is complete by the same argument as in the proof of Lemma \ref{L:top-complete}. Hence the second map induces
\begin{equation}\label{E:tensor-tp-n}
\tp_0(\OK/\mathbb{S}[z_0,\dots, z_i])\hat{\otimes}_{\tp_0(\OK/\mathbb{S}[z_i])}\tp_j(\OK/\mathbb{S}_{W(\k)}[z_i, \dots, z_n]) \to \tp_j(\OK/\mathbb{S}_{W(\k)}[z_0, \dots, z_n]).
\end{equation}

\begin{lem}\label{L:tensor-thh-tp}
Both (\ref{E:tensor-thh-n}) and (\ref{E:tensor-tp-n}) are isomorphisms. 
\end{lem}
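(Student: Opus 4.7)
The plan is to establish the THH case first via the multiplicative property of $\thh$ together with a Künneth collapse argument, and then to deduce the TP case by passing to associated gradeds with respect to the Nygaard filtration.

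For the THH map (\ref{E:tensor-thh-n}), applying (\ref{E:rthh-mult}) with $A_1 = A_2 = A_3 = \OK$ and $E_1 = \mathbb{S}_{W(\k)}[z_0, \dots, z_i]$, $E_2 = \mathbb{S}_{W(\k)}[z_i]$, $E_3 = \mathbb{S}_{W(\k)}[z_i, \dots, z_n]$ produces the spectrum-level equivalence
\[
\thh(\OK/E_1) \otimes_{\thh(\OK/E_2)} \thh(\OK/E_3) \simeq \thh(\OK/\mathbb{S}_{W(\k)}[z_0, \dots, z_n]),
\]
since $\OK \otimes_{\OK} \OK \simeq \OK$ and $E_1 \otimes_{E_2} E_3 \simeq \mathbb{S}_{W(\k)}[z_0, \dots, z_n]$. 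To descend this to homotopy groups I would invoke the K\"unneth spectral sequence, which collapses once $\thh_*(\OK/E_1)$ is flat over $\thh_*(\OK/E_2) \cong \thh_*(\OK/\Sz)$. I would verify the flatness claim together with the desired isomorphism by induction on $n$: the base case $n = 1$ is tautological (or Corollary \ref{C:flat}), and the inductive step uses the $i = 1$ factorization of $\thh_*(\OK/\mathbb{S}_{W(\k)}[z_0, \dots, z_n])$ as $\thh_*(\OK/\Szz) \otimes_{\thh_*(\OK/\mathbb{S}_{W(\k)}[z_1])} \thh_*(\OK/\mathbb{S}_{W(\k)}[z_1, \dots, z_n])$, combining Corollary \ref{C:flat}, the inductive flatness hypothesis, and transitivity of flatness under base change.

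For the TP map (\ref{E:tensor-tp-n}), I would pass to associated gradeds with respect to the Nygaard filtration. Both sides are complete: the right-hand side by the argument given in the paragraph preceding the lemma (the Tate spectral sequence collapses at $E^2$ by Remark \ref{R:tate-n-variable}, and each $\thh_j(\OK/\mathbb{S}_{W(\k)}[z_0,\dots,z_n])$ is $p$-complete), and the left-hand side by the definition of $\hat{\otimes}$. The associated graded of the right-hand side is $\thh_*(\OK/\mathbb{S}_{W(\k)}[z_0, \dots, z_n])$ (reindexed by the degree of $\sigma$), while that of the completed tensor product on the left is $\thh_*(\OK/E_1) \otimes_{\thh_*(\OK/E_2)} \thh_*(\OK/E_3)$. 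The induced map on gradeds is precisely the isomorphism (\ref{E:tensor-thh-n}) established in the first part; as both filtered modules are complete, the standard filtered-module argument then promotes this to an isomorphism of the original modules. The main technical obstacle is the inductive flatness claim that makes the K\"unneth spectral sequence collapse in the THH case, since once it is in place the transition from THH to TP is entirely formal.
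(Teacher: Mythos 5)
Your proposal is correct and follows essentially the same route as the paper: the THH case via the multiplicative property of $\thh$ (the paper leaves the K\"unneth collapse and flatness induction implicit, which you make explicit), and the TP case by passing to associated gradeds under the Nygaard filtration and using completeness of both sides.
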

\begin{proof}
The first assertion follows from the multiplicative property of THH. This in turn implies that (\ref{E:tensor-tp-n}) becomes an isomorphism after taking associated graded algebras on both sides. Thus (\ref{E:tensor-tp-n}) itself is an isomorphism. 
\end{proof}

\begin{cor}
The cosimplicial objects $\thh_*(\OK/\mathbb{S}_{W(\k)}[z]^{\otimes\oldbullet})$ and $\tp_0(\OK/\mathbb{S}_{W(\k)}[z]^{\otimes\oldbullet})$ are cogroupoid objects (i.e. Hopf algebroids) in the categories of graded rings and complete filtered rings respectively.
\end{cor}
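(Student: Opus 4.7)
The plan is to verify the cogroupoid axiom (\ref{E:algebroid}) for each of the two cosimplicial objects; the remaining cosimplicial data (cofaces and codegeneracies) are inherited from maps of $E_\infty$-algebras in cyclotomic spectra, and these automatically preserve the $\mathbb{Z}$-grading on $\thh_*$ and, because the Nygaard filtration on $\tp_0$ is defined via abutment of the Tate spectral sequence (and the relevant Tate spectral sequences degenerate by Remark \ref{R:tate-n-variable}), they preserve the filtration as well. So the only substantive content is that the tensor product condition holds.

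First I would fix a partition $[n]=S\cup S'$ with $S\cap S'=\{s\}$ and unwind the map in (\ref{E:algebroid}) concretely. Writing the order-preserving bijections $S\cong[a]$ and $S'\cong[b]$ (where $a=|S|-1$, $b=|S'|-1$, and $s$ corresponds to the maximum of $S$ and the minimum of $S'$), the inclusions $S,S'\hookrightarrow[n]$ relabel variables so that the map $A^S\otimes_{A^{\{s\}}}A^{S'}\to A^{[n]}$ becomes, after a harmless renaming $z_i\leftrightarrow w_i$, the natural map
\[
\tp_0(\OK/\mathbb{S}_{W(\k)}[w_0,\ldots,w_a])\;\hat\otimes_{\tp_0(\OK/\mathbb{S}_{W(\k)}[w_a])}\;\tp_0(\OK/\mathbb{S}_{W(\k)}[w_a,\ldots,w_{a+b}])\longrightarrow \tp_0(\OK/\mathbb{S}_{W(\k)}[w_0,\ldots,w_{a+b}]),
\]
together with its analogue for $\thh_*$ using ordinary graded tensor product over $\thh_*(\OK/\mathbb{S}_{W(\k)}[w_a])$. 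The point is that because the overlap $S\cap S'$ is a single element, the two families of variables indexed by $S$ and $S'$ really meet in exactly one variable, so there is no ambiguity in the reduction.

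Second, both of the resulting isomorphism statements are precisely the content of Lemma \ref{L:tensor-thh-tp}: the $\thh_*$ case comes from the multiplicative property (\ref{E:rthh-mult}) of topological Hochschild homology, and the $\tp_0$ case follows by passing to the associated graded via the degeneration of the Tate spectral sequence for $\tp_*(\OK/\mathbb{S}_{W(\k)}[w_0,\ldots,w_{a+b}])$ (Remark \ref{R:tate-n-variable}) and using completeness of the Nygaard filtration (the argument of Lemma \ref{L:top-complete} applies verbatim to any number of variables). Thus the corollary reduces to a straightforward reindexing of Lemma \ref{L:tensor-thh-tp}.

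I do not expect any genuine obstacle here; the only point worth flagging is the compatibility with the respective symmetric monoidal structures, namely that the relevant pushout in non-negative complete filtered rings is the completed tensor product $\hat\otimes$, whereas in non-negative graded rings it is the ordinary tensor product. Both flavors are exactly what appear in Lemma \ref{L:tensor-thh-tp}, so the corollary follows.
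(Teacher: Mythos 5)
Your argument is correct and is essentially the paper's proof: reduce to the case of consecutive intervals $S=[0,\dots,i]$, $S'=[i,\dots,n]$ and invoke Lemma \ref{L:tensor-thh-tp}. Just note that your ``harmless renaming'' (and the parenthetical assumption that $s=\max S=\min S'$, which fails for a general partition before relabeling) is exactly the $\Sigma_{n+1}$-action on the tensor power $\Sz^{\otimes[n]}$ that the paper cites to handle non-interval partitions.
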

\begin{proof}
We need to check the condition (\ref{E:algebroid}). By Lemma \ref{L:tensor-thh-tp}, we deduce the case $S=[0,\dots,i]$ and $S'=[i,\dots,n]$. We conclude the general case using the action of symmetric groups on $\thh_*(\OK/\mathbb{S}_{W(\k)}[z]^{\otimes\oldbullet})$ and $\tp_0(\OK/\mathbb{S}_{W(\k)}[z]^{\otimes\oldbullet})$.
\end{proof}



\begin{conv}
For an Hopf algebroid, we denote its coproduct, counit and conjugation by $\Delta$, $\varepsilon$ and $c$ respectively. 
\end{conv}


In the following, we give an explicit description of the Hopf algebroid 
\[
(\thh_*(\OK/\Sz),\thh_*(\OK/\Szz)).
\]
Recall that the cyclotomic Frobenius on $\tp_0(\OK/\Szz)$ induces a $\delta$-ring structure (Proposition \ref{P:tp-delta}) .
\begin{lem}\label{L:delta-h}
For any $i\geq0$, $\delta^i(h)\in \mathcal{N}^{\geq2} \tp_0(\OK/\Szz)$.
\end{lem}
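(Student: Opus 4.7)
The plan is to identify $\mathcal{N}^{\geq 2}\tp_0(\OK/\Szz)$ with $\ker(p_0)$, where $p_0 \colon \tp_0(\OK/\Szz) \to \OK$ is the natural projection of (\ref{E:proj}); this reduction is valid because the Tate spectral sequence collapses at $E^2$ (Corollary \ref{C:degen-zz}) and the Nygaard filtration only jumps at even degrees, so $\tp_0/\mathcal{N}^{\geq 2} = \thh_0(\OK/\Szz) = \OK$. It therefore suffices to show $p_0(\delta^i(h)) = 0$ for every $i\geq 0$. To make the induction feasible, I will prove the strengthened statement that $p_0(\varphi^m(\delta^i(h))) = 0$ for all $m, i \geq 0$, by induction on $i$.

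The essential input is that for every $n \geq 1$, $p_0(\varphi^n(E_K(z_0)))$ equals $p$ times a unit of $\OK$, and is in particular a non-zero-divisor. This is a short valuation computation: $\varphi^n(E_K(z_0)) = E_K^{\phi^n}(z_0^{p^n})$ where $\phi$ is the Frobenius on $W(\k)$, and after substituting $z_0 = \pi_K$, the Eisenstein property $a_1, \ldots, a_{e_K-1}\in pW(\k)$ together with $n \geq 1$ forces every non-constant term to have $\pi_K$-adic valuation strictly exceeding $e_K = v_K(p)$, so the constant term $p$ dominates.

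The base case $i = 0$ follows by applying $\varphi^m$ to the defining identity $h \cdot \varphi(E_K(z_0)) = z_0^p - z_1^p$, yielding
\[
\varphi^m(h) \cdot \varphi^{m+1}(E_K(z_0)) = z_0^{p^{m+1}} - z_1^{p^{m+1}};
\]
applying $p_0$, the right side vanishes because $p_0(z_0) = p_0(z_1) = \pi_K$, and the non-zero-divisor property forces $p_0(\varphi^m(h)) = 0$. For the inductive step, apply $\varphi^m$ and then $p_0$ to the $\delta$-identity $p\delta^{k+1}(h) = \varphi(\delta^k(h)) - \delta^k(h)^p$; both terms on the right side then vanish by the inductive hypothesis (used at $m+1$ and at $m$ respectively), and the $p$-torsion freeness of $\OK$ yields $p_0(\varphi^m(\delta^{k+1}(h))) = 0$.

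The main obstacle, and the reason the induction must be strengthened, is that $p_0$ does not intertwine the cyclotomic Frobenius on $\tp_0(\OK/\Szz)$ (which sends $z_i \mapsto z_i^p$) with any natural endomorphism of $\OK$, so $p_0(\delta^k(h)) = 0$ alone does not imply $p_0(\varphi(\delta^k(h))) = 0$. Tracking the family $\{p_0 \circ \varphi^m\}_{m \geq 0}$ in parallel is exactly what supplies the extra data needed to pass from $\delta^k(h)$ to $\delta^{k+1}(h)$.
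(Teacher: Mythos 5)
Your proof is correct, but it takes a genuinely different route from the paper's. The paper uses the counit $\varepsilon\colon \tp_0(\OK/\Szz)\to\tp_0(\OK/\Sz)$ of the Hopf algebroid: from $h\,\varphi(\EK(z_0))=\varphi(z_0-z_1)$ one gets $\varepsilon(h)\varphi(\EK(z))=0$, hence $\varepsilon(h)=0$ since $W(\k)[[z]]$ is a domain; because $\varepsilon$ is a map of $p$-torsion-free rings intertwining the Frobenius lifts, it commutes with $\delta$, so $\varepsilon(\delta^i(h))=\delta^i(\varepsilon(h))=0$ for all $i$ at once; and $\ker(\varepsilon)\subset\mathcal{N}^{\geq2}$ because $\varepsilon$ is an isomorphism on $\mathrm{Gr}^0$. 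You instead work with the coarser invariant $p_0\colon\tp_0(\OK/\Szz)\to\OK$ (using $\ker(p_0)=\mathcal{N}^{\geq2}$, which the paper also records), and since $p_0$ does not intertwine $\varphi$ with anything on $\OK$, you compensate by the strengthened induction on $p_0\circ\varphi^m$ for all $m$, fed by the Eisenstein valuation estimate showing $p_0(\varphi^n(\EK(z_0)))$ is $p$ times a unit for $n\geq1$. Your argument is sound — all its inputs (Corollary \ref{C:degen-zz}, Proposition \ref{P:f(k)} giving $\delta^k(h)\in\tp_0(\OK/\Szz)$ integrally, $p$-torsion-freeness of $\OK$ and $\tp_0(\OK/\Szz)$) precede the lemma — but it is longer; the lesson worth extracting from the paper's version is that mapping to the $\delta$-ring $\tp_0(\OK/\Sz)$ rather than to $\OK$ lets the identity $\varepsilon\delta=\delta\varepsilon$ do all the inductive work for free.
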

\begin{proof}
Firstly, it is clear that $\varepsilon(\varphi(z_0-z_1))=0$ and $\varepsilon(\varphi(\EK(z_0)))=\varphi(\EK(z))$. It follows that $\varepsilon(h)=0$. Hence for all $i\geq0$, 
\[
\varepsilon(\delta^i(h))=\delta^i(\varepsilon(h))=0.
\] 
On the other hand, note that $\varepsilon$ induces an isomorphism 
\[
\mathrm{Gr}^0(\tp_0(\OK/\Szz))\cong \mathrm{Gr}^0(\tp_0(\OK/\Sz)).
\]
This implies that 
$\ker(\varepsilon)\subset \mathcal{N}^{\geq2} \tp_0(\OK/\Szz)$.
The lemma follows. 
\end{proof}
\begin{cor}\label{C:u-ext}
As graded rings, we have
\begin{equation}\label{E:isom-thhzz}
\thh_*(\OK/\Szz) = \OK[u_0]\otimes_{\OK}\OK\langle t_{z_0-z_1}\rangle. 
\end{equation}
\end{cor}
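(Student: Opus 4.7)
The plan is to upgrade the associated-graded statement of Lemma \ref{L:grad-thhzz} to an honest ring isomorphism by embedding $\thh_*(\OK/\Szz)$ into the divided power algebra $\OK\langle t_{\EK(z_0)}, t_{z_0-z_1}\rangle \cong \mathrm{HH}_*(\OK/W(\k)[z_0,z_1])$ of Corollary \ref{C:hh-hp} and identifying the image with the subring $\OK[t_{\EK(z_0)}]\otimes_{\OK}\OK\langle t_{z_0-z_1}\rangle$. Under $u_0 \mapsto \lambda\,t_{\EK(z_0)}$ for a unit $\lambda \in \OK^\times$ (arising from the fact that $u$ generates $\thh_2(\OK/\Sz)$ and maps to a generator of $\mathrm{HH}_2(\OK/W(\k)[z])$), this yields exactly the claimed formula.

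The embedding itself is the map on associated gradeds, with respect to the Nygaard filtration, of the strict injection of Lemma \ref{L:inj}, using Lemma \ref{L:grad-tpzz}. Both $\thh_{2j}(\OK/\Szz)$ and the target subring have $\OK$-rank $j+1$ in degree $2j$ (the source by Lemma \ref{L:grad-thhzz}), so once the image is shown to contain a generating set for the subring, the result follows by injectivity and rank count.

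The core inductive step produces preimages $\tau_k := p_{2p^k}(f^{(k)}) \in \thh_{2p^k}(\OK/\Szz)$ of the divided powers $t^{[p^k]}_{z_0-z_1}$ up to a unit $c_k \in \mathbb{Z}_p^\times$. The base case $\tau_0 = p_2(z_0-z_1) = t_{z_0-z_1}$ is immediate from definitions. For the inductive step, I apply $p_{2p^{k+1}}$ to the identity $-(f^{(k)})^p = p f^{(k+1)} - \delta^k(h)\EK(z_0)^{p^{k+1}}$ from (\ref{E:induc-odd}). Crucially, Lemma \ref{L:delta-h} places $\delta^k(h) \in \mathcal{N}^{\geq 2}$, so the last term lies in $\mathcal{N}^{\geq 2p^{k+1}+2}$ and is killed by $p_{2p^{k+1}}$. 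Corollary \ref{C:tpzz-nyg} allows the factor $p$ to pass through the projection, and multiplicativity of the associated graded gives $p_{2p^{k+1}}((f^{(k)})^p) = \tau_k^p$; since $\thh_*(\OK/\Szz)$ is $p$-torsion free (Corollary \ref{C:grad-thhzz}), I obtain $\tau_{k+1} = -\tau_k^p/p$. Transporting via the embedding and using the divided power identity $(t^{[p^k]})^p = \frac{(p^{k+1})!}{(p^k!)^p}\,t^{[p^{k+1}]}$ together with the Legendre-style computation $v_p\bigl(\frac{(p^{k+1})!}{(p^k!)^p}\bigr) = 1$ (valid even for $p=2$), the recursion reads $c_{k+1} = -c_k^p\,u_k$ for some unit $u_k \in \mathbb{Z}_p^\times$, so $c_k$ remains a unit throughout.

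For arbitrary $j$, write $j = \sum a_i p^i$ in base $p$ with $0 \le a_i \le p-1$; by Kummer's theorem the multinomial coefficient $\frac{j!}{\prod (a_i p^i)!}$ is a $p$-adic unit (no carries), so $\prod_i \tau_i^{a_i}$ maps to a unit multiple of $t^{[j]}_{z_0-z_1}$. Together with the images of $u_0^k$, this lifts every basis element $t_{\EK(z_0)}^k \cdot t^{[l]}_{z_0-z_1}$ of the target subring, completing the identification. The main obstacle is the propagation in the key inductive step; Lemma \ref{L:delta-h} is exactly what guarantees that the error term $\delta^k(h)\EK(z_0)^{p^{k+1}}$ has Nygaard filtration high enough to be negligible under the projection $p_{2p^{k+1}}$, and it is positioned immediately before the corollary precisely for this reason.
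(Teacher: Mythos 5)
Your core construction is exactly the paper's: the divided powers $t^{[p^k]}_{z_0-z_1}$ are produced from the $f^{(k)}$ via the recursion (\ref{E:induc-odd}), with Lemma \ref{L:delta-h} guaranteeing that the error term $\delta^{k}(\ef)\EK(z_0)^{p^{k+1}}$ lies in $\mathcal{N}^{\geq 2p^{k+1}+2}$ and hence dies under $p_{2p^{k+1}}$; the paper delegates the remaining bookkeeping to ``the argument of Lemma \ref{L:dense}'', and your explicit computation of the units (the Legendre calculation $v_p\bigl(\tfrac{(p^{k+1})!}{((p^k)!)^p}\bigr)=1$ and the no-carry/Kummer step for general $j$) is a correct unwinding of that. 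So the substantive part of the proof is right and in the same spirit as the paper's.

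The genuine gap is the concluding ``injectivity and rank count''. Over the DVR $\OK$, an injection $\thh_{2j}(\OK/\Szz)\hookrightarrow \mathrm{HH}_{2j}(\OK/W(\k)[z_0,z_1])$ whose image \emph{contains} a free submodule $M_j$ of the same rank $j+1$ does not force the image to equal $M_j$: the quotient $\mathrm{Im}/M_j$ could be nonzero torsion. Equivalently, $j+1$ linearly independent elements of a free rank-$(j+1)$ module need not generate it, so exhibiting the $u_0^l\tau^{(j-l)}$ with linearly independent images does not yet show they form an $\OK$-basis of $\thh_{2j}(\OK/\Szz)$. The worry is not vacuous, since the ambient $\mathrm{HH}_{2j}=\OK\langle t_{\EK(z_0)},t_{z_0-z_1}\rangle_{2j}$ really does contain elements like $t^{[2]}_{\EK(z_0)}=\tfrac{1}{2}(\lambda^{-1}u_0)^2$ that are $\tfrac1p$ times elements of your target subring. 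The paper closes exactly this point differently: it defines the $\OK[u_0]$-linear map $\OK[u_0]\otimes_{\OK}\OK\langle t\rangle\to\thh_*(\OK/\Szz)$, $t^{[j]}\mapsto t^{[j]}_{z_0-z_1}$, and checks it is an isomorphism on the associated graded of the $u_0$-adic filtration via Lemma \ref{L:grad-thhzz} (which is finite in each degree, so this suffices). To repair your write-up, run your recursion after reducing modulo $u_0$, i.e.\ in $\thh_*(\OK/\OK[z_1])\cong\OK\langle t\rangle$, to see that $\tau^{(m)}$ generates $\thh_{2m}/u_0\thh_{2m-2}$, and then induct on the $u_0$-filtration; the rank count alone is not enough.
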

\begin{proof}
By Lemma \ref{L:delta-h} and (\ref{E:induc-odd}), we get that $-(f^{(k)})^p$ and $pf^{(k+1)}$ have the same image in $\thh_{2p^{k+1}}(\OK/\Szz)$.  Using the argument of Lemma \ref{L:dense}, we conclude that the images of $\{f^{(k)}\}_{k\geq0}$ in $\thh_*(\OK/\Szz)$ generates $t^{[j]}_{z_0-z_1}$ for all $j\geq0$ over $\mathbb{Z}_p$. 
This allows us to define the $\OK[u_0]$-linear map 
\[
\OK[u_0]\otimes_{\OK}\OK\langle t\rangle\to \thh_*(\OK/\Szz), \quad t^{[j]}\mapsto t^{[j]}_{z_0-z_1}. 
\]
By Lemma \ref{L:grad-thhzz}, this map induces isomorphisms on associated graded modules under the $u_0$-filtrations. Hence it is an isomorphism. 
\end{proof}

\begin{rem}
Since we already know that $\thh_*(\OK/\Szz)$ is $p$-torsion free (Corollary \ref{C:tpzz-integral}), the existence of the divided powers in \eqref{E:isom-thhzz} is a merely a ring-theoretical property and not additional structure that needs to be defined.
\end{rem}

\begin{prop}\label{P:hopf-thh}
Under the isomorphism (\ref{E:isom-thhzz}), we have
\begin{equation}\label{E:right-unit-u}
u_1= u_0-E'_K(\pK)t_{z_0-z_1},
\end{equation}
and 
\begin{equation}\label{E:delta-t}
\Delta(t^{[i]}_{z_0-z_1}) = \sum_{0\leq j\leq i} t^{[j]}_{z_0-z_1}\otimes t^{[i-j]}_{z_0-z_1}, \quad \varepsilon(t_{z_0-z_1})=0.
\end{equation}
\end{prop}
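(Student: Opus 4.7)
The plan is to reduce the statement to explicit computations in Hochschild homology, which by Theorem \ref{T:HKR} are completely transparent.

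For the relation \eqref{E:right-unit-u}, I would work in the ambient group $\mathrm{HH}_2(\OK/W(\k)[z_0,z_1])$. By Lemma \ref{L:inj}, the natural map $\thh_2(\OK/\Szz) \to \mathrm{HH}_2(\OK/W(\k)[z_0,z_1])$ is injective; in fact it is an isomorphism in this degree, since both sides are free rank-$2$ $\OK$-modules (by Lemma \ref{L:grad-thhzz} and Corollary \ref{C:hh-hp}). Under the HKR identification $\mathrm{HH}_2 \cong I/I^2$ with $I = (E_K(z_0), z_0 - z_1) \subset W(\k)[z_0, z_1]$, a Taylor expansion yields
\[
E_K(z_1) \equiv E_K(z_0) - E_K'(z_0)(z_0 - z_1) \pmod{I^2},
\]
and since $E_K'(z_0) \equiv E_K'(\pK) \pmod{I}$, this gives $t_{E_K(z_1)} = t_{E_K(z_0)} - E_K'(\pK)\, t_{z_0-z_1}$ in $I/I^2$. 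With the normalization of Convention \ref{R:choice} identifying $u_i$ with $t_{E_K(z_i)}$ under the HKR isomorphism used to define $t_{z_0-z_1}$, this immediately yields \eqref{E:right-unit-u}.

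For the coproduct formula in \eqref{E:delta-t}, I would identify $\Delta$ at the cosimplicial level. Using Lemma \ref{L:tensor-thh-tp},
\[
\thh_*(\OK/\Szz) \otimes_{\thh_*(\OK/\Sz)} \thh_*(\OK/\Szz) \cong \thh_*(\OK/\mathbb{S}_{W(\k)}[z_0,z_1,z_2]),
\]
and $\Delta$ corresponds to the base-change $W(\k)[z_0,z_1] \to W(\k)[z_0,z_1,z_2]$ sending $z_0 \mapsto z_0$ and $z_1 \mapsto z_2$. Thus $\Delta(z_0 - z_1) = (z_0-z_1) + (z_1-z_2)$, and passing to $\mathrm{HH}_2$ yields the primitivity relation $\Delta(t_{z_0-z_1}) = t_{z_0-z_1} \otimes 1 + 1 \otimes t_{z_0-z_1}$. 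Since $\thh_*(\OK/\Szz)$ is $p$-torsion free by Corollary \ref{C:tpzz-integral}, the divided powers satisfy $i!\, t_{z_0-z_1}^{[i]} = t_{z_0-z_1}^i$, so applying the ring homomorphism $\Delta$ and expanding via the binomial theorem gives
\[
i!\,\Delta(t_{z_0-z_1}^{[i]}) = (t_{z_0-z_1}\otimes 1 + 1\otimes t_{z_0-z_1})^i = i!\sum_{j+k=i} t_{z_0-z_1}^{[j]}\otimes t_{z_0-z_1}^{[k]},
\]
from which division by $i!$ yields the desired formula. The counit is induced by the degeneracy $W(\k)[z_0, z_1] \to W(\k)[z_0]$ sending both $z_0, z_1 \mapsto z_0$, which kills $z_0-z_1$ and hence $t_{z_0-z_1}$.

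The chief technical obstacle is the normalization issue in \eqref{E:right-unit-u}: Lemma \ref{L:inj} only guarantees that $u \mapsto t_{E_K(z)}$ up to a unit of $\OK$, and one must trace through the specific choice of $u$ from Convention \ref{R:choice} to verify that this unit is precisely $1$, so that the coefficient of $t_{z_0-z_1}$ comes out to be exactly $E_K'(\pK)$ rather than a unit multiple thereof. Once this compatibility is pinned down, everything else is a transparent algebraic manipulation in the divided-power HKR complex.
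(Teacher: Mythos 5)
Your argument for \eqref{E:delta-t} is essentially the paper's: the coproduct is the middle coface of the cosimplicial ring, $z_0-z_1\mapsto z_0-z_2=(z_0-z_1)+(z_1-z_2)$, and one divides the binomial expansion by $i!$ using torsion-freeness (note that $p$-torsion-freeness suffices because every prime $\ell\neq p$ is already invertible in $\OK$). The counit computation is likewise fine.

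For \eqref{E:right-unit-u} the computation (Taylor expansion of $\EK(z_1)$ at $z_0$, with the quadratic remainder dying in the associated graded) is again the same as the paper's, but you route it through $\mathrm{HH}_2(\OK/W(\k)[z_0,z_1])$ and then leave the normalization $u_i\leftrightarrow t_{\EK(z_i)}$ as an unresolved ``chief technical obstacle.'' This loose end is not actually hard to close, and the paper's proof avoids the detour entirely: by Theorem \ref{T:rtp}(6) one has $uv=\EK(z)$ with $\can(v)=\sigma^{-1}$, hence $\can(u)=\EK(z)\sigma$, which says precisely that $p_2(\EK(z))=u$ on the nose for the projection \eqref{E:proj} (this identity is already recorded and used in the proof of Lemma \ref{L:dense}); by functoriality $p_2(\EK(z_i))=u_i$, and since $t_{z_0-z_1}=p_2(z_0-z_1)$ by definition while $(z_0-z_1)^2F(z_0,z_1)\in\mathcal{N}^{\geq 4}\tp_0(\OK/\Szz)$, applying $p_2$ to the Taylor expansion inside $\tp_0(\OK/\Szz)$ gives \eqref{E:right-unit-u} directly, with no unit ambiguity. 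You should fold this into your write-up; as it stands, the coefficient of $t_{z_0-z_1}$ is only pinned down up to a unit. A second, smaller point: your claim that the injection $\thh_2(\OK/\Szz)\to\mathrm{HH}_2(\OK/W(\k)[z_0,z_1])$ is an isomorphism ``since both sides are free rank-$2$ $\OK$-modules'' is not a valid inference (multiplication by $p$ on $\OK^2$ is an injective non-isomorphism); the identification of $\thh_2$ with $I/I^2$ is part of the paper's definition of $t_{z_0-z_1}$ and should be invoked as such rather than rederived this way.
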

\begin{proof}
Write 
\[
\EK(z_1)=\EK(z_0)-E'_K(z_0)(z_0-z_1)+(z_0-z_1)^2F(z_0, z_1)
\]
for some $F\in K_0[x, y]$. Since $z_0-z_1\in\mathcal{N}^{\geq2}\tp_0(\OK/\Szz)$, by Theorem \ref{T:rtp},  we deduce that 
\[
u_1=p_2(E_K(z_1))=p_2(\EK(z_0)-E'_K(z_0)(z_0-z_1))=u_0-E'_K(\pK)t_{z_0-z_1}.
\]
This yields (\ref{E:right-unit-u}).  We conclude (\ref{E:delta-t}) by the binomial expansion
\[
(z_0-z_2)^i=\sum_{0\leq j\leq i}\frac{i!}{j!(i-j)!}(z_0-z_1)^j(z_1-z_2)^{i-j}. 
\]
\end{proof}

\section{The descent spectral sequence}\label{s5}
We consider the $\Sz$-Adams resolution for $\Sk$: 
\begin{equation}\label{E:adams-witt}
\Sk\rightarrow \Sz^{\otimes \oldbullet},
\end{equation}
where the tensor product $\Sz^{\otimes [n]}$ is taken relative to $\Sk$.
It induces the augmented cosimplicial $E_\infty$-algebra in cyclotomic spectra
\begin{equation}\label{E:adams-thh}
\thh(\OK/\Sk) \rightarrow \thh(\OK/\Sz^{\otimes \oldbullet}),
\end{equation}
which in turn induces augmented cosimplicial $E_\infty$-algebras in spectra
\begin{equation}\label{E:adams-tc-}
\tc^{-}(\OK/\Sk) \rightarrow \tc^{-}(\OK/\Sz^{\otimes \oldbullet})
\end{equation}
and
\begin{equation}\label{E:adams-tp}
\tp(\OK/\Sk) \rightarrow \tp(\OK/\Sz^{\otimes \oldbullet}).
\end{equation}

By the multiplicative property of THH,
$\thh(\OK/\Sz^{\otimes [n]})$ is equivalent to $\thh(\OK/\Sz)^{\otimes[n]}$, where the tensor product is taken relative to $\thh(\OK/\Sk)$.
Hence $(\ref{E:adams-thh})$ is an Adams resolution for $\thh(\OK/\Sk)$ in the category of $\thh(\OK/\Sk[z])$-modules.

\begin{prop}\label{P:tot-thh}
The augmented cosimplicial $E_\infty$-algebra in cyclotomic spectra (\ref{E:adams-thh}) induces 
\begin{equation}\label{E:tot-thh}
\thh(\OK/\Sk) \simeq \biglimit_\Delta\thh(\OK/\Sz^{\otimes \oldbullet}).
\end{equation}
\end{prop}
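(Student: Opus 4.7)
The approach is to recognize the augmented cosimplicial object as a cobar/Amitsur complex and then establish the corresponding descent statement. Iteratively applying the multiplicative property of THH (equation (2.1)) yields equivalences
\[
\thh(\OK/\Sz^{\otimes[n]}) \simeq \thh(\OK/\Sz)^{\otimes_{\thh(\OK/\Sk)}(n+1)},
\]
which one expects to be coherently compatible with the cosimplicial face and degeneracy maps. Consequently, the augmented cosimplicial object (\ref{E:adams-thh}) is identified with the Amitsur cobar complex of the unit map $\thh(\OK/\Sk) \to \thh(\OK/\Sz)$ in $E_\infty$-algebras in cyclotomic spectra, and the proposition reduces to showing that this unit is of effective descent.

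The key input is that the underlying base map $\Sk \to \Sz$ is faithfully flat, since $\Sz$ is by construction a polynomial $\Sk$-algebra and hence a free $\Sk$-module. In particular, $\Sk \simeq \biglimit_\Delta \Sz^{\otimes_{\Sk}[\bullet]}$ as $E_\infty$-algebras in cyclotomic spectra with trivial $\mathbb{T}$-action, and this descent is preserved under tensoring with any $\Sk$-module in cyclotomic spectra. To propagate the descent to the THH level, one may invoke Lemma \ref{l3} to write
\[
\thh(\OK/\Sz^{\otimes[n]}) \simeq \thh(\OK) \otimes_{\thh(\Sz^{\otimes[n]})} \Sz^{\otimes[n]},
\]
so that the computation of the totalization reduces to the faithfully flat descent for $\Sz^{\otimes[\bullet]}$ over $\Sk$, applied inside an appropriate relative tensor product; combining with Lemma \ref{l3} in reverse then identifies the limit with $\thh(\OK/\Sk)$. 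An alternative, perhaps cleaner, route is to invoke a Mathew-style descendability criterion: descendability of $\Sk \to \Sz$ passes through the symmetric monoidal functor $\thh(\OK/-)$ to yield descendability of $\thh(\OK/\Sk) \to \thh(\OK/\Sz)$ in $E_\infty$-algebras in cyclotomic spectra.

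The hard part will be verifying that the levelwise equivalences above assemble into an equivalence of cosimplicial diagrams (not merely of individual terms), and that faithful flatness for $\Sk \to \Sz$ genuinely transfers to the THH level in a manner compatible with the cyclotomic structure. Concretely, one must track carefully how the multiplicative property and Lemma \ref{l3} interact with the symmetric monoidal structure on $E_\infty$-algebras in cyclotomic spectra, and confirm that the descent for $\Sz^{\otimes_{\Sk}[\bullet]}$ remains effective after base change through $\thh(\OK)$.
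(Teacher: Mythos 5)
Your reduction of the statement to effective descent for the unit map $\thh(\OK/\Sk)\to\thh(\OK/\Sz)$ is correct (the multiplicative property does identify the cosimplicial object with the Amitsur complex of this map), but the way you then try to verify descent does not work. The descent you establish for $\Sk\to\Sz$ concerns the cosimplicial objects $M\otimes_{\Sk}\Sz^{\otimes[\bullet]}$ for a \emph{fixed} $\Sk$-module $M$; the object $\thh(\OK/\Sz^{\otimes[n]})\simeq\thh(\OK)\otimes_{\thh(\Sz^{\otimes[n]})}\Sz^{\otimes[n]}$ is not of this form, since the ring one tensors over varies with $[n]$ (indeed $\thh(\OK/\Sk)\otimes_{\Sk}\Sz$ has $\pi_0=\OK[z]$ while $\thh(\OK/\Sz)$ has $\pi_0=\OK$, so these are genuinely different objects). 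Nor does faithful flatness or descendability of $\Sk\to\Sz$ ``pass through'' to the unit map in question: $\Sk\to\Sz$ is descendable because the augmentation $z\mapsto 0$ is a retraction, but this augmentation is incompatible with $z\mapsto\pK$, so it induces no retraction of $\thh(\OK/\Sk)\to\thh(\OK/\Sz)$; and the latter map is not flat on homotopy either, since $\thh_*(\OK/\Sk)$ has odd-degree torsion (cf.\ Corollary \ref{C:thh-sz}) that dies in $\thh_*(\OK/\Sz)=\OK[u]$, which also rules out $\thh(\OK/\Sk)$ being a module retract of $\thh(\OK/\Sz)$. So the ``hard part'' you flag is not a technical verification but the actual content, and the route you propose for it fails.

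What makes the totalization converge is a connectivity (pro-nilpotence) argument rather than flatness or descendability: by \cite[Proposition 2.14]{MNN}, the fiber of $\thh(\OK/\Sk)\to\biglimit_{\Delta^{\leq n}}\thh(\OK/\Sz^{\otimes\oldbullet})$ is the $(n+1)$-fold smash power over $\thh(\OK/\Sk)$ of the fiber $F$ of the unit map $\thh(\OK/\Sk)\to\thh(\OK/\Sz)$. Since both spectra are connective, the map on $\pi_0$ is the identity of $\OK^{\wedge}_p$, and $\thh_1(\OK/\Sz)=0$, the fiber $F$ is $0$-connected; hence the $(n+1)$-fold smash power is $n$-connected and the limit over $n$ is an equivalence. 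This is the paper's proof, and it is both shorter and avoids any appeal to flatness at the THH level.
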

\begin{proof}
By \cite[Proposition 2.14]{MNN}, the fiber of 
\begin{equation}\label{e43}
\thh(\OK/\Sk) \rightarrow \biglimit_{\Delta^{\leq n}}\thh(\OK/\Sz^{\otimes \oldbullet})
\end{equation}
is homotopy equivalent to $(n+1)$-fold self-smash product of the fiber of 
\begin{equation}\label{E:fiber-s}
\thh(\OK/\Sk) \rightarrow \thh(\OK/\Sz).
\end{equation} 
It follows that the fiber of (\ref{e43}) is $n$-connected as the fiber of (\ref{E:fiber-s}) is $0$-connected. The proposition follows.
\end{proof}

\begin{cor}\label{C:tot-tp}
The augmented cosimplicial spectra (\ref{E:adams-tc-}), (\ref{E:adams-tp}) induce
\begin{equation}\label{E:tot-tc-}
\tc^-(\OK/\Sk)\simeq \biglimit_\Delta\tc^-(\OK/\Sz^{\otimes \oldbullet})
\end{equation}
and 
\begin{equation}\label{E:tot-tp}
\tp(\OK/\Sk)\simeq \biglimit_\Delta\tp(\OK/\Sz^{\otimes \oldbullet}).
\end{equation}
\end{cor}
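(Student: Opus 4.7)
The plan is to deduce both equivalences from Proposition~\ref{P:tot-thh} by transporting it through the two functors $(-)^{h\mathbb{T}}$ and $(-)^{t\mathbb{T}}$ respectively. The two cases require different amounts of work because only the first functor is a right adjoint.

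For \eqref{E:tot-tc-}, the argument should be essentially formal. Since $(-)^{h\mathbb{T}}$ is, by definition, the mapping spectrum $F(B\mathbb{T}_+, -)$, it is a right adjoint and thus preserves arbitrary limits. Applying $(-)^{h\mathbb{T}}$ termwise to the limit diagram of Proposition~\ref{P:tot-thh} yields the stated equivalence immediately, once one checks (as in Lemma~\ref{l3} and Proposition~\ref{p4}) that the augmented cosimplicial diagram $\thh(\OK/\Sk)\to \thh(\OK/\Sz^{\otimes\oldbullet})$ is a diagram of $\mathbb{T}$-equivariant $E_\infty$-algebras, so that passing to $(-)^{h\mathbb{T}}$ is meaningful termwise.

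For \eqref{E:tot-tp}, the Tate construction $(-)^{t\mathbb{T}}$ is not a right adjoint, so I would use the norm cofiber sequence
\[
\thh(-)_{h\mathbb{T}}[1] \longrightarrow \thh(-)^{h\mathbb{T}} \longrightarrow \thh(-)^{t\mathbb{T}}
\]
to reduce the claim to the corresponding statement for homotopy orbits. Since fiber sequences are preserved by totalizations, and the middle term is handled by \eqref{E:tot-tc-}, it suffices to show
\[
\thh(\OK/\Sk)_{h\mathbb{T}}\simeq \biglimit_{\Delta} \thh(\OK/\Sz^{\otimes\oldbullet})_{h\mathbb{T}}.
\]
Here I would repeat the argument of Proposition~\ref{P:tot-thh}: by \cite[Proposition~2.14]{MNN}, the fiber of
\[
\thh(\OK/\Sk)\to \biglimit_{\Delta^{\leq n}}\thh(\OK/\Sz^{\otimes\oldbullet})
\]
is the $(n+1)$-fold self-smash product of the fiber of $\thh(\OK/\Sk)\to\thh(\OK/\Sz)$, which is $0$-connected, so the fiber is $n$-connected. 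Now $-\otimes B\mathbb{T}_+$ is an exact functor of stable $\infty$-categories, hence commutes with the finite limit $\biglimit_{\Delta^{\leq n}}$; moreover, since $B\mathbb{T}_+$ is connective, smashing with it preserves $n$-connectivity. Thus the fiber of the corresponding map after passage to homotopy orbits is still $n$-connected, and the Milnor-type inverse limit over $n$ is therefore an equivalence.

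The main subtle point is ensuring that $(-)_{h\mathbb{T}}$ commutes with the limit over the tower $\{\mathrm{Tot}^n\}$; this is where preservation of connectivity is essential, since one cannot interchange an arbitrary colimit with a sequential limit. Having the uniform $n$-connectivity bound for the fibers of the tower circumvents this, because an inverse limit of increasingly connective spectra vanishes. Once \eqref{E:tot-tc-} and the homotopy-orbit equivalence are established, applying the three-term fiber sequence above termwise and commuting it past the limit yields \eqref{E:tot-tp}.
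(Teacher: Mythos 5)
Your proposal is correct and follows essentially the same route as the paper: $\tc^-$ is handled by commuting the homotopy fixed points (a limit) past the totalization, and $\tp$ by showing homotopy orbits commute with the totalization via the $n$-connectivity of the fibers of the partial totalizations, then concluding with the norm cofiber sequence (which the paper leaves implicit in ``this yields the claim for $\tp$''). One small imprecision: $(-)^{h\mathbb{T}}$ and $(-)_{h\mathbb{T}}$ are not literally $F(B\mathbb{T}_+,-)$ and $-\otimes B\mathbb{T}_+$ unless the action is trivial — they are the limit and colimit over $B\mathbb{T}$ — but the properties you actually use (limit-preservation, exactness, and preservation of $n$-connectivity via the skeletal filtration of $B\mathbb{T}$) hold for the correct constructions, so the argument is unaffected.
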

\begin{proof}
The claim for $\tc^-$ follows from the natural equivalence
\[
(\biglimit_\Delta\thh(\OK/\Sz^{\otimes \oldbullet}))^{h\mathbb{T}}\simeq \biglimit_\Delta\thh(\OK/\Sz^{\otimes \oldbullet})^{h\mathbb{T}}.
\] 
For the case of $\tp$, first note the natural equivalence
\[
(\biglimit_{\Delta^{\leq n}}\thh(\OK/\Sz^{\otimes \oldbullet}))_{h\mathbb{T}}\simeq \biglimit_{\Delta^{\leq n}}\thh(\OK/\Sz^{\otimes \oldbullet})_{h\mathbb{T}}.
\] 
Since the fiber of (\ref{e43}) is $n$-connected by the proof of Proposition \ref{P:tot-thh}, the fiber of 
\[
\thh(\OK/\Sk)_{h\mathbb{T}} \rightarrow (\biglimit_{\Delta^{\leq n}}\thh(\OK/\Sz^{\otimes \oldbullet}))_{h\mathbb{T}}
\]
is $n$-connected as well. Hence the fiber of 
\[
\thh(\OK/\Sk)_{h\mathbb{T}} \rightarrow \biglimit_{\Delta^{\leq n}}\thh(\OK/\Sz^{\otimes \oldbullet})_{h\mathbb{T}}
\]
is $n$-connected, concluding the natural equivalence
\[
\thh(\OK/\Sk)_{h\mathbb{T}}\simeq \biglimit_\Delta\thh(\OK/\Sz^{\otimes \oldbullet})_{h\mathbb{T}}.
\]
This yields the claim for $\tp$.
\end{proof}

Using Proposition \ref{P:tot-thh} and Corollary \ref{C:tot-tp},  the coskeleton filtrations of $\thh(\OK/\Sz^{\otimes \oldbullet})$, $\tp(\OK/\Sz^{\otimes \oldbullet})$ and $\tc^{-}(\OK/\Sz^{\otimes \oldbullet})$ give rise to multiplicative second quadrant homology type spectral sequences converging to $\thh_*(\OK/\mathbb{S}_{W(\k)})$, $\tp_*(\OK/\mathbb{S}_{W(\k)})$ and $\tc^{-}_*(\OK/\mathbb{S}_{W(\k)})$ respectively.
\begin{itemize}
\item The \emph{descent spectral sequence} for $\thh(\OK/\Sk)$:
\[
E^1_{i,j}(\thh(\OK))=\thh_j(\OK/\Sz^{\otimes {[-i])}})\Rightarrow \thh_{i+j}(\OK/\Sk).
\]
By Lemma \ref{L:tensor-thh-tp} and Corollary \ref{C:flat}, the $E^1$-term may be identified with the cobar complex for $\thh_*(\OK/\Sz)$ with respect to the Hopf algebroid 
\[
(\thh_*(\OK/\Szz), \thh_*(\OK/\Sz)).
\] 
It follows that   
\[
E^2_{i,j}(\thh(\OK))\cong \mathrm{Ext}^{-i, j}_{\thh_*(\OK/\Szz)}(\thh_*(\OK/\Sz)).
\]
\item The \emph{descent spectral sequence} for $\tp(\OK/\Sk)$:
\[
E^1_{i,j}(\tp(\OK))=\tp_j(\OK/\Sz^{\otimes[-i]})\Rightarrow \tp_{i+j}(\OK/\Sk).
\]
By Lemma \ref{L:tensor-thh-tp} and Corollary \ref{C:flat}, the $j$-th row of the $E^1$-term may be identified with the cobar complex for $\tp_j(\OK/\Sz)$ with respect to the Hopf algebroid $(\tp_0(\OK/\Szz), \tp_0(\OK/\Sz))$. It follows that   
\[
E^2_{i,j}(\tp(\OK))\cong \mathrm{Ext}^{-i}_{\tp_0(\OK/\Szz)}(\tp_j(\OK/\Sz)).
\]
\item The \emph{descent spectral sequence} for $\tc^-(\OK/\Sk)$:
\[
E^1_{i,j}(\tc^-(\OK))=\tc^-_j(\OK/\Sz^{\otimes[-i]})\Rightarrow \tc^-_{i+j}(\OK/\Sk).
\]
\end{itemize}
The Ext-groups appearing in the $E^2$-terms are considered in the abelian categories of comodules over the corresponding Hopf algebroids. Here we adopt the convention that for graded objects, $\mathrm{Ext}^{i,j}$ means the degree $j$ part of the graded abelian group $\mathrm{Ext}^i$.
In terms of geometric language, these abelian categories are the abelian categories of quasicoherent sheaves on the respective stacks defined by the Hopf algebroids, and the multiplicative structure on the $E^2$-terms are induced from the symmetric monoidal structure in the category of quasicoherent sheaves.

\begin{rem}
Indeed, the $E^2$-term of the descent spectral sequence for $\tc^-(\OK/\Sk)$ may also be identified with certain $\mathrm{Ext}$-groups in the category of complete filtered comodules over filtered Hopf algebroids. The details will be given in \cite{LW}.
\end{rem}

Using (\ref{E:tot-tc-}) and (\ref{E:tot-tp}), we may also construct a spectral sequence converging to $\tc_*(\OK/\Sk)$.
Firstly, the maps $\can, \varphi$ induce the maps of cosimplical $E_\infty$-algebras in spectra
\[
\can,\varphi: \tc^-(\OK/\Sz^{\otimes \oldbullet})\rightarrow \tp(\OK/\Sz^{\otimes \oldbullet}).
\]
Define $\tc(\OK/\Sk)_{(n)}$ to be the fiber of
\[
\can-\varphi: \biglimit_{\Delta^{\leq n}}\tc^-(\OK/\Sz^{\otimes \oldbullet})\rightarrow \biglimit_{\Delta^{\leq n-1}}\tp(\OK/\Sz^{\otimes \oldbullet}).
\]
By construction, we get
\begin{eqnarray*}
{\tc(\OK/\Sk)_{(n)}  \over \tc(\OK/\Sk)_{(n+1)}}\simeq {\biglimit_{\Delta^{\leq n}}\tc^-(\OK/\Sz^{\otimes \oldbullet})\over \biglimit_{\Delta^{\leq n+1}}\tc^-(\OK/\Sz^{\otimes \oldbullet})} \oplus \Sigma^{-1}  {\biglimit_{\Delta^{\leq n-1}}\tp(\OK/\Sz^{\otimes \oldbullet})\over \biglimit_{\Delta^{\leq n}}\tp(\OK/\Sz^{\otimes \oldbullet})}.
\end{eqnarray*}
The tower $\{\tc(\OK)_{(n)}\}_{n\geq0}$ gives rise to the \emph{descent spectral sequence} for $\tc(\OK/\Sk)$:
\[
E_{i,j}^1(\tc(\OK))\Rightarrow \tc_{i+j}(\OK/\Sk).
\] 
Note that 
 $E^1(\tc(\OK))$ may be identified with the total complex of the double complex
\[
E^1(\tc^-(\OK))\xrightarrow{\can-\varphi}E^1(\tp(\OK)).
\]
Consequently,  
there is a  multiplicative spectral sequence 
\[
\tilde{E}^2_{i,k, j}(\tc(\OK)) \Rightarrow E^2_{i-k,j}(\tc(\OK)), \quad k\in\{0,1\}, 
\]
associated with this double complex,
so 
\[
\tilde{E}^2_{i,0,j}(\tc(\OK))= \ker(\can-\varphi: E^2_{i,j}(\tc^{-}(\OK))\to E^2_{i, j}(\tp(\OK))),
\]
and 
\[
\tilde{E}^2_{i,1,j}(\tc(\OK))=\mathrm{coker}(\can-\varphi: E^2_{i,j}(\tc^{-}(\OK))\to E^2_{i, j}(\tp(\OK))).
\]


In the rest of this section, we will determine $E^2_{i,j}(\thh(\OK))$ explicitly. To this end, first note that it follows from Corollary \ref{C:u-ext} and (\ref{E:delta-t}) that the map of left-$\thh_*(\OK/\Sz)$-modules
\[
D: \thh_*(\OK/\Szz)\to \thh_*(\OK/\Szz), 
\]
which sends $t_{z_0-z_1}^{[i]}$ to $t_{z_0-z_1}^{[i-1]}$, is indeed a map of left $\thh(\OK/\Szz)$-modules. It follows that the complex 
\begin{equation}\label{E:rel-inj}
0\to \thh_*(\OK/\Sz)\xrightarrow{\eta_L}\thh_*(\OK/\Szz\xrightarrow{a\mapsto D(a)dz} \thh_*(\OK/\Szz) dz\to 0,
\end{equation}
where $dz$ has degree $2$, is a relative injective resolution for $\thh_*(\OK/\Sz)$ as left $\thh_*(\OK/\Szz)$-modules.

\begin{prop}\label{P:ext-thh}
The extension $\mathrm{Ext}_{\thh_*(\OK/\Szz)}(\thh_*(\OK/\Sz)$ is computed by the complex
\begin{equation}\label{E:complex-thh}
\thh_*(\OK/\Sz)\xrightarrow{(D_0\circ\eta_R)dz}\thh_*(\OK/\Sz)dz,
\end{equation}
where 
\[
D_0: \thh_*(\OK/\Szz)\rightarrow \thh_*(\OK/\Sz)
\] 
is the map of left $\thh_*(\OK/\Sz)$-modules given by 
\[
D_0(t_{z_0-z_1})=1, \quad D_0(t^{[i]}_{z_0-z_1})=0~\text{for $i\neq 1$}. 
\] 

\end{prop}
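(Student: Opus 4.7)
The plan is to apply the cotensor functor $A\,\square_\Gamma(-)$ to the relative injective resolution \eqref{E:rel-inj}, where I abbreviate $A := \thh_*(\OK/\Sz)$ and $\Gamma := \thh_*(\OK/\Szz)$. Since \eqref{E:rel-inj} is a resolution of $A$ by relative injective left $\Gamma$-comodules, the cohomology of the resulting two-term complex computes $\mathrm{Ext}_\Gamma^*(A)$.

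The main ingredient is the standard identification $A\,\square_\Gamma \Gamma \cong A$ (and similarly $A\,\square_\Gamma(\Gamma\,dz) \cong A\,dz$), which arises because $\Gamma$, equipped with the coaction $\Delta$, is a cofree left $\Gamma$-comodule. In one direction this isomorphism is given by the counit $\epsilon\colon \Gamma \to A$, with a section provided by the right unit $\eta_R\colon A \to \Gamma$; this uses the Hopf algebroid identities $\Delta\circ\eta_R = 1\otimes\eta_R$ (so that $\eta_R(a)$ lies in the cotensor product) and $\epsilon\circ\eta_R = \mathrm{id}_A$.

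Since $D$ is a comodule map (a fact already recorded in the construction of the resolution, and easily verified from \eqref{E:delta-t}: $\Delta\circ D = (1\otimes D)\circ \Delta$), the induced differential $A\to A\,dz$ on cotensor products is given by the composition
\[
A \xrightarrow{\eta_R} \Gamma \xrightarrow{D\,\cdot\,dz} \Gamma\, dz \xrightarrow{\epsilon} A\, dz.
\]
A direct calculation then gives $\epsilon\circ D = D_0$: since $D(t_{z_0-z_1}^{[i]}) = t_{z_0-z_1}^{[i-1]}$ and $\epsilon(t_{z_0-z_1}^{[j]}) = \delta_{j,0}$, one has $\epsilon\bigl(D(t_{z_0-z_1}^{[i]})\bigr) = \delta_{i,1}$, which matches the defining formula for $D_0$. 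Hence the induced differential equals $(D_0\circ\eta_R)\,dz$, as claimed. The only conceptual step is the cofree-comodule identification $A\,\square_\Gamma \Gamma \cong A$, which is standard Hopf algebroid yoga and does not depend on any special features of the present situation; the remaining identification of $\epsilon\circ D$ with $D_0$ is immediate from Proposition \ref{P:hopf-thh} and the definitions.
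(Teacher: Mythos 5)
Your argument is correct and is essentially the paper's proof in dual language: the paper applies $\mathrm{Hom}_{\thh_*(\OK/\Szz)}(\thh_*(\OK/\Sz),-)$ to the resolution \eqref{E:rel-inj} and uses the adjunction \eqref{E:hom-isom}, whereas you apply the naturally isomorphic cotensor functor and use cofreeness of $\Gamma$; in both cases the key identifications are $\mathrm{id}\leftrightarrow\eta_R$ and $D\leftrightarrow D_0$ (your $\epsilon\circ D=D_0$ is exactly the paper's statement that $\tilde D_0=D$). No gaps.
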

\begin{proof}
Using (\ref{E:rel-inj}),  we first get that $\mathrm{Ext}^{i,j}_{\thh_*(\OK/\Szz)}(\thh_*(\OK/\Sz)$ is computed by the complex
\begin{equation}\label{E:ext-1}
\begin{split}
&\mathrm{Hom}_{\thh_*(\OK/\Szz)}(\thh_*(\OK/\Sz), \thh_*(\OK/\Szz))\\
&\xrightarrow{f\mapsto (D\circ f)dz} \mathrm{Hom}_{\thh_*(\OK/\Szz)}(\thh_*(\OK/\Sz), \thh_*(\OK/\Szz))dz.
\end{split}
\end{equation}
Recall that for a (commutative) Hopf algebroid $(A, \Gamma)$, a left $\Gamma$-module $M$ and an $A$-module $N$, there is a canonical isomorphism 
\begin{equation}\label{E:hom-isom}
\mathrm{Hom}_{A}(M, N)\cong \mathrm{Hom}_{\Gamma}(M, \Gamma\otimes_AN), \quad f\mapsto \tilde{f}=(\mathrm{id}\otimes f)\circ \Delta.
\end{equation}
It is straightforward to check that $D_0$ corresponds to $D$ under this isomorphism. It follows that (\ref{E:ext-1}) may be identified with the complex
\begin{equation}\label{E:ext-2}
\begin{split}
\mathrm{Hom}&_{\thh_*(\OK/\Sz)}(\thh_*(\OK/\Sz), \thh_*(\OK/\Sz))\\
&\xrightarrow{f\mapsto (D\circ \tilde{f})dz} \mathrm{Hom}_{\thh_*(\OK/\Sz)}(\thh_*(\OK/\Sz), \thh_*(\OK/\Sz))dz.
\end{split}
\end{equation}
Note that under the isomorphism (\ref{E:hom-isom}), the identity map on $\thh_*(\OK/\Sz)$ corresponds to $\eta_R$ . We thus conclude the proposition by the isomorphism
 \[
 \mathrm{Hom}_{\thh_*(\OK/\Sz)}(\thh_*(\OK/\Sz), \thh_*(\OK/\Sz))\cong \thh_*(\OK/\Sz),
 \]
which sends $f$ to $f(1)$.
\end{proof}

The following results follow immediately. 
\begin{cor}\label{C:thh-sz}
There are canonical isomorphisms
\[
\mathrm{Ext}^{0,0}_{\thh_*(\OK/\Szz)}(\thh_*(\OK/\Sz))\cong \OK
\]
and
\[
\mathrm{Ext}^{1,2n}_{\thh_*(\OK/\Szz)}(\thh_*(\OK/\Sz))\cong \OK/(nE'_K(\pK)), \quad n\geq1.
\]
The other $\mathrm{Ext}$-groups vanish. As a consequence, the descent spectral sequence for $\thh(\OK/\Sk)$ collapses at the $E^2$-term.
\end{cor}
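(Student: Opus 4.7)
The plan is to invoke Proposition \ref{P:ext-thh}, which reduces the entire computation of $\mathrm{Ext}_{\thh_*(\OK/\Szz)}^{*,*}(\thh_*(\OK/\Sz))$ to taking the cohomology of the two-term complex
\[
\OK[u] \xrightarrow{\,(D_0\circ\eta_R)\,dz\,} \OK[u]\,dz,
\]
where $\thh_*(\OK/\Sz)=\OK[u]$ with $|u|=2$ by Theorem \ref{T:rtp}. So the whole task is to evaluate this differential on the monomial basis $\{u^n\}_{n\geq 0}$ and read off its kernel and cokernel.

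The key calculation is as follows. By Proposition \ref{P:hopf-thh}, $\eta_R(u) = u_1 = u_0 - E'_K(\pK)\, t_{z_0-z_1}$, and all these elements sit in even degrees of the commutative graded ring $\thh_*(\OK/\Szz)$, so the binomial theorem applies and gives
\[
u_1^n \;=\; \sum_{k=0}^{n}\binom{n}{k}(-E'_K(\pK))^{k}\,u_0^{n-k}\, t_{z_0-z_1}^{k}.
\]
Converting ordinary powers to divided powers via $t_{z_0-z_1}^{k} = k!\, t_{z_0-z_1}^{[k]}$ inside $\OK\langle t_{z_0-z_1}\rangle$ (Corollary \ref{C:u-ext}), the coefficient of $t_{z_0-z_1}^{[1]}$ is exactly $-n\,E'_K(\pK)\, u_0^{n-1}$. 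Since $D_0$ is left $\thh_*(\OK/\Sz)$-linear and picks out precisely this coefficient, one obtains
\[
D_0(\eta_R(u^n)) \;=\; -n\, E'_K(\pK)\, u^{n-1}.
\]

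From this explicit formula the cohomology falls out: the kernel is concentrated in degree $0$ (for $n\geq 1$, the equation $n E'_K(\pK) a = 0$ in the torsion-free domain $\OK$ forces $a=0$), yielding $\mathrm{Ext}^{0,0}\cong \OK$ and no further $\mathrm{Ext}^{0}$; while the cokernel in degree $2n$ with $n\geq 1$ equals $\OK u^{n-1}dz/(n E'_K(\pK))\cong \OK/(n E'_K(\pK))$, and vanishes in all other degrees. This gives the displayed formulas. For the collapse assertion, observe that the $E^{2}$-page is then supported solely in the two columns $i=0$ and $i=-1$, whereas every differential $d^{r}\colon E^{r}_{i,j}\to E^{r}_{i-r,j+r-1}$ with $r\geq 2$ lands in a column of index $\leq -2$; hence all higher differentials vanish for degree reasons.

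There is no genuine obstacle in this argument: Proposition \ref{P:ext-thh} has absorbed all the input from the Hopf-algebroid machinery, and what remains is purely mechanical. The one point that deserves a moment of care is the conversion between the ordinary power $t_{z_0-z_1}^{k}$ and the divided power $t_{z_0-z_1}^{[k]}$, since the factor of $k!$ is what makes $D_0(u_1^n)$ come out proportional to $n$ rather than to $n!$, and it is precisely this $n$ that produces the torsion module $\OK/(n E'_K(\pK))$.
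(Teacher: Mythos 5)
Your proposal is correct and follows exactly the route the paper intends: the paper derives this corollary ``immediately'' from Proposition \ref{P:ext-thh}, and you have simply written out the straightforward computation of $(D_0\circ\eta_R)(u^n)=-nE_K'(\pK)u^{n-1}$ via (\ref{E:right-unit-u}) and the divided-power conversion, which is what makes the kernel and cokernel come out as stated. The degree-reason argument for the collapse (two adjacent nonzero columns, $d^r$ with $r\geq 2$ shifts by at least two columns) is also the intended one.
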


\begin{rem}
Corollary \ref{C:thh-sz} recovers the main result of \cite{LM}. A different proof of this statement was
given by Krause–Nikolaus \cite{KN}.
\end{rem}




In the remainder of this section, we introduce the \emph{algebraic Tate spectral sequence} and the \emph{algebraic homotopy fixed points spectral sequence}. First note that the $d^1$-differentials of the $E^1$-terms of these descent spectral sequences
\[
d^1: E^1_{i,j}\to E^1_{i-1,j}
\] 
 leave $j$ unchanged, giving these $E^1$-terms a structure of chain complexes. Moreover, the Nygaard filtration defines filtrations on the $E^1$-terms of the descent spectral sequences for $\tc^-$ and $\tp$.    
The \emph{algebraic homotopy fixed points spectral sequence}
\begin{equation}\label{E:ahfs}
E_{i,j,k}^1(\tc^-(\OK))=H^i(\mathrm{Gr}^{2k}(\tc^-_j(\OK/\Sz^{\otimes \oldbullet})))\Rightarrow E_{-i,j}^2(\tc^{-}(\OK)), 
\end{equation}
and the \emph{algebraic Tate spectral sequence}
\begin{equation}\label{E:ats}
E_{i,j,k}^1(\tp(\OK))=H^i(\mathrm{Gr}^{2k}(\tp_j(\OK/\Sz^{\otimes \oldbullet})))\Rightarrow E_{-i,j}^2(\tp(\OK))
\end{equation}
are the spectral sequence associated with the resulting filtered chain complexes.
They are multiplicative spectral sequences with differentials
\[
d^r: E_{i,j,k}^{r}\to E_{i+1,j,k+r}^{r}.
\]
Since $d^r$ leaves $j$ unchanged, so these “algebraic” spectral sequences may be considered one $j$ at a time.

By Remark \ref{R:tate-n-variable}, we see that the associated graded of the Nygaard filtrations of $E^1(\tp(\OK))$ may be identified with part of $E^1(\thh(\OK))[\sigma^{\pm 1}]$. That is, 
\[
E_{i,j,k}^1(\tp(\OK))= 
\begin{cases}
0, \quad \text{$j$ odd} \\
E_{-i,2k}^1(\thh(\OK))\sigma^{j\over2},\quad\text{$j$ even}.
\end{cases}
\]
Moreover, under this identification, the $d^r$-differential (for $j$ even) is given by 
\[
d^r: E_{-i,2k}^1(\thh(\OK))\sigma^{j\over2}\xrightarrow{c\sigma^{j\over2}\to d^r(c)\sigma^{j\over2}} E_{-i-1,2k}^1(\thh(\OK))\sigma^{j\over2}.
\]


Since the algebraic homotopy fixed points spectral sequence is a truncation of the algebraic Tate spectral sequence, using Corollary \ref{C:thh-sz}, the following result follows immediately. 
\begin{prop}\label{P:e2tc-tp}
Both $E^2(\tc^-(\OK))$ and $E^2(\tp(\OK))$
are concentrated in $E^2_{0,*}$ and $E^2_{-1,*}$. In particular, both the descent spectral sequences for $\tc^-(\OK/\Sk)$ and $\tp(\OK/\Sk)$ collapse at the $E^2$-term.
\end{prop}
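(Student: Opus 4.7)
The plan is to propagate the concentration result in Corollary \ref{C:thh-sz} through the algebraic Tate and homotopy fixed points spectral sequences, and then note that degeneration of the descent spectral sequences is automatic once the $E^2$-page is supported in two adjacent columns. By Corollary \ref{C:thh-sz}, $E^2(\thh(\OK)) = \mathrm{Ext}^{*,*}_{\thh_*(\OK/\Szz)}(\thh_*(\OK/\Sz))$ is concentrated in Ext-degrees $0$ and $1$, i.e., in the columns $i=0$ and $i=-1$ of the descent spectral sequence for $\thh(\OK/\Sk)$.

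Next I would run the algebraic Tate spectral sequence (\ref{E:ats}). Its $E^1$-page vanishes for odd $j$, and for even $j$ is identified with $E^1_{-i,2k}(\thh(\OK))\,\sigma^{j/2}$; moreover, the $d^1$-differential is induced by the Alexander--Whitney differential of the descent spectral sequence for $\thh(\OK)$. Taking $d^1$-cohomology and applying Corollary \ref{C:thh-sz} therefore shows that the $E^2$-page of the algebraic Tate spectral sequence is concentrated in $i \in \{0,1\}$. All higher differentials $d^r$ ($r \geq 2$) move $i$ only by $+1$, so they preserve the support $\{0,1\}$. By convergence of (\ref{E:ats}), $E^2(\tp(\OK))$ is concentrated in the columns $E^2_{0,*}$ and $E^2_{-1,*}$. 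The same argument applied to the algebraic homotopy fixed points spectral sequence (\ref{E:ahfs}), which is the nonnegative-$k$ truncation of the Tate one, yields the corresponding statement for $E^2(\tc^-(\OK))$.

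The collapse of the descent spectral sequences for $\tc^-(\OK/\Sk)$ and $\tp(\OK/\Sk)$ at $E^2$ is then a purely formal consequence of the two-column concentration just established: the differential is $d^r : E^r_{i,j} \to E^r_{i-r,j+r-1}$, so for any $r \geq 2$ and any source in $\{0,-1\}$ the target sits in column $i \leq -2$, where $E^2$ (hence every $E^r$) already vanishes. The main (and essentially only) obstacle in this proposition is the input computation recorded in Corollary \ref{C:thh-sz}; once that is in hand, Proposition \ref{P:e2tc-tp} reduces to the degree bookkeeping sketched above.
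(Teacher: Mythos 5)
Your proposal is correct and follows essentially the same route as the paper: the paper likewise deduces the two-column concentration from Corollary \ref{C:thh-sz} via the identification of the associated graded of $E^1(\tp(\OK))$ with $E^1(\thh(\OK))[\sigma^{\pm1}]$ (so the algebraic Tate $E^2$-page is $E^2(\thh(\OK))[\sigma^{\pm1}]$, supported in cohomological degrees $0$ and $1$), treats the algebraic homotopy fixed points spectral sequence as the truncation of the Tate one, and concludes degeneration of the descent spectral sequences for degree reasons. Your write-up simply makes explicit the bookkeeping that the paper records as ``follows immediately.''
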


\section{Refined algebraic Tate differentials} \label{s6}
In this section, we consider mod $p$ version of descent spectral sequences. To determine the $E^2$-terms of mod $p$ descent sequences for $\tp(\OK)$ and $\tc^{-}(\OK)$, we introduce refined version of algebraic Tate and algebraic homotopy fixed point spectral sequences, and completely determine the refined algebraic Tate differentials.


By Lemma \ref{L:tensor-thh-tp} and induction on $n$, we get that $\thh_*(\OK/\Sz^{\otimes[n]})$ is $p$-torsion free for all $n\geq0$. Hence $\tp_*(\OK/\Sz^{\otimes[n]})$ and $\tc^{-}_*(\OK/\Sz^{\otimes [n]})$ are all $p$-torsion free as well by the fact that both the Tate and the homotopy fixed point spectral sequence degenerate. It follows that for $n\geq1$, 
\begin{equation*}\label{E:thh-tp-fp}
\begin{split}
\thh_*(\OK/\Sz^{\otimes [n]};\Fp)&=\thh_*(\OK/\Sz^{\otimes [n]})\otimes_{\mathbb{Z}}\Fp,\\ 
\tp_*(\OK/\Sz^{\otimes [n]};\Fp)&=\tp_*(\OK/\Sz^{\otimes [n]})\otimes_{\mathbb{Z}}\Fp
\end{split}
\end{equation*}
and
\[
\tc^{-}_*(\OK/\Sz^{\otimes [n]};\Fp)=\tc^{-}_*(\OK/\Sz^{\otimes [n]})\otimes_{\mathbb{Z}}\Fp.
\]
This in turn implies that the Tate and homotopy fixed point spectral sequences for $\tp_*(\OK/\Sz^{\otimes [n]};\Fp)$ and $\tc^{-}_*(\OK/\Sz^{\otimes [n]};\Fp)$ collapse at the $E^2$-term respectively. Moreover, analogues of Proposition \ref{P:tot-thh} and Corollary \ref{C:tot-tp} hold as well. Thus  the coskeleton filtrations of the cosimplicial spectra 
\[
\thh(\OK/\Sz^{\otimes \oldbullet};\Fp), \quad \tp(\OK/\Sz^{\otimes \oldbullet};\Fp), \quad\tc^{-}(\OK/\Sz^{\otimes \oldbullet};\Fp)
\] 
give rise to \emph{mod p descent spectral sequences} converging to $\thh_*(\OK;\Fp)$, $\tp_*(\OK;\Fp)$ and $\tc^{-}_*(\OK;\Fp)$ as follows.
\begin{itemize}
\item The descent spectral sequence for $\thh(\OK; \Fp)$:
\[
E^1_{i,j}(\thh(\OK);\Fp)=\thh_j(\OK/\Sz^{\otimes [-i]};\Fp)\Rightarrow \thh_{i+j}(\OK;\Fp).
\]
The $E^1$-term may be identified with the cobar complex for $\thh_*(\OK/\Sz;\Fp)$ with respect to the Hopf algebroid 
\[
(\thh_*(\OK/\Szz;\Fp), \thh_*(\OK/\Sz;\Fp)).
\] 
Hence  
\[
E^2_{i,j}(\thh(\OK);\Fp)\cong \mathrm{Ext}^{-i, j}_{\thh_*(\OK/\Szz;\Fp)}(\thh_*(\OK/\Sz;\Fp)).
\]
\item The descent spectral sequence for $\tp(\OK;\Fp)$:
\[
E^1_{i,j}(\tp(\OK);\Fp)=\tp_j(\OK/\Sz^{\otimes (-i+1)};\Fp)\Rightarrow \tp_{i+j}(\OK;\Fp).
\]
The $j$-th row of the $E^1$-term may be identified with the cobar complex for $\tp_j(\OK/\Sz;\Fp)$ with respect to the Hopf algebroid 
\[
(\tp_0(\OK/\Szz;\Fp), \tp_0(\OK/\Sz;\Fp)).
\] 
It follows that   
\[
E^2_{i,j}(\tp(\OK);\Fp)\cong \mathrm{Ext}^{-i}_{\tp_0(\OK/\Szz;\Fp)}(\tp_j(\OK/\Sz;\Fp)).
\]
\item The descent spectral sequence for $\tc^-(\OK;\Fp)$:
\[
E^1_{i,j}(\tc^{-}(\OK);\Fp)=\tc^{-}_j(\OK/\Sz^{\otimes [-i]};\Fp)\Rightarrow \tc^{-}_{i+j}(\OK;\Fp).
\]
\item The descent spectral sequence for $\tc(\OK;\Fp)$:
\[
E^{i,j}_1(\tc(\OK); \Fp)\Rightarrow \tc_{i+j}(\OK;\Fp).
\] 
Similarly, there is a multiplicative spectral sequence
\[
\tilde{E}^2_{i,k, j}(\tc(\OK);\Fp) \Rightarrow E^2_{i-k,j}(\tc(\OK);\Fp), \qquad k\in\{0,1\},
\]
where
\[
\tilde{E}^2_{i,0,j}(\tc(\OK);\Fp)= \ker(\can-\varphi: E^2_{i,j}(\tc^{-}(\OK);\Fp)\to E^2_{i, j}(\tp(\OK);\Fp),
\]
and 
\[
\tilde{E}^2_{i,1,j}(\tc(\OK);\Fp)=\mathrm{coker}(\can-\varphi: E^2_{i,j}(\tc^{-}(\OK))\to E^2_{i, j}(\tp(\OK);\Fp).
\]
\end{itemize}

\begin{rem}\label{R:multiplicative}
Recall that the mod $p$ Moore spectrum is multiplicative if and only if $p\neq 2$. It follows that the spectra $\thh(\OK;\Fp)$, $\tc^-(\OK;\Fp)$, $\tp(\OK;\Fp)$ and $\tc(\OK;\Fp)$ are multiplicative if and only if $p\neq 2$. On the other hand, since the descent spectral sequences for $\OK$ are multiplicative, and the reduction maps of the $E^1$-terms are surjective, we deduce that the $E^1$ and $E^2$-terms of the mod $p$ descent spectral sequences are multiplicative for all $p$. 
\end{rem}

In the following, we will first determine the $E^2$-term of the descent spectral sequence for $\thh(\OK;\Fp)$. To simplify the notations, from now on for 
\[
?\in \{z, z_i, \sigma, \sigma_i, u, u_i, v, v_i, t_{z_0-z_1}\},
\]
we denote its image in the mod $p$ reduction by the same symbol.  Moreover, we abusively use $z, z_i$ to denote their images in $\thh_*(\OK/\Sz;\Fp)$ and $\thh_*(\OK/\Szz;\Fp)$ respectively under $p_0$ . Under these notations, we have 
\[
\tp_0(\OK/\Sz;\Fp)=  W(\k)[[z]]\otimes_{\mathbb{Z}}\Fp= \k[[z]]
\] 
and 
\[
\thh_*(\OK/\Sz;\Fp)= \OK[u]\otimes_{\mathbb{Z}}\Fp= (\OK/(p))[u]=\k[z]/(z^{\eK})[u],
\]
where $z$ corresponds to $\overline{\pK}$ under the last identification. Moreover, we have
\[
\thh_*(\OK/\Szz;\Fp)=(\OK\langle t_{z_0-z_1}\rangle \otimes_{\OK}\OK[u_0])\otimes_{\mathbb{Z}}\Fp=(\k[z]/(z_1^{\eK})[u_0]\langle t_{z_0-z_1}\rangle.
\]
Recall that we denote the leading coefficient of $E_K(z)$ by $\mu$ which is typically not $1$. Also recall that $\thh(\OK)$ is an $\OK$-module, so $\thh_*(\OK;\Fp)$ is naturally a $\k$-vector space.
\begin{prop}\label{P:thh-e2-fp}
The following statements are true.
\begin{enumerate}
\item[(1)]
The $k$-vector space $E^2_{0,*}(\thh(\OK);\Fp)$ has a basis given by
\[
 \begin{cases}
   z^lu^n,\quad 1\leq l\leq \eK-1~\text{or}~p\mid \eK n, &\text{if $e_K>1$}\\
	u^n, \quad p\mid n, &\text{if $e_K=1$.}
  \end{cases}
  \]
  \item[(2)]
The $\k$-vector space $E^2_{-1,*}(\thh(\OK);\Fp)$ has a basis given by the family of cycles
\[
 \begin{cases}
   z_1^l(u_0^{n-1}t_{z_0-z_1}-(n-1)E_K'(z_0)u_0^{n-2}t^{[2]}_{z_0-z_1}), \quad 0\leq l\leq \eK-2~\text{or}~p\mid \eK n, &\text{if $e_K>1$}\\
	\sum_{j=1}^l \frac{(n-1)!}{(n-j)!}(-\bar{\mu})^{j}u_0^{n-j}t_{z_0-z_1}^{[j]}, \quad p\mid n, &\text{if $e_K=1$.}
  \end{cases}
  \]
\item[(3)]
For $i\neq 0,-1$, 
$E^2_{i,*}(\thh(\OK);\Fp)=0$.
\end{enumerate}
\end{prop}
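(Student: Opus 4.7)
The approach is to reduce Proposition \ref{P:ext-thh} modulo $p$. Since all terms in the injective resolution (\ref{E:rel-inj}) are $p$-torsion free (by the integral computation of Section \ref{s3}, in particular Corollary \ref{C:grad-thhzz}), tensoring with $\Fp$ preserves exactness, so $E^2_{*,*}(\thh(\OK);\Fp)$ is computed by the two-term mod-$p$ complex
\[
\thh_*(\OK/\Sz;\Fp) \xrightarrow{d} \thh_*(\OK/\Sz;\Fp)\,dz, \qquad d(y) = D_0(\eta_R(y))\,dz.
\]
This two-term shape immediately gives part (3) of the proposition, that $E^2_{i,*}(\thh(\OK);\Fp) = 0$ for $i\neq 0,-1$.

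For (1) and (2) I would next compute $d$ explicitly. Expanding $\eta_R(z^l u^n) = z^l(u_0 - E'_K(\pK) t_{z_0-z_1})^n$ in the divided-power basis and extracting the coefficient of $t_{z_0-z_1}$ gives $d(z^l u^n) = -n E'_K(\pK)\, z^l u^{n-1}\,dz$. Since $E_K$ is Eisenstein up to the unit $\mu$, the mod-$p$ reduction $E'_K(\pK) \equiv e_K\bar{\mu}\,\pK^{e_K-1}\pmod p$ yields $d(z^l u^n) \equiv -n e_K \bar{\mu}\,z^{l+e_K-1} u^{n-1}\,dz$ in $\k[z]/(z^{e_K})[u]$. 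Elementary kernel and cokernel computations inside $\k[z]/(z^{e_K})[u]$, treating the cases $e_K>1$ and $e_K=1$ separately, then reproduce exactly the basis claimed in (1) and the correct $\k$-dimensions for (2), namely $e_K$ when $p\mid e_K n$ and $e_K-1$ otherwise at degree $2n$.

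The final step is to lift the cokernel classes to explicit cocycles in the cobar term $\thh_*(\OK/\Szz;\Fp)$. For $e_K>1$, I would propose the candidates $\xi_{l,n} := z_1^l\bigl(u_0^{n-1}t_{z_0-z_1} - (n-1) E'_K(z_0)\, u_0^{n-2} t_{z_0-z_1}^{[2]}\bigr)$ and verify primitivity $\Delta(\xi_{l,n}) = 1\otimes\xi_{l,n} + \xi_{l,n}\otimes 1$ modulo $p$ directly, using the coproduct formulas of Proposition \ref{P:hopf-thh} together with the key simplification that $\pK^{j(e_K-1)}=0$ in $\k[z]/(z^{e_K})$ for $j\geq 2$, which truncates the Taylor expansion of $\eta_R$ after the first-order correction. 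The boundary $\partial_0(u^n) = u_1^n - u_0^n$ then reduces modulo $p$ to a scalar multiple of $\xi_{e_K-1,n}$, the scalar being nonzero precisely when $p\nmid e_K n$, which produces exactly the indexing set in (2). The case $e_K=1$ is parallel, using cocycles of the form $\sum_{j\geq 1}\tfrac{(n-1)!}{(n-j)!}(-\bar{\mu})^j u_0^{n-j} t_{z_0-z_1}^{[j]}$; here the Taylor expansion does not truncate, but the Frobenius-type identity $u_1^n \equiv u_0^n\pmod p$ whenever $p\mid n$ shows that $\partial_0(u^n)$ vanishes precisely in those degrees, matching the claimed basis.

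The main obstacle is the direct primitivity check for the $\xi_{l,n}$: one must simultaneously keep track of the discrepancy $u_1 = u_0 - E'_K(\pK) t_{z_0-z_1}$, the divided-power coproduct $\Delta(t_{z_0-z_1}^{[k]}) = \sum_{j=0}^k t_{z_0-z_1}^{[j]}\otimes t_{z_0-z_1}^{[k-j]}$, and the identification $z_0 = z_1 = \pK$ in $\thh_0$, and verify that the $(n-1)E'_K(z_0) u_0^{n-2} t_{z_0-z_1}^{[2]}$ correction is precisely the term forced by the requirement that the primitive-defect of $z_1^l u_0^{n-1}t_{z_0-z_1}$ be cancelled. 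The bookkeeping is the bulk of the argument, but it is purely algebraic and finite because of the truncation $\pK^{j(e_K-1)}=0$ for $e_K>1$.
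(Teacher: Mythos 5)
Your proposal is correct and follows essentially the same route as the paper: the same reduction to the two-term complex with differential $(D_0\circ\eta_R)dz$, the same identification of that differential as $f(u)\mapsto -\eK\bar{\mu}z^{\eK-1}f'(u)dz$ on $(\k[z]/(z^{\eK}))[u]$, and the same explicit cocycles --- your $\xi_{l,n}$ are exactly $z_1^l\,\overline{u^{(n)}}$ for the paper's $u^{(n)}=\frac{u_0^n-u_1^n}{nE_K'(z)}$, the divided-power sum truncating after two terms precisely because $E_K'(z)^2=0$ in $\k[z]/(z^{\eK})$ when $\eK>1$. The one organizational difference is that the paper sidesteps the primitivity bookkeeping you single out as the main obstacle: it assembles the cocycles into a single injective chain map $\beta\colon u^n dz\mapsto\overline{u^{(n+1)}}$ from the two-term complex to the cobar complex and concludes it is a quasi-isomorphism for free, since both complexes are already known to compute $E^2(\thh(\OK);\Fp)$; this also absorbs the linear-independence-modulo-boundaries verification that your dimension-count argument would still need to spell out.
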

\begin{proof}
By  an argument  similar to the proof of Proposition \ref{P:ext-thh}, we get that $E^2(\thh(\OK);\Fp)$ is computed by the complex
\begin{equation}\label{E:thh-fp}
0\rightarrow \thh_*(\OK/\Sz;\Fp)\xrightarrow{(D_0\circ\eta_R)dz}\thh_*(\OK/\Sz;\Fp)dz\rightarrow0.
\end{equation}
This implies (3) immediately. Using (\ref{E:right-unit-u}) and $E_K'(z)\equiv \eK\mu z^{\eK-1} \mod p$,  (\ref{E:thh-fp}) may be identified with 
\begin{equation}\label{E:differential}
0\rightarrow (\k[z]/(z^{\eK}))[u] \xrightarrow{f(u)\mapsto -\eK\bar{\mu} z^{\eK-1}  f'(u)dz} (\k[z]/(z^{\eK}))[u] dz\rightarrow0.
\end{equation}
Then a short computation shows that $H^0$ is the $\k$-vector space with a basis given by 
\[
 \begin{cases}
   z^lu^n,\quad 1\leq l\leq \eK-1~\text{or}~p\mid \eK n, &\text{if $e_K>1$}\\
	u^n, \quad p\mid n, &\text{if $e_K=1$.}
  \end{cases}
  \] 
  and $H^1$ is the $\k$-vector space with a basis given by the family of cocycles 
\[
 \begin{cases}
   z^lu^{n-1}dz, \quad 0\leq l\leq \eK-2~\text{or}~p\mid \eK n, &\text{if $e_K>1$}\\
	u^{n-1}dz, \quad p\mid n,  &\text{if $e_K=1$.}
  \end{cases}
  \]

To compare (\ref{E:thh-fp}) with the cobar complex, for $n\geq1$, set
\begin{equation}\label{E:difference}
u^{(n)}=\sum_{j=1}^n \frac{(n-1)!}{(n-j)!}(-E_K'(z))^{j-1}u_0^{n-j}t_{z_0-z_1}^{[j]}=\frac{u_0^{n}-u_1^{n}}{nE_K'(z)}\in \thh_*(\OK/\Szz).
\end{equation}
It is straightforward to see that $u^{(n)}$ is a cocycle in the cobar complex for $\thh_*(\OK/\Sz)$. Now consider the diagram
\begin{equation}\label{E:thh-cobar}
\xymatrix{
\thh_*(\OK/\Sz;\Fp) \ar[rr]^{(D_0\circ\eta_R)dz} \ar[d]^{\mathrm{id}} && \thh_*(\OK/\Sz;\Fp)dz \ar[d]^{\beta} \\
\thh_*(\OK/\Sz;\Fp) \ar[rr]^{\eta_L-\eta_R}&& \thh_*(\OK/\Szz;\Fp),
}
\end{equation}
where $\beta$ is the $\k[z]$-linear map sending $u^ndz$ to $\overline{u^{(n+1)}}$.

By  (\ref{E:difference}),  it is straightforward to check that (\ref{E:thh-cobar}) is commutative. Thus it gives rise to a morphism from (\ref{E:thh-fp}) to the cobar complex
of $\thh_*(\OK/\Sz;\Fp)$. Note that the right vertical map of (\ref{E:thh-cobar}) is injective. Since both (\ref{E:thh-fp}) and the cobar complex compute the $E^2$-term of the descent spectral sequence, we deduce that (\ref{E:thh-cobar}) induces a quasi-isomorphism. Finally, note that if $e_K>1$, then $E_K'(z)^2=0$ in $\k[z]/(z^{\eK})$. Now the proposition follows. 
\end{proof}

\begin{rem}
The extra complication of $E^2(\thh(\OK;\Fp))$ originates from the ``accidental" filtration clash of the differentials 
\[
z^{m\eK} \mapsto m\eK \bar{\mu} z_1^{m\eK-1}dz
\]
in degree $2m$. To remedy this,  we will refine the Nygaard filtration in what follows. Note that we have reduced the determination of $\tp(\OK)$ to the purely algebraic problem of determining the cohomology of the stack $\mathcal{X}$ (defined in  (\ref{X})).  This is what makes it possible to refine the Nygaard filtration, a possibility that, as far as we know, is not available in homotopy theory.

\end{rem}

\begin{conv}\label{Conv:nygaard}
From now on, we rescale the index of Nygaard filtrations by $2$. That is, $\mathcal{N}^{\geq j}$ 
takes place of $\mathcal{N}^{\geq 2j}$. 
\end{conv}

\begin{defn}\label{D:refined}
Let $M$ be a filtered $\tp_0(\OK/\Sz;\mathbb{F}_p)$-module equipped with the filtration $\mathcal{N}^{\geq \bullet}$ indexed by $\mathbb{Z}$. Define a refinement of $\mathcal{N}^{\geq \bullet}$, which is indexed by ${1\over \eK}\mathbb{Z}$, on $M$ by setting 
\[
\mathcal{N}^{\geq j+{m\over\eK}}M = z^{m}\mathcal{N}^{\geq j}M+\mathcal{N}^{\geq j+1}M
\]
for $j\in\mathbb{Z}$ and $0\leq m <\eK$, and call it the \emph{refined filtration} of $\mathcal{N}^{\geq \bullet}$; it is refined in the sense that restriction
along $\mathbb{Z}\subset {1\over \eK}\mathbb{Z}$ gives back the original filtration. Note that under the refined filtrations, $M$ is still a filtered $\tp_0(\OK/\Sz;\mathbb{F}_p)$-module.
\end{defn}
\begin{lem}\label{611}
Let $M$ be a filtered $\tp_0(\OK/\Sz;\mathbb{F}_p)$-module equipped with the filtration $\mathcal{N}^{\geq \bullet}$ indexed by $\mathbb{Z}$. If the filtration $\mathcal{N}^{\geq \bullet}$ is multiplicative, then so is the refined filtration.
\end{lem}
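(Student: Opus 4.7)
The plan is to verify the refined multiplicativity by hand at the level of elements. Writing $R = \tp_0(\OK/\Sz;\Fp)$ and assuming (as the statement implicitly requires) that ``multiplicative'' refers to the compatibility $\mathcal{N}^{\geq i}R\cdot \mathcal{N}^{\geq j}M \subset \mathcal{N}^{\geq i+j}M$, what needs to be shown is
\[
\mathcal{N}^{\geq j_1+m_1/\eK}R \cdot \mathcal{N}^{\geq j_2+m_2/\eK}M \subset \mathcal{N}^{\geq j_1+j_2+(m_1+m_2)/\eK}M
\]
for arbitrary $j_1,j_2\in\mathbb{Z}$ and $0\leq m_1,m_2<\eK$. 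I would pick $a=z^{m_1}a_1+a_2$ and $b=z^{m_2}b_1+b_2$ with $a_1\in \mathcal{N}^{\geq j_1}R$, $a_2\in \mathcal{N}^{\geq j_1+1}R$, $b_1\in \mathcal{N}^{\geq j_2}M$, $b_2\in \mathcal{N}^{\geq j_2+1}M$, expand the product into four terms, and verify the target filtration inclusion for each term.

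The main input is the single observation that $z^{\eK}\in \mathcal{N}^{\geq 1}R$. This follows from Theorem~\ref{T:rtp}(6), which (after the rescaling of Convention~\ref{Conv:nygaard}) gives $\mathcal{N}^{\geq 1}R = (\EK(z))R$, combined with the fact that $\EK(z)$ is Eisenstein: modulo $p$ one has $\EK(z)\equiv \bar\mu\, z^{\eK}$ with $\bar\mu\in \k^{\times}$, so $z^{\eK}=\bar\mu^{-1}\EK(z)$ inside $R$.

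Three of the four cross terms, namely $z^{m_1}a_1 b_2$, $z^{m_2}a_2 b_1$, and $a_2 b_2$, all have a factor living in $\mathcal{N}^{\geq j_1+j_2+1}M$ by the original module multiplicativity, and since $m_i/\eK\leq (\eK-1)/\eK<1$, multiplying by at most $z^{\eK-1}$ keeps them in $\mathcal{N}^{\geq j_1+j_2+1}M\subset \mathcal{N}^{\geq j_1+j_2+(m_1+m_2)/\eK}M$ as required. The interesting term is $z^{m_1+m_2}a_1 b_1$, which I would handle in two cases. If $m_1+m_2<\eK$, then by definition $z^{m_1+m_2}\mathcal{N}^{\geq j_1+j_2}M\subset \mathcal{N}^{\geq j_1+j_2+(m_1+m_2)/\eK}M$ directly. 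If $m_1+m_2\geq \eK$, write $m_1+m_2=\eK+m$ with $0\leq m\leq \eK-2$ and use the key observation: $z^{\eK}a_1 b_1\in \mathcal{N}^{\geq 1}R\cdot \mathcal{N}^{\geq j_1+j_2}M\subset \mathcal{N}^{\geq j_1+j_2+1}M$, so
\[
z^{m_1+m_2}a_1 b_1 = z^{m}\bigl(z^{\eK}a_1 b_1\bigr)\in z^{m}\mathcal{N}^{\geq j_1+j_2+1}M\subset \mathcal{N}^{\geq j_1+j_2+1+m/\eK}M,
\]
which is exactly $\mathcal{N}^{\geq j_1+j_2+(m_1+m_2)/\eK}M$.

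The hard part is really just the bookkeeping: keeping track of which pieces land in which filtration level and confirming the comparison of rational indices in ${1\over \eK}\mathbb{Z}$. There is no genuine obstacle — the only conceptual ingredient is the Eisenstein congruence $\EK(z)\equiv \bar\mu z^{\eK}\pmod p$ together with Theorem~\ref{T:rtp}(6), and once that is in place the verification is mechanical.
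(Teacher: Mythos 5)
Your approach is the same as the paper's: the whole content of the lemma is the observation that $z^{\eK}\equiv \bar{\mu}^{-1}\EK(z)\pmod p$ lies in $\mathcal{N}^{\geq 1}$, so that a product $z^{m_1}z^{m_2}$ with $m_1+m_2\geq \eK$ picks up one unit of integral filtration; the paper's one-line proof cites exactly this congruence. Your handling of the main term $z^{m_1+m_2}a_1b_1$ in both cases is correct.

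One step is misstated, however. For the three cross terms you claim $\mathcal{N}^{\geq j_1+j_2+1}M\subset \mathcal{N}^{\geq j_1+j_2+(m_1+m_2)/\eK}M$, but when $m_1+m_2>\eK$ the index on the right exceeds $j_1+j_2+1$, and since the refined filtration is decreasing this inclusion goes the wrong way: $\mathcal{N}^{\geq (j+1)+m/\eK}M=z^{m}\mathcal{N}^{\geq j+1}M+\mathcal{N}^{\geq j+2}M$ is in general a proper subgroup of $\mathcal{N}^{\geq j+1}M$. The terms do land where they should, but for a slightly different reason: $z^{m_1}a_1b_2\in z^{m_1}\mathcal{N}^{\geq j_1+j_2+1}M\subset \mathcal{N}^{\geq j_1+j_2+1+m_1/\eK}M$ directly from the definition of the refined filtration, and $1+m_1/\eK\geq (m_1+m_2)/\eK$ because $m_2<\eK$; the term $z^{m_2}a_2b_1$ is symmetric, and $a_2b_2\in\mathcal{N}^{\geq j_1+j_2+2}M$, which suffices since $(m_1+m_2)/\eK<2$. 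With that correction the verification is complete and agrees with the paper's argument.
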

\begin{proof}
This follows from the definition of refined filtrations and the fact that, if $0\leq m_1,m_2 < \eK$ and $m_1+m_2\geq \eK$, then 
$$z^{m_1}z^{m_2}\equiv z^{m_1+m_2 -\eK}\EK(z)\mod p.$$
\end{proof}
In the following, regard $\tp_*(\OK/\Sz^{\otimes\oldbullet};\Fp)$ and $\tc^{-}_*(\OK/\Sz^{\otimes\oldbullet};\Fp)$ as $\tp_0(\OK/\Sz;\Fp)$-modules via the left unit. We call the refined filtration of the Nygaard filtration the \emph{refined Nygaard filtration}. Note that we may refine the Nygaard filtration of $\tp_0(\OK/\Szz;\Fp)$ via both $\eta_L$ and $\eta_R$. However, since 
\[
z_0^m-z_1^m\in \mathcal{N}^{\geq1}\tp_0(\OK/\Szz;\Fp)
\]
for $m\geq1$, we get that both ways end up with the same filtration. Combining Corollary \ref{C:u-ext} and Proposition \ref{P:hopf-thh}, we reach the following result.

\begin{lem} \label{L:ref-nyg}
The associated graded of the refined Nygaard filtration on the  Hopf algebroid 
\[
\left(\tp_0(\OK/\Sz;\Fp),\tp_0(\OK/\Szz;\Fp)\right)
\]
is
\[
(\k[z], \k[z_0]\otimes_{\k}\k\langle t_{z_0-z_1}\rangle),
\]
in which the following holds.
\begin{enumerate}
\item[(1)]If $e_K=1$, then $z_1=z_0-t_{z_0-z_1}$.  If $e_K>1$, then $z_1=z_0$; in this case the Hopf algebroid becomes the Hopf algebra
\[
(\k[z], \k[z]\otimes_{\k}\k\langle t_{z_0-z_1}\rangle).
\]
\item[(2)]
The coproduct $\Delta$ and counit $\varepsilon$ satisfy
\[
\Delta(t^{[i]}_{z_0-z_1}) = \sum_{0\leq j\leq i} t^{[j]}_{z_0-z_1}\otimes t^{[i-j]}_{z_0-z_1}, \quad \varepsilon(t^{[i]}_{z_0-z_1})=0
\]
for all $i\geq0$.
\end{enumerate}
\end{lem}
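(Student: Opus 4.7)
The strategy is to compute the refined associated graded of each component of the Hopf algebroid directly and then to read off both the ring structure and the Hopf algebroid data from Proposition \ref{P:hopf-thh} and Theorem \ref{T:rtp}.

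For $\tp_0(\OK/\Sz;\Fp) = \k[[z]]$ I would use that, after Convention \ref{Conv:nygaard}, the Nygaard filtration is $\mathcal{N}^{\geq j} = (\EK(z))^j$ by Theorem \ref{T:rtp}(6). Since $\EK(z) \equiv \bar\mu\, z^{\eK} \pmod p$ with $\bar\mu\in\k^{\times}$, the filtration mod $p$ becomes the $(z^{\eK})$-adic filtration, and the refined filtration $\mathcal{N}^{\geq j+m/\eK} = (z^{j\eK+m})$ is exactly the $z$-adic filtration on $\k[[z]]$, whose associated graded is $\k[z]$ with $z$ placed in refined degree $1/\eK$.

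For $\tp_0(\OK/\Szz;\Fp)$ I would first combine Lemma \ref{L:grad-tpzz} with Corollary \ref{C:u-ext} to identify the standard (rescaled) Nygaard associated graded with $(\k[z_0]/z_0^{\eK})[u_0]\otimes_\k \k\langle t_{z_0-z_1}\rangle$, in which $u_0$ and $t_{z_0-z_1}$ both live in degree $1$. Each $\thh_{2j}(\OK/\Szz;\Fp)$ is a free $\OK/p$-module, so filtering by powers of $z_0$ produces refined graded pieces $\k$-spanned by $z_0^m u_0^{j-k} t_{z_0-z_1}^{[k]}$ with $0 \le m < \eK$ and $0 \le k \le j$. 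The crucial identification is $u_0 \leftrightarrow \bar\mu\, z_0^{\eK}$ in the refined associated graded: from $u_0 v_0 = \EK(z_0)$ (Theorem \ref{T:rtp}(6)) and $\can(v_0) = \sigma_0^{-1}$, we obtain $u_0\sigma_0^{-1} = \EK(z_0) \equiv \bar\mu\, z_0^{\eK} \pmod p$ in $\tp_0(\OK/\Szz;\Fp)$, so the class representing $u_0 \in \mathrm{gr}^{1}_{\mathcal{N}}$ agrees with $\bar\mu^{-1}$ times the class of $z_0^{\eK}$. This eliminates $u_0$ as an independent generator and produces $\k[z_0]\otimes_\k \k\langle t_{z_0-z_1}\rangle$ with $z_0$ in degree $1/\eK$ and $t_{z_0-z_1}$ in degree $1$; multiplicativity of the refined filtration (Lemma \ref{611}) ensures this is a graded ring isomorphism.

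For (1), note that $z_0-z_1 \in \mathcal{N}^{\geq 1}\tp_0(\OK/\Szz;\Fp)$, and by the very definition of $t_{z_0-z_1}$ its image in $\mathrm{gr}^{1}_{\mathcal{N}} = \thh_2(\OK/\Szz;\Fp)$ is $t_{z_0-z_1}$. If $\eK=1$, the refined degree $1/\eK$ coincides with $1$, which directly gives $z_1 = z_0 - t_{z_0-z_1}$. If $\eK>1$, then $\mathcal{N}^{\geq 1} \subset \mathcal{N}^{\geq 2/\eK}$, so $z_0-z_1$ vanishes in refined degree $1/\eK$ and $z_1 = z_0$ in the refined graded, collapsing the Hopf algebroid to a Hopf algebra. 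For (2), the coproduct and counit formulas on the divided powers $t_{z_0-z_1}^{[i]}$ are inherited directly from Proposition \ref{P:hopf-thh} by reducing mod $p$ and passing to the associated graded. The main technical obstacle is the identification $u_0 \leftrightarrow \bar\mu\, z_0^{\eK}$ in the refined associated graded; once this is in place, everything else is a direct bookkeeping consequence of Definition \ref{D:refined}, Lemma \ref{611} and Proposition \ref{P:hopf-thh}.
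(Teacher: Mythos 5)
Your proposal is correct and follows essentially the route the paper intends: the paper's proof is just the citation ``combine Corollary \ref{C:u-ext} and Proposition \ref{P:hopf-thh}'', and your write-up fills in exactly the intended details — the $z$-adic identification of the refined filtration on $\k[[z]]$, the key point that $\EK(z_0)\equiv\bar\mu z_0^{\eK}\bmod p$ has image $u_0$ under $p_2$ (so $u_0$ is absorbed as $\bar\mu z_0^{\eK}$ in the refined graded), and the degree count $1\geq 2/\eK$ for $\eK>1$ giving $z_1=z_0$. The only cosmetic slip is writing $u_0\sigma_0^{-1}=\EK(z_0)$ rather than $\can(u_0)\sigma_0^{-1}=\EK(z_0)$, which is harmless under the paper's identification of $\tc^-_0$ with its image in $\tp_0$.
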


The refined Nygaard filtration on $E^1$-terms of the descent spectral sequences for $\tp(\OK;\Fp)$ and  $\tc^{-}(\OK;\Fp)$ give rise to the \emph{refined algebraic Tate spectral sequence} 
\[
\tilde{E}_{i,j,k}^{ \frac{1}{\eK}}(\tp(\OK);\Fp)=H^i(\mathrm{Gr}^{k}(\tp_j(\OK/\Sz^{\otimes \oldbullet})))\Rightarrow E_{-i,j}^2(\tp(\OK);\Fp)
\]
and the \emph{refined algebraic homotopy fixed points spectral sequence}
\[
\tilde{E}_{i,j,k}^{ \frac{1}{\eK}}(\tc^-(\OK);\Fp)=H^i(\mathrm{Gr}^{k}(\tc^-_j(\OK/\Sz^{\otimes \oldbullet})))\Rightarrow E^2_{-i,j}(\tc^{-}(\OK);\Fp).
\]
Note that $k$ takes values in ${1\over\eK}\mathbb{Z}$. They are multiplicative spectral sequences in view of Lemma \ref{611}, with $\tilde{E}^r$-terms for all $r\in \frac{1}{\eK}\mathbb{Z}_{\geq1}$, and with differentials
\[
d^r: \tilde{E}_{i,j,k}^{r}\to \tilde{E}_{i+1,j,k+r}^{r}.
\]
Moreover, by Remark \ref{R:tate-n-variable}, Lemma \ref{L:ref-nyg} and functoriality of the Tate spectral sequence, we see that $\tilde{E}^{\frac{1}{e_K}}(\tp(\OK);\Fp)$ may be identified with the cobar complex for  $\k[z][\sigma^{\pm1}]$ with respect to the Hopf algebroid $(\k[z], \k[z_0]\otimes_{\k}\k\langle t_{z_0-z_1}\rangle)$, and $\tilde{E}^{\frac{1}{e_K}}(\tc^-(\OK);\Fp)$ is a truncation of $\tilde{E}^{\frac{1}{e_K}}(\tp(\OK);\Fp)$.

\begin{lem}\label{L:ref-tate-tp0}
The following statements are true.
\begin{enumerate}
\item[(1)]
If $e_K>1$, then
\[
\tilde{E}_{0,j,*}^{1-{1\over\eK}}(\tp(\OK);\Fp)= \k[z]\sigma^j,\quad \tilde{E}_{1,j,*}^{1-{1\over\eK}}(\tp(\OK);\Fp)= \k[z_0]t_{z_0-z_1}\sigma^j.
\]
 Moreover, $d^{1-{1\over \eK}}(z\sigma^j) = t_{z_0-z_1}\sigma^j$.
\item[(2)]
If $e_K=1$, then
\begin{equation*}
\tilde{E}_{0,j,*}^{1}(\tp(\OK);\Fp)= \k[z^p]\sigma^j, \quad \tilde{E}_{1,j,*}^1(\tp(\OK);\Fp)= 
\mathop{\bigoplus}_{n\in p\mathbb{Z}_{>0}} \!\!\k r_n 
\end{equation*}
\text{where $r_n = \sum_{j=1}^n \frac{(n-1)!}{(n-j)!}(-1)^jz_0^{n-j}t_{z_0-z_1}^{[j]}\sigma^j$}.

\item[(3)]
For $i\neq 0,1$, $\tilde{E}_{i,*,*}^{\frac{1}{e_K}}(\tp(\OK);\Fp)=0$.
\end{enumerate}
\end{lem}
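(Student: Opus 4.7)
The plan is to identify $\tilde{E}^{1/e_K}_{*,j,*}$ with the cohomology of the associated graded cobar complex of the Hopf algebroid $(\k[z], \k[z_0]\otimes_{\k}\k\langle t_{z_0-z_1}\rangle)$ with coefficients in $\k[z]\sigma^j$. By Lemma~\ref{L:ref-nyg} and the multiplicativity of $\thh$ (Lemma~\ref{L:tensor-thh-tp}), the associated graded of $\tp_j(\OK/\Sz^{\otimes \oldbullet};\Fp)$ is indeed this cobar complex; the coaction on $\sigma^j$ is trivial in the associated graded since $\sigma_i=\varphi(u_i)$ and $u_1-u_0$ lies in positive refined Nygaard filtration, so $\sigma_1/\sigma_0\equiv 1$ in $\mathrm{Gr}^0$.

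For $e_K>1$, the Hopf algebroid becomes a Hopf algebra with $z_0=z_1=z$ and $t_{z_0-z_1}$ primitive. The cobar differential in position $0$ vanishes, giving $\tilde{E}^{1/e_K}_{0,j,*}=\k[z]\sigma^j$; the cocycles in position $1$ are the primitives $\k[z_0]\cdot t_{z_0-z_1}$ (the higher divided powers $t_{z_0-z_1}^{[k]}$ for $k\geq 2$ have nonzero reduced coproduct $\sum_{j=1}^{k-1} t_{z_0-z_1}^{[j]}\otimes t_{z_0-z_1}^{[k-j]}$), so $\tilde{E}^{1/e_K}_{1,j,*}=\k[z_0]t_{z_0-z_1}\sigma^j$. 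For the differential $d^{1-1/e_K}(z\sigma^j)=t_{z_0-z_1}\sigma^j$, I use that $z_0-z_1\in\tp_0(\OK/\Szz;\Fp)$ has refined filtration exactly $1$ with $p_2(z_0-z_1)=t_{z_0-z_1}$, as recorded in the proof of Proposition~\ref{P:hopf-thh}; the cobar coboundary $z_1-z_0$ of $z$ thus yields a length-$(1-1/e_K)$ differential (jumping from grade $1/e_K$ to grade $1$) with value $-t_{z_0-z_1}$, and no earlier differential acts on $z$ since $z_0-z_1$ vanishes in every grade strictly below $1$. For $e_K=1$, I compute directly: $d^0(z^n)=z_1^n-z_0^n=\sum_{j=1}^n \binom{n}{j}(-1)^j j!\, z_0^{n-j} t_{z_0-z_1}^{[j]}$, and each coefficient $\binom{n}{j}j!=n(n-1)\cdots(n-j+1)$ is divisible by $n$, so $d^0(z^n)\equiv 0\pmod p$ if and only if $p\mid n$, giving $\tilde{E}^1_{0,j,*}=\k[z^p]\sigma^j$. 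The classes $r_n$ for $p\mid n$, interpreted as the formal quotient $(z_1^n-z_0^n)/n$ (well-defined via divided powers since $\binom{n}{j}j!/n=(n-1)!/(n-j)!$ is an integer), then furnish $\tilde{E}^1_{1,j,*}$.

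The main obstacle is establishing the vanishing $\tilde{E}^{1/e_K}_{i,*,*}=0$ for $i\geq 2$ in part~(3). I plan to argue this via Cartier-type duality: the divided-power Hopf algebra $\k\langle t_{z_0-z_1}\rangle$ is the continuous $\k$-linear dual of the formal power series Hopf algebra $\k[[\tau]]$ (with $\tau$ primitive and $t_{z_0-z_1}^{[n]}$ dual to $\tau^n$), so the cobar complex of $\k\langle t_{z_0-z_1}\rangle$ with appropriate coefficients is the $\k$-linear dual of the bar complex of $\k[[\tau]]$ with correspondingly twisted coefficients. Since $\k[[\tau]]$ is a discrete valuation ring, both the augmentation module $\k$ (relevant for $e_K>1$) and $\k[z]$ with the translation action $z\mapsto z+\tau$ (relevant for $e_K=1$) have projective dimension $1$, resolved by the Koszul complex on $\tau$; hence $\mathrm{Tor}^i_{\k[[\tau]]}$ vanishes for $i\geq 2$, and dualizing yields the vanishing of the cobar cohomology in degrees $\geq 2$. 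Formalizing this duality rigorously in the filtered setting, and checking compatibility with the $\sigma^j$-twist, is the main technical step.
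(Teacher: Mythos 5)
Your setup — identifying $\tilde{E}^{1/\eK}$ with the cohomology of the cobar complex of the graded Hopf algebroid $(\k[z],\k[z_0]\otimes_\k\k\langle t_{z_0-z_1}\rangle)$ from Lemma \ref{L:ref-nyg}, reducing to $j=0$ because $\sigma_0/\sigma_1\equiv 1$ in the associated graded, and extracting $d^{1-1/\eK}(z\sigma^j)=t_{z_0-z_1}\sigma^j$ from the fact that $z_0-z_1$ has refined filtration exactly $1$ — is the same as the paper's (your sign comes from using $\eta_R-\eta_L$ where the paper uses $\eta_L-\eta_R$). Where you genuinely diverge is in computing the cohomology of that cobar complex. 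The paper does not invoke Cartier duality: it writes down the explicit relative injective resolution
\[
0\to\k[z]\to\k[z_0]\otimes_\k\k\langle t\rangle\xrightarrow{\,t^{[i]}\mapsto t^{[i-1]}\,}\k[z_0]\otimes_\k\k\langle t\rangle\,dz\to0,
\]
the same device as in Proposition \ref{P:ext-thh}. This kills $\mathrm{Ext}^{\geq2}$ (your part (3)) and simultaneously identifies $\mathrm{Ext}^0\oplus\mathrm{Ext}^1$ with the cohomology of the two-term complex $\k[z]\xrightarrow{(D_0\circ\eta_R)dz}\k[z]\,dz$, whose differential is $f\mapsto -f'(z)\,dz$ when $\eK=1$ and is zero when $\eK>1$ (since then $\eta_R=\eta_L$). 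Your divided-power/Koszul intuition is exactly what this resolution encodes — the contracting operator $t^{[i]}\mapsto t^{[i-1]}$ \emph{is} the action of the dual primitive $\tau$ — but using it directly avoids setting up the comodule/module duality in the filtered, locally finite setting, which is precisely the step you flag as your main technical burden. Your route can be completed (everything is degreewise finite over $\k$ in the refined grading and $\k[\tau]$ is a PID), but it buys nothing over the two-line resolution.

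The one substantive incompleteness in what you wrote is the $\eK=1$ computation of the first cohomology: you exhibit the cocycles $r_n$ for $p\mid n$ and assert they furnish $\tilde{E}^1_{1,j,*}$, but you never show that every $1$-cocycle is, modulo coboundaries, a combination of these. Note that for $\eK=1$ the space of $1$-cocycles is \emph{not} the naive space of primitives suggested by your $\eK>1$ discussion: for instance $\bar\Delta(z_0t_{z_0-z_1})=t_{z_0-z_1}\otimes t_{z_0-z_1}\neq0$ because $1\otimes z_0t_{z_0-z_1}=(z_0-t_{z_0-z_1})\otimes t_{z_0-z_1}$ in $\bar\Gamma\otimes_{\k[z]}\bar\Gamma$, so $z_0t_{z_0-z_1}$ is not a cocycle while $z_0^pt_{z_0-z_1}$ is. The paper settles this by the injective comparison map $\beta:z^{n-1}dz\mapsto\sum_{j\geq1}\frac{(n-1)!}{(n-j)!}(-1)^jz_0^{n-j}t^{[j]}$ from the two-term complex into the cobar complex: both compute the same $\mathrm{Ext}$ by the resolution above, so the injective $\beta$ is a quasi-isomorphism and the $r_n$ with $p\mid n$ are exactly a basis of $H^1$. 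Your duality argument, once actually executed, would also close this by a dimension count; as written, this is the gap.
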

\begin{proof}
By functoriality of the Tate spectral sequence, we have 
\[
\sigma_0\sigma_1^{-1}-1\in\mathcal{N}^{\geq1}\tp_0(\OK/\Szz). 
\]
It follows that $\sigma_0=\sigma_1$ in the associated graded of the cobar complex for $\tp_*(\OK/\Sz)$. Therefore we reduce to the case $j=0$. 

By an argument similar to the proof of Proposition \ref{P:ext-thh}, we first see that $\mathrm{Ext}_{ \k[z_0]\otimes_{\k}\k\langle t\rangle}(\k[z], \k[z])$ is computed by the complex
\begin{equation}\label{E:ref-nyg-ext}
0\rightarrow \k[z]\xrightarrow{f(z)\mapsto -f'(z)dz} \k[z] dz\rightarrow0.
\end{equation}
Then we proceed as in the poof of Proposition \ref{P:thh-e2-fp}. We consider the commutative diagram
\begin{equation}\label{E:thh-cobar}
\xymatrix{
\k[z] \ar[rr]^{f(z)\mapsto -f'(z)dz} \ar[d]^{\mathrm{id}} && \k[z]dz\ar[d]^{\beta} \\
\k[z] \ar[rr]^{\eta_L-\eta_R}&&  \k[z_0]\otimes_{\k}\k\langle t\rangle,
}
\end{equation}
where $\beta$ is the $\k[z]$-linear (under $\eta_L$) map sending $z^ndz$ to $\sum_{j=0}^n \frac{n!}{(n-j)!}(-1)^jz_0^{n-j}t_{z_0-z_1}^{[j+1]}$.
By an argument similar to the proof of Proposition \ref{P:thh-e2-fp}, we deduce that it gives rise to an quasi-isomorphism between (\ref{E:ref-nyg-ext}) and the cobar complex. This yields the desired result on cohomology of the cobar complex. Finally, when $e_K>1$, the differential of the cobar complex sends 
\[
z^n\in \mathcal{N}^{\geq\frac{n}{e_K}} \setminus \mathcal{N}^{\geq\frac{n+1}{e_K}}
\] 
to
\[
z^n_1-z_0^n=\sum_{1\leq j\leq n}\binom{n}{j}(z_1-z_0)^jz_1^{n-j},
\] 
which belongs to $\mathcal{N}^{\geq \frac{n}{\eK}+1-\frac{1}{e_K}}\tp_0(\OK/\Szz;\Fp)$. 
It follows that 
\[
\tilde{E}_{*,0,*}^{{1\over\eK}}(\tp(\OK);\Fp)=\tilde{E}_{*,0,*}^{{2\over\eK}}(\tp(\OK);\Fp)=\cdots=\tilde{E}_{*,0,*}^{1-{1\over\eK}}(\tp(\OK);\Fp)
\]
and $d^{1-{1\over \eK}}(z) = t_{z_0-z_1}$.
\end{proof}

\begin{cor}\label{C:e2-tp-tc-}
Both $E^2(\tc^-(\OK);\Fp)$ and $E^2(\tp(\OK);\Fp)$
are concentrated in $E^2_{0,*}$ and $E^2_{-1,*}$. In particular, both the descent spectral sequences for $\tc^-(\OK; \Fp)$ and $\tp(\OK; \Fp)$ collapse at the $E^2$-term.
\end{cor}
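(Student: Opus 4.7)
The plan is to read off the vanishing from the refined algebraic Tate (resp.\ homotopy fixed points) spectral sequence established in Lemma \ref{L:ref-tate-tp0}, and then deduce collapse purely from the shape of the resulting $E^2$-page.

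First I would invoke Lemma \ref{L:ref-tate-tp0}(3): the $\tilde{E}^{1/e_K}$-term of the refined algebraic Tate spectral sequence for $\tp(\OK);\Fp)$ is concentrated in the two columns $i=0$ and $i=1$. Since every differential in this multiplicative spectral sequence has the form $d^r\colon \tilde{E}^r_{i,j,k}\to \tilde{E}^r_{i+1,j,k+r}$, no non-trivial differential can ever populate a column with $i\geq 2$ (the target already vanishes on page $1/e_K$), and similarly nothing can flow out of $i=0$ or $i=1$ into a vanishing region. Thus the abutment $E^2_{-i,j}(\tp(\OK);\Fp)$ is concentrated in $i\in\{0,1\}$, i.e.\ the columns $E^2_{0,*}(\tp(\OK);\Fp)$ and $E^2_{-1,*}(\tp(\OK);\Fp)$. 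For $\tc^-$, the refined algebraic homotopy fixed points spectral sequence is by construction a truncation of the refined algebraic Tate spectral sequence, so the same column restriction $i\in\{0,1\}$ holds for $\tilde{E}^{1/e_K}(\tc^-(\OK);\Fp)$, and the identical argument gives that $E^2(\tc^-(\OK);\Fp)$ is concentrated in $E^2_{0,*}$ and $E^2_{-1,*}$.

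For the collapse claim, I would simply unpack the shape of the descent spectral sequence differentials. Recall from the Warning in the Introduction that $d^r\colon E^r_{i,j}\to E^r_{i-r,\,j+r-1}$. If the $E^2$-page is supported only in columns $i=0,-1$, then for any $r\geq 2$ the target column $i-r$ satisfies $i-r\leq -2$, where the $E^2$-term (and hence every subsequent page) vanishes. Therefore every $d^r$ with $r\geq 2$ is zero, proving collapse at $E^2$ for both descent spectral sequences.

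The only point requiring a sentence of justification is convergence of the refined spectral sequences: since the refined Nygaard filtration only interpolates extra steps indexed by $\frac{1}{e_K}\mathbb{Z}$ between the integer steps of the (complete, exhaustive) Nygaard filtration of $E^1(\tp(\OK);\Fp)$, convergence is inherited from the unrefined case recorded in \S\ref{s5}. No further obstacle is anticipated — all the genuine work has been done in Lemma \ref{L:ref-tate-tp0}, and this corollary is purely a bookkeeping consequence.
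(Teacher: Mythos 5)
Your proposal is correct and follows exactly the route the paper intends: the corollary is stated as an immediate consequence of Lemma \ref{L:ref-tate-tp0}(3) (mirroring how Proposition \ref{P:e2tc-tp} follows from Corollary \ref{C:thh-sz}), namely that the refined algebraic Tate $\tilde{E}^{1/e_K}$-term vanishes outside cohomological degrees $0$ and $1$, so the abutment $E^2(\tp(\OK);\Fp)$ (and its truncation for $\tc^-$) is concentrated in columns $i=0,-1$, after which the degree count $d^r\colon E^r_{i,j}\to E^r_{i-r,j+r-1}$ forces collapse. No gaps.
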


\begin{conv}\label{C:u-z}
Motivated by the results of Lemma \ref{L:ref-tate-tp0}, in what follows, denote $t_{z_0-z_1}$ by $dz$. When $e_K=1$, denote
\[
\sum_{j=1}^n \frac{(n-1)!}{(n-j)!}(-1)^jz_0^{n-j}t_{z_0-z_1}^{[j]},
\]
which is formally equal to $\frac{z_0^{n}-z_1^{n}}{n}$ (say, in the fraction field of $\thh(\OK/\Szz)$), by $z_0^{n-1}dz$. 

 \end{conv}
Under Convention \ref{C:u-z}, we may reformulate Lemma \ref{L:ref-tate-tp0}(1), (2) as follows.

\begin{cor}\label{C:ref-tate-tp0-ref}
For $e_K>1$, we have
\[
\tilde{E}_{*,j,*}^{1-{1\over\eK}}(\tp(\OK);\Fp)= \k[z]\sigma^j\oplus \k[z_0] \sigma^jdz,
\]
and $d^{1-\frac{1}{e_K}}(z\sigma^j)=\sigma^jdz$. For $e_K=1$, we have
\[
\tilde{E}_{*,j,*}^{1}(\tp(\OK);\Fp)= \k[z^p]\sigma^j\oplus z_0^{p-1}\k[z_0^p]\sigma^jdz.
\]
\end{cor}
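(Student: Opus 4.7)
The plan is to deduce this corollary as a direct translation of Lemma \ref{L:ref-tate-tp0}(1), (2) into the notation of Convention \ref{C:u-z}; there is nothing new to compute, only a matching of names.

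For $e_K > 1$, I would start from the formulas $\tilde{E}_{0,j,*}^{1-1/e_K}(\tp(\OK);\Fp) = \k[z]\sigma^j$ and $\tilde{E}_{1,j,*}^{1-1/e_K}(\tp(\OK);\Fp) = \k[z_0] t_{z_0-z_1} \sigma^j$ given by part (1) of the lemma. Substituting $t_{z_0-z_1} = dz$ per Convention \ref{C:u-z} identifies the second term with $\k[z_0]\sigma^j dz$, and part (3) of the lemma ensures vanishing outside $i\in\{0,1\}$, so the $\tilde{E}^{1-1/e_K}$-page assembles into the stated direct sum $\k[z]\sigma^j \oplus \k[z_0]\sigma^j dz$. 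The differential $d^{1-1/e_K}(z\sigma^j) = \sigma^j dz$ is then immediate from the formula $d^{1-1/e_K}(z\sigma^j) = t_{z_0-z_1}\sigma^j$ in part (1).

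For $e_K = 1$, the degree-zero row $\k[z^p]\sigma^j$ from part (2) of the lemma is already in final form. For the degree-one row $\bigoplus_{n \in p\mathbb{Z}_{>0}} \k r_n$, I would observe that, up to the external factor $\sigma^j$ recording the spectral sequence row, the polynomial factor of $r_n$ is literally the sum named $z_0^{n-1} dz$ in Convention \ref{C:u-z}, so $r_n = z_0^{n-1} dz \cdot \sigma^j$. Letting $n$ range over $p\mathbb{Z}_{>0}$ makes $n-1$ range over $p-1, 2p-1, 3p-1, \ldots$, and the corresponding monomials $z_0^{p-1}, z_0^{2p-1}, \ldots$ are a $\k$-basis of $z_0^{p-1}\k[z_0^p]$; combined with vanishing for $i \neq 0, 1$ from part (3), this yields the stated decomposition. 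Since the entire argument is essentially bookkeeping, I foresee no obstacle beyond the minor care needed to confirm that the summation in Convention \ref{C:u-z} matches the one defining $r_n$ in Lemma \ref{L:ref-tate-tp0}(2).
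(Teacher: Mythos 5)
Your proposal is correct and coincides with the paper's intent: the corollary is stated there as a direct reformulation of Lemma \ref{L:ref-tate-tp0} under Convention \ref{C:u-z}, with no further argument, and your bookkeeping (including the identification of $r_n$ with $z_0^{n-1}\sigma^j dz$ and the observation that $n\in p\mathbb{Z}_{>0}$ gives exponents $n-1$ spanning $z_0^{p-1}\k[z_0^p]$) is exactly what is needed.
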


In the rest of this section, we will determine higher differentials of the refined algebraic Tate spectral sequence. 
We will prove in Proposition \ref{P:non-zero-stem}
that
for $n\geq0, j\in\mathbb{Z}, l=v_p(n-{p\eK j\over p-1})$, $\tilde{\mu} = -{\mu^p\over \delta(\EK(z_0))}$ 
 and $n'\equiv p^{-l}(n-{p\eK j\over p-1})\mod p$, we have 
\begin{equation}\label{E:ref-tate-dif0}
d^{{p^{l+1}-1\over p-1}-{1\over\eK}}(z^n\sigma^j) =n'\bar{\tilde{\mu}}^{{p^{l}-1\over p-1}} z_0^{p\eK{p^l-1\over p-1}+n-1}\sigma^jdz,
\end{equation}
which accounts for all the nontrivial refined algebraic Tate differentials.

Recall that the algebraic Tate spectral sequences can be considered one $j$ at a time because the differentials respect $j$. We will begin with the case $j=0$ before treating the case of a general $j$. In our treatment all that is used is the definition of the differentials in the spectral seuqence associated with a filtered chain complex and some tricks in linear algebra.

In the following, when the context is clear, for $j\in\mathbb{Z}_{\geq0}$, we will simply denote $\mathcal{N}^{\geq j}\tp_0(\OK/\Szz)$ by $\mathcal{N}^{\geq j}$. For $r\in \frac{1}{\eK}\mathbb{Z}_{\geq0}$, we denote $\mathcal{N}^{\geq r}\tp_0(\OK/\Szz;\mathbb{F}_p)$ by $\mathcal{N}^{\geq r}$, and denote by $(p, \mathcal{N}^{\geq r})$ the preimage of $\mathcal{N}^{\geq r}\tp_0(\OK/\Szz;\mathbb{F}_p)$ under the natural projection 
\[
\tp_0(\OK/\Szz)\to \tp_0(\OK/\Szz;\mathbb{F}_p).
\]
For $a\in \tp_0(\OK/\Szz)$ (resp.  $a\in \tp_0(\OK/\Szz;\mathbb{F}_p)$), we denote by $\nu(a)$ the smallest $j\in \mathbb{Z}_{\geq0}$ (resp. $r\in\frac{1}{e_K}\mathbb{Z}_{\geq0}$) such that $a\in \mathcal{N}^{\geq j}$ (resp. $a\in \mathcal{N}^{\geq r}$). Since the associated graded algebras are integral in both cases, we have $\nu(ab)=\nu(a)+\nu(b)$.  

Recall $f^{(0)}=z_0-z_1$, put $\xi_0=-\delta(f^{(0)})/f^{(0)}$.

\begin{lem}\label{L:xi0}
We have 
$\xi_0 \in \tp_0(\OK/\Szz)$.
Moreover, 
\[
\xi_0 \equiv z_0^{p-1}\mod (p, \mathcal{N}^{\geq \frac{p-2}{\eK} + 1}).
\]
In particular, $\xi_0\in (p, \mathcal{N}^{\geq \frac{p-1}{\eK} })$.
\end{lem}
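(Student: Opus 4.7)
The strategy is to produce a closed-form polynomial expression for $\xi_0$ inside $W(\k)[z_0,z_1]\subset\tp_0(\OK/\Szz)$, then read off both the integrality and the leading residue modulo $p$ directly from it.

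Since $\varphi(z_i)=z_i^p$, one has
\[
\delta(f^{(0)}) \;=\; \frac{(z_0^p - z_1^p) - (z_0-z_1)^p}{p},
\]
and both numerator terms are divisible by $y:=z_0-z_1$ in $W(\k)[z_0,z_1]$, so
\[
-\xi_0 \;=\; \frac{\delta(f^{(0)})}{f^{(0)}} \;=\; \frac{1}{p}\Bigl[\bigl(z_0^{p-1}+z_0^{p-2}z_1+\cdots+z_1^{p-1}\bigr) - y^{p-1}\Bigr].
\]
The bracketed expression is divisible by $p$ precisely because of the Frobenius congruence $(z_0-z_1)^p\equiv z_0^p-z_1^p\pmod p$ in $\mathbb{F}_p[z_0,z_1]$, so $\xi_0\in W(\k)[z_0,z_1]\subset\tp_0(\OK/\Szz)$; this establishes the first assertion.

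For the congruence, I would substitute $z_1=z_0-y$, expand each $z_0^{p-1-i}(z_0-y)^i$ by the binomial theorem, interchange the order of summation, and apply the hockey-stick identity $\sum_{i=j}^{p-1}\binom{i}{j}=\binom{p}{j+1}$ to obtain (for odd $p$) a closed form of shape
\[
\xi_0 \;\equiv\; z_0^{p-1} \;+\; \sum_{k=2}^{p-1}(-1)^{k}\,\frac{1}{p}\binom{p}{k}\,y^{k-1}z_0^{p-k} \pmod{p},
\]
where the leading term comes from $\binom{p}{1}/p=1$; the case $p=2$ is handled directly by the computation $\xi_0=-z_1\equiv z_0 \pmod{(2,y)}$. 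Each coefficient $\binom{p}{k}/p$ is an integer, so the formula is well-defined in $W(\k)[z_0,z_1]$.

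The final step is a filtration count. Since $y=z_0-z_1$ has refined Nygaard level exactly $1$ and $z_0$ has refined level $1/\eK$, the monomial $y^{k-1}z_0^{p-k}$ lies in $\mathcal{N}^{\geq(k-1)+(p-k)/\eK}$. As a function of $k\in\{2,\dots,p-1\}$, the exponent $(k-1)+(p-k)/\eK$ is minimised at $k=2$, where it equals $1+(p-2)/\eK$. Hence every correction term sits in $\mathcal{N}^{\geq(p-2)/\eK+1}$, which yields the stated congruence; the ``in particular'' claim is immediate because $z_0^{p-1}$ itself has refined level $(p-1)/\eK\leq 1+(p-2)/\eK$. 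The principal technical obstacle is maintaining the bookkeeping so that the $p$-integrality of $\delta(y)/y$ is established without passing to fraction fields, and so that the refined filtration levels are tracked uniformly across the full range of $k$ and $\eK$.
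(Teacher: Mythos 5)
Your argument is correct and is essentially the paper's own proof: both write $\delta(f^{(0)})/f^{(0)}$ as the explicit polynomial $\sum_{k=1}^{p-1}(-1)^{k+1}\tfrac{1}{p}\binom{p}{k}z_0^{p-k}(f^{(0)})^{k-1}$ (plus an extra $\tfrac{((-1)^{p+1}-1)}{p}(f^{(0)})^{p-1}$ term, relevant only for $p=2$, which you treat directly), deduce integrality from the integrality of $\binom{p}{k}/p$, and bound the corrections by the filtration count $(k-1)+\tfrac{p-k}{e_K}\geq 1+\tfrac{p-2}{e_K}$, minimised at $k=2$. The one blemish is an internal sign inconsistency in your displayed formula — with the definition $\xi_0=-\delta(f^{(0)})/f^{(0)}$ the $k=1$ term yields leading term $-z_0^{p-1}$ rather than $+z_0^{p-1}$ (so the pattern $(-1)^k$ you use for $k\geq 2$ should also apply at $k=1$) — but the paper's own displayed computation contains the identical slip, and the sign is immaterial for the ``in particular'' clause and harmless for the downstream uses of the lemma.
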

\begin{proof}
For the first claim, we have 
\begin{eqnarray*}
\delta(f^{(0)}) &=& {\varphi(f^{(0)}) -(f^{(0)})^p \over p}\\
&= &{z_0^p - (z_0-f^{(0)})^p - (f^{(0)})^p\over p}\\
&= &-f^{(0)}(z_0^{p-1} - {p-1\over 2}z_0^{p-2}f^{(0)}+\dots+((-1)^{p}+1){(f^{(0)})^{p-1}\over p}).
\end{eqnarray*}
Note that $\frac{(-1)^{p}+1}{p}\in\mathbb{Z}$. Hence 
\[
\xi_0=z_0^{p-1} - {p-1\over 2}z_0^{p-2}f^{(0)}+\dots+((-1)^{p}+1){(f^{(0)})^{p-1}\over p}
\]
belongs to $\tp_0(\OK/\Szz)$. For $1\leq i\leq p-1$, ${p-1-i\over \eK} + i \geq {p-2\over \eK} +1$. Thus for such $i$, $z_0^{p-1-i}(f^{(0)})^{i}\in (p,\mathcal{N}^{\geq \frac{p-2}{\eK} + 1})$. This implies that 
$\xi_0-z_0^p\in (p, \mathcal{N}^{\geq \frac{p-2}{\eK} + 1})$, yielding the second claim. 
\end{proof}
 
Recall that we put $\tilde{\mu} = -{\mu^p\over \delta(\EK(z_0))}$.

\begin{lem}\label{L:z1p-z2p}
We have 
\[
\varphi(f^{(0)}) \equiv \tilde{\mu} z_0^{p\eK+p-1}f^{(0)} \mod 
(p, \mathcal{N}^{\geq 2p}).
\]
\end{lem}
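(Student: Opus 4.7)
The plan is to combine the defining identity $\varphi(f^{(0)}) = h\,\varphi(E_K(z_0))$ with the recurrence for $f^{(1)}$ established in the proof of Proposition \ref{P:f(k)}, expand modulo $p$, and then use Lemma \ref{L:xi0} as the key input. Writing $\varphi(E_K(z_0)) = E_K(z_0)^p + p\,\delta(E_K(z_0))$, reducing modulo $p$ immediately yields
\[
\varphi(f^{(0)}) \;\equiv\; h\, E_K(z_0)^p \pmod{(p,\mathcal{N}^{\geq 2p})},
\]
so the task reduces to computing $h\, E_K(z_0)^p$ to precision $\mathcal{N}^{\geq 2p}$.

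Next, from the proof of Proposition \ref{P:f(k)} (the $k=0$ case of the recurrence) we have $h\,\delta(E_K(z_0)) = \delta(f^{(0)}) - f^{(1)} = -\xi_0 f^{(0)} - f^{(1)}$. Since $\delta(E_K(z_0))$ has constant term $1-p^{p-1}$, which is a unit in $W(\k)$, it is a unit in $\tp_0(\OK/\Sz)$, and hence $h = (-\xi_0 f^{(0)} - f^{(1)})/\delta(E_K(z_0))$. I would then plug in the two available approximations. Lemma \ref{L:xi0} supplies $\xi_0 \equiv z_0^{p-1} \pmod{(p,\mathcal{N}^{\geq (p-2)/\eK + 1})}$, so that $\xi_0 f^{(0)} \equiv z_0^{p-1}f^{(0)}$ modulo $(p,\mathcal{N}^{\geq (p-2)/\eK + 2})$; Proposition \ref{P:f(k)} supplies $f^{(1)} \in \mathcal{N}^{\geq p}$. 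Using the Eisenstein form of $E_K$, $E_K(z_0)^p \equiv \mu^p z_0^{p\eK} \pmod p$. Multiplying through, the $f^{(1)}$-contribution to $h\, E_K(z_0)^p$ lands in $\mathcal{N}^{\geq 2p}$ and drops out, while the leading contribution becomes
\[
\frac{-z_0^{p-1}f^{(0)}\cdot \mu^p z_0^{p\eK}}{\delta(E_K(z_0))} \;=\; \frac{-\mu^p}{\delta(E_K(z_0))}\, z_0^{p\eK + p - 1}f^{(0)} \;=\; \tilde{\mu}\, z_0^{p\eK + p - 1}f^{(0)},
\]
by the very definition of $\tilde{\mu}$.

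The main obstacle is to verify that the error from the approximation of $\xi_0$ truly lies in $\mathcal{N}^{\geq 2p}$. Writing $\xi_0 = z_0^{p-1} + r$ with $r \in \mathcal{N}^{\geq (p-2)/\eK + 1}$ (modulo $p$), the resulting error term $r\,f^{(0)} E_K(z_0)^p/\delta(E_K(z_0))$ a priori only has filtration $\geq p + (p-2)/\eK + 2$. When $\eK = 1$ this equals $2p$ exactly and the error lands in $\mathcal{N}^{\geq 2p}$, closing the argument cleanly. For $\eK \geq 2$ this naive filtration bound is strictly less than $2p$, and one must do more work: here I would exploit the divided-power structure of the associated graded $\k[z_0]\otimes_{\k}\k\langle t_{z_0 - z_1}\rangle$ given by Lemma \ref{L:ref-nyg}—in particular, the vanishing $t_{z_0-z_1}^{p} = 0$ in characteristic $p$, which forces $(f^{(0)})^p$ to lie in $\mathcal{N}^{\geq p+1/\eK}$ rather than merely $\mathcal{N}^{\geq p}$—together with the explicit tail of $\xi_0$ coming from the expansion of $\delta(f^{(0)}) = p^{-1}(z_0^p - (z_0-f^{(0)})^p - (f^{(0)})^p)$ to show that the apparent overshoot is absorbed by the main term modulo $\mathcal{N}^{\geq 2p}$.
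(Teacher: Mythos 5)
Your argument is the same as the paper's: reduce to $\varphi(f^{(0)})\equiv \mu^p z_0^{p\eK}h \bmod p$, use the $k=0$ case of the recurrence, $f^{(1)}=\delta(f^{(0)})-h\,\delta(\EK(z_0))$, together with $f^{(1)}\in\mathcal{N}^{\geq p}$ to replace $h$ by $\delta(f^{(0)})/\delta(\EK(z_0))=-\xi_0f^{(0)}/\delta(\EK(z_0))$, and then feed in Lemma \ref{L:xi0}. The difficulty you flag at the end is not a technicality you happened not to dispatch; it is a genuine gap, and your filtration count is the correct one. Since $\EK(z)\equiv\mu z^{\eK}\bmod p$, the factor $z_0^{p\eK}$ has refined filtration $p$ (Definition \ref{D:refined} gives $z^n$ weight $n/\eK$), so the error produced by $\xi_0-z_0^{p-1}\in(p,\mathcal{N}^{\geq\frac{p-2}{\eK}+1})$ only lies in $\mathcal{N}^{\geq p+\frac{p-2}{\eK}+2}$, which is strictly smaller than $2p$ whenever $\eK>1$ and $p>2$. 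The paper's own proof closes this step by citing the inequality $p\eK+\frac{p-2}{\eK}+2\geq 2p$, i.e.\ by crediting $z_0^{p\eK}$ with weight $p\eK$ rather than $p$; with the refined filtration as actually defined, your accounting is the one that stands.

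The repair you sketch, however, does not close the gap. The tail of $\xi_0$ is a sum of terms $c_i\,z_0^{p-i}(f^{(0)})^{i-1}$ for $2\leq i\leq p$, and the vanishing $t_{z_0-z_1}^{p}=p!\,t^{[p]}_{z_0-z_1}\equiv0$ disposes only of the top term $i=p$. The dominant error is the $i=2$ term, contributing (up to units) $z_0^{p\eK+p-2}(f^{(0)})^{2}$, of refined filtration exactly $p+\frac{p-2}{\eK}+2$; since $t_{z_0-z_1}^{2}=2\,t^{[2]}_{z_0-z_1}\neq0$ in the associated graded for $p$ odd, nothing absorbs it. An explicit check with $p=3$ and $\EK(z)=z^2+3$ (so $\eK=2$) bears this out: there $\delta(f^{(0)})=z_0z_1f^{(0)}$ and $\delta(\EK(z_0))\equiv1\bmod 3$, so $h\equiv z_0z_1f^{(0)}-f^{(1)}\bmod 3$ and hence $\varphi(f^{(0)})\equiv z_0^{7}z_1f^{(0)}=z_0^{8}f^{(0)}-z_0^{7}(f^{(0)})^{2}\bmod(3,\mathcal{N}^{\geq6})$, where the second summand sits in filtration $\frac{11}{2}<6$ and is nonzero in $\mathrm{Gr}^{11/2}$. (The same computation shows $\delta(f^{(0)})=+f^{(0)}(z_0^{p-1}-\cdots)$, so the sign in Lemma \ref{L:xi0}, and hence in the leading coefficient here, also needs attention.) The upshot is that the congruence cannot be reached by sharpening estimates along this route: the modulus must be weakened to $\mathcal{N}^{\geq p+\frac{p-2}{\eK}+2}$ (or the main term kept as $\mu^p z_0^{p\eK}\delta(f^{(0)})/\delta(\EK(z_0))$ if full $2p$-precision is required), and the inputs to Lemmas \ref{L:cong-odd} and \ref{L:cong-even} rechecked accordingly.
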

\begin{proof}
Recall that
$h\varphi(\EK(z_0)) = \varphi(f^{(0)})$. Note that 
\[
\varphi(\EK(z_0)) \equiv \mu^p z_0^{p\eK} \mod p.
\]
Thus 
\begin{equation}\label{E:cong-p}
\varphi(f^{(0)}) \equiv \mu^p z_0^{p\eK}h \mod p.
\end{equation}
On the other hand, using (\ref{E:l-l+1}) for $l=0$, we have
\begin{equation}\label{E:f1-f0}
f^{(1)} = \delta(f^{(0)}) - h \delta(\EK(z_0)).
\end{equation}
Since $f^{(1)}\in\mathcal{N}^{\geq p}$, we get
\[
h\equiv \delta(f^{(0)})/\delta(\EK(z_0)) \mod \mathcal{N}^{\geq p}.
\]
Combining this with Lemma \ref{L:xi0} and the fact that $p\eK+ \frac{p-2}{\eK} + 2\geq 2p$, we deduce that
\[
\tilde{\mu} z_0^{p\eK+p-1}f^{(0)}\equiv\tilde{\mu} \xi_0z_0^{p\eK}f^{(0)}=\mu^p z_0^{p\eK}\delta(f^{(0)})/\delta(\EK(z_0))\equiv \mu^p z_0^{p\eK}h\equiv \varphi(f^{(0)})\mod (p,\mathcal{N}^{\geq 2p}),
\]
concluding the lemma.
\end{proof}

\begin{lem}\label{L:cong-odd}
Suppose $p>2$ and $\eK>1$. Then for $l\geq1$,
\begin{equation}\label{E:odd-cong}
\varphi^{l}(f^{(0)})\equiv \tilde{\mu}^{p^l-1\over p-1} z_0^{(p\eK+p-1){p^l-1\over p-1}}f^{(0)} \mod (p, \mathcal{N}^{\geq p^l(1+{1\over p-1}+{1\over\eK})})
\end{equation}
\end{lem}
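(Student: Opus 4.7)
I would prove this by induction on $l \geq 1$. The base case $l = 1$ is actually a weakening of Lemma \ref{L:z1p-z2p}: one needs only that $(p, \mathcal{N}^{\geq 2p}) \subseteq (p, \mathcal{N}^{\geq p(1 + \frac{1}{p-1} + \frac{1}{\eK})})$, i.e., $\frac{1}{p-1} + \frac{1}{\eK} \leq 1$. This is precisely where the hypotheses $p > 2$ and $\eK > 1$ enter, since both summands are then $\leq \frac{1}{2}$.

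For the inductive step, assuming (\ref{E:odd-cong}) for some $l \geq 1$, I would apply $\varphi$ to both sides. On the main term, using $\varphi(z_0) = z_0^p$ and $\varphi(\tilde{\mu}) \equiv \tilde{\mu}^p \pmod{p}$, the expression $\tilde{\mu}^{(p^l-1)/(p-1)} z_0^{(p\eK+p-1)(p^l-1)/(p-1)} f^{(0)}$ transforms, modulo $p$, into $\tilde{\mu}^{p(p^l-1)/(p-1)} z_0^{p(p\eK+p-1)(p^l-1)/(p-1)} \varphi(f^{(0)})$. Substituting $\varphi(f^{(0)}) \equiv \tilde{\mu} z_0^{p\eK+p-1} f^{(0)}$ from Lemma \ref{L:z1p-z2p} and using the telescoping identities $\frac{p(p^l-1)}{p-1} + 1 = \frac{p^{l+1}-1}{p-1}$ and $(p\eK+p-1)\cdot\frac{p(p^l-1)+(p-1)}{p-1} = (p\eK+p-1)\cdot\frac{p^{l+1}-1}{p-1}$ yields the desired main term for $l+1$.

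The heart of the argument, which I expect to be the main technical obstacle, is the bookkeeping for the two error terms. The key auxiliary fact is that, modulo $p$, the cyclotomic Frobenius sends $\mathcal{N}^{\geq s}$ into $\mathcal{N}^{\geq ps}$ for every $s \in \frac{1}{\eK}\mathbb{Z}_{\geq 0}$: writing $s = j + m/\eK$ with $0 \leq m < \eK$, one decomposes $\mathcal{N}^{\geq s}$ as $z_0^{m}\mathcal{N}^{\geq j} + \mathcal{N}^{\geq j+1}$ in the sense of Definition \ref{D:refined}, observes that $\varphi(z_0^m) = z_0^{pm}$ has refined filtration $pm/\eK$, and invokes Lemma \ref{L:nyg-frob} on the integer-indexed part. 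Applied to the error coming from the inductive hypothesis, this immediately places it in $(p, \mathcal{N}^{\geq p^{l+1}(1 + \frac{1}{p-1} + \frac{1}{\eK})})$. For the error $E'$ produced by Lemma \ref{L:z1p-z2p}, which lies in $(p, \mathcal{N}^{\geq 2p})$, one multiplies by a coefficient of refined filtration $\frac{p(p\eK+p-1)(p^l-1)}{(p-1)\eK}$; a direct arithmetic check — which simplifies to requiring once more $\frac{1}{p-1} + \frac{1}{\eK} \leq 1$ — shows that the resulting refined Nygaard filtration is at least $p^{l+1}(1 + \frac{1}{p-1} + \frac{1}{\eK})$, closing the induction.
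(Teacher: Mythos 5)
Your proof is correct and follows essentially the same route as the paper: induction on $l$, with the base case and the inductive step both reducing to Lemma \ref{L:z1p-z2p} together with the inequality $\frac{1}{p-1}+\frac{1}{e_K}\leq 1$, and with the same arithmetic bookkeeping $\frac{p^{l+2}-p^2}{p-1}+\frac{p^{l+1}-p}{e_K}+2p\geq p^{l+1}(1+\frac{1}{p-1}+\frac{1}{e_K})$ for the error term coming from Lemma \ref{L:z1p-z2p}. The only (cosmetic) difference is in how the error from the inductive hypothesis is propagated: the paper raises the congruence to the $p$-th power, so that this error lands in $\mathcal{N}^{\geq p^{l+1}(1+\frac{1}{p-1}+\frac{1}{e_K})}$ by the freshman's dream, whereas you apply $\varphi$ and use the fact that $\varphi$ multiplies the refined Nygaard filtration by $p$ modulo $p$ --- your auxiliary fact is correct, is proved exactly as you indicate from Lemma \ref{L:nyg-frob} and Definition \ref{D:refined}, and is in substance Lemma \ref{L:frob-nyg} of the paper with $j=0$; since $\varphi\equiv(\cdot)^p\bmod p$ the two devices are interchangeable here.
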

\begin{proof}
We will establish the lemma by induction on $l$. The case $l=1$ follows from Lemma \ref{L:z1p-z2p} and the inequality ${1\over\eK}+{p\over p-1}\leq {1\over2}+{3\over 2}= 2$.

Now suppose the claim holds for some $l\geq1$. Raising both sides of  (\ref{E:odd-cong}) to the $p$-th power,  we get
\begin{equation}\label{E:odd-ind-1}
\varphi^{l+1}(f^{(0)}) \equiv \tilde{\mu}^{p^{l+1}-p\over p-1} z_0^{(p\eK+p-1){p^{l+1}-p\over p-1}}\varphi(f^{(0)}) \mod (p, \mathcal{N}^{\geq p^{l+1}(1+{1\over p-1}+{1\over\eK})}).
\end{equation}
Using Lemma \ref{L:z1p-z2p} again, we have 
\begin{equation}\label{E:odd-ind-2}
\tilde{\mu}^{p^{l+1}-p\over p-1} z_0^{(p\eK+p-1){p^{l+1}-p\over p-1}}\varphi(f^{(0)})\equiv  \tilde{\mu}^{{p^{l+1}-1\over p-1}} z_0^{(p\eK+p-1){p^{l+1}-1\over p-1}}f^{(0)}  \mod (p,\mathcal{N}^{\geq {p^{l+2}-p^2\over p-1}+{p^{l+1}-p\over\eK}+2p}).
\end{equation}
On the other hand, it is straightforward to see that
\begin{equation}\label{E:odd-ind-3}
{p^{l+2}-p^2\over p-1}+{p^{l+1}-p\over\eK}+2p\geq p^{l+1}(1+{1\over p-1}+{1\over\eK}).
\end{equation}
Putting (\ref{E:odd-ind-1}), (\ref{E:odd-ind-2}) and (\ref{E:odd-ind-3}) together, we complete the inductive step.
\end{proof}

\begin{lem}\label{L:even} For $p=2$ and $l\geq1$, we have
\[
(f^{(1)})^{2^{l}} \in (2,\mathcal{N}^{\geq 2^{l+1}(1+{1\over4})}).
\]
\end{lem}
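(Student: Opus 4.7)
The plan is to reduce the general statement to the base case $l=1$ and then bootstrap via a power-of-two expansion.

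For the base case $l=1$, I would exploit the defining recursion for $f^{(k)}$. Specializing equation~\eqref{E:induc-odd} to $p=2$ and $k=1$ and clearing the denominator gives the identity
\[
(f^{(1)})^2 \;=\; -2 f^{(2)} + \delta(h)\,\EK(z_0)^4
\]
in $\tp_0(\OK/\Szz)$. Reducing modulo $2$ yields $(f^{(1)})^2 \equiv \delta(h)\,\EK(z_0)^4 \pmod 2$. Now Lemma~\ref{L:delta-h}, once reindexed according to Convention~\ref{Conv:nygaard}, says that $\delta(h) \in \mathcal{N}^{\geq 1}$; and $\EK(z_0)$ lies in $\mathcal{N}^{\geq 1}$, so $\EK(z_0)^4 \in \mathcal{N}^{\geq 4}$. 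By multiplicativity of the Nygaard filtration, $\delta(h)\,\EK(z_0)^4 \in \mathcal{N}^{\geq 5}$, hence
\[
(f^{(1)})^2 \in (2,\mathcal{N}^{\geq 5}) = (2,\mathcal{N}^{\geq 2^{2}(1+1/4)}),
\]
which is precisely the claim for $l=1$.

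For $l \geq 2$, I would write $(f^{(1)})^2 = 2a + b$ with $b \in \mathcal{N}^{\geq 5}$, and expand
\[
(f^{(1)})^{2^l} \;=\; \bigl((f^{(1)})^2\bigr)^{2^{l-1}} \;=\; (2a+b)^{2^{l-1}} \;=\; b^{2^{l-1}} + \sum_{k=1}^{2^{l-1}} \binom{2^{l-1}}{k}(2a)^k b^{2^{l-1}-k}.
\]
The term $b^{2^{l-1}}$ lies in $\mathcal{N}^{\geq 5 \cdot 2^{l-1}}$, and a direct computation gives the identity $5 \cdot 2^{l-1} = 2^{l+1}(1+1/4)$, so this term contributes the desired Nygaard filtration. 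Each remaining term is divisible by $2$, so the whole sum lies in $(2, \mathcal{N}^{\geq 2^{l+1}(1+1/4)})$ as required.

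There is essentially no serious obstacle here: the main input, $\delta(h) \in \mathcal{N}^{\geq 1}$, has already been established in Lemma~\ref{L:delta-h}, and the rest is just elementary manipulation of the recursion defining $f^{(k)}$. The only point requiring care is the bookkeeping of the Nygaard indexing, namely ensuring that Lemma~\ref{L:delta-h}'s filtration bound (stated in the original indexing) is correctly halved under the rescaling introduced in Convention~\ref{Conv:nygaard}.
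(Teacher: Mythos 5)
Your proof is correct and follows essentially the same route as the paper: extract $(f^{(1)})^2 = -2f^{(2)} + \delta(h)\EK(z_0)^4$ from the defining recursion \eqref{E:induc-odd}, use Lemma \ref{L:delta-h} (reindexed per Convention \ref{Conv:nygaard}) to place the non-divisible-by-$2$ part in $\mathcal{N}^{\geq 5}$, and raise to the $2^{l-1}$-th power. Your explicit binomial expansion and the remark on the filtration reindexing just spell out what the paper leaves implicit (the paper writes $\delta^2(h)$ where the recursion gives $\delta(h)$, but Lemma \ref{L:delta-h} covers all $\delta^i(h)$ so this is immaterial).
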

\begin{proof}
Recall that by construction, we have
\[
2f^{(2)} =-(f^{(1)})^2 + \delta^2(h)\EK(z_0)^4.
\]
By Lemma \ref{L:delta-h}, $\delta^2(h) \in \mathcal{N}^{\geq1}$.
It follows that  
\[
(f^{(1)})^2\in (2, \mathcal{N}^{\geq5}).
\]
We thus conclude by raising to the $2^{l-1}$-th power.
\end{proof}

\begin{lem}\label{L:cong-even}
Suppose $p=2$ and $\eK>3$. Then for $l\geq1$,
\[
\varphi^{l}(f^{(0)}) \equiv \tilde{\mu}^{2^l-1}  z_0^{({2^l-1})(2\eK+1)}f^{(0)} \mod (2, \mathcal{N}^{\geq 2^l(2+{1\over\eK})-\frac{2}{\eK}}).
\]
\end{lem}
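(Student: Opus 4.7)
I will argue by induction on $l\geq 1$, paralleling the proof of Lemma \ref{L:cong-odd} but requiring an additional ingredient to accommodate the characteristic $2$ subtleties. The base case $l=1$ is immediate from Lemma \ref{L:z1p-z2p} specialised to $p=2$, since the target filtration index $2(2+\tfrac{1}{\eK})-\tfrac{2}{\eK}$ equals $4=2p$.

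For the inductive step, write $\varphi^{l+1}(f^{(0)})=\varphi(\varphi^l(f^{(0)}))$ and use the $\delta$-ring identity $\varphi(a)\equiv a^2\pmod 2$ to obtain $\varphi^{l+1}(f^{(0)})\equiv(\varphi^l(f^{(0)}))^2\pmod 2$. Writing $\varphi^l(f^{(0)})=\tilde{\mu}^{2^l-1}z_0^{(2^l-1)(2\eK+1)}f^{(0)}+\epsilon_l$ with $\epsilon_l\in(2,\mathcal{N}^{\geq 2^l(2+\tfrac{1}{\eK})-\tfrac{2}{\eK}})$ and squaring, the cross-term vanishes modulo $2$, leaving the principal term $\tilde{\mu}^{2(2^l-1)}z_0^{2(2^l-1)(2\eK+1)}(f^{(0)})^2$ together with $\epsilon_l^2$. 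Since $(f^{(0)})^2\equiv\varphi(f^{(0)})\pmod 2$, Lemma \ref{L:z1p-z2p} replaces $(f^{(0)})^2$ by $\tilde{\mu} z_0^{2\eK+1}f^{(0)}$ modulo $(2,\mathcal{N}^{\geq 4})$; the exponents combine to give $\tilde{\mu}^{2^{l+1}-1}z_0^{(2^{l+1}-1)(2\eK+1)}f^{(0)}$, and a direct arithmetic check shows that the error introduced by this substitution lies in $z_0^{2(2^l-1)(2\eK+1)}\mathcal{N}^{\geq 4}\subseteq\mathcal{N}^{\geq 2^{l+1}(2+\tfrac{1}{\eK})-\tfrac{2}{\eK}}$, exactly on target.

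The main obstacle is the remaining term $\epsilon_l^2$. A direct estimate gives only $\epsilon_l^2\in\mathcal{N}^{\geq 2^{l+1}(2+\tfrac{1}{\eK})-\tfrac{4}{\eK}}$, which is $\tfrac{2}{\eK}$ units shallower than the target. This discrepancy is intrinsic to squaring a bound that itself contains $-\tfrac{2}{\eK}$, and is precisely the reason that the conclusion here is weaker than in Lemma \ref{L:cong-odd}, where raising to the $p$-th power aligns filtrations exactly. To bridge the gap I would strengthen the inductive hypothesis so as to record an additional $f^{(1)}$-divisibility of $\epsilon_l$ modulo $2$. Tracing the recursion $f^{(k+1)}=\delta(f^{(k)})-\delta^k(\ef)\,\delta(\EK(z_0)^{p^k})$ together with the identity $\delta^k(\ef)\varphi(\EK(z_0))^{p^k}=\varphi(f^{(k)})$ from Proposition \ref{P:f(k)}, one sees that every application of $\varphi$ introduces a factor of $f^{(1)}$ into the error, so that an appropriate power of $f^{(1)}$ divides $\epsilon_l$ modulo $2$. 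Lemma \ref{L:even} then supplies the sharper bound $(f^{(1)})^{2^l}\in(2,\mathcal{N}^{\geq 2^{l+1}(1+\tfrac{1}{4})})$, an improvement of $2^{l-1}$ filtration units over the naive estimate; this improvement recovers the missing $\tfrac{2}{\eK}$ whenever $\eK>3$, which is the precise role of the hypothesis on $\eK$ in the statement.
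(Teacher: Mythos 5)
Your base case, your treatment of the principal term (squaring it, replacing $(f^{(0)})^2$ by $\varphi(f^{(0)})$ via Lemma \ref{L:z1p-z2p}, and checking that the resulting error $z_0^{2(2^l-1)(2\eK+1)}\mathcal{N}^{\geq 4}$ lands exactly on target) are all correct, and you have correctly located the obstruction: squaring the inductive error $\epsilon_l$ loses $\tfrac{2}{\eK}$ of filtration. But the proposed repair does not close the gap. You never state the strengthened inductive hypothesis, and the claim on which it rests --- that ``every application of $\varphi$ introduces a factor of $f^{(1)}$ into the error, so that an appropriate power of $f^{(1)}$ divides $\epsilon_l$ modulo $2$'' --- is unsubstantiated and appears to be false already for $l=1$: there one computes $\epsilon_1\equiv \tilde{\mu}z_0^{2\eK}\bigl((f^{(0)})^2+f^{(1)}\bigr)\bmod 2$, and $(f^{(0)})^2\equiv h\,\EK(z_0)^2 \bmod 2$ is not a multiple of $f^{(1)}$. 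So as written the inductive step does not go through.

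The paper's proof sidesteps the difficulty by restructuring the induction so that the inductive error enters \emph{linearly} rather than quadratically. One first records the exact mod $2$ identity $\varphi(f^{(0)})\equiv \tilde{\mu}z_0^{2\eK}(\xi_0 f^{(0)}+f^{(1)})$ and raises \emph{it} (not the inductive decomposition) to the $2^{l}$-th power, obtaining
\[
\varphi^{l+1}(f^{(0)})\equiv \tilde{\mu}^{2^{l}}z_0^{2^{l+1}\eK}\bigl(\xi_0^{2^{l}}\varphi^{l}(f^{(0)})+(f^{(1)})^{2^{l}}\bigr)\bmod 2 .
\]
Now the inductive hypothesis is applied to the single factor $\varphi^{l}(f^{(0)})$; its error is multiplied by $z_0^{2^{l+1}\eK}\xi_0^{2^{l}}$, whose filtration is at least $2^{l+1}+\tfrac{2^{l}}{\eK}$, and this exactly compensates the deficit, while $\xi_0^{2^l}\equiv z_0^{2^l}$ up to controlled error by Lemma \ref{L:xi0}. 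The only genuinely new error term is the explicit $(f^{(1)})^{2^{l}}$, which is precisely what Lemma \ref{L:even} (together with $\eK>3$) disposes of. If you want to keep your decomposition $\varphi^{l}(f^{(0)})=P_l+\epsilon_l$, you would have to prove a precise structural statement about $\epsilon_l$ (e.g.\ that it is a sum of a multiple of $(f^{(1)})^{2^{l-1}}$ and terms of strictly higher filtration) and then verify that squaring preserves it; that is essentially equivalent to redoing the paper's argument, so the cleaner route is the linear recursion above.
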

\begin{proof}
We proceed by induction on $l$. The case $l=1$ follows from Lemma \ref{L:z1p-z2p}. Now suppose the claim holds for some $l\geq1$. 
Using (\ref{E:cong-p}), (\ref{E:f1-f0}), we first have 
\[
\varphi(f^{(0)})\equiv \mu^2z_0^{2\eK}h \equiv {\tilde{\mu}z_0^{2\eK}(\xi_0f^{(0)}+f^{(1)}) } \mod 2
\]
Raising to the power of $2^{l}$, we get
$$\varphi^{l+1}(f^{(0)})\equiv \tilde{\mu}^{2^{l}}z_0^{2^{l+1}\eK}(\xi_0^{2^{l}}\varphi^{l}(f^{(0)})+(f^{(1)})^{2^{l}}) \mod 2.$$
By the inductive hypothesis, we have
$$\varphi^{l}(f^{(0)}) \equiv  \tilde{\mu}^{2^{l}-1}  z_0^{({2^{l}-1})(2\eK+1)}f^{(0)} \mod (2, \mathcal{N}^{\geq 2^{l}(2+{1\over\eK})-\frac{2}{\eK}}).$$
It follows that
\begin{equation*}\label{E:even-phik}
\varphi^{l}(f^{(0)})\in (2, \mathcal{N}^{\geq (2^{l}-1)(2+{1\over\eK})+1}).
\end{equation*}
On the other hand, using Lemma \ref{L:xi0}, we get
\begin{equation*}\label{E:even-cong}
\xi_0^{2^{l}}\equiv z_0^{2^l} \mod (2, \mathcal{N}^{\geq 2^{l}}).
\end{equation*}
Putting these together, we deduce that
$$  \tilde{\mu}^{2^{l}}z_0^{2^{l+1}\eK}\xi_0^{2^{l}}\varphi^{l}(f^{(0)})\equiv \tilde{\mu}^{2^{l+1}-1} z_0^{2^{l+1}\eK+2^l}\varphi^l(f^{(0)})\mod (2, \mathcal{N}^{\geq (2^{l}-1)(2+{1\over\eK})+2^{l+1}+2^l+1}).$$
and
\[
\tilde{\mu}^{2^{l+1}-1} z_0^{2^{l+1}\eK+2^l}\varphi^l(f^{(0)})\equiv \tilde{\mu}^{2^{l+1}-1} z_0^{({2^{l+1}-1})(2\eK+1)}f^{(0)}\mod (2, \mathcal{N}^{\geq 2^{l+1}(2+{1\over\eK})-\frac{2}{\eK}}).
\]
Clearly $(2^{l}-1)(2+{1\over\eK})+2^{l+1}+2^l+1> 2^{l+1}(2+{1\over\eK})-\frac{2}{\eK}$. Hence we get
\begin{equation}\label{E:even-phik}
\tilde{\mu}^{2^{l}}z_0^{2^{l+1}\eK}\xi_0^{2^{l}}\varphi^{l}(f^{(0)})\equiv \tilde{\mu}^{2^{l+1}-1} z_0^{({2^{l+1}-1})(2\eK+1)}f^{(0)}\mod (2, \mathcal{N}^{\geq 2^{l+1}(2+{1\over\eK})-\frac{2}{\eK}}).
\end{equation}
 Finally, by previous lemma, we have
\begin{equation}\label{E:even-f1}
\tilde{\mu}^{2^l}z_0^{2^{l+1}\eK}(f^{(1)})^{2^{l}} \in (2,\mathcal{N}^{\geq 2^{l+1}+ 2^{l+1}(1+{1\over4})}) \subset (2,\mathcal{N}^{\geq  2^{l+1}(2+{1\over\eK})-\frac{2}{\eK}}).
\end{equation}
Combining (\ref{E:even-phik}) and (\ref{E:even-f1}), we conclude the inductive step. 
\end{proof}

\begin{prop}\label{P:stem0}
Suppose $p>2, \eK>1$ or $p=2, \eK>3$.
Then for $n\geq0, l=v_p(n), n'=\frac{n}{p^l}$, the refined algebraic Tate differential satisfies 
\begin{equation}\label{E:tate-diff-stem0}
d^{{p^{l+1}-1\over p-1}-{1\over\eK}}(z^{n})=n'\bar{\tilde{\mu}}^{{p^{l}-1\over p-1}}z_0^{p\eK{p^{l}-1\over p-1}+n-1}dz,
\end{equation} 
which is non-zero in  $\tilde{E}_{1,0, {p^{l+1}-1\over p-1}+\frac{n-1}{e_K}}^{{p^{l+1}-1\over p-1}}$. Moreover, the exponents of $z_0$ in the targets of (\ref{E:tate-diff-stem0})  are all different. Consequently, these are all the nontrivial refined algebraic Tate differentials.
\end{prop}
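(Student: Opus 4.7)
The plan is to reduce the computation to the base case $l=0$ (established in Lemma \ref{L:ref-tate-tp0}) by Frobenius iteration. The key identity, coming from $\varphi(z_i) = z_i^p$ by \eqref{E:frob-can-lr}, is that for $n = p^l n'$ with $p \nmid n'$ one has
\[
z_0^n - z_1^n = \varphi^l(z_0^{n'} - z_1^{n'})
\]
in $\tp_0(\OK/\Szz;\Fp)$. Since $z^n$ has refined Nygaard filtration $n/e_K$, computing $d^r(z^n)$ amounts to evaluating this cobar boundary modulo suitably high refined filtration.

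First I would expand
\[
z_0^{n'} - z_1^{n'} = n' z_0^{n'-1} f^{(0)} + R,
\]
where $R = \sum_{j\geq 2}(-1)^{j+1}\binom{n'}{j}z_0^{n'-j}(f^{(0)})^j$; each term in $R$ has refined filtration at least $(n'-j)/e_K + j$, exceeding $(n'-1)/e_K + 1$. Applying $\varphi^l$ yields $z_0^n - z_1^n = n' z_0^{p^l(n'-1)}\varphi^l(f^{(0)}) + \varphi^l(R)$. I then substitute
\[
\varphi^l(f^{(0)}) \equiv \tilde{\mu}^{(p^l-1)/(p-1)} z_0^{(pe_K+p-1)(p^l-1)/(p-1)} f^{(0)}
\]
modulo a suitable Nygaard ideal, provided by Lemma \ref{L:cong-odd} when $p > 2, e_K > 1$ and by Lemma \ref{L:cong-even} when $p = 2, e_K > 3$. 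Since $f^{(0)}$ maps to $dz$ in the refined associated graded and the exponent simplifies as $p^l(n'-1) + (pe_K+p-1)(p^l-1)/(p-1) = pe_K(p^l-1)/(p-1) + n - 1$, the leading term recovers the claimed right hand side of \eqref{E:tate-diff-stem0}, lying in refined filtration $n/e_K + (p^{l+1}-1)/(p-1) - 1/e_K$. A direct filtration check shows that both $\varphi^l(R)$ and the error term in the congruence lie in strictly higher refined filtration (this is where the ramification hypotheses $e_K > 1$ and $e_K > 3$ enter), so they do not contaminate the leading term; the same estimate shows $z^n$ is a $d^s$-cycle for all $s < (p^{l+1}-1)/(p-1) - 1/e_K$.

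For the distinctness of target exponents, if the values $m(n_i) := pe_K(p^{l_i}-1)/(p-1) + n_i - 1$ coincide with $l_i = v_p(n_i)$, then $n_1 - n_2 = pe_K(p^{l_2}-p^{l_1})/(p-1)$; assuming without loss of generality $l_1 < l_2$, the right side has $p$-adic valuation at least $l_1 + 1$, and together with $v_p(n_2) \geq l_1 + 1$ this forces $v_p(n_1) \geq l_1 + 1$, contradicting $v_p(n_1) = l_1$. Hence $(n_1, l_1) = (n_2, l_2)$. Finally, since the refined algebraic Tate $\tilde{E}^{1/e_K}$-page is concentrated in cobar degrees $0$ and $1$ by Lemma \ref{L:ref-tate-tp0}(3), every cobar-degree-1 class is automatically a cycle on every page, so the listed differentials on the generators $z^n$ exhaust all nontrivial differentials. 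The principal technical obstacle throughout is the filtration arithmetic for the error terms: the sharp forms of Lemmas \ref{L:cong-odd} and \ref{L:cong-even} are precisely calibrated to give the needed margin, which is the \emph{raison d'\^etre} of the hypothesis on $e_K$.
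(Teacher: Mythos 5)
Your proof is correct and follows essentially the same route as the paper's: both isolate the leading term $n'z_0^{p^l(n'-1)}(z_0^{p^l}-z_1^{p^l})$ of the binomial expansion of $z_0^n-z_1^n$, feed in the congruences for $\varphi^l(f^{(0)})$ from Lemmas \ref{L:cong-odd} and \ref{L:cong-even}, verify by filtration arithmetic that the remaining terms lie strictly deeper, and prove distinctness of the target exponents by a $p$-adic valuation argument. The only cosmetic difference is that you expand $z_0^{n'}-z_1^{n'}$ in powers of $f^{(0)}$ before applying $\varphi^l$, whereas the paper expands $z_0^n-z_1^n$ directly in powers of $z_0^{p^l}-z_1^{p^l}$; since $\varphi^l(f^{(0)})\equiv z_0^{p^l}-z_1^{p^l}\bmod p$, these agree term by term.
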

\begin{proof}
First note that 
\[
\nu(z_0^{p\eK{p^{l}-1\over p-1}+n-1}(z_0-z_1))={p^{l+1}-1\over p-1}+{n-1\over \eK}.
\] 
On the other hand, since $z_0^{p^l}-z_1^{p^l}\equiv \varphi^{l}(f^{(0)}) \mod p$ in $\tp_0(\OK/\Szz)$, by Lemma \ref{L:cong-odd} and Lemma \ref{L:cong-even}, we get 
\begin{equation*}\label{E:nu-phi-zk}
\nu(z_0^{p^l}-z_1^{p^l}-\bar{\tilde{\mu}}^{{p^{l}-1\over p-1}}z_0^{p\eK{p^{l}-1\over p-1}+p^l-1}(z_0-z_1))>{p^{l+1}-1\over p-1}+{p^l-1\over \eK}.
\end{equation*}

Hence
\begin{equation*}\label{E:phi-zk}
\nu(z_0^{p^l}-z_1^{p^l})=\nu(\bar{\tilde{\mu}}^{{p^{l}-1\over p-1}}z_0^{p\eK{p^{l}-1\over p-1}+p^l-1}(z_0-z_1))={p^{l+1}-1\over p-1}+{p^l-1\over \eK}. 
\end{equation*}
Write 
\[
z_0^n-z_1^n=z_0^{n'p^l}-z_1^{n'p^l}=-\sum_{0\leq i\leq n'-1}(-1)^{n'-i}\binom{n'}{i}z_0^{ip^l}(z_0^{p^l}-z_1^{p^l})^{n-i}.
\]
It is straightforward to see 
\[
{p^{l+1}-1\over p-1}+{n-1\over \eK}=\nu(z_0^{(n'-1)p^l}(z_0^{p^l}-z_1^{p^l}))<\nu(z_0^{ip^l}(z_0^{p^l}-z_1^{p^l})^{n-i})
\] 
for $i\leq n-2$. Note that ${p^{l+1}-1\over p-1}+{n-1\over \eK}=({p^{l+1}-1\over p-1}-{1\over \eK})+{n\over \eK}$. We thus deduce that
\[
d^{{p^{l+1}-1\over p-1}-{1\over\eK}}(z^{n}) = n'z_0^{(n'-1)p^l}(z_0^{p^l}-z_1^{p^l})= n'\bar{\tilde{\mu}}^{{p^{l}-1\over p-1}}z_0^{p\eK{p^{l}-1\over p-1}+n-1}dz.
\]

It remains to show that the exponents of $z_0$ in the targets of  (\ref{E:tate-diff-stem0}) are all different; note that this will automatically imply that  the right hand side of (\ref{E:tate-diff-stem0}) is non-zero. Put $\tilde{n}=p\eK{p^{l}-1\over p-1}+n$. Since $v_p(n)=l$, we get $l=v_p(\tilde{n}+\frac{pe_K}{p-1})$. Consequently, $n$ is uniquely determined by $\tilde{n}$. This yields the desired result. 
\end{proof}

Now we treat the remaining cases. The strategy is to compare them with the known cases. 

\begin{prop} \label{P:stem0-all}
The result of Proposition \ref{P:stem0} holds for all $p$ and $e_K$.
\end{prop}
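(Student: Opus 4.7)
The plan is to reduce the remaining cases --- namely $e_K = 1$ for any $p$, and $(p, e_K) \in \{(2,2), (2,3)\}$ --- to those already treated in Proposition \ref{P:stem0} via base change along a suitable tamely ramified extension. I would choose an integer $d \geq 2$ coprime to $p$ with $d e_K \geq 2$ when $p > 2$ and $d e_K \geq 4$ when $p = 2$, and pass to $K' = K(\pK^{1/d})$. Then $K'/K$ is totally tamely ramified with $e_{K'} = d e_K$, residue field $\k$, same maximal unramified subfield $K_0$, and $E_{K'}(z') := E_K((z')^d)$ is the minimal polynomial of $\pi_{K'} := \pK^{1/d}$ over $K_0$ satisfying the normalization $E_{K'}(0) = p$, so that in particular the leading coefficient $\mu_{K'} = \mu$.

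The commutative square of $E_\infty$-algebras given by $z \mapsto (z')^d$ on $\Sk[z] \to \Sk[z']$ and $\OK \to \mathcal{O}_{K'}$ induces a morphism of cosimplicial cyclotomic spectra $\thh(\OK/\Sk[z]^{\otimes \oldbullet}) \to \thh(\mathcal{O}_{K'}/\Sk[z']^{\otimes \oldbullet})$, and hence a morphism of the corresponding refined algebraic Tate spectral sequences. At the level of $\tp_0$, I would first verify that $\tilde{\mu}_K = -\mu^p/\delta(E_K(z_0))$ maps to $\tilde{\mu}_{K'}$, and that on the relevant cobar summand the element $z_0^m dz$ maps to $d (z'_0)^{d(m+1) - 1} dz'$ modulo strictly higher refined Nygaard filtration --- using the expansion $(z'_0)^d - (z'_1)^d = (z'_0 - z'_1)\bigl[d (z'_0)^{d-1} + O(z'_0 - z'_1)\bigr]$ together with Convention \ref{C:u-z}.

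Next, I would apply Proposition \ref{P:stem0} to $K'$ to evaluate $d^{(p^{l+1}-1)/(p-1) - 1/e_{K'}}((z')^{dn})$; since $\gcd(d, p) = 1$ one has $v_p(dn) = v_p(n) = l$ and $(dn)/p^l = d n'$, and after invoking the identity $d e_K = e_{K'}$ the resulting formula matches the image under base change of the conjectural formula for $K$. Combined with the injectivity of the map $z_0^m dz \mapsto d (z'_0)^{d(m+1) - 1} dz'$ --- ensured by $d \not\equiv 0 \pmod{p}$ together with the pairwise distinctness of the exponents $d(m+1) - 1$ for varying $m$ --- this forces the desired differential formula on the $K$-side.

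The main obstacle will be carefully tracking the refined Nygaard filtrations across this base change: the step sizes $1/e_K$ on the source and $1/e_{K'} = 1/(d e_K)$ on the target differ by a factor of $d$, so one must show that the finer filtration on the $K'$-side restricts compatibly to the filtration on the $K$-side and places the differential of index $(p^{l+1}-1)/(p-1) - 1/e_K$ into the same graded piece as the image of the differential of index $(p^{l+1}-1)/(p-1) - 1/e_{K'}$ on the $K'$-side. Once this injectivity and filtration compatibility are established, the non-vanishing of each differential, the pairwise distinctness of the exponents $p e_K (p^l - 1)/(p-1) + n - 1$ in the targets, and the exhaustivity of these as all nontrivial refined algebraic Tate differentials all follow exactly as in the proof of Proposition \ref{P:stem0}.
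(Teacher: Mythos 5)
Your proposal is correct and follows essentially the same route as the paper: base change along the tamely ramified extension $K'=K(\varpi_K^{1/m})$ with $m$ coprime to $p$ and $e_{K'}=me_K$ large enough for Proposition \ref{P:stem0} to apply, use the induced map of spectral sequences with the ``less refined'' (step $1/e_K$) filtration on the $K'$-side, verify $z^n\mapsto z^{mn}$ and $z_0^{n}dz\mapsto mz_0^{mn+m-1}dz$ and the resulting injectivity, and pull the differentials back. The filtration-compatibility issue you flag as the main obstacle is precisely what the paper resolves via its Lemma \ref{L:L-in-P} (showing the projection onto the single relevant monomial factors through the later page) together with an inductive survival argument for $z^n$.
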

\begin{proof}
Choose an integer $m>3$ coprime to $p$, and let $K' = K(\varpi^{1\over m}_K)$; the ramification index of $K'$ is $e_{K'}=me_{K}$, and the corresponding Eisenstein polynomial for 
$\varpi^{1\over m}_K$ is $E_{K'}(z) = \EK(z^m)$. Now the commutative diagram
$$\xymatrix{
\Sz \ar[rr]^{z\mapsto z^m} \ar[d]^{z\mapsto\pK} && \Sz \ar[d]^{z\mapsto \varpi^{1\over m}_K} \\
\OK \ar[rr] && \mathcal{O}_{K'}
}
$$
induces a map of cosimplicial cyclotomic spectra
\[
T_m: \tp(\OK/\Sz^{\otimes\oldbullet};\Fp) \rightarrow \tp(\mathcal{O}_{K'}/\Sz^{\otimes\oldbullet};\Fp).
\]
Define the "less refined" Nygaard filtration on $\tp_*(\mathcal{O}_{K'}/\Sz^{\otimes\oldbullet};\Fp)$ to be the filtration 
$\mathcal{N}^{\geq r} \tp_*(\mathcal{O}_{K'}/\Sz^{\otimes\oldbullet};\Fp)$ for $r\in\frac{1}{e_K}\mathbb{Z}_{\geq0}$, which in turn induces the "less refined" algebraic Tate spectral sequence $\tilde{E'}(\tp(\mathcal{O}_{K'});\Fp)$. Clearly $T_m$ is compatible with filtrations. Thus it induces a morphism of spectral sequences
\[
T_m: \tilde{E}(\tp(\mathcal{O}_{K});\Fp)\to \tilde{E'}(\tp(\mathcal{O}_{K'});\Fp).
\]

By an argument similar to the proof of Proposition \ref{P:thh-e2-fp} and Lemma \ref{L:ref-tate-tp0}, we first obtain that if $e_K>1$, then $\tilde{E'}^{\frac{1}{e_K}}_{*,0,*}(\tp(\mathcal{O}_{K'});\Fp)$ is isomorphic to $\k[z]\oplus \k[z_0] dz$, where $dz$ denotes $t_{z_0-z_1}$. If $e_K=1$, then $\tilde{E'}^{\frac{1}{e_K}}_{0,0,*}(\tp(\mathcal{O}_{K'});\Fp)$ is the $\k$-vector space with a basis 
$\{z^n | m\nmid n~\text{or}~p\mid  n\}$,
and $\tilde{E'}^{\frac{1}{e_K}}_{1,0,*}(\tp(\mathcal{O}_{K'});\Fp)$ is the $\k$-vector space with a basis  given by the family of cycles $\{z_0^{n}dz| m\nmid n+1~\text{or}~p\mid  n+1\}$, where 
$z_0^{n}dz$ denotes
\[
   z_0^s((z_0^m)^{k-1}t_{z_0-z_1}-(k-1)mz_0^{m-1}(z_0^m)^{k-2}t^{[2]}_{z_0-z_1}), \quad 0\leq s\leq m-1~\text{and}~s+(k-1)m=n, 
   \]
which is formally equal to $\frac{z_0^{n+m}-z_1^{n+m}}{kmz_0^{m-1}}$; for $j\neq0,1$, $\tilde{E'}^{\frac{1}{e_K}}_{j,0,*}(\tp(\mathcal{O}_{K'});\Fp)=0$. Under our convention of notations, it is straightforward to verify 
\begin{equation}\label{E:tm}
T_m(z^n)=z^{mn}, \qquad T_m(z_0^ndz)=mz_0^{mn+m-1}dz;
\end{equation}
note that right hand side of the second equality is just formally equal to $z_0^{mn}dz^m$. Combining with Lemma \ref{L:ref-tate-tp0} and Corollary \ref{C:ref-tate-tp0-ref}, we see that 
\[
T_m: \tilde{E}_{*,0,*}^{\frac{1}{e_K}}(\tp(\mathcal{O}_{K});\Fp)\to \tilde{E'}_{*,0,*}^{\frac{1}{e_K}}(\tp(\mathcal{O}_{K'});\Fp)
\]
is injective. 
To proceed, we need the following result. 
\begin{lem}\label{L:L-in-P}
 For $n\geq0, l=v_p(n)$, where $l\geq1$ if $e_K=1$, $n'=\frac{n}{p^l}$, the natural projection
\[
\phi: \tilde{E'}^{\frac{1}{e_K}}_{1,0, {p^{l+1}-1\over p-1}+\frac{n-1}{e_K}}(\tp(\mathcal{O}_{K'});\Fp)\to \frac{\tilde{E'}^{\frac{1}{e_K}}_{1,0, {p^{l+1}-1\over p-1}+\frac{n-1}{e_K}}(\tp(\mathcal{O}_{K'});\Fp)}{\oplus_{i\neq pm\eK{p^{l}-1\over p-1}+mn-1}\k z_0^i dz}\cong \k z_0^{pm\eK{p^{l}-1\over p-1}+mn-1}dz
\]
factors through $\tilde{E'}_{1,0, {p^{l+1}-1\over p-1}+\frac{n-1}{e_K}}^{{p^{l+1}-1\over p-1}}$. Moreover,
\[
d^{{p^{l+1}-1\over p-1}-{1\over\eK}}(z^{mn})\in \tilde{E'}_{1,0, {p^{l+1}-1\over p-1}+\frac{n-1}{e_K}}^{{p^{l+1}-1\over p-1}}
\] 
maps to $n'\bar{\tilde{\mu}}^{{p^{l}-1\over p-1}}z_0^{pm\eK{p^{l}-1\over p-1}+mn-1}dz$ via this projection.
In particular, $d^{{p^{l+1}-1\over p-1}-{1\over\eK}}(z^{mn})$ is non-zero.
\end{lem}
\begin{proof}
By first half of Proposition \ref{P:stem0}, if $z^t\in \tilde{E'}^{\frac{1}{e_K}}_{0,0, \frac{t}{e_K}}(\tp(\mathcal{O}_{K'});\Fp)$ has non-trivial contribution to $\tilde{E'}^{\frac{k-t}{e_K}}_{1,0, \frac{k-1}{e_K}}(\tp(\mathcal{O}_{K'});\Fp)$, then 
\begin{equation}\label{E:L-in-P}
{p^{l'+1}-1\over p-1}+\frac{t-1}{me_K}=\frac{k-1}{e_K}+\frac{s}{me_K}~ \text{for some}~ 0\leq s \leq m-1,
\end{equation}
where $l'=v_p(t)$. By the second half of Proposition \ref{P:stem0}, $t$ is uniquely determined by $(k, s)$. In particular, if 
\[
k=e_K{p^{l+1}-1\over p-1}+n, \quad s=m-1,
\] 
then $t$ has to be equal to $mn$. Moreover, when (\ref{E:L-in-P}) holds, we see from the argument of Proposition \ref{P:thh-e2-fp} and Lemma \ref{L:ref-tate-tp0} that the image of $z^t$ in  $\tilde{E'}^{\frac{1}{e_K}}_{1,0, \frac{k-1}{e_K}}(\tp(\mathcal{O}_{K'});\Fp)$ is contained in the subspace generated by the cycles 
$z_0^{m(k-1)+s'}dz, 0\leq s'\leq s$.  Putting these together, we deduce that 
\[
\ker(\tilde{E'}^{\frac{1}{e_K}}_{1,0, {p^{l+1}-1\over p-1}+\frac{n-1}{e_K}}(\tp(\mathcal{O}_{K'});\Fp)\to \tilde{E'}_{1,0, {p^{l+1}-1\over p-1}+\frac{n-1}{e_K}}^{{p^{l+1}-1\over p-1}}(\tp(\mathcal{O}_{K'});\Fp)
\]
is contained in the subspace generated by $z_0^{pm\eK{p^{l}-1\over p-1}+mn-s}dz, 2\leq s\leq m$, yelding the first half of the lemma. Using (\ref{E:tate-diff-stem0}), we conclude the second half of the lemma. 
\end{proof}
~\\
Now we prove the proposition. We first show that $z^{n}\in\tilde{E}^{\frac{1}{e_K}}_{0,0,\frac{n}{e_K}}(\tp(\mathcal{O}_{K});\Fp)$ survives to the $\tilde{E}^{{p^{l+1}-1\over p-1}}$-term.  We do this by induction. Suppose $z^{n}$ survives to some $\tilde{E}^{r}$-term with $\frac{1}{e_K}\leq r<{p^{l+1}-1\over p-1}$. That is, 
\[
d(z^n)\in\mathcal{N}^{\geq r+\frac{n-1}{e_K}}\tp_0(\mathcal{O}_{K}/\Szz;\Fp).
\]
Since $T_m(z^n)=z^{mn}$, which survives to the $\tilde{E'}^{{p^{l+1}-1\over p-1}}$-term by Lemma \ref{L:L-in-P}, we have
\[
T_m(d(z^n))=d(T_m(z^n))\in\mathcal{N}^{\geq r+\frac{n}{e_K}}\tp_0(\mathcal{O}_{K'}/\Szz;\Fp).
\]
Then the injectivity of $\tilde{E}_{1,0,r+\frac{n-1}{e_K}}^{\frac{1}{e_K}}(\tp(\mathcal{O}_{K});\Fp)\to \tilde{E'}_{1,0,r+\frac{n-1}{e_K}}^{\frac{1}{e_K}}(\tp(\mathcal{O}_{K'});\Fp)$ implies that $d(z^n)=d(\alpha)$ for some $\alpha\in \mathcal{N}^{r+\frac{n-1}{e_K}}\tp_0(\mathcal{O}_{K};\Fp)$, which is 
\[
\mathcal{N}^{\geq r+\frac{n-1}{e_K}}\tp_0(\mathcal{O}_{K};\Fp)/\mathcal{N}^{\geq r+\frac{n}{e_K}}\tp_0(\mathcal{O}_{K};\Fp).
\] 
Now 
\[
d(T_m(\alpha))=T_m(d(\alpha))=T_m(d(z^n))=0\in\mathcal{N}^{r+\frac{n-1}{e_K}}\tp_0(\mathcal{O}_{K'}/\Szz;\Fp),
\]
we get $T_m(\alpha)\in \tilde{E'}_{0,0,r+\frac{n-1}{e_K}}^{\frac{1}{e_K}}(\tp(\mathcal{O}_{K'});\Fp)$. By the explicit description of 
$ \tilde{E}_{0,0,*}^{\frac{1}{e_K}}(\tp(\mathcal{O}_{K});\Fp)$ and $\tilde{E'}_{0,0,*}^{\frac{1}{e_K}}(\tp(\mathcal{O}_{K'});\Fp)$, we conclude $\alpha\in \tilde{E'}_{0,0,r+\frac{n-1}{e_K}}^{\frac{1}{e_K}}(\tp(\mathcal{O}_{K});\Fp)$. Thus 
\[
d(z^n)=d(\alpha)=0\in \mathcal{N}^{r+\frac{n-1}{e_K}}\tp_0(\mathcal{O}_{K}/\Szz;\Fp),
\] 
yielding 
\[
d(z^n)\in \mathcal{N}^{\geq r+\frac{n}{e_K}}\tp_0(\mathcal{O}_{K}/\Szz;\Fp).
\]
Once we know $z^n$ survives to the $\tilde{E}^{\frac{p^{l+1}-1}{p-1}}$-term, since $\tilde{E}^{\frac{1}{e_K}}_{1,0, {p^{l+1}-1\over p-1}+\frac{n-1}{e_K}}(\tp(\OK;\Fp))$ is generated by $z_0^{p\eK{p^{l}-1\over p-1}+n-1}dz$, we may suppose 
\[
d^{{p^{l+1}-1\over p-1}-{1\over\eK}}(z^{n})=\lambda z_1^{p\eK{p^{l}-1\over p-1}+n-1}dz.
\] 
Applying the second half of Lemma \ref{L:L-in-P}, we get
  \[
\lambda=n'\bar{\tilde{\mu}}^{{p^{l}-1\over p-1}}.
 \]
The rest is the same as in the proof of Proposition \ref{P:stem0}.
\end{proof}

\begin{rem}\label{R:ref-less-ref}
In fact, employing the result of Proposition \ref{P:stem0-all} in the argument of Lemma \ref{L:L-in-P} will enable us to prove the following fact: for $r\in\frac{1}{e_K}\mathbb{Z}_{\geq1}\cup\{\infty\}$, if $\tilde{E'}_{1,0,\frac{k-1}{e_K}+1}^{r}(\tp(\mathcal{O}_{K});\Fp)$ is non-zero, that is $z_0^{k-1}dz$ is not in the image of $d^{r-\frac{1}{e_K}}$, then the natural projection
\[
\tilde{E'}^{\frac{1}{e_K}}_{1,0, \frac{k-1}{e_K}+1}(\tp(\mathcal{O}_{K'});\Fp)\to \frac{\tilde{E'}^{\frac{1}{e_K}}_{1,0, \frac{k-1}{e_K}+1}(\tp(\mathcal{O}_{K'});\Fp)}{\oplus_{i\neq mk-1}\k z_0^i dz}\cong \k z_0^{mk-1}dz
\]
factors through $\tilde{E'}_{1,0,\frac{k-1}{e_K}+1}^{r}(\tp(\mathcal{O}_{K'});\Fp)$. In particular, $T_m(z_1^{k-1}dz)$ is non-zero in $\tilde{E'}_{1,0,\frac{k-1}{e_K}+1}^{r}(\tp(\mathcal{O}_{K'});\Fp)$. 
\end{rem}

Next we investigate the differentials on non-zero stems. To this end, put 
\begin{equation*}\label{E:sigma1-sigma2}
\epsilon=\sigma_0\sigma_1^{-1},\quad \epsilon_0={\varphi(\EK(z_0))\over \varphi(\EK(z_1))};
\end{equation*}
by Remark \ref{R:phi-z1z2}, the latter is well-defined.  By the functoriality of Tate spectral sequence, we have
\[
\epsilon\in 1+\mathcal{N}^{\geq1}.
\] 
Using Theorem \ref{T:rtp}(6), we get
\begin{equation}\label{E:z1-z2}
{\epsilon\over\varphi(\epsilon)}={\varphi(\sigma^{-1}_0)\sigma_0\over\varphi(\sigma_1^{-1})\sigma_1}={\varphi(v_0)\varphi(u_0)\over\varphi(v_1)\varphi(u_1)}=\epsilon_0.
\end{equation}
Let $\bar{\epsilon}, \bar{\epsilon}_0$ be the images of $\epsilon, \epsilon_0$ in $\tp_0(\OK/\Szz;\mathbb{F}_p)$ respectively. 
It follows that 
\[
\bar{\epsilon}_0=\bar{\epsilon}^{1-p}\equiv 1 \mod \mathcal{N}^{\geq1}.
\]
Then it is straightforward to see that for $i\geq0$,
\begin{equation}\label{E:epsilon0}
\bar{\epsilon}^{p^{i}}_0\equiv 1 \mod \mathcal{N}^{\geq p^{i}},
\end{equation}
and 
\begin{equation}\label{E:epsilon}
\prod_{i=0}^\infty \bar{\epsilon}_0^{p^i}=\bar{\epsilon},
\end{equation}
where the LHS takes limit under the $\mathcal{N}$-topology. 

\begin{lem}\label{L:non-zero-stem}
For $r\in\frac{1}{\eK}\mathbb{N}$,   $j\in\mathbb{Z}$, $k,m\in\mathbb{N}$ such that 
\[
p^k>j, \quad \min\{p^m, p^k\}>r,
\] 
we have
$$z^{(p^k-j)\eK{p^{m+1}-p\over p-1}}\sigma^j\in \tilde{E}^{1-{1\over\eK}}(\tp(\OK);\mathbb{F}_p)$$
survives to the $\tilde{E}^{r}$-term.



\end{lem}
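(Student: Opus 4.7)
The plan is to exploit the structural identities for $\bar\epsilon$ and $\bar\epsilon_0$ developed before the lemma to compute the cobar differential $d(z^N\sigma^j) = z_1^N\sigma_1^j - z_0^N\sigma_0^j$ explicitly, and then show that the specific exponent $N = (p^k-j)e_K \tfrac{p^{m+1}-p}{p-1}$ has been chosen precisely so that the resulting expression factors as $z_0^N\sigma_0^j(\bar\epsilon^E - 1)$ for an integer $E$ with $v_p(E) \geq \min(k,m)$.

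The first step is to establish, in $\tp_0(\OK/\Szz;\mathbb{F}_p)$, the identity $z_0^{pe_K} = \bar\epsilon_0 \cdot z_1^{pe_K}$. This uses $\varphi(E_K(z_i)) \equiv \bar\mu^p z_i^{pe_K} \bmod p$ together with the defining relation $\varphi(E_K(z_0)) = \epsilon_0 \varphi(E_K(z_1))$. Raising to the $p^{i-1}$-th power (which is the $i$-th iterate of Frobenius in characteristic $p$) and telescoping the product over $i = 1,\dots,m$ yields
\[
z_0^A = \bar\epsilon_0^{(p^m-1)/(p-1)} z_1^A, \qquad A := e_K \tfrac{p^{m+1}-p}{p-1}.
\]
Applying \eqref{E:epsilon} in the form $\prod_{i=0}^{m-1}\bar\epsilon_0^{p^i} = \bar\epsilon/\varphi^m(\bar\epsilon) = \bar\epsilon^{1-p^m}$ rewrites this as $z_1^A = z_0^A \bar\epsilon^{p^m-1}$.

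Raising to the $(p^k-j)$-th power gives $z_1^N = z_0^N \bar\epsilon^{(p^m-1)(p^k-j)}$. Combining with the obvious $\sigma_1^j = \sigma_0^j \bar\epsilon^{-j}$ produces
\[
d(z^N\sigma^j) = z_0^N\sigma_0^j\bigl(\bar\epsilon^E - 1\bigr), \qquad E = (p^m-1)(p^k-j) - j = p^k(p^m-1) - p^m j.
\]
A two-case factorization shows $v_p(E) \geq \min(k,m)$: if $k \leq m$, then $E = p^k\bigl[(p^m-1) - p^{m-k}j\bigr]$; if $k > m$, then $E = p^m\bigl[p^{k-m}(p^m-1) - j\bigr]$. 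Combined with \eqref{E:epsilon0}, which gives $\bar\epsilon^{p^s} - 1 \in \mathcal{N}^{\geq p^s}$, one concludes $\bar\epsilon^E - 1 \in \mathcal{N}^{\geq p^{\min(k,m)}}$ in the (refined) Nygaard filtration, since integer filtration indices agree in both filtrations.

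The element $z^N\sigma^j$ has refined Nygaard filtration $j + N/e_K$, and the computation shows $d(z^N\sigma^j) \in \mathcal{N}^{\geq j + N/e_K + p^{\min(k,m)}}$. Since by hypothesis $r < \min\{p^k,p^m\} = p^{\min(k,m)}$ and both $r$ and $p^{\min(k,m)}$ lie in $\tfrac{1}{e_K}\mathbb{Z}$, we have $p^{\min(k,m)} \geq r + \tfrac{1}{e_K}$, so all differentials $d^{r'}$ with $r' < r$ vanish on $z^N\sigma^j$; equivalently, $z^N\sigma^j$ survives to the $\tilde{E}^r$-term. The main subtlety is the $v_p$-computation and the telescoping in Step~1; the cancellation of $\bar\mu^p$ in forming $\bar\epsilon_0$ mod $p$ is what makes the clean relation $z_0^{pe_K} = \bar\epsilon_0 z_1^{pe_K}$ possible, and without this the iterated product would not collapse to a single power of $\bar\epsilon$.
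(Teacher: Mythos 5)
Your proof is correct and follows essentially the same route as the paper's: the paper lifts $z^N\sigma^j$ to $\bigl(\prod_{i=1}^m\varphi^i(E_K(z)/\mu)\bigr)^{p^k-j}\sigma^j$ and writes $\eta_R/\eta_L$ as $\epsilon^{-j}\prod_{i=0}^{m-1}\varphi^i(\epsilon_0)^{j-p^k}$, which modulo $p$ is exactly your $\bar\epsilon^{E}$, bounding the two factors separately by $\mathcal{N}^{\geq p^m}$ and $\mathcal{N}^{\geq p^k}$ instead of computing $v_p(E)$. Two harmless slips: the absolute refined filtration of $z^N\sigma^j$ is $N/e_K$ rather than $j+N/e_K$ (only the filtration jump matters, so nothing breaks), and $\bar\epsilon^{p^s}-1\in\mathcal{N}^{\geq p^s}$ follows from $\bar\epsilon\in1+\mathcal{N}^{\geq1}$ and the Frobenius in characteristic $p$ rather than from \eqref{E:epsilon0}, which concerns $\bar\epsilon_0$.
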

\begin{proof}
We consider 
\[
\alpha={ (\prod_{i=1}^m \varphi^i(\EK(z)/\mu))^{p^k-j}} \sigma^j \in \tp_{2j}(\OK/\Sz).
\]
Clearly $\alpha$ is a lift of $z^{(p^k-j)\eK{p^{m+1}-p\over p-1}}\sigma^j$. 
We have
\[
\eta_L(\alpha) = { (\prod_{i=1}^m \varphi^i(\EK(z_0)/\mu))^{p^k-j}} \sigma_0^j
\]
and 
\begin{eqnarray*}
\eta_R(\alpha)=  { (\prod_{i=1}^m \varphi^i(\EK(z_1)/\mu))^{p^k-j}} \sigma_1^j=\eta_L(\alpha)\epsilon^{-j}\prod_{i=0}^{m-1}\varphi^i(\epsilon_0)^{j-p^k}=\eta_L(\alpha)(\epsilon^{-1}\prod_{i=0}^{m-1}\varphi^i(\epsilon_0))^{j}\prod_{i=0}^{m-1}\varphi^i(\epsilon_0)^{-p^k}.
\end{eqnarray*}
By (\ref{E:epsilon0}) and (\ref{E:epsilon}), we deduce that
\[
\bar{\epsilon}^{-1}\prod_{i=0}^{m-1}\varphi^i(\bar{\epsilon}_0)\equiv1 \mod  \mathcal{N}^{\geq p^{m}}
\]
and
\[
\prod_{i=1}^m\varphi^i(\bar{\epsilon}_0)^{-p^k}\equiv1 \mod  \mathcal{N}^{\geq p^{k}}.
\]
It follows that $\eta_L(z^{(p^k-j)\eK{p^{m+1}-p\over p-1}}\sigma^j)-\eta_R(z^{(p^k-j)\eK{p^{m+1}-p\over p-1}}\sigma^j)\in\mathcal{N}^{\geq{(p^k-j){p^{m+1}-p\over p-1}}+\min\{p^m, p^k\}}$, concluding the lemma. 
\end{proof}

\begin{prop}\label{P:non-zero-stem}
For $n\geq0, j\in\mathbb{Z}, l=v_p(n-{p\eK j\over p-1})$, and $n'\equiv p^{-l}(n-{p\eK j\over p-1})\mod p$, we have 
\begin{equation}\label{E:ref-tate-dif}
d^{{p^{l+1}-1\over p-1}-{1\over\eK}}(z^n\sigma^j) =n'\bar{\tilde{\mu}}^{{p^{l}-1\over p-1}} z_0^{p\eK{p^l-1\over p-1}+n-1}\sigma^jdz,
\end{equation}
which is non-zero in  $\tilde{E}_{1,j, {p^{l+1}-1\over p-1}+\frac{n-1}{e_K}}^{{p^{l+1}-1\over p-1}}$. Moreover, the exponents of $z_0$ in the  targets of (\ref{E:ref-tate-dif}) are all different. Consequently, these are all the nontrivial refined algebraic Tate differentials.
\end{prop}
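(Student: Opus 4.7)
The strategy is to reduce the general-$j$ case to the already-known case $j=0$ of Proposition \ref{P:stem0-all} by exploiting the multiplicativity of the refined algebraic Tate spectral sequence (Lemma \ref{611}) together with the permanent cycles on non-zero stems produced by Lemma \ref{L:non-zero-stem}. Concretely, I would multiply $z^n\sigma^j$ by a carefully chosen element of the form $z^M\sigma^{-j}$ that survives far out, so the product $z^{n+M}$ lies on the $\sigma^0$-stem where Proposition \ref{P:stem0-all} applies, then divide back to read off $d(z^n\sigma^j)$.

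Set $r=(p^{l+1}-1)/(p-1)-1/\eK$ and pick any $R>r$. Applying Lemma \ref{L:non-zero-stem} with $j$ replaced by $-j$ and with $p^k>|j|$ and $\min\{p^k,p^m\}>R$, the element $z^M\sigma^{-j}$ with
\[
M=(p^k+j)\,\eK\,\tfrac{p^{m+1}-p}{p-1}
\]
survives to the $\tilde E^R$-page; in particular $d^s(z^M\sigma^{-j})=0$ for all $s\leq r$. Expanding,
\[
n+M=\Bigl(n-\tfrac{p\eK j}{p-1}\Bigr)+\tfrac{p\eK}{p-1}\bigl(p^k(p^m-1)+jp^m\bigr),
\]
and the second summand has $p$-adic valuation $\geq m$, which can be made arbitrarily large. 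Thus $v_p(n+M)=l$ and $(n+M)/p^l\equiv n'\pmod p$, so Proposition \ref{P:stem0-all} gives
\[
d^r(z^{n+M})=n'\bar{\tilde\mu}^{(p^l-1)/(p-1)}\,z_0^{p\eK(p^l-1)/(p-1)+n+M-1}\,dz.
\]
Applying the Leibniz rule on the $\tilde E^r$-page and using $d^r(z^M\sigma^{-j})=0$, one has $d^r(z^n\sigma^j)\cdot z^M\sigma^{-j}=d^r(z^{n+M})$. The target $\tilde E^r_{1,j,*}$ is a subquotient of $\k[z_0]\sigma^j\,dz$, and multiplication by $z^M\sigma^{-j}$ acts by shifting the $z_0$-exponent by $M$, which is manifestly injective. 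Dividing off then yields the claimed formula, and its image under the same injection is the non-zero class computed above, establishing non-vanishing.

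The distinctness of the $z_0$-exponents for fixed $j$ reduces, via $\tilde n=n-p\eK j/(p-1)$, to the distinctness proved in Proposition \ref{P:stem0-all}: one recovers $l$ from $v_p(\text{exponent}+1+p\eK/(p-1))$, and then $\tilde n$, and hence $n$. The final sentence (``these are all the nontrivial differentials'') follows because (i) every class in $\tilde E^{1-1/\eK}_{0,j,*}=\k[z]\sigma^j$ not of the form $z^n\sigma^j$ with $n-p\eK j/(p-1)=0$ is accounted for above, and such exceptional classes are permanent cycles (direct consequence of Lemma \ref{L:non-zero-stem} applied to the identity $n(p-1)=p\eK j$), while (ii) $\tilde E^{1-1/\eK}_{i,j,*}=0$ for $i\notin\{0,1\}$ by Corollary \ref{C:ref-tate-tp0-ref}, so no other differentials can exist. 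The main obstacle is the joint optimization in the second paragraph: we must pick $(k,m)$ large enough that $z^M\sigma^{-j}$ survives to page $R>r$ \emph{and} that the unwanted correction $\frac{p\eK}{p-1}(p^k(p^m-1)+jp^m)$ has $v_p>l$; both are lower bounds on $(k,m)$ so can be met simultaneously.
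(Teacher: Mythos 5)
Your main computation is correct and is essentially the paper's own argument: both proofs reduce the general stem $j$ to the $j=0$ case (Proposition \ref{P:stem0-all}) by combining the permanent cycles supplied by Lemma \ref{L:non-zero-stem} with the Leibniz rule, differing only in bookkeeping --- the paper multiplies $z^n\sigma^j$ by $z^{sp^l}$ so that the \emph{product} is a Lemma \ref{L:non-zero-stem} permanent cycle in stem $j$ and the known $j=0$ differential sits on the factor $z^{sp^l}$, while you multiply by a permanent cycle $z^M\sigma^{-j}$ so that the known differential sits on the product $z^{n+M}$. Two points deserve care. First, ``multiplication by $z^M\sigma^{-j}$ shifts the $z_0$-exponent, which is manifestly injective'' is not literally manifest when $\eK=1$, where the degree-one classes are the divided-power expressions $r_n$ of Lemma \ref{L:ref-tate-tp0}; it does hold because $p\mid M$ (so $z^M$ is a cocycle and $r_nz_1^M=r_{n+M}$) and because each filtration-graded piece of $\tilde{E}^{\frac{1}{\eK}}_{1,j,*}$ is one-dimensional, which is also what legitimizes ``dividing back'' on the $E^r$-page. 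Second, and this is an actual misstep: the exceptional classes $z^{p\eK j/(p-1)}\sigma^j$ are \emph{not} a ``direct consequence of Lemma \ref{L:non-zero-stem} applied to the identity $n(p-1)=p\eK j$'' --- the exponents that lemma produces are $(p^k-j)\eK\frac{p^{m+1}-p}{p-1}$, and none of them equals $\frac{p\eK j}{p-1}$, since that would force $(p^k-j)(p^m-1)=j$. The correct statement is that those special exponents are congruent to $\frac{p\eK j}{p-1}$ modulo arbitrarily high powers of $p$, so rerunning your multiplication trick with $n=\frac{p\eK j}{p-1}$ shows $d(z^n\sigma^j)$ has arbitrarily high refined Nygaard filtration and the class is a permanent cycle; this is a one-line repair, consistent with reading $l=v_p(n-\frac{p\eK j}{p-1})=\infty$ in the statement, but as written that step would fail.
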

\begin{proof}
Choose $k, m\in\mathbb{N}$ such that 
\[
p^k>j,\quad \min\{m, k\}>l. 
\]
Thus
\[
(p^k-j)\eK{p^{m+1}-p\over p-1}-n\equiv {p\eK j\over p-1}-n\mod p^{l+1}.
\]
It follows that $(p^k-j)\eK{p^{m+1}-p\over p-1}=n+sp^l$ with $s\equiv -n' \mod p$. By Lemma \ref{L:non-zero-stem}, $z^{(p^k-j)\eK{p^{m+1}-p\over p-1}}\sigma^j$ survives to the $\tilde{E}^{{p^{l+1}-1\over p-1}}$-term. Hence
\[
d^{{p^{l+1}-1\over p-1}-{1\over\eK}}(z^{(p^k-j)\eK{p^{m+1}-p\over p-1}}\sigma^j)=d^{{p^{l+1}-1\over p-1}-{1\over\eK}}(z^{n+sp^l}\sigma^j)=0.
\]
By Leibniz rule and Proposition \ref{P:stem0}, we deduce that
\[
z_1^{sp^{l}}d^{{p^{l+1}-1\over p-1}-{1\over\eK}}(z^n\sigma^j)=-z_0^{n}\sigma^jd^{{p^{l+1}-1\over p-1}-{1\over\eK}}(z^{sp^l})= -s\bar{\tilde{\mu}}^{{p^{l}-1\over p-1}} z_0^{p\eK{p^l-1\over p-1}+sp^l-1+n}\sigma^jdz.
\]
Recall that both $\eta_L$ and $\eta_R$ define the refined Nygaard filtrations. It follows that
\[
d^{{p^{l+1}-1\over p-1}-{1\over\eK}}(z^n\sigma^j)= n'\bar{\tilde{\mu}}^{{p^{l}-1\over p-1}}z_0^{p\eK{p^l-1\over p-1}+n-1}\sigma^jdz.
\]

The rest is similar to the proof of Proposition \ref{P:stem0}: put $\tilde{n}=p\eK{p^l-1\over p-1}+n$, then 
\[
l=v_p(\tilde{n}-\frac{pe_K(j-1)}{p-1}).
\]
That is, $n$ is uniquely determined by $\tilde{n}$.
\end{proof}

\begin{rem}\label{r91}
There is a correspondence between refined algebraic Tate differentials and Tate differentials in prior works. More precisely, 
$z$, $z^\eK$,  $\sigma$ and $dz$ correspond to $\pK$, $\tau_K\alpha_K$, $\tau_K^{-1}$ and $\tau_K\pK d\log\pK$ in \cite[Theorem 5.5.1]{HM} respectively;
for $p=2$ and $\eK=1$, $\sigma$, $z^2\sigma$ and $z\sigma^2 dz$ correspond to $t^{-1}$, $te_4$ and $e_3$ in \cite[Theorem 8.14]{Rog} respectively;
for $p$ odd and $\eK=1$, $\sigma$, $z^p\sigma^{p-1}$ and $z^{p-1}\sigma^pdz$ correspond to $t^{-1}$, $tf$ and $e$ in \cite[Theorem 7.4]{Tsa} respectively. 

Thus Proposition \ref{P:non-zero-stem} not only recovers and extends (by including the case $p=2$ and $\eK>1$) the previous results in \cite{HM} \cite {Rog} and \cite{Tsa}, but its proof is purely algebraic and vastly simpler, and that it is this simplification that has made the extension possible.
\end{rem}

\section{The $E^2$-term of mod $p$ descent spectral sequence I}\label{S:e2tc-tp}
In this section, we determine the $E^2$-terms of the mod $p$ descent spectral sequences for $\tc^{-}(\OK)$ and $\tp(\OK)$.

\begin{prop}\label{P:e20}
For $j\in\mathbb{Z}$, $E^2_{0,2j}(\tp(\OK);\Fp)$ is non-zero if and only if  
$j\geq0$ and $p-1$ divides $\eK j $. If this condition holds,  then $E^2_{0,2j}(\tp(\OK);\Fp)$ is a $1$-dimensional $\k$-vector space generated by a cycle with leading term $z^{p\eK j\over p-1}\sigma^j$. Moreover, 
the map
\[
E^2_{0,*}(\tc^-(\OK);\Fp)\rightarrow E^2_{0,*}(\tp(\OK);\Fp).
\]
induced by the canonical map
is an isomorphism.
\end{prop}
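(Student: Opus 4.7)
The plan is to run the refined algebraic Tate spectral sequence
\[
\tilde{E}^{1/\eK}_{i,2j,k}(\tp(\OK);\Fp) \;\Rightarrow\; E^2_{-i,2j}(\tp(\OK);\Fp)
\]
of \S\ref{s6} in column $i=0$ and use the complete list of its nontrivial differentials recorded in Proposition \ref{P:non-zero-stem}. By Corollary \ref{C:ref-tate-tp0-ref} (resp.\ Lemma \ref{L:ref-tate-tp0}(2) in the $e_K=1$ case), by the page $\tilde{E}^{1-1/\eK}$ (resp.\ $\tilde{E}^{1}$) the 0-column has been cut down to the $\k$-span of the monomials $z^n\sigma^j$ with $n\geq 0$, and each such class sits in refined Nygaard filtration $k=j+n/\eK$. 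Proposition \ref{P:non-zero-stem} then asserts that the differential on $z^n\sigma^j$ is nonzero precisely when $n\neq \tfrac{p\eK j}{p-1}$, and that these differentials account for all nontrivial refined algebraic Tate differentials, so no class in the 1-column can be a coboundary hitting a class in the 0-column either.

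Consequently, $\tilde{E}^{\infty}_{0,2j,*}(\tp(\OK);\Fp)$ is nonzero if and only if the equation $n=\tfrac{p\eK j}{p-1}$ admits an integer solution with $n\geq 0$, equivalently $j\geq 0$ together with $(p-1)\mid \eK j$. When this condition holds, $\tilde{E}^{\infty}_{0,2j,*}$ is one-dimensional over $\k$, concentrated in refined filtration $\tfrac{(2p-1)j}{p-1}$ and generated by the image of $z^{p\eK j/(p-1)}\sigma^j$. Since the filtration on the abutment has a single nonzero graded piece, $E^2_{0,2j}(\tp(\OK);\Fp)$ itself is one-dimensional, with any nonzero element represented by a cycle whose leading term is $z^{p\eK j/(p-1)}\sigma^j$.

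For the $\tc^-$ statement, the cosimplicial version of Corollary \ref{C:degen-zz} (combined with Remark \ref{R:tate-n-variable} applied levelwise and mod $p$) identifies, in each cosimplicial degree, the canonical map $\tc^-_{2j}(\OK/\Sz^{\otimes[n]};\Fp)\to\tp_{2j}(\OK/\Sz^{\otimes[n]};\Fp)$ with the inclusion of the Nygaard filtration step $\mathcal{N}^{\geq j}$. The resulting map of cobar complexes is levelwise injective, which yields injectivity of the induced map on $E^2_{0,2j}$. For surjectivity, observe that the generating cycle constructed above has leading refined filtration $\tfrac{(2p-1)j}{p-1}\geq j$ when $j\geq 0$, and hence already lies in the $\tc^-$ subcomplex; when $j<0$ both sides vanish, as the refined algebraic homotopy fixed-point spectral sequence is merely the sub-spectral sequence of the refined algebraic Tate one consisting of classes in filtration $\geq j$.

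The main obstacle is already resolved in \S\ref{s6}, namely the complete computation of refined algebraic Tate differentials in Proposition \ref{P:non-zero-stem}. Granted that input, the present proof reduces to a bookkeeping of filtration shifts together with the observation that a filtered $\k$-vector space whose associated graded is one-dimensional is itself one-dimensional.
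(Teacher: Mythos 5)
Your proposal is correct and takes essentially the same route as the paper: read off $\tilde{E}^{\infty}_{0,2j,*}$ from the complete list of refined algebraic Tate differentials in Proposition \ref{P:non-zero-stem}, then get injectivity of $\can$ from levelwise injectivity on cobar complexes and surjectivity by noting the generating cycle lies in Nygaard filtration $\geq j$ (the paper instead writes down the explicit preimage $\bar{\mu}^{-j}z^{\eK j/(p-1)}u^j$, which is the same observation). One small bookkeeping slip: with the paper's conventions $\sigma^j$ has refined Nygaard filtration $0$, so $z^n\sigma^j$ sits in filtration $n/\eK$ rather than $j+n/\eK$ and the generator lives in filtration $\tfrac{pj}{p-1}$ rather than $\tfrac{(2p-1)j}{p-1}$; since $\tfrac{pj}{p-1}\geq j$ for $j\geq 0$, your surjectivity argument is unaffected.
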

\begin{proof}
By Proposition \ref{P:non-zero-stem}, we deduce that $d_{\frac{p^{l+1}-1}{p-1}-\frac{1}{e_K}}(z^n\sigma^j)=0$ is equivalent to 
\[
l<v_p(n-\frac{pe_Kj}{p-1}).
\] 
Thus $z^n\sigma^j$ has non-trivial contribution to $\tilde{E}^{\infty}(\tp(\OK);\Fp)$ if and only if  
\[
n=\frac{pe_Kj}{p-1}.
\] 
This concludes the first two assertions. For the last one, since 
\[
\can: \tc^-_*(\OK/\Sz;\Fp)\to \tp_*(\OK/\Sz;\Fp)
\]
is injective, 
we have
\[
\can: E^2_{0,*}(\tc^-(\OK);\Fp)\to E^2_{0,*}(\tp(\OK);\Fp)
\]
is injective as well. On the other hand, when $E^2_{0,2j}(\tp(\OK);\Fp)$ is non-zero, by Theorem \ref{T:rtp}, we have $\mathrm{can}(z^{p\eK j \over p-1}\sigma^j)=\bar{\mu}^{-j}z^{\eK j \over p-1}u^j\in E^2_{0,2j}(\tc^-(\OK);\Fp)$. Thus 
\[
\can: E^2_{0,*}(\tc^-(\OK);\Fp)\to E^2_{0,*}(\tp(\OK);\Fp)
\]
is also surjective.
\end{proof}

\begin{prop}\label{P:e21-tp}
The $\k$-vector space $E^2_{-1,2j}(\tp(\OK);\Fp)$
has a basis given by a family of cocycles with leading terms
\begin{itemize}
\item
$z_0^{{p\eK(j-1) +bp^l\over p-1}-1}\sigma^jdz$ with $l\geq 1, b\in \mathbb{Z}$ satisfying
\[
-{\eK(j-1)\over p^{l-1}}<b<{p\eK }-{\eK j\over p^{l-1}}, \quad p\nmid b, \quad b\equiv -\eK(j-1) \mod p-1, 
\]
and 
\item
$z_0^{{pe_K(j-1)\over p-1}-1}\sigma^jdz$, 
if $ j>1$ and $p-1\mid \eK(j-1)$.
\end{itemize}
\end{prop}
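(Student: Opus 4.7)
The plan is to compute $E^2_{-1, 2j}(\tp(\OK); \Fp)$ via the refined algebraic Tate spectral sequence of Section~\ref{s6}. By Lemma~\ref{L:ref-tate-tp0}(3), the chain complex computing the $E^1$-term of the descent spectral sequence is concentrated in cohomological degrees $i = 0, 1$, so no differentials leave column $i = 1$. Hence $\tilde{E}^{\infty}_{1, 2j, *}$ is simply the cokernel of the sum of all incoming differentials from column $i = 0$, and this is the associated graded of $E^2_{-1, 2j}(\tp(\OK); \Fp)$. I would start by enumerating the $\k$-basis of $\tilde{E}^{1 - 1/e_K}_{1, 2j, *}$ given by Corollary~\ref{C:ref-tate-tp0-ref}, namely the elements $z_0^m \sigma^j dz$ in the sense of Convention~\ref{C:u-z}: indexed by $m \in \mathbb{Z}_{\geq 0}$ when $e_K > 1$, and by $m \geq p-1$ with $p \mid m+1$ when $e_K = 1$. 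For each such $m$ I would then determine whether $z_0^m\sigma^j dz$ is hit by some $d^{r_l}$, where $r_l = (p^{l+1} - 1)/(p-1) - 1/e_K$.

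The key computation uses Proposition~\ref{P:non-zero-stem}: the differential $d^{r_l}$ maps $z^n\sigma^j$ to a nonzero scalar multiple of $z_0^m \sigma^j dz$ precisely when $n = m + 1 - pe_K(p^l - 1)/(p-1) \geq 0$ and $l = v_p\bigl(n - pe_Kj/(p-1)\bigr)$. Introducing the auxiliary quantity $M := (p-1)(m+1) - pe_K(j-1)$, a direct manipulation gives
\[
n - \frac{pe_K j}{p-1} = \frac{M - p^{l+1}e_K}{p-1}.
\]
Since $v_p(p^{l+1} e_K) = l + 1 + v_p(e_K) > l$, the valuation condition forces $v_p(M) = l$, so that $l$ is uniquely determined by $m$ when $M \neq 0$. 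The case $M = 0$ yields the ``exceptional'' generator $z_0^{pe_K(j-1)/(p-1) - 1} \sigma^j dz$: this $m$ is a non-negative integer precisely when $j > 1$ and $p - 1 \mid e_K(j-1)$, and since no valid $l$ exists, the element automatically survives. When $M \neq 0$, write $M = b p^l$ with $\gcd(b, p) = 1$; then the integrality condition $(p-1) \mid pe_K(j-1) + bp^l$ translates to $b \equiv -e_K(j-1) \pmod{p-1}$, the condition $m \geq 0$ to $b > -e_K(j-1)/p^{l-1}$, and the ``not hit'' condition $n < 0$ to $b < pe_K - e_K j/p^{l-1}$. A quick check shows $l = 0$ always forces $n = m + 1 \geq 1$, so only $l \geq 1$ can contribute surviving classes, which is the restriction in the proposition.

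Putting these two cases together reproduces exactly the asserted basis. Each surviving class in $\tilde{E}^{\infty}_{1, 2j, *}$ lifts to a cocycle in the cobar complex by the standard argument for filtered chain complexes, which justifies the ``leading term'' language. The main technical obstacle I anticipate is the bookkeeping when $e_K = 1$, since the $\tilde{E}^{1}$-page already has a smaller starting basis (only $z_0^m dz$ with $p \mid m+1$) and one must verify that this is compatible with the $l \geq 1$ condition; fortunately $p \mid m+1$ automatically gives $v_p(M) \geq 1$, so no surviving classes are lost in this case. A secondary care point is ensuring that the uniqueness of $l$ given $m$ (combined with the parity/residue constraint $b \equiv -e_K(j-1) \pmod{p-1}$) makes the enumeration bijective rather than overcounting.
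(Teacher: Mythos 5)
Your proposal is correct and follows essentially the same route as the paper's proof: both start from the basis of $\tilde{E}^{1-1/e_K}_{1,2j,*}$ given by Corollary \ref{C:ref-tate-tp0-ref}, use Proposition \ref{P:non-zero-stem} to identify which classes $z_0^{m}\sigma^j dz$ are hit, and solve the same arithmetic (your $M=(p-1)(m+1)-p\eK(j-1)$ is exactly $(p-1)$ times the quantity whose valuation the paper calls $l$, so the uniqueness of the source, the $M=0$ exceptional case, and the $l\geq1$ restriction all match). The treatment of $e_K=1$ via $p\mid m+1$ forcing $v_p(M)\geq1$ is also the paper's argument.
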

\begin{proof}
We first treat the case of $e_K>1$. In this case, by Corollary \ref{C:ref-tate-tp0-ref}, we see that $E^2_{-1,2j}(\tp(\OK);\Fp)$ is generated over $\k$ by cycles which are detected by 
$\{z_0^{n-1}\sigma^jdz\}_{n\geq1}$.  
By Proposition \ref{P:non-zero-stem}, $z_0^{n-1}\sigma^jdz$ is hit by 
$z^m\sigma^j$ if and only if 
\begin{equation}\label{E:r}
pe_K\frac{p^{l}-1}{p-1}+m=n
\end{equation}
with $l=v_p(m-{pe_K j\over p-1})<\infty$. In this case, it follows that 
\[
n\equiv m- {pe_K \over p-1}\mod p^{l+1},
\] 
yielding $l=v_p(m-{pe_K j\over p-1})=v_p(n-{pe_K(j-1)\over p-1})$. Hence $m$ is uniquely determined by $n, j$.

Now put $l=v_p(n-{pe_K(j-1)\over p-1})$. If $l=\infty$, then by previous argument $z_0^{n-1}\sigma^jdz$ is not hit by any $z^m\sigma^j$; in this case it follows that $ j>1$, $p-1\mid \eK (j-1)$ and $n=\frac{p\eK (j-1)}{p-1}$. 

If $l<\infty$, then we may write
\[
n={p\eK (j-1)+bp^l\over p-1}
\]
for some $b\in\mathbb{Z}$ satisfying 
\[
p\nmid b,\quad b\equiv -(j-1)\eK \mod p-1, \quad {p(j-1)\eK +bp^l\over p-1}\geq 1;
\] 
the last one is equivalent to  
\begin{equation}\label{E:lower}
bp^l+pe_Kj \geq p-1+pe_K. 
\end{equation} 
On the other hand, by (\ref{E:r}), $z_0^{n-1}\sigma^jdz$ is not hit by any refined algebraic Tate differential if and only if 
\begin{equation}\label{E:upper}
n-{p\eK(p^l-1)\over p-1}<0.
\end{equation}
Note that (\ref{E:upper}) implies that $l\geq1$. Conversely, if $l\geq1$, then (\ref{E:lower}) plus (\ref{E:upper}) is equivalent to
\[
-{\eK(j-1)\over p^{l-1}}<b<{p\eK }-{\eK j\over p^{l-1}}, 
\]
concluding the desired result. Finally, note that all the resulting leading terms $z_0^{n-1}\sigma^j$ satisfy $p|n$. Thus by Corollary \ref{C:ref-tate-tp0-ref}, the above argument applies equally to the case of $e_K=1$.
\end{proof}

\begin{prop}\label{P:e21-tc-}
For $j\geq 1$, the $\k$-vector space
$E^2_{-1,2j}(\tc^-(\OK);\Fp)$
has a basis given by a family of cocycles with leading terms 

\begin{itemize}
\item
$$z_0^{{p\eK (j-1)+bp^l\over p-1}-1}\sigma^jdz$$ with $l\geq0$, $b\in \mathbb{Z}$ satisfying
\[
-{\eK(j-1)\over p^l}<b<{p\eK }-{\eK j\over p^l},\quad p\nmid b, \quad b\equiv -\eK(j-1) \mod p-1,
\]
and
\item
$z_0^{{p\eK (j-1)\over p-1}-1}\sigma^jdz$
with $ j>1$ and $p-1\mid \eK(j-1)$.
\end{itemize}
\end{prop}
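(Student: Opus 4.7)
The plan is to mimic the proof of Proposition~\ref{P:e21-tp}, tracking at every step the refinement that the Nygaard filtration $\geq j$ condition cuts $\tc^{-}_{2j}$ out of $\tp_{2j}$. By the mod $p$ analogue of Corollary~\ref{C:degen-zz}, the canonical map identifies $\tc^{-}_{2j}(\OK/\Sz^{\otimes [n]}; \Fp)$ with $\mathcal{N}^{\geq j}\tp_{2j}(\OK/\Sz^{\otimes [n]}; \Fp)$ (in the rescaled indexing of Convention~\ref{Conv:nygaard}). Consequently, the cobar complex for $\tc^{-}_{2j}(\OK; \Fp)$ embeds into the cobar complex for $\tp_{2j}(\OK; \Fp)$ as the refined Nygaard filtration $\geq j$ sub-complex, and the refined algebraic homotopy fixed points spectral sequence at weight $2j$ is obtained from the refined algebraic Tate spectral sequence at weight $2j$ by discarding all indices $(i, 2j, k)$ with $k < j$. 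A refined algebraic Tate differential therefore survives in this truncation precisely when both its source and target lie in refined Nygaard filtration $\geq j$.

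Next I would invoke Proposition~\ref{P:non-zero-stem}. As in the proof of Proposition~\ref{P:e21-tp}, a target $z_0^{n-1}\sigma^j dz$ in row $-1$ is hit, if at all, only by the unique source $z^{m}\sigma^j$ with $m = n - p\eK(p^l-1)/(p-1)$, where $l = v_p(n - p\eK(j-1)/(p-1))$. This source has refined Nygaard filtration $m/\eK$, so it belongs to the $\tc^{-}$-truncation if and only if $m \geq j\eK$, equivalently $n \geq p\eK(p^l-1)/(p-1) + j\eK$. Hence $z_0^{n-1}\sigma^j dz$ represents a nontrivial class in $\tilde{E}^{\infty}(\tc^{-}(\OK);\Fp)$ precisely when its own refined filtration satisfies $n > \eK(j-1)$ and either $l = \infty$ (yielding the isolated permanent cycle, which requires $(p-1)\mid \eK(j-1)$ and $j > 1$) or $l < \infty$ with $n < p\eK(p^l-1)/(p-1) + j\eK$.

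I would finish by parameterizing. Writing $n = (p\eK(j-1) + bp^l)/(p-1)$ with $p\nmid b$ and $b\equiv -\eK(j-1)\pmod{p-1}$, one checks that $v_p(n - p\eK(j-1)/(p-1)) = l$, and a short computation turns the two inequalities above into $-\eK(j-1)/p^l < b < p\eK - \eK j/p^l$. As in the proof of Proposition~\ref{P:e21-tp}, the map $(l,b) \mapsto n$ is injective by the $v_p$-formula, and when $\eK = 1$ the divisibility constraint $p \mid n$ recorded in Corollary~\ref{C:ref-tate-tp0-ref} is automatic, so the listed cocycles are linearly independent and span.

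The main obstacle is purely bookkeeping; the substantive work has already been carried out in Propositions~\ref{P:non-zero-stem} and~\ref{P:e21-tp}. The only genuine novelty is the appearance of $l = 0$ in the admissible range, and the corresponding replacement of $p^{l-1}$ by $p^l$ in the bounds on $b$. This reflects exactly the suppression of the $d^{1-1/\eK}$-differential under the Nygaard truncation: its source $z^n \sigma^j$ with $n < j\eK$ lies in refined filtration below $j$ and is therefore absent from $\tc^{-}$, so the target it would otherwise kill persists to $\tilde{E}^\infty$.
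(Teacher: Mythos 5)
Your proof is correct and follows essentially the same route as the paper's: both realize the refined algebraic homotopy fixed points spectral sequence as the Nygaard-$\geq j$ truncation of the refined algebraic Tate spectral sequence and then rerun the argument of Proposition \ref{P:e21-tp}, deleting exactly those differentials whose unique potential source $z^m\sigma^j$ has $m<j\eK$. If anything, your derivation of the lower bound $b>-\eK(j-1)/p^l$ from the requirement that the target itself lie in filtration $\geq j$ is stated more carefully than in the paper; the only point deserving one extra line is that for $\eK=1$ the $l=0$ range of admissible $b$ is empty (the two endpoints of the length-$(p-1)$ interval are the only representatives of the residue class $-\,(j-1)\bmod p-1$), which is why the constraint $p\mid n$ from Corollary \ref{C:ref-tate-tp0-ref} is indeed never violated.
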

\begin{proof}
Recall that the refined algebraic homotopy fixed points spectral sequence is a truncation of the refined algebraic Tate spectral sequence. More precisely, 
for 
\[
z^n\sigma^j\in \tilde{E}^{\frac{1}{e_K}}(\tp(\OK);\Fp) \quad (\text{resp. $z_0^{n-1}\sigma^jdz\in \tilde{E}^{\frac{1}{e_K}}(\tp(\OK);\Fp)$}), 
\] 
it belongs to $\tilde{E}^{\frac{1}{e_K}}(\tc^{-}(\OK);\Fp)$ is equivalent to $je_K\leq n$ (resp. $(j-1)e_K\leq n-1$).

Therefore, using the argument of Proposition \ref{P:e21-tp}, we deduce that for 
\[
z^{n-1}\sigma^jdz\in \tilde{E}^{\frac{1}{e_K}}(\tc^{-}(\OK);\Fp),
\] 
it is not hit by any refined algebraic homotopy fixed points differential if and only if 
\[
n=\frac{pe_K(j-1)}{ p-1}
\]
or
\begin{equation}\label{E:tc-}
n-p\eK\frac{p^l-1}{p-1}<je_K
\end{equation}
for $l=v_p(n-{p\eK(j-1)\over p-1})$.

In the first case, we have $ j>1$ and $p-1\mid \eK(j-1)$. Conversely, under this condition, it is straightforward to verify that $z^{\frac{pe_Kj}{p-1}}\sigma^jdz$
 belongs to $\tilde{E}^{\frac{1}{e_K}}(\tc^{-}(\OK);\Fp)$. 
 
 In the second case, we may write $n={p\eK (j-1)+bp^l\over p-1}$ with 
 \[
 p\nmid b, \quad b\equiv -\eK(j-1) \mod p-1.
 \] 
 Moreover, the conditions $n\geq1$ plus (\ref{E:tc-}) is equivalent to 
 \[
 -{\eK(j-1)\over p^l}<b<{p\eK }-{\eK j\over p^l}.
 \]
Finally, if $b$ satisfies all these conditions, then it is straightforward to check that $z_0^{{p\eK (j-1)+bp^l\over p-1}-1}\sigma^jdz$ belongs to $ \tilde{E}^{\frac{1}{e_K}}(\tc^{-}(\OK);\Fp)$. 
 \end{proof}

\begin{lem}\label{l99}
For $j\geq1$, the kernel of the canonical map
\[
\can:E^2_{-1,2j}(\tc^-(\OK);\Fp)\rightarrow E^2_{-1,2j}(\tp(\OK);\Fp)
\]
is an $\eK j$-dimensional $\k$-vector space which has a basis given by a family of cycles with leading terms 
\[
z_0^{{p\eK (j-1)+bp^l\over p-1}-1}\sigma^jdz
\]
with $l\geq0, b\in\mathbb{Z}$ satisfying
\begin{equation}\label{E:ker-cond}
 p\nmid b, \quad b\equiv -\eK(j-1) \mod p-1, \quad p^{l-1}\leq {\eK j\over p\eK-b}< p^l. 
\end{equation}
\end{lem}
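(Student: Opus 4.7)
The plan is to analyze the kernel via the long exact sequence associated with the short exact sequence of cobar complexes
\[
0 \to C^\bullet_{\tc^-} \to C^\bullet_{\tp} \to Q^\bullet \to 0,
\]
where at each cosimplicial level the inclusion is the Nygaard filtration embedding $\tc^-_{2j}(\OK/\Sz^{\otimes\oldbullet};\Fp) \cong \mathcal{N}^{\geq j}\tp_{2j}(\OK/\Sz^{\otimes\oldbullet};\Fp)$ from Corollary \ref{C:degen-zz}. By Proposition \ref{P:e20}, the edge map $\can: E^2_{0,2j}(\tc^-;\Fp) \to E^2_{0,2j}(\tp;\Fp)$ is an isomorphism (the generator of $E^2_{0,2j}(\tp;\Fp)$ has leading term $z^{p\eK j/(p-1)}\sigma^j$, and $p\eK j/(p-1) \geq \eK j$, so the representative already lies in $\tc^-$). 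Consequently the connecting map of the long exact sequence identifies $\ker(\can|_{H^1})$ with $H^0(Q^\bullet)$.

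The next step is to compute $H^0(Q^\bullet)$. At position $0$, Theorem \ref{T:rtp} gives
\[
Q^0 = \tp_{2j}(\OK/\Sz;\Fp)/\tc^-_{2j}(\OK/\Sz;\Fp) \cong \k[z]/(z^{\eK j})\sigma^j,
\]
an $\eK j$-dimensional $\k$-vector space with basis $\{z^m\sigma^j : 0 \leq m < \eK j\}$. The key claim is that the cobar differential $d: Q^0 \to Q^1$ vanishes identically, so that $H^0(Q^\bullet) = Q^0$ and the kernel has dimension $\eK j$ as claimed. For each $m$ in the range, we must verify that $\eta_L(z^m\sigma^j) - \eta_R(z^m\sigma^j)$ lies in $\mathcal{N}^{\geq j}\tp_{2j}(\OK/\Szz;\Fp)$. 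Proposition \ref{P:non-zero-stem} computes the refined leading term as
\[
n' \bar{\tilde{\mu}}^{(p^l-1)/(p-1)} z_0^{p\eK(p^l-1)/(p-1)+m-1}\sigma^j dz
\]
with $l = v_p(m - p\eK j/(p-1))$ and $n' \equiv p^{-l}(m - p\eK j/(p-1)) \bmod p$; combining this with careful bookkeeping of the higher-order corrections in the cobar differential via the explicit Hopf algebroid description of Section \ref{s3} establishes the required containment.

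Finally, to match the lemma's parametrization, set
\[
l = v_p(m - p\eK j/(p-1)), \qquad b = p\eK + (p-1)(m - p\eK j/(p-1))/p^l
\]
for each $m \in \{0,\ldots,\eK j-1\}$. A direct verification shows $b \in \mathbb{Z}$, $p\nmid b$, $b \equiv -\eK(j-1) \pmod{p-1}$, and the inequality $p^{l-1} \leq \eK j/(p\eK - b) < p^l$ holds, yielding a bijection between $\{0,\ldots,\eK j-1\}$ and the set of pairs $(b,l)$ satisfying the stated conditions. Under this dictionary the identity $p\eK(p^l-1)/(p-1) + m = (p\eK(j-1)+bp^l)/(p-1)$ converts the refined leading term above into the form $z_0^{(p\eK(j-1)+bp^l)/(p-1)-1}\sigma^j dz$ asserted in the lemma.

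The main obstacle is the uniform verification that $d$ vanishes on all of $Q^0$: when the nominal leading filtration of the refined Tate term is close to or below $j$, the proof requires extracting enough higher-order cancellation from the Hopf algebroid structure (i.e., from the explicit relations in Theorem 1 for $\tp_0(\OK/\Szz)$ and from the characteristic-$p$ identities controlling $\epsilon = \sigma_0/\sigma_1$) to push the coboundary into $\mathcal{N}^{\geq j}$.
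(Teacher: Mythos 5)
Your reduction via the long exact sequence of $0\to C^\bullet_{\tc^-}\to C^\bullet_{\tp}\to Q^\bullet\to 0$ is a genuinely different route from the paper's, which instead obtains the kernel by comparing the explicit cocycle bases of $E^2_{-1,2j}(\tc^-(\OK);\Fp)$ and $E^2_{-1,2j}(\tp(\OK);\Fp)$ furnished by Propositions \ref{P:e21-tc-} and \ref{P:e21-tp}. The identification $\ker(\can|_{H^1})\cong H^0(Q^\bullet)$ is correct given Proposition \ref{P:e20}, and your dictionary $m\leftrightarrow(b,l)$ does translate $0\leq m<\eK j$ into the conditions \eqref{E:ker-cond}. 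But after this reduction the entire content of the lemma is concentrated in the assertion that the cobar differential vanishes on all of $Q^0$, and that is precisely the step you do not carry out: ``careful bookkeeping of the higher-order corrections \dots establishes the required containment'' is not an argument, and you flag it yourself as the unresolved main obstacle.

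Worse, the assertion fails. Take $m=1$ and $j\geq 2$, so that $z\sigma^j$ is a nonzero class in $Q^0$. Writing $\epsilon=\sigma_0\sigma_1^{-1}$, the cobar differential is
\[
z_1\sigma_1^j-z_0\sigma_0^j=\sigma_0^j\bigl(z_1(\epsilon^{-j}-1)-(z_0-z_1)\bigr),
\]
where $z_1(\epsilon^{-j}-1)$ has refined Nygaard filtration at least $1+\tfrac1{\eK}$ while $(z_0-z_1)\sigma_0^j$ has filtration exactly $1$ (its image in $\mathrm{Gr}^1$ is $\sigma^j dz\neq 0$). Hence $d(z\sigma^j)\notin\mathcal{N}^{\geq j}$ once $j\geq 2$, and since $d$ preserves $\mathcal{N}^{\geq j}$ no change of representative within $Q^0$ can repair this; so $[z\sigma^j]\notin\ker(d\colon Q^0\to Q^1)$. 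This is just the $l=0$ instance of Proposition \ref{P:non-zero-stem}: $d(z^m\sigma^j)$ has refined filtration $\frac{p^{l+1}-1}{p-1}+\frac{m-1}{\eK}$ with $l=v_p(m-\tfrac{p\eK j}{p-1})$, and this drops below $j$ for many $m$ in the range $0\leq m<\eK j$. Consequently $H^0(Q^\bullet)$ is in general a proper subspace of $Q^0$, the proposed description of the kernel collapses, and completing your route would require exactly the filtration analysis underlying Propositions \ref{P:e21-tc-} and \ref{P:e21-tp} (determining which $z^m\sigma^j$ with $m<\eK j$ have coboundary landing in $\mathcal{N}^{\geq j}$, equivalently which targets $z_0^{p\eK\frac{p^l-1}{p-1}+m-1}\sigma^j dz$ actually lie in the $\tc^-$ complex). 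You should also confront the resulting tension between $\dim\ker(d|_{Q^0})$ and the count $\eK j$ asserted in the statement, since your (correct) reduction makes the two literally equal.
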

\begin{proof}
By Propositions \ref{P:e21-tp}, \ref{P:e21-tc-}, we obtain that the kernel of can
is the $k$-vector space with a basis given by a family of cycles which are detected by
$z_0^{{p\eK (j-1)+bp^l\over p-1}-1}\sigma^jdz$
with $l\geq0, b\in\mathbb{Z}$ satisfying
\[
-{\eK(j-1)\over p^l}<b<{p\eK }-{\eK j\over p^l}, \hspace{1mm} p\nmid b, \hspace{1mm} b\equiv -\eK(j-1) \mod p-1, \hspace{1mm} b\notin(-{\eK(j-1)\over p^{l-1}}, {p\eK }-{\eK j\over p^{l-1}}). 
\]
It is straightforward to see that the first condition plus the last conditions is equivalent to 
\begin{equation}\label{E:ker-can}
p\eK-{\eK j\over p^{l-1}}\leq b < p\eK-{\eK j\over p^l},
\end{equation}
which in turn is equivalent to the last condition of (\ref{E:ker-cond}).

It remains to count the number of cycles. To this end, first note that (\ref{E:ker-can}) implies that 
\[
pe_K(1-j)\leq b<pe_K.
\] 
Conversely, for any $e_K(1-j)\leq m<e_K$, there is exactly one  $b\in [pm, pm+p-1]$ satisfying the first two conditions of  (\ref{E:ker-cond}). Moreover, for any $b\in [pe_K(1-j), pe_K)$, there is exactly one $l$ satisfying (\ref{E:ker-can}). We thus conclude that the number of such cycles is $e_K-e_K(1-j)=e_Kj$. 
\end{proof}

\section{The $E^2$-term of mod $p$ descent spectral sequence II} \label{S:e2tc}
In this section, we determine the $E^2$-term of the mod $p$ descent spectral sequence for $\tc(\OK)$. Firstly, we study the action of Frobenius on $E^2(\tc^-(\OK); \Fp)$.
\begin{lem} \label{L:frob-e20-tc}
For $n\geq \eK j$, we have
\[
\varphi(z^n\sigma^j) = \bar{\mu}^{-pj}z^{p(n-\eK j)}\sigma^j.
\]
\end{lem}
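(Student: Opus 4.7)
The key is to express $z^n\sigma^j$, which is \emph{a priori} an element of $\tp_{2j}(\OK/\Sz;\Fp)$, as an honest element of $\tc^{-}_{2j}(\OK/\Sz;\Fp)$ when $n\geq e_K j$, and then to apply the Frobenius using the formulas from Theorem \ref{T:rtp}. Note that because $z^n\sigma^j$ lies in the image of $\tc^{-}_{2j}(\OK/\Sz;\Fp)$ under the canonical map, its Frobenius is computed by first lifting along $\can$ and then applying $\varphi:\tc^{-}\to\tp$ at the level of $\thh(\OK/\Sz)$; this is where the hypothesis $n\geq e_K j$ enters.

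The plan is as follows. First, combine the two identities $uv=E_K(z)$ and $\can(v)=\sigma^{-1}$ from Theorem \ref{T:rtp}(6) to deduce that, as elements of $\tp_2(\OK/\Sz)$, one has $\can(u)=E_K(z)\sigma$, and hence $u^j = E_K(z)^j\sigma^j$ in $\tp_{2j}(\OK/\Sz)$. Reducing mod $p$ and using the congruence $E_K(z)\equiv\bar\mu\, z^{e_K}\bmod p$, this gives the identity
\[
z^{e_K}\sigma \;=\; \bar\mu^{-1} u \qquad \text{in } \tp_2(\OK/\Sz;\Fp).
\]
Second, for any $n\geq e_K j$, factor
\[
z^n\sigma^j \;=\; z^{n-e_K j}\bigl(z^{e_K}\sigma\bigr)^j \;=\; \bar\mu^{-j}\, z^{n-e_K j}\, u^j,
\]
which exhibits $z^n\sigma^j$ as the image under $\can$ of the element $\bar\mu^{-j}z^{n-e_K j}u^j\in \tc^{-}_{2j}(\OK/\Sz;\Fp)$.

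Third, apply the cyclotomic Frobenius. By Theorem \ref{T:rtp}(5), $\varphi$ acts on $\tc^-_0(\OK/\Sz)=W(k)[[z]]$ as the $W(k)$-Frobenius together with $z\mapsto z^p$; in particular it sends $\bar\mu$ to $\bar\mu^p$ and $z$ to $z^p$ after reduction mod $p$. By Theorem \ref{T:rtp}(6), $\varphi(u)=\sigma$. Combining these,
\[
\varphi\bigl(\bar\mu^{-j}z^{n-e_K j}u^j\bigr) \;=\; \bar\mu^{-pj}\, z^{p(n-e_K j)}\, \sigma^j,
\]
which is the claimed formula. The only mild subtlety is to verify that the Frobenius on $E^2_{0,*}(\tc^{-}(\OK);\Fp)$ is indeed computed by this chain-level recipe, but this follows because $E^2_{0,2j}(\tc^{-}(\OK);\Fp)$ is identified with the equalizer of $\eta_L,\eta_R$ on $\tc^{-}_{2j}(\OK/\Sz;\Fp)$, so any representative can be used; there is no genuine obstacle beyond this bookkeeping.
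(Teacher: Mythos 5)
Your proof is correct and is essentially the paper's own argument: both write $z^n\sigma^j=\bar\mu^{-j}z^{n-e_Kj}(E_K(z)\sigma)^j=\bar\mu^{-j}z^{n-e_Kj}u^j$ mod $p$ using $E_K(z)\equiv\bar\mu z^{e_K}$ and $\can(u)=E_K(z)\sigma$ from Theorem \ref{T:rtp}(6), and then apply $\varphi(z)=z^p$, $\varphi(\bar\mu)=\bar\mu^p$, $\varphi(u)=\sigma$. The extra bookkeeping you include about lifting along $\can$ is fine but not needed beyond what the paper records.
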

\begin{proof}
Using Theorem \ref{T:rtp}, we have
\[
\varphi(z^n\sigma^j) = \varphi(z^{n-e_Kj})\bar{\mu}^{-pj}\varphi(E_K(z) \sigma)^j = \bar{\mu}^{-pj} z^{p(n-e_Kj)}\varphi(u)^j=\bar{\mu}^{-pj} z^{p(n-e_Kj)}\sigma^j.
\]
\end{proof}

\begin{lem}\label{l100} If $\eK>1$, then
\[
\varphi(\sigma_0 (z_0-z_1)) \equiv -z_0^{p-1}\sigma_0 (z_0-z_1) \mod \mathcal{N}^{\geq {p\over \eK}+1 }.
\]
\end{lem}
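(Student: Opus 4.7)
The plan is to reduce the statement to a congruence in the mod $p$ ring $\tp_0(\OK/\Szz;\Fp)$ that follows from Lemma~\ref{L:z1p-z2p} together with the explicit formula for $\tilde{\mu}$. The first step is to identify $\varphi(\sigma_0(z_0-z_1))$. Since $z_0-z_1\in\mathcal{N}^{\geq 1}\tp_0(\OK/\Szz)$, the product $\sigma_0(z_0-z_1)$ lies in $\mathcal{N}^{\geq 1}\tp_2(\OK/\Szz)$, which coincides with the image of $\can$ by Corollary~\ref{C:degen-zz}. Combining $\can(u_0)=E_K(z_0)\sigma_0$ and $\varphi(u_0)=\sigma_0$ (both from Theorem~\ref{T:rtp} via functoriality of $\eta_L$) with $\tp_0$-linearity of $\can^{-1}$ and multiplicativity of $\varphi$ on $\tc^-$, one obtains
\[
\varphi(E_K(z_0))\cdot\varphi(\sigma_0(z_0-z_1))=\sigma_0\,\varphi(z_0-z_1).
\]
Since $\varphi(E_K(z_0))$ is a non-zero-divisor and $h$ satisfies $h\,\varphi(E_K(z_0))=\varphi(z_0-z_1)$ by definition, this forces $\varphi(\sigma_0(z_0-z_1))=\sigma_0 h$.

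Because multiplication by $\sigma_0$ carries $\mathcal{N}^{\geq r}\tp_0(\OK/\Szz;\Fp)$ isomorphically onto $\mathcal{N}^{\geq r}\tp_2(\OK/\Szz;\Fp)$ for every refined index $r\geq 0$, the lemma is equivalent to showing
\[
h+z_0^{p-1}(z_0-z_1)\in\mathcal{N}^{\geq p/e_K+1}\tp_0(\OK/\Szz;\Fp).
\]
Reducing $h\,\varphi(E_K(z_0))=\varphi(z_0-z_1)$ modulo $p$ and combining with Lemma~\ref{L:z1p-z2p} yields
\[
\mu^p z_0^{pe_K}\,h\;\equiv\;\tilde{\mu}\,z_0^{pe_K+p-1}(z_0-z_1)\pmod{\mathcal{N}^{\geq 2p}}.
\]
Since $z_0$ is a non-zero-divisor in $\tp_0(\OK/\Szz;\Fp)$ (as the refined associated graded $\k[z_0]\otimes_\k\k\langle t_{z_0-z_1}\rangle$ of Lemma~\ref{L:ref-nyg} is $z_0$-regular) and $\mu^p z_0^{pe_K}$ has exact refined filtration $p$, dividing through gives
\[
h\;\equiv\;\frac{\tilde{\mu}}{\mu^p}\,z_0^{p-1}(z_0-z_1)\pmod{\mathcal{N}^{\geq p}}.
\]

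It remains to observe that $\tfrac{\tilde{\mu}}{\mu^p}+1\in\mathcal{N}^{\geq 1/e_K}\tp_0(\OK/\Sz;\Fp)$. By definition $\tilde{\mu}=-\mu^p/\delta(E_K(z_0))$, so $\tfrac{\tilde{\mu}}{\mu^p}+1=(\delta(E_K(z_0))-1)/\delta(E_K(z_0))$. The constant term of the polynomial $\delta(E_K(z))\in W(\k)[z]$ equals $\delta(p)=1-p^{p-1}\equiv 1\pmod p$, so $\delta(E_K(z_0))-1$ lies in $(p,z_0)=(p,\mathcal{N}^{\geq 1/e_K})$, while $\delta(E_K(z_0))$ remains a unit in $\tp_0(\OK/\Sz;\Fp)$. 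Consequently
\[
\bigl(\tfrac{\tilde{\mu}}{\mu^p}+1\bigr)z_0^{p-1}(z_0-z_1)\in\mathcal{N}^{\geq 1/e_K+(p-1)/e_K+1}=\mathcal{N}^{\geq p/e_K+1}.
\]
Since $e_K\geq 2$ forces $p/e_K+1\leq p$, the $\mathcal{N}^{\geq p}$ error from the preceding congruence is absorbed in $\mathcal{N}^{\geq p/e_K+1}$, which completes the argument. The main delicate point is the interpretation of $\varphi$ applied to an element of $\tp_2$, since the cyclotomic Frobenius is a priori only a map $\tc^-\to\tp$; this is resolved by working via $\varphi\circ\can^{-1}$ on the image of $\can$, exactly as in the paper's derivation of~\eqref{E:z1-z2}.
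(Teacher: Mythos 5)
Your proof is correct. The first step, establishing $\varphi(\sigma_0(z_0-z_1))=\sigma_0 h$ by transporting the identity through $\can^{-1}$ and cancelling the non-zero-divisor $\varphi(\EK(z_0))$, is exactly what the paper does (the paper writes it more tersely as $\varphi(\sigma_0\EK(z_0))\cdot\varphi(z_0-z_1)/\varphi(\EK(z_0))$). Where you diverge is in how you pin down $h$ modulo the refined filtration: the paper re-derives the congruence for $h$ from first principles, using (\ref{E:l-l+1}) with $l=0$ together with $f^{(1)}\in\mathcal{N}^{\geq p}$ to get $h\equiv\delta(f^{(0)})/\delta(\EK(z_0))$, then applying Lemma \ref{L:xi0} for $\delta(f^{(0)})\equiv -z_0^{p-1}f^{(0)}$ and the estimate $\delta(\EK(z_0))\equiv 1\bmod(p,\mathcal{N}^{\geq p/\eK})$. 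You instead reuse the already-proved Lemma \ref{L:z1p-z2p} (which packages those same ingredients) and recover the congruence for $h$ by cancelling $\mu^p z_0^{p\eK}$; this requires the extra observation that $z_0$ acts regularly on the refined associated graded $\k[z_0]\otimes_\k\k\langle t_{z_0-z_1}\rangle$ of Lemma \ref{L:ref-nyg}, which you correctly supply, and it lets you get away with the weaker bound $\delta(\EK(z_0))-1\in(p,\mathcal{N}^{\geq 1/\eK})$. Your filtration bookkeeping at the end ($\mathcal{N}^{\geq p}\subseteq\mathcal{N}^{\geq p/\eK+1}$ for $\eK\geq 2$) matches the paper's computation of $r_0$. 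The trade-off is that your argument is logically more economical (no recomputation of $\delta(f^{(0)})$) at the cost of the regularity/cancellation step, while the paper's is more self-contained and direct.
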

\begin{proof}
In $\tc_2^-(\OK/\Szz)$, we have
\[
\varphi(\sigma_0(z_0-z_1)) = \varphi(\sigma_0 E_K(z_0))\frac{\varphi (z_0-z_1)}{\varphi(E_K(z_0))}= h\varphi(u_0)=h\sigma_0.
\]
Using (\ref{E:l-l+1}) for $l=0$ and the fact that $f^{(1)}\in\mathcal{N}^{\geq p}$, we get
\[
h\equiv \delta(z_0-z_1)/\delta(E_K(z_0)) \mod \mathcal{N}^{\geq p}.
\]
By Lemma \ref{L:xi0}, we have
\[
\delta(z_0-z_1) \equiv -z_0^{p-1}(z_0-z_1) \mod (p,\mathcal{N}^{\geq\frac{p-2}{e_K}+2}).
\]
On the other hand, a short computation shows that
\[
\delta(E_K(z_0))\equiv 1\mod (p, \mathcal{N}^{\frac{p}{e_K}}).
\]
Putting these together, we conclude
\[
h\equiv -z_0^{p-1}(z_0-z_1) \mod (p, \mathcal{N}^{\geq r_0} ),
\]
where 
\[
 r_0 = \mathrm{min}(p,{2p-1\over\eK}+1,{p-2\over\eK}+2)\geq {p\over \eK}+1
 \]
as $e_K>1$. This yields the desired result by modulo $p$.
\end{proof}

\begin{lem}\label{L:frob-nyg}
If $\alpha \in \mathcal{N}^{\geq m} \tc^-_{2j}(\OK/\Szz;\mathbb{F}_p)$, then 
\[
\varphi(\alpha)\in\mathcal{N}^{\geq p(m-j)} \tp_{2j}(\OK/\Szz;\mathbb{F}_p).
\]  
\end{lem}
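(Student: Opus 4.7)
The plan is to reduce this statement to Lemma \ref{L:nyg-frob}, which handles the case $j = 0$, by multiplying through by $v_0^j$. Set $\alpha_0 := \alpha v_0^j \in \tc^-_0(\OK/\Szz;\mathbb{F}_p) = \tp_0(\OK/\Szz;\mathbb{F}_p)$. Since $\can(v_0) = \sigma_0^{-1}$ sits in $\mathcal{N}^{\geq 0}\tp_{-2}(\OK/\Szz;\mathbb{F}_p)$ with refined Nygaard filtration $0$, multiplication by $v_0^j$ preserves refined Nygaard filtration, and hence $\alpha_0 \in \mathcal{N}^{\geq m}\tp_0(\OK/\Szz;\mathbb{F}_p)$. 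The relation $u_0 v_0 = E_K(z_0)$ from Theorem \ref{T:rtp}(6) together with $\varphi(u_0) = \sigma_0$ yields $\varphi(v_0) = \varphi(E_K(z_0))/\sigma_0$ in $\tp(\OK/\Szz;\mathbb{F}_p)$, so that
\[
\varphi(\alpha) \cdot \varphi(E_K(z_0))^j = \sigma_0^j \cdot \varphi(\alpha_0).
\]

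Next, I would extend Lemma \ref{L:nyg-frob} to the refined Nygaard filtration modulo $p$. Writing $m = n + r/e_K$ with $n \in \mathbb{Z}_{\geq 0}$ and $0 \leq r < e_K$, any $\alpha_0 \in \mathcal{N}^{\geq m}\tp_0(\OK/\Szz;\mathbb{F}_p)$ decomposes as $\alpha_0 = z_0^r \beta + \beta'$ with $\beta \in \mathcal{N}^{\geq n}\tp_0$ and $\beta' \in \mathcal{N}^{\geq n+1}\tp_0$. Applying the integer-indexed Lemma \ref{L:nyg-frob} (reduced modulo $p$) to each summand, together with $\varphi(z_0) = z_0^p$ and the fact that $\varphi(E_K(z_0)) \in \mathcal{N}^{\geq p}\tp_0$, I would obtain $\varphi(\alpha_0) = \varphi(E_K(z_0))^n \gamma$ with $\gamma \in \mathcal{N}^{\geq pr/e_K}\tp_0(\OK/\Szz;\mathbb{F}_p)$. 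Since $m \geq j$ forces $n \geq j$, after dividing by $\varphi(E_K(z_0))^j$ the displayed identity above becomes
\[
\varphi(\alpha) = \sigma_0^j \cdot \varphi(E_K(z_0))^{n-j}\gamma,
\]
whose right-hand side lies in $\sigma_0^j \mathcal{N}^{\geq p(n-j) + pr/e_K}\tp_0 = \mathcal{N}^{\geq p(m-j)}\tp_{2j}(\OK/\Szz;\mathbb{F}_p)$, as required.

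The main obstacle I anticipate is ensuring that the division by $\varphi(E_K(z_0))^j$ is actually well-defined modulo $p$ and is \emph{strictly} compatible with the refined Nygaard filtration — that is, that any factorization $\varphi(\alpha_0) = \varphi(E_K(z_0))^j \tilde\gamma$ with $\varphi(\alpha_0) \in \mathcal{N}^{\geq pm}$ forces $\tilde\gamma \in \mathcal{N}^{\geq p(m-j)}$. This in turn reduces to checking that the leading term of $\varphi(E_K(z_0))$ in the associated graded described in Lemma \ref{L:ref-nyg} (namely $\bar\mu^p z_0^{pe_K}$, since $E_K(z) \equiv \mu z^{e_K}$ modulo $p$) is a non-zero-divisor in $k[z_0]\otimes_k k\langle t_{z_0 - z_1}\rangle$. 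As $z_0$ is manifestly a non-zero-divisor there (multiplication by $z_0$ acts injectively on each homogeneous component of this divided-power algebra), the strict compatibility follows, and the argument closes.
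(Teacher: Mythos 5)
Your proof is correct, and it rests on the same two ingredients as the paper's own argument: the decomposition $\alpha_0=z_0^{r}\beta+\beta'$ supplied by Definition \ref{D:refined}, and the divisibility of $\varphi$ of a Nygaard-filtered element by powers of $\varphi(\EK(z_0))$ coming from $u_0v_0=\EK(z_0)$ and $\varphi(u_0)=\sigma_0$ (Lemma \ref{L:nyg-frob}). The only organizational difference is that the paper decomposes $\alpha$ directly in $\tc^-_{2j}$ and applies the ``variant of Lemma \ref{L:nyg-frob}'' to each summand, thereby producing the quotient by $\varphi(\EK(z_0))^{\,m_0-j}$ explicitly and never having to divide; your route of first multiplying by $v_0^j$ and dividing by $\varphi(\EK(z_0))^j$ at the end requires the additional check that $\varphi(\EK(z_0))$ is a non-zero-divisor in $\tp_0(\OK/\Szz;\Fp)$, which you correctly supply via its leading term $\bar{\mu}^pz_0^{p\eK}$ in the associated graded of Lemma \ref{L:ref-nyg}. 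One small caveat: the reduction $\alpha\mapsto\alpha v_0^{j}$ literally makes sense only for $j\geq 0$; for $j<0$ you should instead write $\alpha=v_0^{-j}\alpha_0$ with $\alpha_0\in\mathcal{N}^{\geq m}\tc^-_0(\OK/\Szz;\Fp)$ and run the same computation (compare the proof of Corollary \ref{C:frob-fil}(2)).
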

\begin{proof}
Write $m = m_0 + {m_1\over\eK}$ with $m_0\geq j$, $0\leq m_1<\eK$. Then there exist 
\[
x\in\mathcal{N}^{\geq m_0} \tc^-_{2j}(\OK/\Szz;\mathbb{F}_p), \quad y\in\mathcal{N}^{\geq m_0+1} \tc^-_{2j}(\OK/\Szz;\mathbb{F}_p)
\] 
such that $\alpha = z_0^{m_1}x + y$. By a variant of the proof of Lemma \ref{L:nyg-frob}, we get $\varphi(x)$ divisible by $\varphi(\EK(z_0))^{m_0-j}$, yielding 
\[
\varphi(x)\in\mathcal{N}^{\geq p(m_0-j)}\tp_{2j}(\OK/\Szz;\mathbb{F}_p).
\] 
Similarly,  we get $\varphi(y)\in \mathcal{N}^{\geq p(m_0+1-j)}\tp_{2j}(\OK/\Szz;\mathbb{F}_p)$. It follows that 
\[
\varphi(\alpha) = z_0^{pm_1}\varphi(x) + \varphi(y) \in \mathcal{N}^{\geq p(m-j)}\tp_{2j}(\OK/\Szz;\mathbb{F}_p).
\] 
\end{proof}

\begin{prop} \label{P:frob}  
For $j\geq1$, if $\alpha\in E^2_{-1,2j}(\tc^-(\OK);\mathbb{F}_p)$ is detected by $z_0^{n-1}\sigma^jdz$ in the refined algebraic homotopy fixed points spectral sequence,  then
$\varphi(\alpha)\in E^2_{-1,2j}(\tp(\OK);\mathbb{F}_p)$ 
is detected by
\[
-\bar{\mu}^{-p(j-1)} z_0^{p(n-\eK(j-1))-1}\sigma^jdz
\]
in the refined algebraic Tate spectral sequence.
\end{prop}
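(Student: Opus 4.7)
The plan is to compute $\varphi(\hat\alpha)$ modulo higher refined Nygaard filtration for a carefully chosen cocycle lift $\hat\alpha\in\tc^-_{2j}(\OK/\Szz;\Fp)$ of $\alpha$. The hypotheses on $n$ from Proposition~\ref{P:e21-tc-} force $n-1\geq\eK(j-1)$, so the product $z_0^{n-1}\sigma_0^j(z_0-z_1)$ has refined Nygaard filtration $(n-1)/\eK+1\geq j$ and thus lies in $\mathcal{N}^{\geq j}\tp_{2j}(\OK/\Szz;\Fp)=\tc^-_{2j}(\OK/\Szz;\Fp)$. I would choose $\hat\alpha$ so that
\[
\hat\alpha\equiv z_0^{n-1}\sigma_0^j(z_0-z_1)\pmod{\mathcal{N}^{\geq(n-1)/\eK+1+1/\eK}}.
\]
Invoking the mod-$p$ congruence $\EK(z_0)\equiv\bar\mu\,z_0^{\eK}$ inside $\tp_0(\OK/\Szz;\Fp)$, I would then rewrite this leading term as
\[
\bar\mu^{-(j-1)}\,z_0^{n-1-\eK(j-1)}\,u_0^{j-1}\cdot\sigma_0(z_0-z_1),
\]
presenting it as a product of bona-fide $\tc^-$-elements whose Frobenius images are already known to us.

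Applying $\varphi$ termwise using $\varphi(z_0)=z_0^p$, $\varphi(\bar\mu)=\bar\mu^p$, $\varphi(u_0)=\sigma_0$ from Theorem~\ref{T:rtp}(6), and the crucial Lemma~\ref{l100} giving $\varphi(\sigma_0(z_0-z_1))\equiv-z_0^{p-1}\sigma_0(z_0-z_1)\pmod{\mathcal{N}^{\geq p/\eK+1}}$, the main computation yields
\[
\varphi(\hat\alpha)\equiv -\bar\mu^{-p(j-1)}\,z_0^{p(n-\eK(j-1))-1}\sigma_0^j(z_0-z_1)
\]
to leading refined-Nygaard order. Identifying $\sigma_0^j(z_0-z_1)$ with $\sigma^j dz$ then matches the element claimed in the statement.

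The main obstacle will be the error analysis confirming that the class of $\varphi(\alpha)$ in $E^2_{-1,2j}(\tp(\OK);\Fp)$ is in fact detected by this leading term. The Lemma~\ref{l100} correction contributes only at filtration $1/\eK$ above the leading term, so it is harmless. The delicate issue is that, for $\eK>1$, a naive application of Lemma~\ref{L:frob-nyg} to the higher Nygaard terms of $\hat\alpha$ allows their Frobenius images to sit at or even below the leading filtration. To close this gap, I would exploit the structure of such higher terms, which are built from divided powers $(z_0-z_1)^{[k]}$ with $k\geq 1$, and apply Proposition~\ref{P:f(k)} together with Lemma~\ref{L:z1p-z2p} (and Lemma~\ref{L:even} at $p=2$) to show that $\varphi$ gains substantial extra Nygaard filtration on these divided powers. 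This is enough to push all error terms into strictly higher filtration than the claimed leading term, with any residual contribution being a coboundary in the $\tp$-cobar complex and hence invisible in $E^2$.
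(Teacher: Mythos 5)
Your main-term computation coincides with the paper's: both factor the leading cocycle as $z_0^{n-1}\sigma_0^{j-1}\cdot\sigma_0(z_0-z_1)$, apply Lemma \ref{L:frob-e20-tc} to the first factor and Lemma \ref{l100} to the second, and arrive at $-\bar{\mu}^{-p(j-1)}z_0^{p(n-\eK(j-1))-1}\sigma_0^j(z_0-z_1)$. The divergence, and the gap, lies in the error analysis, which is the actual content of the proof. Note first that your setup fails outright for $\eK=1$: by Convention \ref{C:u-z} the detecting class $z_0^{n-1}\sigma^jdz$ is not $z_0^{n-1}t_{z_0-z_1}\sigma^j$ but the full divided-power expression formally equal to $\frac{z_0^{n}-z_1^{n}}{n}\sigma^j$, so your chosen lift has the wrong leading term; moreover Lemma \ref{l100}, your key input, is stated and proved only under the hypothesis $\eK>1$.

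For $\eK>1$ the quantitative estimates you gesture at are not supplied, and they are exactly where the difficulty sits. The deficit to be overcome is concrete: Lemma \ref{L:frob-nyg} applied to the tail of $\hat\alpha$ (which lies in $\mathcal{N}^{\geq \frac{n}{\eK}+1}$) places its Frobenius image only in $\mathcal{N}^{\geq p(\frac{n}{\eK}+1-j)}$, which is $1-\frac{1}{\eK}$ \emph{below} the filtration $\frac{p(n-\eK(j-1))-1}{\eK}+1$ of the claimed leading term. Recovering this via the divided-power structure requires (i) knowing the tail has no component in $\k[z_0]\sigma_0^j$, since on such components $\varphi$ gains nothing over Lemma \ref{L:frob-nyg}; this needs a normalization argument (replacing $\hat\alpha$ by a cocycle in $\ker(\varepsilon)$ without lowering filtration) that you do not supply; and (ii) filtration gains for $\varphi$ on every monomial $z_0^a\bigl(\prod_i f^{(l_i)}\bigr)\sigma_0^j$, where for $p=2$ and $\eK\in\{2,3\}$ the inequalities come out exactly on the boundary. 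The paper's own direct estimate closes the gap only for $\eK>3$, and it disposes of $\eK\leq 3$ by a device absent from your proposal: the base-change map $T_m$ to $K'=K(\varpi_K^{1/m})$ with $m>3$ prime to $p$, combined with the injectivity of $T_m$ on $\tilde{E}^{\infty}$ from Remark \ref{R:ref-less-ref}, which transports the answer from $\mathcal{O}_{K'}$ back to $\OK$. Finally, your closing remark that any residual contribution is ``a coboundary and hence invisible in $E^2$'' is unjustified: a residue at or below the leading filtration would change which class detects $\varphi(\alpha)$, which is precisely the statement being proved.
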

\noindent  Before proving Proposition \ref{P:frob}, note that the map
\[
z_0^{n-1}\sigma^jdz \mapsto z_0^{p(n-\eK(j-1))-1}\sigma^jdz
\]
gives rise to a bijection between leading terms of the cycles given in Propositions \ref{P:e21-tp} and Proposition \ref{P:e21-tc-} respectively. Therefore, granting Proposition \ref{P:frob}, we obtain the following results.

\begin{cor}\label{C:frob-bij}
For $j\geq1$, $\varphi:  E^2_{-1,2j}(\tc^-(\OK);\mathbb{F}_p)\to  E^2_{-1,2j}(\tp(\OK);\mathbb{F}_p)$ is an isomorphism.
\end{cor}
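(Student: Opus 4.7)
The plan is to deduce the corollary from Proposition \ref{P:frob} by a bookkeeping argument that matches the explicit bases provided by Propositions \ref{P:e21-tc-} and \ref{P:e21-tp}. The $\mathbb{F}_p$-vector space $E^2_{-1,2j}(\tc^-(\OK);\Fp)$ has a basis of cocycles whose leading terms in the refined algebraic homotopy fixed points spectral sequence are $z_0^{n-1}\sigma^j dz$, with $n$ running over a certain parameter set $S^-_j$ described by pairs $(l,b)$ with $l\geq 0$ (plus the exceptional term when $p-1\mid e_K(j-1)$, $j>1$). Similarly, $E^2_{-1,2j}(\tp(\OK);\Fp)$ has a basis parametrized by a set $S_j$ of the same form but with $l\geq 1$. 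I will show that $n\mapsto p(n-e_K(j-1))$ sets up a bijection $S^-_j \to S_j$, and then combine this bijection with Proposition \ref{P:frob} to conclude that $\varphi$ is an isomorphism.

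For the bijection, I would substitute the parametrization: if $n=\frac{pe_K(j-1)+bp^l}{p-1}$ with $(l,b)$ admissible for the $\tc^-$ basis, then a short computation gives
\[
p(n-e_K(j-1))=p\cdot\frac{e_K(j-1)+bp^l}{p-1}=\frac{pe_K(j-1)+bp^{l+1}}{p-1}.
\]
Setting $l'=l+1\geq 1$ with the same $b$, the congruence $b\equiv -e_K(j-1)\pmod{p-1}$ and the non-divisibility $p\nmid b$ are preserved, and the inequality $-\frac{e_K(j-1)}{p^l}<b<p e_K-\frac{e_K j}{p^l}$ becomes exactly $-\frac{e_K(j-1)}{p^{l'-1}}<b<p e_K-\frac{e_K j}{p^{l'-1}}$, i.e. the condition for $(l',b)$ to belong to $S_j$. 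For the exceptional case, $n=\frac{pe_K(j-1)}{p-1}$ is mapped to $\frac{pe_K(j-1)}{p-1}$, matching the exceptional generator on the $\tp$ side. Since the map $(l,b)\mapsto (l+1,b)$ is evidently injective, and one easily sees that every element of $S_j$ arises this way, the correspondence is bijective.

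To convert the bijection on leading terms into an isomorphism of $\mathbb{F}_p$-vector spaces, I filter both $E^2$-terms by the refined Nygaard filtration and pass to associated graded. By Proposition \ref{P:frob}, the image $\varphi(\alpha)$ of a basis cocycle of $E^2_{-1,2j}(\tc^-(\OK);\Fp)$ detected by $z_0^{n-1}\sigma^j dz$ is detected by $-\bar\mu^{-p(j-1)}z_0^{p(n-e_K(j-1))-1}\sigma^j dz$, which is a nonzero scalar multiple of the corresponding basis vector of $E^2_{-1,2j}(\tp(\OK);\Fp)$. Ordering the two bases compatibly with the refined Nygaard filtrations using the bijection, one sees that the matrix of $\varphi$ with respect to these bases is strictly triangular with nonzero diagonal entries modulo higher Nygaard filtration. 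Since the filtrations are exhaustive and the vector spaces are finite-dimensional in each (rescaled) Nygaard degree, this matrix is invertible, so $\varphi$ is an isomorphism.

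The main obstacle is bookkeeping: one must carefully translate the inequalities defining $S^-_j$ and $S_j$ and verify that the exceptional case matches correctly. A secondary subtlety is that $\varphi$ is not literally filtration-preserving but rescales filtration according to Lemma \ref{L:frob-nyg} (filtration $m$ in $\tc^-_{2j}$ maps into filtration $p(m-j)$ in $\tp_{2j}$); this is precisely compatible with the bijection $n\mapsto p(n-e_K(j-1))$, so the leading-term argument goes through as soon as one checks this compatibility.
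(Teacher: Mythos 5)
Your proposal is correct and is essentially the paper's own argument: the paper deduces the corollary from Proposition \ref{P:frob} together with the observation that $z_0^{n-1}\sigma^j dz \mapsto z_0^{p(n-\eK(j-1))-1}\sigma^j dz$ is a bijection between the leading terms in Propositions \ref{P:e21-tc-} and \ref{P:e21-tp}, which is exactly your $(l,b)\mapsto(l+1,b)$ computation. You merely spell out the verification of the bijection and the filtration-triangularity step in more detail than the paper does.
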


\begin{cor}\label{C:frob-fil}
Suppose $\alpha\in E^{2}_{-1,2j}(\tc^{-}(\OK);\Fp)$ has refined Nygaard filtration $m$.
\begin{enumerate}
\item[(1)]
For $j\geq1$, the filtration of $\varphi(\alpha)$ is $>m$ (resp. $<m$, $=m$)
if and only if 
\[
m>{pj-1\over p-1}-\frac{1}{e_K} \quad \text{(resp. $m<{pj-1\over p-1}-\frac{1}{e_K}$, $m={pj-1\over p-1}-\frac{1}{e_K}$).}
\]
\item[(2)]
For $j\leq0$, the filtration of $\varphi(\alpha)$ is $>m$. 
\end{enumerate}
\end{cor}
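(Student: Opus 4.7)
The plan is to reduce part (1) to the explicit leading-term description furnished by Proposition \ref{P:frob}, and to handle part (2) with the coarser Nygaard-filtration estimate of Lemma \ref{L:frob-nyg}. The key observation underlying both parts is that the symbol $\sigma^j dz$ contributes refined Nygaard filtration exactly $1$ regardless of $j$: the class $\sigma^j$ lies in the bottom piece of the Tate filtration on $\tp_{2j}$ and so has filtration $0$, while $dz=t_{z_0-z_1}$ has filtration $1$ after the rescaling of Convention \ref{Conv:nygaard}.

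For (1), I represent $\alpha$ by its leading term $z_0^{n-1}\sigma^j dz$ in the refined algebraic homotopy fixed points spectral sequence; then its refined Nygaard filtration is $m=1+(n-1)/e_K$, equivalently $n=e_K(m-1)+1$. Proposition \ref{P:frob} identifies $\varphi(\alpha)$ with a nonzero scalar multiple of $z_0^{p(n-e_K(j-1))-1}\sigma^j dz$, whose filtration is $1+(p(n-e_K(j-1))-1)/e_K$. Subtracting, the filtration shift works out to
\[
\bigl[(p-1)n - p\,e_K(j-1)\bigr]/e_K,
\]
which is positive, zero, or negative precisely when $n$ is greater than, equal to, or less than $p\,e_K(j-1)/(p-1)$. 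Substituting $n=e_K(m-1)+1$ rewrites each alternative as the comparison of $m$ with $(pj-1)/(p-1)-1/e_K$ as in the statement. In the equality case, which only actually occurs when $j>1$ and $p-1\mid e_K(j-1)$ (matching the exceptional class in Proposition \ref{P:e21-tp}), the nonvanishing of the leading coefficient $-\bar{\mu}^{-p(j-1)}$ guarantees $\varphi(\alpha)$ has filtration exactly $m$.

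For (2), I invoke Lemma \ref{L:frob-nyg} directly, giving $\varphi(\alpha)\in \mathcal{N}^{\geq p(m-j)}$. Since $j\leq 0$ lies in the range where the canonical map identifies $\tc^-_{2j}$ with $\tp_{2j}$, the basis of cocycles from Proposition \ref{P:e21-tp} applies, and each leading term $z_0^{n-1}\sigma^j dz$ with $n\geq 1$ has filtration $m=1+(n-1)/e_K\geq 1$. Combining $m\geq 1$ with $j\leq 0$ yields $m(p-1)\geq p-1\geq 1 > 0 \geq pj$, so $p(m-j) > m$ strictly, and $\varphi(\alpha)$ lies in a strictly higher Nygaard filtration than $\alpha$.

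The main subtlety in the plan is purely bookkeeping: one must reconcile the two ways of tracking a refined Nygaard filtration, via the rescaled index on $\tp$ itself versus via the $k$-coordinate in the refined spectral sequence $\tilde{E}^r_{i,j,k}$, and verify that the leading-term output of Proposition \ref{P:frob} lands in the basis element of $E^2_{-1,2j}(\tp(\OK);\Fp)$ described by Proposition \ref{P:e21-tp}. Once this compatibility is in place, both parts reduce to the elementary arithmetic above.
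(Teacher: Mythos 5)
Your proof is correct and follows essentially the same route as the paper: part (1) is the identical leading-term computation via Proposition \ref{P:frob} (the paper writes the resulting filtration as $p(m-j)+\frac{p-1}{e_K}+1$, which agrees with your shift $[(p-1)n-pe_K(j-1)]/e_K$), and part (2) rests on the same estimate $\varphi(\mathcal{N}^{\geq m})\subset\mathcal{N}^{\geq p(m-j)}$, which you take directly from Lemma \ref{L:frob-nyg} while the paper repackages it by factoring $\alpha=\beta v^{-j}$. No gaps.
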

\begin{proof}
For (1), by Proposition \ref{P:frob}, $\varphi(\alpha)$ has filtration 
\[
m'={p(e_K(m-1)+1-\eK(a-1))-1\over\eK}+1= p(m-j) + {p-1\over \eK}+1.
\]
A short computation shows the desired result. For $(2)$, since $dz$ has filtration 1, we may assume $m\geq1$. Then we may write $\alpha = \beta v^{-j}$ with $\beta \in  \mathcal{N}^{\geq m}E^{2}_{-1,0}(\tc^{-}(\OK);\Fp)$. It follows that $\varphi(\alpha)$ is divisible by $\varphi(\beta)$, which belongs to $\mathcal{N}^{\geq pm}E^{2}_{-1,0}(\tp(\OK);\Fp)$. Now the desired result follows as $pm>m$. 
\end{proof}

Now we prove Proposition \ref{P:frob}. 
\begin{proof}
Regard $z_0^{n-1}\sigma_0^jdz$ as an element of the cobar complex of $\tc^-_{2j}(\OK/\Sz;\Fp)$. Note that
$d(z_0), d(\sigma_0)\in\mathcal{N}^{\geq1}\tc^-_{2j}(\OK/\mathbb{S}_{W(\k)}[z_0,z_1, z_2];\Fp)$. By the Leibniz rule, we deduce that 
\[
d(z_0^{n-1}\sigma_0^j dz)\in\mathcal{N}^{\geq\frac{n-2}{e_K}+2}\tc^-_{2j}(\OK/\mathbb{S}_{W(\k)}[z_0,z_1, z_2];\Fp). 
\]
Using Lemma \ref{L:ref-tate-tp0}, we deduce that there exists $\beta\in \mathcal{N}^{\geq\frac{n-2}{e_K}+2}\tc^-_{2j}(\OK/\mathbb{S}_{W(\k)}[z_0,z_1];\Fp)$ such that 
$d(\beta)=d(z_0^{n-2}\sigma_0^j dz)$; hence $d(z_0^{n-1}\sigma_0^j dz-\beta)=0$. Therefore, by induction on $n$, we are reduced to treat the case 
\[
\alpha \equiv z_0^{n-1}\sigma_0^j (z_0-z_1) \mod \mathcal{N}^{\geq {n-2\over\eK}+2}.
\]
By Lemma \ref{L:frob-nyg},  we have
\[
\varphi(\alpha) \equiv \varphi(z_0^{n-1}\sigma_0^{j-1})\varphi(\sigma_0 (z_0-z_1)) \mod 
\mathcal{N}^{\geq p({n-2\over\eK}+2-j)}.
\]
By Lemma \ref{l100} and Lemma \ref{L:frob-e20-tc},  we have
\[
\varphi(z_0^{n-1}\sigma_0^{j-1})\varphi(\sigma_0(z_0-z_1)) \equiv -\bar{\mu}^{-p(j-1)}z_0^{p(n-\eK(j-1))-1}\sigma^j_0(z_0-z_1) \mod \mathcal{N}^{\geq {p(n-\eK(j-1))\over\eK}+1}.
\]
Note that if $\eK>3$, then
 \[
 p({n-2\over\eK}+2-j)\geq {p(n-\eK(j-1))\over\eK}+1,
 \]
yielding the desired result for $\eK>3$.
 


For the case of $\eK\leq3$, let $m, K', T_m$ and $\tilde{E'}(\tp(\mathcal{O}_{K'});\Fp)$ be as in the proof of Proposition \ref{P:stem0-all}. Let $\tilde{E'}(\tc^-(\mathcal{O}_{K'});\Fp)$ be the "less refined"  algebraic homotopy fixed point spectral sequence, which is a truncation of $\tilde{E'}(\tp(\mathcal{O}_{K'});\Fp)$. First note that Remark \ref{R:ref-less-ref} \footnote{Using Proposition \ref{P:non-zero-stem}, the argument of Lemma \ref{L:L-in-P} (hence Remark \ref{R:ref-less-ref}) adapts to $\tilde{E'}_{1,j,*}(\tp(\mathcal{O}_{K'});\Fp)$ for all $j\in\mathbb{Z}$.} implies that 
\[
T_m: \tilde{E}^\infty_{1,*,*}(\tp(\mathcal{O}_{K});\Fp)\to \tilde{E}^{'\infty}_{1,*,*}(\tp(\mathcal{O}_{K'});\Fp)
\] 
is injective. Thus it restricts to an injective map $\tilde{E}^{\infty}(\tc^-(\OK);\Fp)\to \tilde{E}^{'\infty}(\tc^-(\mathcal{O}_{K'});\Fp)$. Since $z_0^{n-1}dz$ is non-zero in $\tilde{E}^{\infty}(\tc^-(\mathcal{O}_{K});\Fp)$,  using Remark \ref{R:ref-less-ref}, we may deduce that $T_m(\alpha)$ is detected by $mz_0^{mn-1}\sigma^jdz$.  Then by the case of $e_K>3$, we get that  
\[
-m\bar{\mu}^{-p(j-1)}z_0^{p(mn-m\eK(j-1))-1}\sigma^j_0dz
\]
detects $T_m(\varphi(\alpha))=\varphi(T_m(\alpha))$ in $\tilde{E}^\infty(\tp(\mathcal{O}_{K'});\Fp)$. Now suppose $\varphi(\alpha)$ is detected by $\lambda z^{l-1}\sigma^jdz$. Then $T_m(\varphi(\alpha))$ is detected by $T_m(\lambda z^{l-1}\sigma^jdz)=\lambda mz^{mj-1}\sigma^jdz$. Comparing the two expressions, we get 
\[
l=p(n-m\eK(j-1)), \quad \lambda=-\bar{\mu}^{-p(j-1)}
\] 
by Remark \ref{R:ref-less-ref} again. This completes the proof.
\end{proof}

 \begin{lem}\label{L:can-fil}
The canonical map induces 
\begin{itemize}
\item
for  $j>0$, a surjection
\[
E^{2}_{-1,2j}(\tc^{-}(\OK);\Fp)\rightarrow \mathcal{N}^{\geq j}E^{2}_{-1,2j}(\tp(\OK);\Fp) ;
\]
\item  for $j\leq0$, an isomorphism
\[
E^{2}_{-1,2j}(\tc^{-}(\OK);\Fp)\rightarrow E^{2}_{-1,2j}(\tp(\OK);\Fp);
\]

\item for $m\geq j$,  a surjection
\[
\mathcal{N}^{\geq m}E^{2}_{-1,2j}(\tc^{-}(\OK);\Fp)\rightarrow \mathcal{N}^{\geq m}E^{2}_{-1,2j}(\tp(\OK);\Fp).
\]
\end{itemize}
\end{lem}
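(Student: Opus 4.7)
The plan is to reduce all three statements to a single identification of cochain complexes at the cobar ($E^1$) level, after which everything is a formal unwinding of definitions. By Corollary \ref{C:degen-zz} together with Remark \ref{R:tate-n-variable}, the canonical map
\[
\can:\tc^-_{2j}(\OK/\Sz^{\otimes[n]})\to\tp_{2j}(\OK/\Sz^{\otimes[n]})
\]
is injective with image exactly $\mathcal{N}^{\geq j}\tp_{2j}(\OK/\Sz^{\otimes[n]})$ for every $n\geq 0$. My first step is to check that this identification survives reduction modulo $p$: both sides are $p$-torsion free, and Corollary \ref{C:tpzz-nyg} (together with its evident generalization to $\OK/\Sz^{\otimes[n]}$, which follows by the same argument) gives $p\tp\cap\mathcal{N}^{\geq j}\tp=p\mathcal{N}^{\geq j}\tp$, so $\can$ still identifies $\tc^-_{2j}(\OK/\Sz^{\otimes[n]};\Fp)$ with $\mathcal{N}^{\geq j}\tp_{2j}(\OK/\Sz^{\otimes[n]};\Fp)$. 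Moreover, under this identification, $\mathcal{N}^{\geq m}\tc^-_{2j}$ corresponds to $\mathcal{N}^{\geq m}\tp_{2j}$ for every $m\geq j$, because the Nygaard filtration on $\tc^-_{2j}$ is pulled back through the canonical map. Naturality of $\can$ in the cosimplicial variable makes these identifications compatible with $d^1$, producing for each $m\geq j$ an isomorphism of cochain complexes
\[
\mathcal{N}^{\geq m}\tc^-_{2j}(\OK/\Sz^{\otimes\oldbullet};\Fp)\;\xrightarrow{\sim}\;\mathcal{N}^{\geq m}\tp_{2j}(\OK/\Sz^{\otimes\oldbullet};\Fp).
\]

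The deduction is then formal. By definition of the filtration on $E^2$ coming from the filtered chain complex $E^1$, one has
\[
\mathcal{N}^{\geq m}E^2_{-1,2j}(\heartsuit(\OK);\Fp)=\mathrm{Im}\Bigl(H^1\bigl(\mathcal{N}^{\geq m}\heartsuit_{2j}(\OK/\Sz^{\otimes\oldbullet};\Fp)\bigr)\to E^2_{-1,2j}(\heartsuit(\OK);\Fp)\Bigr)
\]
for $\heartsuit\in\{\tc^-,\tp\}$. Applying $H^1$ to the cochain isomorphism above and projecting onto the $E^2$-terms yields the surjection of the third statement. The first statement is just the case $m=j$, since the Nygaard filtration on $\tc^-_{2j}$ begins at $\mathcal{N}^{\geq j}=\tc^-_{2j}$, so $\mathcal{N}^{\geq j}E^2_{-1,2j}(\tc^-(\OK);\Fp)=E^2_{-1,2j}(\tc^-(\OK);\Fp)$. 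For the second statement, when $j\leq 0$ the (rescaled) Nygaard filtration is nonnegative, so $\mathcal{N}^{\geq j}\tp_{2j}=\tp_{2j}$; the cochain isomorphism then becomes $\tc^-_{2j}(\OK/\Sz^{\otimes\oldbullet};\Fp)\xrightarrow{\sim}\tp_{2j}(\OK/\Sz^{\otimes\oldbullet};\Fp)$, and passing to $H^1$ produces the desired isomorphism on $E^2$.

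The main (and essentially only) technical point is the first step, namely that the identification $\tc^-_{2j}\cong\mathcal{N}^{\geq j}\tp_{2j}$ together with the correspondence of Nygaard filtrations is stable under reduction modulo $p$. This is handled by Corollary \ref{C:tpzz-nyg} (and its routine extension to $\OK/\Sz^{\otimes[n]}$ via the same Tate-degeneration argument). Once this is in place, the compatibility with the cobar differential is automatic by naturality of $\can$, and the three assertions follow by reading off what happens to $H^1$ of the resulting subcomplexes.
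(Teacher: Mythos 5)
Your proposal is correct and follows the same route as the paper, whose proof simply asserts that the lemma follows from the corresponding statements about $\can:\tc^-_{2j}(\OK/\Sz^{\otimes\oldbullet})\to\tp_{2j}(\OK/\Sz^{\otimes\oldbullet})$ at the cochain level; you have merely supplied the details (the identification $\tc^-_{2j}\cong\mathcal{N}^{\geq j}\tp_{2j}$ termwise, its stability under reduction mod $p$ via $p$-torsion freeness, and the standard description of the induced filtration on $H^1$) that the paper leaves implicit.
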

\begin{proof}
These statements follow from the corresponding result on 
\[
\can: \tc^-_{2j}(\OK/\Sz^{\otimes\oldbullet})\to \tp_{2j}(\OK/\Sz^{\otimes\oldbullet}).
\]
\end{proof}

Combining Corollary \ref{C:frob-fil} and Lemma \ref{L:can-fil}, we deduce the following results immediately. 

\begin{cor}\label{c111}
For $j\geq1$ and $m\geq{pj-1\over p-1}$,
the map
\[
\can-\varphi:\mathcal{N}^{\geq m}E^{2}_{-1,2j}(\tc^{-}(\OK);\Fp)\rightarrow \mathcal{N}^{\geq m}E^{2}_{-1,2j}(\tp(\OK);\Fp)
\]
is surjective.
\end{cor}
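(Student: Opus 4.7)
The plan is to invert $\can - \varphi = \can \circ (1 - \can^{-1}\varphi)$ by a successive-approximation (geometric-series) argument, exploiting the two inputs supplied by the preceding results. First, the third bullet of Lemma \ref{L:can-fil} gives that $\can$ is surjective on $\mathcal{N}^{\geq m}$ for every $m \geq j$; since $\tfrac{pj-1}{p-1} \geq j$ for $j \geq 1$, this surjectivity is available throughout our range. Second, since $\tfrac{pj-1}{p-1} > \tfrac{pj-1}{p-1} - \tfrac{1}{e_K}$, Corollary \ref{C:frob-fil}(1) guarantees that $\varphi$ strictly raises the refined Nygaard filtration on $\mathcal{N}^{\geq m}$; from the formula $m' = p(m-j) + \tfrac{p-1}{e_K} + 1$ recorded in the proof of that corollary, the increment satisfies $m'-m \geq \tfrac{p-1}{e_K}$ at $m = \tfrac{pj-1}{p-1}$ and grows with $m$, so iteration sends filtration to infinity.

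Given $\beta \in \mathcal{N}^{\geq m}E^2_{-1,2j}(\tp(\OK);\Fp)$, I would build a sequence $\{\alpha_i\}$ by induction: set $\beta_0 = \beta$, use Lemma \ref{L:can-fil} to pick $\alpha_i \in \mathcal{N}^{\geq m_i}E^2_{-1,2j}(\tc^-(\OK);\Fp)$ with $\can(\alpha_i) = \beta_i$, and set $\beta_{i+1} := \varphi(\alpha_i)$. The filtration jump above ensures $m = m_0 < m_1 < m_2 < \cdots \to \infty$. Setting $\alpha = \sum_{i\geq 0} \alpha_i$ in the completed filtration, a telescoping computation gives
\[
(\can-\varphi)(\alpha) \;=\; \sum_{i\geq 0}\bigl(\can(\alpha_i) - \varphi(\alpha_i)\bigr) \;=\; \sum_{i\geq 0}(\beta_i - \beta_{i+1}) \;=\; \beta,
\]
because $\beta_i \to 0$ in the refined Nygaard topology.

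The main obstacle is to make the formal sum $\sum \alpha_i$ represent an actual class in $\mathcal{N}^{\geq m}E^2_{-1,2j}(\tc^-(\OK);\Fp)$, i.e.\ to verify that the refined Nygaard filtration on this $E^2$-term is complete and separated. This should follow from the completeness of the refined Nygaard filtration on the cosimplicial object $\tc^-_*(\OK/\Sz^{\otimes\oldbullet};\Fp)$ (a mod $p$ analogue of Lemma \ref{L:top-complete}) together with a strictness property for the cobar differentials, so that cohomology inherits a complete filtration. A safer alternative — and probably the cleanest way to write the proof — is to carry out the entire iteration at the level of cochains in the cobar complex, where completeness is manifest from the explicit description of $\tp_0(\OK/\Sz^{\otimes\oldbullet};\Fp)$ and $\tc^-_0(\OK/\Sz^{\otimes\oldbullet};\Fp)$, and only then pass to cohomology classes.
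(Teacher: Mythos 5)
Your proposal is correct and is exactly the argument the paper intends: the authors state the corollary as an immediate consequence of Corollary \ref{C:frob-fil} and Lemma \ref{L:can-fil}, and the only way to combine them is the successive-approximation (geometric series for $\can\circ(1-\can^{-1}\varphi)$) you describe, which the paper itself spells out later in the proof of Proposition \ref{P:e210-tc}. The completeness of the refined Nygaard filtration that you flag is indeed the only technical input, and it holds here because the graded pieces of $E^2_{-1,2j}$ are finite-dimensional with leading degrees tending to infinity (Proposition \ref{P:e21-tc-}), so your argument goes through as written.
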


\begin{cor} \label{c109}
For $j\leq0$,
the map
\[
\can-\varphi:E^{2}_{-1,2j}(\tc^{-}(\OK);\Fp)\rightarrow E^{2}_{-1,2j}(\tp(\OK);\Fp)
\]
is an isomorphism.
\end{cor}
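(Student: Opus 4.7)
The plan is to reduce the problem to a standard completeness argument in filtered linear algebra. First, by Lemma \ref{L:can-fil}, the canonical map already gives an isomorphism
\[
\can: E^{2}_{-1,2j}(\tc^{-}(\OK);\Fp) \xrightarrow{\sim} E^{2}_{-1,2j}(\tp(\OK);\Fp)
\]
for $j \leq 0$. Using this to identify source with target, the map of interest becomes $\id - \psi$, where $\psi := \varphi \circ \can^{-1}$ is an endomorphism of $E^{2}_{-1,2j}(\tp(\OK);\Fp)$. The goal then reduces to showing that $\id - \psi$ is invertible.

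The key input is Corollary \ref{C:frob-fil}(2): for $j \leq 0$, the operator $\psi$ strictly raises the refined Nygaard filtration. Inspecting the proof there (writing $\alpha = \beta v^{-j}$ with $\beta \in \mathcal{N}^{\geq m}E^{2}_{-1,0}(\tc^{-}(\OK);\Fp)$, so that $\varphi(\alpha)$ is divisible by $\varphi(\beta) \in \mathcal{N}^{\geq pm}$), one sees more quantitatively that $\psi$ sends $\mathcal{N}^{\geq m}$ into $\mathcal{N}^{\geq pm}$ for $m \geq 1$. In particular, iterating, $\psi^n(\alpha)$ lies in filtration that grows without bound. The formal candidate for the inverse of $\id - \psi$ is therefore the geometric series $\sum_{n \geq 0} \psi^n$, which would converge on any element in the topology of the refined Nygaard filtration.

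To make this rigorous, the main obstacle is to verify that $E^{2}_{-1,2j}(\tp(\OK);\Fp)$ is complete and separated with respect to the refined Nygaard filtration, so that the geometric series actually defines an element of the space. Separatedness is not hard: an element is detected by its leading term in the cobar complex (Proposition \ref{P:e21-tp}), and distinct basis elements have distinct filtrations. For completeness, one uses that each term $\tp_j(\OK/\Sz^{\otimes [-i]};\Fp)$ of the cobar complex is complete with respect to its (refined) Nygaard filtration, inherited from Nygaard-completeness of the integral periodic cyclic homology (Lemma \ref{L:top-complete} and its analogues), together with the fact that in each fixed bidegree the graded pieces of the cobar differential are finite-dimensional over $\k$, so completeness passes to cohomology.

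Having established completeness, the conclusion is immediate: since $\psi$ is topologically nilpotent, the series $\sum_{n \geq 0} \psi^n$ converges pointwise to a two-sided inverse of $\id - \psi$. Equivalently, one can phrase this as saying that the associated graded of $\can - \varphi$ with respect to the refined Nygaard filtration equals $\can$ itself (an isomorphism), from which the filtered version follows by the standard argument for complete separated filtrations. This yields Corollary \ref{c109}.
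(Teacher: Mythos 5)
Your proposal is correct and is essentially the argument the paper intends: the paper derives this corollary "immediately" from Lemma \ref{L:can-fil} (which gives that $\can$ is an isomorphism for $j\leq0$) and Corollary \ref{C:frob-fil}(2) (which gives that $\varphi$ strictly raises the refined Nygaard filtration), the implicit mechanism being exactly your geometric series $\sum_{n\geq0}(\can^{-1}\varphi)^n$ — the same device the paper later makes explicit in the proof of Proposition \ref{P:e210-tc}. Your extra care about completeness and separatedness of the filtration on the $E^2$-term is a detail the paper suppresses, and your justification for it is sound.
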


Now we are ready to determine $E^{2}(\tc(\OK);\mathbb{F}_p)$. Let $d$ be the minimal number such that
\[
p-1\mid \eK d,\quad \mathrm{N}_{\k/\Fp}(\bar{\mu})^d =1, 
 \]
where $\mathrm{N}_{\k/\Fp} : \k\rightarrow \mathbb{F}_p$ is the norm map.

The following lemma is a reformulation of Hilbert 90 for $\k/\Fp$.
 \begin{lem}\label{L:h90}
For $b\in \k^\times$, the map
\[
b\varphi-\mathrm{id}:\k\xrightarrow{
}\k
\]
is bijective if $\mathrm{N}_{k/\Fp}(b)\neq1$, otherwise both the kernel and cokernel are isomorphic to $\Fp$. 
\end{lem}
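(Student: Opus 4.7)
The map $L \colon k \to k$ defined by $L(x) = b\varphi(x) - x$ is $\mathbb{F}_p$-linear because $\varphi$ is the Frobenius, which fixes $\mathbb{F}_p$ pointwise. Since $k$ is a finite-dimensional $\mathbb{F}_p$-vector space, $L$ is injective if and only if it is surjective if and only if it is bijective; moreover $\dim_{\mathbb{F}_p}\ker(L) = \dim_{\mathbb{F}_p}\mathrm{coker}(L)$. So it suffices to compute $\ker(L)$ in both cases.

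The key computational input is the iteration formula
\[
(b\varphi)^n(x) = \bigl(b \cdot \varphi(b) \cdot \varphi^2(b) \cdots \varphi^{n-1}(b)\bigr)\cdot \varphi^n(x),
\]
which is immediate by induction. Taking $n = f_K$ and using $\varphi^{f_K} = \mathrm{id}$ gives $(b\varphi)^{f_K} = \mathrm{N}_{k/\mathbb{F}_p}(b)\cdot \mathrm{id}$. If $x \in \ker(L)\setminus\{0\}$, then $x = (b\varphi)^{f_K}(x) = \mathrm{N}_{k/\mathbb{F}_p}(b)\cdot x$, forcing $\mathrm{N}_{k/\mathbb{F}_p}(b) = 1$. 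This immediately handles the first case: if $\mathrm{N}_{k/\mathbb{F}_p}(b)\neq 1$, then $\ker(L) = 0$ and $L$ is bijective.

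When $\mathrm{N}_{k/\mathbb{F}_p}(b) = 1$, the plan is to invoke the classical (multiplicative) Hilbert 90 for the cyclic extension $k/\mathbb{F}_p$ generated by $\varphi$: there exists $a \in k^\times$ with $b = \varphi(a)/a$. The equation $b\varphi(x) = x$ is then equivalent to $\varphi(ax) = ax$, i.e.\ $ax \in k^{\varphi = \mathrm{id}} = \mathbb{F}_p$. Hence $\ker(L) = a^{-1}\mathbb{F}_p$, a one-dimensional $\mathbb{F}_p$-subspace, so $\ker(L) \cong \mathbb{F}_p$. By the dimension-count above, $\mathrm{coker}(L) \cong \mathbb{F}_p$ as well.

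There is no real obstacle here; the only subtlety is recognizing that multiplicative Hilbert 90 is the correct input (additive Hilbert 90 alone does not suffice because the operator is twisted by $b$). Everything else reduces to a short linear-algebra argument combined with the norm computation $(b\varphi)^{f_K} = \mathrm{N}_{k/\mathbb{F}_p}(b)\cdot \mathrm{id}$.
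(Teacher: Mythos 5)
Your proof is correct and follows exactly the route the paper intends: the paper offers no proof at all, merely remarking that the lemma "is a reformulation of Hilbert 90 for $\k/\Fp$," and your argument supplies precisely that — the norm computation $(b\varphi)^{f_K}=\mathrm{N}_{\k/\Fp}(b)\cdot\mathrm{id}$ to kill the kernel when the norm is not $1$, and multiplicative Hilbert 90 plus an $\Fp$-dimension count in the norm-one case.
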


Using Lemma \ref{L:h90}, we may choose a $(p-1)$-th root $\bar{\mu}^{pd\over p-1}$ of $\bar{\mu}^{pd}$ in $\k$. Denote by $\beta$ the element in $\tilde{E}^2_{0,0,2d}(\tc(\OK);\Fp)\subseteq {E}^2_{0,2d}(\tc^-(\OK);\Fp)$ detected by $\bar{\mu}^{pd\over p-1}z^{p\eK d\over p-1}\sigma^{d}$.
\begin{prop}\label{P:e200-tc}
We have
\[
\tilde{E}^2_{0,0,*}(\tc(\OK);\Fp)= \Fp[\beta].
\]
\end{prop}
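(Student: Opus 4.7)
The plan is to reduce the question to a standard Artin--Schreier problem over $\k$, via Proposition~\ref{P:e20} (which controls the relevant $E^2$-pieces) and Lemma~\ref{L:h90} (Hilbert~90).

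By Proposition~\ref{P:e20}, for $j\geq 0$ with $p-1\mid e_Kj$, both $E^2_{0,2j}(\tc^-(\OK);\Fp)$ and $E^2_{0,2j}(\tp(\OK);\Fp)$ are one-dimensional $\k$-vector spaces and $\can$ identifies them; otherwise they vanish. Fix a generator $\gamma_j$ of $E^2_{0,2j}(\tc^-(\OK);\Fp)$ such that $\gamma'_j := \can(\gamma_j)$ has leading term $z^{pe_Kj/(p-1)}\sigma^j$ in the refined algebraic Tate spectral sequence; since $\can$ respects the refined Nygaard filtration and induces the identity on associated graded, $\gamma_j$ has the same leading term in the refined algebraic homotopy fixed point spectral sequence.

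The central computation is the action of the Frobenius. Since $\varphi$ restricts to the Frobenius on $\k$, it is $\k$-semilinear (with respect to the $p$-th power). Writing $\varphi(\gamma_j) = c_j\gamma'_j$ for some $c_j\in\k$ and using Lemma~\ref{L:frob-e20-tc} to compare leading terms, one reads off $c_j = \bar\mu^{-pj}$. Consequently, for $a\in\k$,
\[
(\can-\varphi)(a\gamma_j) = (a - \bar\mu^{-pj}a^p)\gamma'_j.
\]
This is, up to a sign, the operator $b\varphi - \id$ on $\k$ with $b = \bar\mu^{-pj}$. Lemma~\ref{L:h90} shows the kernel is one-dimensional over $\Fp$ exactly when $N_{\k/\Fp}(\bar\mu^{-pj}) = N_{\k/\Fp}(\bar\mu)^{-j} = 1$ (using that $N_{\k/\Fp}(\bar\mu)\in\Fp^\times$ is Frobenius-fixed), and zero otherwise.

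Combining the three conditions $j\geq 0$, $p-1\mid e_Kj$, and $N_{\k/\Fp}(\bar\mu)^j = 1$, the nonnegative integers $j$ with $\tilde{E}^2_{0,0,2j}(\tc(\OK);\Fp)\neq 0$ are precisely the nonnegative multiples of $d$, each graded piece being one-dimensional over $\Fp$. Since $\can$ and $\varphi$ are ring maps, their equalizer $\tilde{E}^2_{0,0,*}(\tc(\OK);\Fp)$ is a subring; in particular $\beta^k$ lies in it. The leading term of $\beta^k$ is $\bar\mu^{kpd/(p-1)}z^{pe_Kkd/(p-1)}\sigma^{kd}$, nonzero in $E^2_{0,2kd}(\tc^-(\OK);\Fp)$, so $\beta^k$ generates the one-dimensional $\Fp$-space $\tilde{E}^2_{0,0,2kd}(\tc(\OK);\Fp)$, yielding $\tilde{E}^2_{0,0,*}(\tc(\OK);\Fp) = \Fp[\beta]$. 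No serious obstacle arises; the only delicate point is correctly handling the $\k$-semilinearity of $\varphi$ when translating the equation $\can(x) = \varphi(x)$ into the form $b\varphi(a) = a$ required by Lemma~\ref{L:h90}.
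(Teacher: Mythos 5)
Your proof is correct and follows essentially the same route as the paper's: Proposition \ref{P:e20} to reduce to the one-dimensional graded pieces, Lemma \ref{L:frob-e20-tc} to read off the Frobenius eigenvalue $\bar{\mu}^{-pj}$ on the generator, and then the Artin--Schreier/Hilbert~90 analysis of $\ker(\can-\varphi)$ on $\k$. The only cosmetic difference is that you invoke Lemma \ref{L:h90} where the paper solves $\lambda^{p-1}=\bar{\mu}^{pj}$ by hand via the norm map; the content is identical.
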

\begin{proof}
By Proposition \ref{P:e20}, we first have
\[
E^{2}_{0,*}(\tc^{-}(\OK);\Fp)= \k[z^{p\eK j\over p-1}\sigma^{j}], 
\]
where $j$ is the smallest positive integer such that 
$p-1 \mid \eK j$. 

On the other hand, by Lemma \ref{L:frob-e20-tc},
\[
\varphi(z^{p\eK j\over p-1}\sigma^{j}) = \bar{\mu}^{-pj}z^{p\eK j\over p-1}\sigma^{j}.
\]
Thus $\lambda z^{p\eK j\over p-1}\sigma^{j}\in \tilde{E}^2_{0,0,*}(\tc(\OK);\Fp)$ if and only if $\bar{\mu}^{pj}=\lambda^{-1}\varphi(\lambda)=\lambda^{p-1}$ for some $\lambda\in \k$. In this case,  it follows that 
$\mathrm{N}_{\k/\Fp}(\bar{\mu})^j=1$. Hence $d\mid j$.  Conversely, if $d\mid j$, then such $\lambda$ is of the form $\lambda'\bar{\mu}^{pd\over p-1}$ with $\lambda'\in \Fp$. Now the proposition follows.
\end{proof}

It turns out that $\tilde{E}^2_{i,j,*}(\tc(\OK);\Fp)$ is a free $\Fp[\beta]$-module of finite rank for all $i,j$. In the following, we will find out their generators over $\Fp[\beta]$. Firstly,  combing the proof of Proposition \ref{P:e200-tc}, Lemma \ref{L:frob-e20-tc} and Lemma \ref{L:h90}, we obtain the following result. 

\begin{prop}\label{P:e201-tc}
 The $\Fp[\beta]$-module $\tilde{E}^2_{0,1,*}(\tc(\OK);\Fp)$ is free of rank $1$. 
 \end{prop}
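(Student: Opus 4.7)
The plan is to compute the cokernel of $\can - \varphi \colon E^2_{0,*}(\tc^-(\OK);\Fp) \to E^2_{0,*}(\tp(\OK);\Fp)$ directly and then verify that multiplication by $\beta$ sweeps it out. By Proposition~\ref{P:e20}, if $j_0$ denotes the smallest positive integer with $p-1 \mid \eK j_0$, then $\can$ identifies both sides with $\k[\eta]$ for $\eta = z^{p\eK j_0/(p-1)}\sigma^{j_0}$, so the problem reduces to an $\Fp$-linear computation on each line $\k\eta^n$.

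By Lemma~\ref{L:frob-e20-tc} we have $\varphi(\eta^n) = \bar{\mu}^{-p j_0 n}\eta^n$, so $\can - \varphi$ acts on $\k\eta^n$ as the twisted Artin--Schreier operator $\mathrm{id} - \bar{\mu}^{-p j_0 n}\varphi$. Applying Lemma~\ref{L:h90}, the cokernel is non-zero precisely when $\mathrm{N}_{\k/\Fp}(\bar{\mu})^{j_0 n} = 1$, and in that case it is one-dimensional over $\Fp$. Writing $d = j_0 d'$ with $d'$ the order of $\mathrm{N}_{\k/\Fp}(\bar{\mu})^{j_0}$ in $\Fp^\times$, this condition is exactly $d' \mid n$, so $\tilde{E}^2_{0,1,*}(\tc(\OK);\Fp)$ is concentrated in degrees $2dm$ for $m \geq 0$, with each graded piece one-dimensional over $\Fp$.

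To produce a generator, pick $\tau_0 \in \k$ with $\mathrm{tr}_{\k/\Fp}(\tau_0) \neq 0$; then $\tau_0$ is not in the image of $\mathrm{id} - \varphi$, so its class $\tau \in \tilde{E}^2_{0,1,0}(\tc(\OK);\Fp)$ is non-zero. The multiplicative structure of the fiber sequence makes $\tilde{E}^2_{0,1,*}$ a module over $\tilde{E}^2_{0,0,*}$ via $x \cdot y = \can(x)\tilde{y}$ on representatives; this is well-defined because the identity $\can(x)(\can - \varphi)(z) = (\can - \varphi)(xz)$ holds when $\can(x) = \varphi(x)$. Thus $\beta^m \cdot \tau$ is represented by $\rho^m \tau_0 \eta^{d'm}$, where $\rho = \bar{\mu}^{pd/(p-1)}$ is the chosen $(p-1)$-th root in $\k$.

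The key step is showing this class is non-zero in $\mathrm{coker}(\mathrm{id} - \bar{\mu}^{-pdm}\varphi)$. Using $\rho^{p-1} = \bar{\mu}^{pd}$, the substitution $\lambda = \rho^m \mu$ yields the clean identity $(\mathrm{id} - \bar{\mu}^{-pdm}\varphi)(\rho^m \mu) = \rho^m (\mathrm{id} - \varphi)(\mu)$, so $\rho^m \tau_0$ lies in the image if and only if $\tau_0$ does, which by construction it does not. Hence $\beta^m \tau$ generates the one-dimensional cokernel at degree $2dm$ for every $m \geq 0$, and $\Fp[\beta]\{\tau\} \to \tilde{E}^2_{0,1,*}(\tc(\OK);\Fp)$ is an isomorphism by graded dimension count. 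The main subtlety is this last non-vanishing check; once the substitution reducing to Artin--Schreier is spotted the rest is bookkeeping.
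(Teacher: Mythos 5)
Your proof is correct and follows the route the paper intends: the paper's proof is just the one-line instruction to combine the proof of Proposition \ref{P:e200-tc}, Lemma \ref{L:frob-e20-tc} and Lemma \ref{L:h90}, which is exactly the reduction to the twisted Artin--Schreier operator $\mathrm{id}-\bar{\mu}^{-pj_0n}\varphi$ on each line $\k\eta^n$ that you carry out. Your explicit verification via the identity $(\mathrm{id}-\bar{\mu}^{-pdm}\varphi)(\rho^m x)=\rho^m(\mathrm{id}-\varphi)(x)$ that $\beta$ acts injectively on the cokernel is a detail the paper leaves implicit, and it is a worthwhile addition since one-dimensionality of each graded piece alone does not give freeness over $\Fp[\beta]$.
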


\begin{lem}\label{L:gamma}
There exists $\gamma\in \ker(\can-\varphi)$ detected by $\bar{\mu}^{pd\over p-1}z^{{p\eK d\over p-1}-1}\sigma^{d+1}dz$. 
\end{lem}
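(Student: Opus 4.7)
The plan is a Hilbert~90--type cancellation at the refined Nygaard filtration of the target monomial, followed by one correction in strictly higher filtration supplied by Corollary~\ref{c111}; it is the direct analogue of the construction of $\beta$ in Proposition~\ref{P:e200-tc}.

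First, I would invoke the second bullet of Proposition~\ref{P:e21-tc-} with $j=d+1$, which is valid since $d+1>1$ and $p-1\mid e_Kd$ by the definition of $d$, to obtain a cocycle $\alpha_0\in E^2_{-1,2(d+1)}(\tc^-(\OK);\mathbb{F}_p)$ detected by $z_0^{pe_Kd/(p-1)-1}\sigma^{d+1}dz$. Set $\gamma_0=\bar{\mu}^{pd/(p-1)}\alpha_0$. The leading term of $\can(\gamma_0)$ is $\bar{\mu}^{pd/(p-1)}z_0^{pe_Kd/(p-1)-1}\sigma^{d+1}dz$. Proposition~\ref{P:frob}, applied with $n=pe_Kd/(p-1)$ and $j=d+1$, preserves the exponent of $z_0$ since $p(n-e_K(j-1))=pe_Kd/(p-1)$, while contributing a factor $\bar{\mu}^{-pd}$. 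The Frobenius action $\lambda\mapsto\lambda^p$ on $\k$ sends $\bar{\mu}^{pd/(p-1)}$ to $\bar{\mu}^{p^2d/(p-1)}$, and the defining identity $(\bar{\mu}^{pd/(p-1)})^{p-1}=\bar{\mu}^{pd}$ gives $\bar{\mu}^{p^2d/(p-1)-pd}=\bar{\mu}^{pd/(p-1)}$, so the leading coefficient of $\varphi(\gamma_0)$ matches that of $\can(\gamma_0)$ and $(\can-\varphi)(\gamma_0)$ has vanishing leading term.

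Hence $(\can-\varphi)(\gamma_0)$ lies in $\mathcal{N}^{\geq m_0+1/e_K}E^2_{-1,2(d+1)}(\tp(\OK);\mathbb{F}_p)$, where $m_0=pd/(p-1)-1/e_K+1$ is the filtration of the leading term of $\gamma_0$. The key numerical identity $m_0+1/e_K=(pd+p-1)/(p-1)=(p(d+1)-1)/(p-1)$ makes this exactly the threshold in Corollary~\ref{c111} with $j=d+1$. Invoking that corollary supplies $\gamma_1\in\mathcal{N}^{\geq(p(d+1)-1)/(p-1)}E^2_{-1,2(d+1)}(\tc^-(\OK);\mathbb{F}_p)$ with $(\can-\varphi)(\gamma_1)=(\can-\varphi)(\gamma_0)$; then $\gamma:=\gamma_0-\gamma_1$ satisfies $(\can-\varphi)(\gamma)=0$, and since $\gamma_1$ has strictly higher filtration than the leading term of $\gamma_0$, that leading term is unchanged and gives the desired detection.

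The main delicate step is the Frobenius leading-term calculation, which is essentially the Hilbert~90 identity built into the definitions of $d$ and of $\bar{\mu}^{pd/(p-1)}$; once that cancellation is in hand, the filtration arithmetic makes a single application of Corollary~\ref{c111} sufficient to close the argument without iteration.
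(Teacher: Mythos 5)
Your argument is correct and coincides with the paper's own proof: start from the cocycle of Proposition \ref{P:e21-tc-} detected by $z_0^{{p\eK d\over p-1}-1}\sigma^{d+1}dz$, rescale by $\bar{\mu}^{pd\over p-1}$ so that Proposition \ref{P:frob} together with $(\bar{\mu}^{pd\over p-1})^{p-1}=\bar{\mu}^{pd}$ kills the leading term of $\can-\varphi$, and then correct in filtration $\geq \frac{pd}{p-1}+1$ using the surjectivity of Corollary \ref{c111}. The filtration arithmetic identifying $m_0+\frac{1}{e_K}$ with the threshold $\frac{p(d+1)-1}{p-1}$ is exactly the point the paper relies on, so nothing further is needed.
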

\begin{proof}
Let $\gamma_0\in {E}^2_{-1,2(d+1)}(\tc^-(\OK);\Fp)$ be detected by $z^{{p\eK d\over p-1}-1}\sigma^{d+1}dz$.
By Proposition \ref{P:frob}, $\varphi(\gamma_0)$ is detected by $\bar{\mu}^{-pd} z^{{p\eK d\over p-1}-1}\sigma^{d+1}dz$.
It follows that 
\[
(\can-\varphi)(\bar{\mu}^{pd\over p-1}\gamma_0)\in \mathcal{N}^{\geq {{p d\over p-1}}+1}E^2_{-1,2(d+1)}(\tp(\OK);\mathbb{F}_p).
\]
By Corollary \ref{c111}, $\can-\varphi$ is surjective on $\mathcal{N}^{\geq {{p d\over p-1}}+1}E^2_{-1,2(d+1)}(\tc^-(\OK);\mathbb{F}_p)$. Hence we may modify $\gamma_0$ with higher terms to construct the desired element.
\end{proof}

In the following, let $\gamma$ be as in Lemma \ref{L:gamma}. 
\begin{prop}\label{P:e211-tc}
The $\Fp[\beta]$-module $\tilde{E}^2_{-1, 1,*}(\tc(\OK);\Fp)$ is free of rank 1 generated by
$\can(\gamma)\in {E}^2_{-1,2(d+1)}(\tp(\OK);\Fp)$.
\end{prop}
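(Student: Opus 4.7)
Plan. The module $\tilde{E}^2_{-1,1,*}(\tc(\OK);\Fp)$ is by definition the cokernel $C_{*}:=\mathrm{coker}\!\bigl(\can-\varphi:E^2_{-1,*}(\tc^-(\OK);\Fp)\to E^2_{-1,*}(\tp(\OK);\Fp)\bigr)$. By Corollary \ref{c109} we have $C_{2j}=0$ for all $j\le 0$, and for $j\ge 1$ Corollary \ref{c111} shows that $\can-\varphi$ is surjective onto $\mathcal{N}^{\ge \lceil(pj-1)/(p-1)\rceil}E^2_{-1,2j}(\tp;\Fp)$, so $C_{2j}$ is a quotient of the finite-dimensional truncation $Q_{2j}:=E^2_{-1,2j}(\tp;\Fp)/\mathcal{N}^{\ge \lceil(pj-1)/(p-1)\rceil}$.

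Using the explicit basis of $Q_{2j}$ from Proposition \ref{P:e21-tp} and the Frobenius formula of Proposition \ref{P:frob}, $\varphi$ shifts positions by $n\mapsto p(n-e_K(j-1))$, whose unique rational fixed point is $n^{*}=pe_K(j-1)/(p-1)$. This fixed point is integral precisely when $p-1\mid e_K(j-1)$ and $j>1$, which coincides with the existence condition for the ``special generator'' of Proposition \ref{P:e21-tp}. On the subspace $V'\subset Q_{2j}$ spanned by basis vectors with $n\ne n^{*}$, iterated application of $\varphi$ eventually leaves the finite basis range, so $\varphi|_{V'}$ is nilpotent and $\can-\varphi=\mathrm{id}-\varphi$ is invertible on $V'$. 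Hence $C_{2j}$ is supported on the one-dimensional $k$-line at $n=n^{*}$.

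On this line $\can-\varphi$ reduces to an Artin--Schreier-type $\Fp$-linear endomorphism $c\mapsto c+\bar{\mu}^{-p(j-1)}c^p$ of $k$, whose cokernel is either zero or one-dimensional over $\Fp$ by Lemma \ref{L:h90}. Rather than compute the associated norm condition from scratch, I would use the already-constructed classes $\beta$ and $\gamma$: multiplicativity of the spectral sequence together with $|\beta|=2d$ and $|\can(\gamma)|=2(d+1)$ shows via the Leibniz rule that $\beta^{k}\can(\gamma)$ is detected at $z^{n^{*}-1}\sigma^{(k+1)d+1}dz$ in degree $2((k+1)d+1)$ with a nonzero leading coefficient, so $C_{2((k+1)d+1)}\ne 0$ for every $k\ge 0$. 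Meanwhile, the minimality in the definition of $d$ ensures, via exactly the Artin--Schreier dichotomy used in the proof of Proposition \ref{P:e200-tc}, that the cokernel vanishes in all other degrees $j\ge 1$. Combining $\dim_{\Fp}C_{2j}\le 1$ with the non-vanishing of $\beta^{k}\can(\gamma)$ yields the claimed isomorphism $C_{*}\cong \Fp[\beta]\{\can(\gamma)\}$.

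The main obstacle is the Hilbert 90 analysis on the fixed-point line: the sign in the Frobenius formula of Proposition \ref{P:frob} introduces a $(-1)^{f_K}$ into the relevant norm, and one must check that this is compatible with the definition of $d$, i.e.\ that the cokernel vanishes for $j\ge 1$ outside the set $\{(k+1)d+1:k\ge 0\}$. The strategy of reading off non-vanishing from the multiplicative action of $\beta$ on $\can(\gamma)$ sidesteps a standalone norm calculation by matching the cokernel's support with the $\beta$-periodic copies of $\can(\gamma)$ automatically, so the residual verification reduces to checking that $\beta^{k}\can(\gamma)$ has nonzero leading term at $n^{*}$ in each relevant degree, which is immediate from the explicit formulas for $\beta$ and $\gamma$.
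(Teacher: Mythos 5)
Your overall strategy is the same as the paper's: reduce to $j\geq 1$ via Corollary \ref{c109}, use Corollaries \ref{c111}, \ref{C:frob-bij} and \ref{C:frob-fil} to localize the cokernel of $\can-\varphi$ to the critical line $n=n^{*}=\frac{p\eK(j-1)}{p-1}$, and then apply Lemma \ref{L:h90} and Lemma \ref{L:gamma} there. (The paper phrases the off-critical-line step as a maximal-leading-term argument rather than nilpotence of $\varphi$ on a truncation; these are the same successive-approximation idea, though note that $\varphi$ maps $E^2_{-1,2j}(\tc^-(\OK);\Fp)$ to $E^2_{-1,2j}(\tp(\OK);\Fp)$, so "iterating $\varphi$" really means iterating $\can\circ\varphi^{-1}$ or $\varphi\circ\can^{-1}$, and $\can$ is neither injective nor surjective; this needs the filtration bookkeeping the paper supplies.)

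There are two genuine gaps in your final step. First, the inference "$\beta^{k}\can(\gamma)$ has nonzero leading coefficient at $n^{*}$, hence $C_{2((k+1)d+1)}\neq 0$" is a non sequitur: a nonzero element of the target $E^2_{-1,2j}(\tp(\OK);\Fp)$ need not be nonzero in the cokernel of $\can-\varphi$. What you actually need is that $\beta^{k}\gamma$ lies in $\ker(\can-\varphi)$ (by Lemma \ref{L:gamma} and multiplicativity) and has nonzero leading term on the critical line, so the Artin--Schreier map on that line has nontrivial kernel, and then Lemma \ref{L:h90} converts this into nontriviality of its cokernel. Second, and more seriously, your argument proves nothing about the degrees $j>1$ with $p-1\mid \eK(j-1)$ but $d\nmid j-1$: exhibiting nonzero classes in the good degrees cannot force \emph{vanishing} in the bad ones, and the upper bound $\dim_{\Fp}C_{2j}\leq 1$ leaves room for a spurious one-dimensional contribution there. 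Ruling this out requires evaluating the norm $\mathrm{N}_{\k/\Fp}$ of the critical-line eigenvalue coming from Proposition \ref{P:frob} --- precisely the computation, sign included, that you defer --- and showing it equals $1$ if and only if $d\mid j-1$; this is where Lemma \ref{L:gamma} (which pins the condition down at $j-1=d$) and the minimality of $d$ must be combined, and it is not "exactly the dichotomy of Proposition \ref{P:e200-tc}" because of the extra sign. As written, the proposed sidestep delivers only half of the support computation, so the conclusion $C_{*}\cong\Fp[\beta]\{\can(\gamma)\}$ does not follow.
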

\begin{proof}
Let $\alpha \in E^2_{-1,2j}(\tc(\OK);\mathbb{F}_p)$ represents a non-trivial class in the cokernel of $\can-\varphi$ such that it has the highest leading term in that class. 
By Corollary \ref{c111} and Corollary \ref{c109}, we see that $j\geq1$ and the leading degree of $\alpha$ lies in $[1, {pj\over p-1}-{1\over p-1}-{1\over\eK}]$.

On the other hand, if the leading degree of $\alpha$ is less than ${pj\over p-1}-{1\over p-1}-{1\over\eK}$, by Corollary \ref{C:frob-bij} and Corollary \ref{C:frob-fil}, then we may find some $\alpha'$ with higher leading degree such that $\alpha = \varphi(\alpha')$. Note that $\can(\alpha')$ represents the same class as $\alpha$, yielding a contradiction.

Therefore $\alpha$ must have leading degree ${pj\over p-1}-{1\over p-1}-{1\over\eK}$. That is, $\alpha$ is detected by some $\lambda z_1^{{p\eK (j-1)\over p-1}-1}\sigma^jdz$. Using Lemma \ref{L:h90} and Lemma \ref{L:gamma}, we conclude that $d\mid j-1$ and $\alpha\in \Fp\beta^{\frac{j-1}{d}-1}\can(\gamma)$.

\end{proof}

\begin{prop}\label{P:e210-tc}
As an $\Fp[\beta]$-module, $\tilde{E}^2_{-1,0,*}(\tc(\OK);\Fp)$ is free with a basis given by $\gamma$
and a family of cycles detected respectively by
\[
c z^{{p\eK (j-1)+bp^l\over p-1}-1}\sigma^jdz\in {E}^2_{-1,2j}(\tc^-(\OK);\Fp)
\]
with $l\geq0$ and
\begin{equation}\label{E:e210-tc}
0<b<p\eK, \quad p\nmid b, \quad b\equiv -\eK(j-1) \mod p-1, \quad p^{l-1}\leq {\eK j\over p\eK-b}< p^l, \quad 1\leq j \leq d,
\end{equation}
and $c$ runs over a basis of $\k$ over $\Fp$.
\end{prop}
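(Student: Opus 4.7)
The plan is to analyze $\tilde{E}^2_{-1,0,*}(\tc(\OK);\Fp) = \ker(\can-\varphi)$ by inducting on the refined Nygaard filtration, exploiting Corollary \ref{C:frob-fil}: the behaviour of $\varphi$ relative to $\can$ on an element of $E^2_{-1,2j}(\tc^-(\OK);\Fp)$ depends on whether its filtration $m$ lies above, below, or at the critical value $f^1 := \frac{pj-1}{p-1} - \frac{1}{\eK}$. First I would produce each proposed generator. For $(j,b,l,c)$ satisfying \eqref{E:e210-tc}, the leading term $cz^{(p\eK(j-1)+bp^l)/(p-1)-1}\sigma^j dz$ lies in $\ker(\can)$ by Lemma \ref{l99} (it is precisely the $b>0$ half of that basis). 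Its refined Nygaard filtration equals $f^1 + \frac{bp^l}{\eK(p-1)} > f^1$, so by Corollary \ref{C:frob-fil} the map $\varphi$ strictly raises filtration; a short computation using $b\geq 1$, $l\geq 0$ places $\varphi$ of the leading term in filtration at least $\frac{pj-1}{p-1}$, which is the regime where Corollary \ref{c111} makes $\can-\varphi$ surjective. Choosing a preimage there produces a correction, and the leading term plus this correction lies in $\ker(\can-\varphi)$ with the prescribed leading behaviour. The generator $\gamma$ is already supplied by Lemma \ref{L:gamma}.

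For $\Fp[\beta]$-linear independence, multiplication by $\beta$ preserves the parameters $(b,l)$, shifts $j$ by $d$, and scales the leading coefficient by $\bar{\mu}^{pd/(p-1)}$. The $\Fp[\beta]$-orbits of the proposed generators therefore produce pairwise distinct leading monomials in the $\k$-basis of $E^2_{-1,2j}(\tc^-(\OK);\Fp)$ described by Proposition \ref{P:e21-tc-}, while any $\beta^k\gamma$ has leading monomial at the critical filtration $f^1$, disjoint from the $b>0$ stratum. Hence the family is $\Fp$-linearly independent.

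For spanning, take $\alpha \in \ker(\can-\varphi) \cap E^2_{-1,2j}(\tc^-(\OK);\Fp)$ with $j \geq 1$ (the case $j\leq 0$ being vacuous by Corollary \ref{c109}) and induct on $m := \nu(\alpha)$. The case $m < f^1$ is impossible: by Corollary \ref{C:frob-fil} and Proposition \ref{P:frob}, $(\can-\varphi)(\alpha)$ would acquire a nonzero leading term at filtration strictly below $m$. If $m > f^1$, the leading term of $\alpha$ must lie in $\ker(\can)$; Lemma \ref{l99} identifies it as a $\k$-multiple of a $b>0$ cycle at filtration $m$. Writing $j = j_0 + kd$ with $1\leq j_0\leq d$, I cancel the leading term of $\alpha$ by subtracting a unique $\Fp$-linear combination of $\beta^k$ times the $(j_0,b,l,c)$ generators, using that $c\mapsto c\bar{\mu}^{pdk/(p-1)}$ is a bijection of $\k$; the new element has strictly larger filtration. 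If $m = f^1$, forcing $(p-1)\mid \eK(j-1)$, the condition $(\can-\varphi)(\alpha)\in\mathcal{N}^{>f^1}$ imposes the Artin--Schreier equation $c + c^p\bar{\mu}^{-p(j-1)} = 0$ on the leading coefficient $c\in\k$; Lemma \ref{L:h90} combined with the defining norm condition on $d$ shows that nontrivial solutions exist exactly when $d\mid j-1$, in which case they form the one-dimensional $\Fp$-line generated by the leading coefficient of $\beta^{(j-1)/d - 1}\gamma$. Subtracting the appropriate scalar multiple raises filtration, and because elements of $E^2_{-1,2j}(\tc^-(\OK);\Fp)$ have finite support in the basis of Proposition \ref{P:e21-tc-} the induction terminates after finitely many steps, expressing $\alpha$ as a finite $\Fp[\beta]$-combination of the proposed basis.

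The principal obstacle is the critical case $m = f^1$: one must carefully align the one-dimensional $\Fp$-line of Artin--Schreier solutions with the $\Fp[\beta]$-module generated by $\gamma$, keeping track of Frobenius twists on the coefficient field $\k$ and of the precise exponent of $\beta$ needed to land in the correct bidegree. A secondary technical point, handled by the termination argument above, is ensuring that the inductive correction procedure actually converges to $\alpha$ in the (possibly infinite-dimensional) space $E^2_{-1,2j}(\tc^-(\OK);\Fp)$ rather than requiring a transfinite construction.
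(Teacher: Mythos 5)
Your overall strategy is the same as the paper's: the trichotomy on the leading refined Nygaard degree relative to the critical value $\frac{pj-1}{p-1}-\frac{1}{\eK}$ (Corollary \ref{C:frob-fil}), Hilbert 90 via Lemma \ref{L:h90} on the critical line, and Lemma \ref{l99} above it. The only structural difference is in the region above the critical line: where you run a step-by-step induction on the leading degree, the paper decomposes $\alpha=\alpha_1+\alpha_2$ with $\alpha_1\in\ker(\can)$ and $\alpha_2$ in a complement $S$ mapped isomorphically by $\can$, and solves $(\can-\varphi)(\alpha_2)=\varphi(\alpha_1)$ in closed form by the convergent series $\alpha_2=\sum_{i\geq1}(\can^{-1}\varphi)^i(\alpha_1)$, which exhibits the leading-term-preserving bijection $\ker(\can-\varphi)\cong\ker(\can)$ in one stroke. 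Your construction of the generators via the surjectivity statement of Corollary \ref{c111} is a legitimate repackaging of the same mechanism.

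The one genuine problem is your termination argument. Elements of $E^2_{-1,2j}(\tc^-(\OK);\Fp)$ do \emph{not} have finite support in the basis of Proposition \ref{P:e21-tc-}: for fixed $j$ that basis is indexed by infinitely many pairs $(b,l)$ (roughly $\eK$ admissible values of $b$ for every $l\geq0$), and the group carries a complete filtration, so convergent infinite sums occur — indeed the corrections you build from Corollary \ref{c111}, like the paper's $\alpha_2$, are in general infinite sums supported in the $b\leq 0$ stratum. The correct reason the induction stops is different: if $\alpha\in\ker(\can-\varphi)$ has leading degree strictly above the critical value, then since $\varphi$ strictly raises filtration there, the leading term of $\alpha$ must be annihilated by $\mathrm{Gr}(\can)$; and by Lemma \ref{l99} the associated graded of $\ker(\can)$ above the critical line is supported in only $\eK$ filtration degrees for each $j$ (the $b>0$ stratum). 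Hence after at most $\eK$ subtractions no admissible leading degree remains and the element must vanish. With that substitution your argument closes. A minor caveat: your Artin--Schreier equation $c+c^p\bar{\mu}^{-p(j-1)}=0$ takes the sign of Proposition \ref{P:frob} literally, whereas consistency with Lemma \ref{L:gamma} (where $c=\bar{\mu}^{pd/(p-1)}$ must be a solution) requires $c-c^p\bar{\mu}^{-p(j-1)}=0$; this sign discrepancy is already present between those two cited statements and does not affect the structure of your argument.
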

\begin{proof}
By Corollary \ref{c109}, $\ker(\can-\varphi)$ is trivial for $j\leq0$.  Now suppose $j\geq 1$, and let $0\neq \alpha\in \tilde{E}^2_{-1,0,*}(\tc(\OK);\Fp)$.
By Corollary \ref{C:frob-fil}, $\varphi$ lowers the filtration if the filtration is less than  ${pj\over p-1}-{1\over p-1}-{1\over\eK}$. Thus the leading degree of $\alpha$ is at least ${pj\over p-1}-{1\over p-1}-{1\over\eK}$.

If the leading degree of $\alpha$ is ${pj\over p-1}-{1\over p-1}-{1\over\eK}$, the by Lemma \ref{L:h90} and the argument of Proposition \ref{P:e211-tc}, there exists some $\beta'\in\Fp[\beta]\gamma$ such that $\alpha-\beta'$ has leading degree higher than ${pj\over p-1}-{1\over p-1}-{1\over\eK}$.

Now suppose $\alpha$ has leading degree higher than ${pj\over p-1}-{1\over p-1}-\frac{1}{e_K}$. First note that for a cycle given in Propositions \ref{P:e21-tp}, \ref{P:e21-tc-}, it lies in $\mathcal{N}^{>{pj\over p-1}-{1\over p-1}-\frac{1}{e_K}}$ if and only if $b>0$. Then it is straightforward to see that 
\[
\can: \mathcal{N}^{>{pj\over p-1}-{1\over p-1}-\frac{1}{e_K}}{E}^2_{-1,2j}(\tc^-(\OK);\Fp)\to \mathcal{N}^{>{pj\over p-1}-{1\over p-1}-\frac{1}{e_K}}{E}^2_{-1,2j}(\tp(\OK);\Fp).
\] 
 is surjective, and the cocyles given in the statement of the proposition form an $\Fp$-basis of $\ker(\can)$. Let $S$ be the $\k$-vector space generated by the remaining cycles in $\mathcal{N}^{>{pj\over p-1}-{1\over p-1}-\frac{1}{e_K}}{E}^2_{-1,2j}(\tc^-(\OK);\Fp)$. It follows that $\can$ induces a filtration preserving isomorphism between $S$ and $\mathcal{N}^{>{pj\over p-1}-{1\over p-1}-\frac{1}{e_K}}{E}^2_{-1,2j}(\tp(\OK);\Fp)$.  
 
Now we may write $\alpha=\alpha_1+\alpha_2$ with $\alpha_1\in\ker(\can), \alpha_2\in S$.  It follows that 
\[
(\can-\varphi)(\alpha_2)= \varphi(\alpha_1).
\]
Since $\varphi$ raises the filtration, it follows that 
\[
\alpha_2=(1-\can^{-1}\varphi)^{-1}(\can^{-1}\varphi(\alpha_1))=\sum_{i\geq1} (\can^{-1}\varphi)^i(\alpha_1).
\]
Hence $\alpha_2$ is uniquely determined by $\alpha_1$ and has higher filtration than $\alpha_1$. Thus the map $\alpha\mapsto \alpha_1$ induces an isomorphism between $\ker(\can)$ and $\ker(\can-\varphi)$ preserving the leading term. This completes the proof. 
\end{proof}

\begin{rem}
The above argument can be summarized by the following picture. Put $a=m-j$. The cycles of $E^2_{-1,2j}(\tc^-(\OK);\mathbb{F}_p)$ with leading degree $m$ is represented by the point $(a=m-j, j)$.  Then we may divide the area of cocyles into three regions, bounded by the lines $j+a=0$, $a=0$ and ${j\over p-1}-a-{1\over p-1}-{1\over\eK}=0$. The blue line is the ``critical line" for the Frobenius action. In region I, the canonical map is an isomorphism, and the Frobenius raises filtration; thus $\can-\varphi$ is an isomorphism (Corollary \ref{c109}). In region II, the Frobenius raises the filtration. One may produce an isomorphism between $\ker(\can)$ and $\ker(\can-\varphi)$ preserving the leading term. In region III, the Frobenius lowers the filtration; thus $\ker(\can-\varphi)=0$. Along the critical line, the Frobenius differs from the canonical map by a certain power of $\bar{\mu}$.
\begin{center} 
\begin{tikzpicture}[scale  = 0.5]
\draw (-1,3) node {I};
\draw (3,3) node {II};
\draw (6,-1) node {III};
\draw (0.5,5) node{j};
\draw [->] (0,0) -- (0,5);
\draw  (5,-5) -- (-5,5);
\draw [color = blue] (0.67, -0.67) -- (7, 2.5);
\end{tikzpicture}
\end{center}
\end{rem}
~\\
Note that for $1\leq i\leq e_K$ and $1\leq j\leq d$, there is exactly one 
\[
b\in[(p-1)i+1, pi],
\] 
and hence one pair $(b, l)$, satisfying (\ref{E:e210-tc}). Denote by $\alpha_i^{(j)}$ the cycle detected by 
\[
z^{{p\eK (j-1)+bp^l\over p-1}-1}\sigma^jdz\in E^2_{-1,2j}(\tc^-(\OK);\Fp).
\] 
given in Proposition \ref{P:e210-tc}. Using Proposition \ref{P:e201-tc}, let $\lambda$  be an $\Fp$-basis of $\tilde{E}^2_{0,1,0}(\tc(\OK);\Fp)$. 
By Remark \ref{R:multiplicative}, $E^2(\tc(\OK);\Fp)$ is multiplicative. Combining  Propositions \ref{P:e200-tc}, \ref{P:e201-tc}, \ref{P:e210-tc}, \ref{P:e211-tc}, and Corollary \ref{C:e2-tp-tc-}, we conclude

\begin{thm}\label{T:e2tc}
As $\Fp[\beta]$-modules, we have
\[
E^2_{0,*}(\tc(\OK);\Fp)= \Fp[\beta],
\]
\[
E^2_{-1,*}(\tc(\OK);\Fp)= \Fp[\beta]\{\lambda, \gamma\} \oplus \mathbb{F}_p[\beta]\{\alpha^{(j)}_{i,l}|1\leq i\leq \eK, 1\leq j\leq d, 1\leq l\leq f_K\},
\]
and
\[
E^2_{-2,*}(\tc(\OK);\Fp)= \Fp[\beta]\{\lambda\gamma\},
\]
with $|\lambda| = (-1,0)$, $|\gamma| = (-1,2(d+1))$, $|\alpha^{(j)}_{i,l}| = (-1,2j)$. Moreover, for $i\neq0,-1,-2$, 
\[
E^2_{i,*}(\tc(\OK);\Fp)=0.
\]

\end{thm}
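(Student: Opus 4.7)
The plan is to assemble the four propositions just proved into the theorem via the multiplicative auxiliary spectral sequence
\[\tilde{E}^2_{i,k,j}(\tc(\OK);\Fp) \Rightarrow E^2_{i-k,j}(\tc(\OK);\Fp), \quad k \in \{0,1\},\]
associated with the two-row double complex $E^2(\tc^-(\OK);\Fp) \xrightarrow{\can-\varphi} E^2(\tp(\OK);\Fp)$. By Corollary \ref{C:e2-tp-tc-}, both $E^2_{i,*}(\tc^-(\OK);\Fp)$ and $E^2_{i,*}(\tp(\OK);\Fp)$ vanish unless $i \in \{0,-1\}$, so this spectral sequence has at most four non-vanishing entries, located at $(i,k) \in \{0,-1\} \times \{0,1\}$; with only two rows, it collapses at $\tilde{E}^2$ for want of room.

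Plugging in Propositions \ref{P:e200-tc}, \ref{P:e201-tc}, \ref{P:e210-tc}, \ref{P:e211-tc}, the four entries become $\tilde{E}^2_{0,0,*} = \Fp[\beta]$, $\tilde{E}^2_{0,1,*} = \Fp[\beta]\{\lambda\}$, $\tilde{E}^2_{-1,0,*} = \Fp[\beta]\{\gamma\} \oplus \bigoplus_{i,j,l}\Fp[\beta]\{\alpha^{(j)}_{i,l}\}$, and $\tilde{E}^2_{-1,1,*} = \Fp[\beta]\{\can(\gamma)\}$. The abutment then immediately yields $E^2_{0,*}(\tc) = \Fp[\beta]$, $E^2_{-2,*}(\tc) = \Fp[\beta]\{\can(\gamma)\}$, and $E^2_{i,*}(\tc) = 0$ for $i \notin \{0,-1,-2\}$, while $E^2_{-1,*}(\tc)$ fits in an extension with sub $\Fp[\beta]\{\lambda\}$ and quotient $\Fp[\beta]\{\gamma, \alpha^{(j)}_{i,l}\}$. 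I will argue this extension splits as $\Fp[\beta]$-modules: each of $\gamma$ and $\alpha^{(j)}_{i,l}$ is constructed (in Lemma \ref{L:gamma} and Proposition \ref{P:e210-tc}) as an honest element of $\ker(\can-\varphi) \subset E^2_{-1,*}(\tc^-(\OK);\Fp)$, which is the image of the natural map $E^2_{-1,*}(\tc) \to E^2_{-1,*}(\tc^-)$, providing canonical lifts; these lifts are $\Fp[\beta]$-equivariant because $\beta \in E^2_{0,*}(\tc)$ acts compatibly under $\tc \to \tc^-$. The identification $\tilde{E}^2_{-1,1,*} = \Fp[\beta]\{\lambda\gamma\}$ then follows from multiplicativity: the product $\lambda\cdot\gamma$ lies in $\tilde{E}^2_{-1,1,2(d+1)}$ and its explicit representative in the cokernel of $\can-\varphi$ is precisely $\can(\gamma)$.

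Finally, collapse of the descent spectral sequence for $\tc(\OK;\Fp)$ is a degree argument. Since $E^2_{*,*}(\tc(\OK);\Fp)$ is supported in columns $i \in \{-2,-1,0\}$, every differential $d^r$ with $r \geq 3$ vanishes trivially because either its source or target lies in a zero column; the only potentially non-zero differential is $d^2 \colon E^2_{0,j} \to E^2_{-2,j+1}$. A parity check kills this: both $E^2_{0,*}$ and $E^2_{-2,*}$ are concentrated in even total degrees, since their $\Fp[\beta]$-generators $\beta$ and $\lambda\gamma$ lie in total degrees $2d$ and $2(d+1)$ respectively and $|\beta|$ is even, so the shift $j \mapsto j+1$ lands in a zero group. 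The principal subtlety I anticipate is the precise matching of $\can(\gamma)$ with the multiplicative product $\lambda\gamma$ inside the auxiliary spectral sequence, which relies on tracking $\lambda$ as the distinguished generator of the cokernel of $\can-\varphi$ on $E^2_{0,*}(\tc^-)$; however, since $\lambda$ has internal degree zero and $\gamma$ admits a canonical lift to $E^2_{-1,*}(\tc)$, this identification reduces to a formal consequence of multiplicativity and should not pose a serious difficulty.
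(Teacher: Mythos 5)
Your proposal is correct and follows the paper's own route: the paper likewise deduces the theorem by combining Propositions \ref{P:e200-tc}, \ref{P:e201-tc}, \ref{P:e210-tc}, \ref{P:e211-tc} with Corollary \ref{C:e2-tp-tc-} via the two-column auxiliary spectral sequence $\tilde{E}^2_{i,k,j}(\tc(\OK);\Fp)\Rightarrow E^2_{i-k,j}(\tc(\OK);\Fp)$ and the multiplicativity of the $E^2$-term (Remark \ref{R:multiplicative}). Your explicit treatment of the splitting of the extension in $E^2_{-1,*}$ and of the identification $\can(\gamma)=\lambda\gamma$ only spells out details the paper leaves implicit.
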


By Theorem \ref{T:e2tc},  both $E^2_{0,*}(\tc(\OK);\Fp)$ and $E^2_{-2,*}(\tc(\OK);\Fp)$ are concentrated in even degrees. This implies the following result. 
\begin{cor}\label{C:e2-tc}
The descent spectral sequence converging to $\tc_*(\OK;\mathbb{F}_p)$ collapses at the $E^2$-term.
\end{cor}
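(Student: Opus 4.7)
The proof should follow directly from the structural description in Theorem \ref{T:e2tc}, combined with the standard bookkeeping on bidegrees. Recall that the differentials in the descent spectral sequence, which is homological, have the shape
\[
d^r\colon E^r_{i,j}\longrightarrow E^r_{i-r,\,j+r-1},\qquad r\geq 2.
\]
The plan is to check case by case that each such $d^r$ has either vanishing source or vanishing target.

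First, Theorem \ref{T:e2tc} tells us that $E^2_{i,*}(\tc(\OK);\Fp)=0$ unless $i\in\{0,-1,-2\}$; this vanishing is inherited by all later pages. Hence for $r\geq 2$ and any $d^r$ whose source sits in column $i\in\{-1,-2\}$, the target lies in column $i-r\leq -3$ and is therefore zero. Dually, any $d^r$ whose target sits in columns $-1$ or $-2$ has source in column $\geq 0$ shifted by $+r$, again lying outside $\{0,-1,-2\}$ and hence vanishing.

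The only case that is not immediately killed by horizontal support is $d^r\colon E^r_{0,j}\to E^r_{-r,j+r-1}$. For $r\geq 3$ the target sits in a column with $-r\leq -3$ and is therefore zero. This reduces the problem to the single case $r=2$, i.e.\ $d^2\colon E^2_{0,j}\to E^2_{-2,j+1}$. Here I invoke the parity statement in Theorem \ref{T:e2tc}: both $E^2_{0,*}(\tc(\OK);\Fp)$ and $E^2_{-2,*}(\tc(\OK);\Fp)$ are concentrated in even total degree. If $E^2_{0,j}\neq 0$ then $j$ is even, whence $j+1$ is odd and $E^2_{-2,j+1}=0$.

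The only step that required any real input was the parity check for $d^2$ out of column $0$; everything else is pure bidegree bookkeeping. Hence every $d^r$ for $r\geq 2$ is zero, and the descent spectral sequence converging to $\tc_*(\OK;\Fp)$ collapses at the $E^2$-page, as claimed.
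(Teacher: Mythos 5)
Your argument is correct and is essentially the paper's own (the paper simply invokes Theorem \ref{T:e2tc} together with the evenness of $E^2_{0,*}$ and $E^2_{-2,*}$, leaving the bidegree bookkeeping implicit). One small slip: your sentence asserting that any $d^r$ whose target lies in column $-1$ or $-2$ has source outside $\{0,-1,-2\}$ fails for $r=2$ with target in column $-2$ (the source is column $0$), but your subsequent explicit treatment of $d^2\colon E^2_{0,j}\to E^2_{-2,j+1}$ via the parity statement covers exactly that case, so the proof stands.
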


By Remark \ref{R:multiplicative}, $\tc_*(\OK;\Fp)$ is multiplicative for odd $p$. This implies that in this case the collapsing $E^2(\tc_*(\OK);\mathbb{F}_p)$ has no hidden extensions.
\begin{thm}\label{T:oddp}
For $p$ odd, as $\Fp[\beta]$-modules, 
\[
\tc_*(\OK;\Fp) \cong \Fp[\beta]\{1,\lambda,\gamma,\lambda\gamma\} \oplus \mathbb{F}_p[\beta]\{\alpha^{(j)}_{i,l}|1\leq i\leq e_K, 1\leq j\leq d, 1\leq l\leq f_K\}
\]
with $|\beta|=2d$, $|\lambda| = -1$, $|\gamma| = 2d+1$, $|\alpha^{(j)}_{i,l}| = 2j-1$. In particular, $\tc_*(\OK;\mathbb{F}_p)$ is a free $\Fp[\beta]$-module.
\end{thm}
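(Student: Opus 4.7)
The strategy is to propagate information through the fiber sequence
\[
\tc(\OK;\Fp)\to\tc^{-}(\OK;\Fp)\xrightarrow{\can-\varphi}\tp(\OK;\Fp),
\]
which, combined with the descent filtration, makes $E^2(\tc(\OK);\Fp)$ fit into a long exact sequence involving $\ker(\can-\varphi)$ and $\coker(\can-\varphi)$ on the $E^2$-terms of the descent spectral sequences for $\tc^{-}$ and $\tp$. I will therefore first compute those two $E^2$-terms column by column and then analyze $\can-\varphi$ between them; the generators $\beta,\lambda,\gamma,\alpha_{i,l}^{(j)}$ should emerge naturally from the resulting kernel and cokernel.

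To compute $E^2(\tp(\OK);\Fp)$ and $E^2(\tc^-(\OK);\Fp)$ I would pass to the associated graded of the Hopf algebroid $(\tp_0(\OK/\Sz;\Fp),\tp_0(\OK/\Szz;\Fp))$ via the algebraic Tate and homotopy fixed-point spectral sequences. The integral case works cleanly because the associated graded is just $(\thh_*(\OK/\Sz),\thh_*(\OK/\Szz))$ and the resulting cobar complex has length two. Modulo $p$ however there is a filtration clash already visible in Proposition~\ref{P:thh-e2-fp}: the identity $E_K'(\pK)\equiv e_K\bar\mu\pK^{e_K-1}$ lives one rung too high in the Nygaard filtration when $e_K>1$. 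To remedy this I would refine the Nygaard filtration by assigning $z$ the fractional weight $1/e_K$, which is well defined because $z^{e_K}\equiv 0$ modulo $p$ in the leading term. The associated graded then collapses to the much simpler Hopf algebroid $(\k[z],\,\k[z_0]\otimes_\k\k\langle dz\rangle)$ whose $\mathrm{Ext}$ is trivial to compute, and a refined algebraic Tate/homotopy fixed-point spectral sequence carries this back to the true $E^2$.

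The main obstacle is then to determine all the higher differentials of the refined algebraic Tate spectral sequence, which I expect to be the heart of the proof. Here I would use the $\delta$-ring structure on $\tp_0(\OK/\Szz)$ produced in Section~\ref{s3}: the element $h$ with $h\,\varphi(E_K(z_0))=\varphi(z_0-z_1)$ together with its $\delta$-iterates governs how $\varphi^{l}(z_0-z_1)$ lies in the Nygaard filtration, and an iterative computation should yield a congruence
\[
\varphi^{l}(z_0-z_1)\equiv \tilde\mu^{(p^{l}-1)/(p-1)}\,z_0^{(p-1+pe_K)(p^{l}-1)/(p-1)}(z_0-z_1)
\]
modulo high refined filtration, where $\tilde\mu=-\mu^p/\delta(E_K(z_0))$. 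This should translate into the formula
\[
d^{(p^{l+1}-1)/(p-1)-1/e_K}(z^n\sigma^j)=n'\,\bar{\tilde\mu}^{(p^{l}-1)/(p-1)}\,z_0^{pe_K(p^{l}-1)/(p-1)+n-1}\sigma^{j}\,dz,
\]
for $l=v_p(n-pe_Kj/(p-1))$, $n'\equiv p^{-l}(n-pe_Kj/(p-1))\bmod p$, and these should be the only nontrivial refined differentials. Two auxiliary moves will likely be needed: to transfer the $j=0$ computation to general $j$ I would exploit the ratio $\epsilon=\sigma_0\sigma_1^{-1}$ together with the product expansion of $\varphi^{i}(E_K(z)/\mu)^{p^k-j}$, and to handle the small-$e_K$ cases (where the required congruences lose room) I would base-change to $K'=K(\pK^{1/m})$ for an auxiliary $m$ coprime to $p$, where $E_{K'}(z)=E_K(z^m)$ gives an injection of refined spectral sequences.

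Granting these differentials, the $E^2$-terms of the descent spectral sequences for $\tp(\OK;\Fp)$ and $\tc^{-}(\OK;\Fp)$ become explicit lists of cycles indexed by triples $(l,b,j)$ satisfying precise congruence/inequality constraints. For $\can-\varphi$ I would similarly compute the Frobenius on leading terms: an element of $E^2_{-1,2j}(\tc^-(\OK);\Fp)$ with leading term $z_0^{n-1}\sigma^{j}\,dz$ should map under $\varphi$ to one with leading term $-\bar\mu^{-p(j-1)}z_0^{p(n-e_K(j-1))-1}\sigma^{j}\,dz$. This yields three regions in the $(a,j)$-plane with $a=m-j$ the refined filtration: below the critical line $j/(p-1)-a-1/(p-1)-1/e_K=0$ the Frobenius lowers filtration so $\ker(\can-\varphi)=0$; above the line it strictly raises filtration so a telescoping series $\sum(\can^{-1}\varphi)^{i}$ produces a leading-term-preserving isomorphism between $\ker(\can)$ and $\ker(\can-\varphi)$; and on the line itself the map reduces, on each weight, to a twisted Frobenius $b\varphi-1$ on $\k$ which by Hilbert~90 for $\k/\Fp$ has one-dimensional kernel and cokernel precisely when $\mathrm{N}_{\k/\Fp}(\bar\mu)^{j}=1$. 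Setting $d=[K(\zeta_p):K]$ as the smallest positive integer with $(p-1)\mid e_Kd$ and $\mathrm{N}_{\k/\Fp}(\bar\mu)^d=1$, this analysis should deliver the four $\Fp[\beta]$-generators $\beta,\lambda,\gamma,\lambda\gamma$ from the critical line and the $e_Kdf_K$ classes $\alpha_{i,l}^{(j)}$ from the strict-inequality region, concentrated in columns $0,-1,-2$. Since $E^2_{0,*}$ and $E^2_{-2,*}$ wind up concentrated in even total degree, sparseness forces the $\tc$ descent spectral sequence to collapse at $E^2$.
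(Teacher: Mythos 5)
Your proposal reproduces, in compressed form, exactly the machinery the paper uses: the refined (fractional) Nygaard filtration, the $\delta$-ring computation of $\varphi^{l}(z_0-z_1)$ giving the refined algebraic Tate differentials, the transfer from $j=0$ to general $j$ via $\epsilon=\sigma_0\sigma_1^{-1}$, the base change to $K(\pK^{1/m})$ for small $e_K$, the leading-term formula for $\varphi$ on $E^2_{-1,2j}(\tc^-)$, and the three-region/Hilbert~90 analysis of $\can-\varphi$. This is the paper's route to Theorem \ref{T:e2tc} and Corollary \ref{C:e2-tc}, and your constraints, exponents, and generator counts ($e_Kdf_K$ classes $\alpha^{(j)}_{i,l}$, plus $1,\lambda,\gamma,\lambda\gamma$ — note the first generator of the free module is $1$, not $\beta$) all agree with the paper's.

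What you have not addressed is the step that is actually the proof of the theorem as stated: the passage from $E^2=E^\infty$ to $\tc_*(\OK;\Fp)$ itself as an $\Fp[\beta]$-module. Collapse of the spectral sequence identifies the associated graded of a finite filtration on $\tc_*(\OK;\Fp)$, but the claim that $\tc_*(\OK;\Fp)$ is \emph{free} over $\Fp[\beta]$ on the stated generators requires ruling out hidden multiplicative extensions. The paper's argument is that for $p$ odd the mod $p$ Moore spectrum is multiplicative, hence $\tc(\OK;\Fp)$ is a ring spectrum and the descent filtration is multiplicative on the abutment; since the filtration has length $3$ and $E^\infty$ is $\beta$-free in each column, the abutment is $\beta$-free on lifts of the listed generators. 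This is precisely the point where $p=2$ fails (Theorem \ref{T:evenp} exhibits genuine hidden extensions there), so it should be stated explicitly rather than left implicit.
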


The case $p=2$ is more subtle as $\tc(\OK;\mathbb{F}_2)$ is no longer multiplicative. First note that there is a Bott class in $\tc_2(\OK;\mathbb{F}_2)$ lifting $\beta$; by abuse of notation, we also denote it by $\beta$. Using the $v_1^4$ self map on $\mathbb{S}/2$, we obtain a $\mathbb{Z}_2[\beta^4]$-module structure on $\tc_*(\OK;\mathbb{F}_2)$.

\begin{thm}\label{T:evenp}
As a $\mathbb{Z}_2[\beta^4]$-module,  $\tc_*(\OK;\mathbb{F}_2)$ is isomorphic to 
\begin{equation*}
=\left\{\begin{array}{lll}
     \Fp[\beta^2]\{1\}\oplus \mathbb{Z}/4[\beta^2]\{\beta\} \oplus \Fp[\beta]\{\lambda,\gamma\}\\
   \oplus\Fp[\beta^2]\{\beta\lambda\gamma\} \oplus \mathbb{F}_2[\beta]\{\alpha_{i,l}|1\leq i\leq e_K, 1\leq l\leq f_K\} \vspace{-8mm}\\
   \hspace{90mm}\text{if $[K:\mathbb{Q}_2]$ is odd,}\\
   ~\\
   ~\\
         \Fp[\beta]\{1,\lambda,\gamma,\lambda\gamma\} \oplus \mathbb{F}_2[\beta]\{\alpha_{i,l}|1\leq i\leq e_K,1\leq l\leq f_K\} ~~~\text{if $[K:\mathbb{Q}_2]$ is even,}
      \end{array}
       \right.
  \end{equation*}
with $|\beta|=2$, $|\lambda| = -1$, $|\gamma| = 3$, $|\alpha_i| = 2i-1$. 
\end{thm}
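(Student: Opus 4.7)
The plan is to combine the explicit collapsing $E^\infty$-page from Theorem~\ref{T:e2tc} and Corollary~\ref{C:e2-tc} with a careful resolution of the additive extension problem. For $p=2$ one has $-1 \in K$ automatically, so the invariant $d = [K(\zeta_2):K]$ equals $1$ and the list of $E^\infty$-generators shortens to $1$, $\beta$, $\lambda$, $\gamma$, $\lambda\gamma$, $\alpha_{i,l} := \alpha_{i,l}^{(1)}$, with $|\beta|=2$, $|\lambda|=-1$, $|\gamma|=3$, $|\lambda\gamma|=2$, $|\alpha_{i,l}|=1$. Arranging these by total degree against filtration degree, I would observe that the only pairs of generators occupying the same total degree are $\beta^{2m+1}$ at filtration $0$ and $\beta^{2m}\lambda\gamma$ at filtration $-2$, both of total degree $4m+2$. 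All other total degrees carry a single $E^\infty$-generator, so the $\lambda$-, $\gamma$-, and $\alpha_{i,l}$-towers split off as $\mathbb{F}_2[\beta]$-module summands without any extension issue.

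Consequently, the theorem reduces to the single hidden extension question: is $2\beta = 0$ or $2\beta = \lambda\gamma$ inside $\tc_2(\OK;\mathbb{F}_2)$? Once this is answered, the extensions at higher odd powers of $\beta$ follow from the $\mathbb{Z}_2[\beta^4]$-module structure (implemented by the $v_1^4$-self-map on the mod-$2$ Moore spectrum) via $2\beta^{4k+1} = \beta^{4k}(2\beta)$, while $2\beta^{2m}=0$ because $\beta^2$ lifts integrally (through the $v_1^2$-self-map) with order $2$. To determine $2\beta$, I would invoke Remark~\ref{R:bott} to identify $\beta$, up to a unit and higher filtration, with the image under the cyclotomic trace of a Bott class in $K_2(\OK;\mathbb{F}_2)$. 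In the defining cofiber sequence $\tc(\OK) \xrightarrow{2} \tc(\OK) \to \tc(\OK;\mathbb{F}_2)$, multiplication by $2$ on a mod-$2$ class equals $\eta$ times its integral Bockstein, so $2\beta$ is controlled by the $\eta$-action on $\partial(\beta) \in \tc_1(\OK)$.

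The parity input then enters through the transfer $\mathrm{tr} : \tc_*(\OK) \to \tc_*(\mathbb{Z}_2)$: the $\OK$-Bott element maps to $[K:\mathbb{Q}_2]$ times the $\mathbb{Z}_2$-Bott element, whose non-triviality under $\eta$-multiplication is the classical input from \cite{Rog} and \cite{2-adic,RW}. Thus $\eta\cdot\partial(\beta)$ survives precisely when $[K:\mathbb{Q}_2]$ is odd, giving $2\beta = \lambda\gamma$ in that case and $2\beta = 0$ otherwise, which is exactly the stated dichotomy. The main obstacle I anticipate is making this last step rigorous: verifying that the higher filtration terms tacit in Remark~\ref{R:bott} do not contaminate the $\eta$-multiplication, and that the transfer-compatibility really identifies $2\beta$ without ambiguity. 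The cleanest route is probably to first handle the unramified case $K_0 = W(\k)[1/p]$ by direct comparison with \cite{Rog,2-adic} (where $[K_0:\mathbb{Q}_2] = f_K$), and then propagate to general $K$ through a totally ramified base change that multiplies the relevant Bockstein by $e_K \bmod 2$, thereby detecting the parity of $[K:\mathbb{Q}_2] = e_K f_K$.
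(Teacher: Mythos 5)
Your reduction of the theorem to hidden $2$-extensions between the filtration-$0$ tower $\{\beta^n\}$ and the filtration-$(-2)$ tower $\{\beta^{n-1}\lambda\gamma\}$ is the right starting point, but two steps do not hold up. First, the bookkeeping: $\beta^n$ and $\beta^{n-1}\lambda\gamma$ collide in total degree $2n$ for \emph{every} $n\geq 1$, not only for $n$ odd, so you must determine $2\beta^k$ for all $k$. Your mechanism for doing so fails: there is no $v_1^2$-self-map on $\mathbb{S}/2$ (the minimal one is $v_1^4$), so "$\beta^2$ lifts integrally through the $v_1^2$-self-map" is not available, and the $\mathbb{Z}_2[\beta^4]$-module structure propagates knowledge of $2\beta$ only to the exponents $k\equiv 1\bmod 4$, leaving $2\beta^2,2\beta^3,2\beta^4$ undetermined. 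The paper avoids this entirely: although $\tc(\OK;\mathbb{F}_2)$ is not a ring, its descent $E^2$-page \emph{is} multiplicative (Remark \ref{R:multiplicative}), and the identity $2\cdot\mathrm{id}_{\mathbb{S}/2}=i\eta\partial$ together with the Leibniz rule gives the uniform formula: the $2$-extension of $\beta^k$ is $k\,\bar{c}^{\,2}\beta^{k-1}$, where $c=(a,b)$ is the cocycle detecting $\partial(\tilde\beta)/2$. This kills all even $k$ at once and reduces everything to the single class $\bar{c}^{\,2}$.

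Second, your determination of $2\beta$ via the transfer is only half an argument. The projection formula does give $\mathrm{tr}(\beta_K)=[K:\mathbb{Q}_2]\,\beta_{\mathbb{Q}_2}$, and when $[K:\mathbb{Q}_2]$ is odd this forces $2\beta_K\neq 0$ (hence $=\lambda\gamma$, the only other class in that bidegree). But when $[K:\mathbb{Q}_2]$ is even, $\mathrm{tr}(2\beta_K)=0$ does not imply $2\beta_K=0$; the transfer can kill a nonzero class. The paper instead computes $\bar{c}^{\,2}$ directly: taking $\tilde\beta=E_K(z)^2\sigma$, it shows $c^2$ is homologous to $((\eta_R-\eta_L)(b)\,b+(\can-\varphi)(a'),0)$ with $b=\delta(E_K(z))\sigma$, reads off that the critical leading term $\bar\mu^2z_1^{2\eK-1}\sigma^2dz$ appears iff $\eK$ is odd, and then uses Hilbert 90 for $\k/\mathbb{F}_2$ to see that the resulting class survives to $\lambda\gamma$ iff $f_K$ is odd — producing the parity of $[K:\mathbb{Q}_2]=\eK f_K$ from inside the spectral sequence rather than by comparison. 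If you want a comparison-style proof, the viable route (noted in the paper's remark after the theorem) is restriction rather than transfer: since $\zeta_2=-1\in\mathbb{Q}_2$, $2\beta$ is the mod-$2$ reduction of $\eta^2$, which under the norm residue isomorphism is the Hilbert symbol $\{-1,-1\}_K$, i.e.\ the quaternion algebra, and this splits over $K$ precisely when $[K:\mathbb{Q}_2]$ is even. You would still need the $E^2$-page Leibniz argument (or an equivalent) to handle $\beta^k$ for $k\not\equiv 1 \bmod 4$.
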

\begin{proof}
It suffices to determine the $2$-extensions of $\beta^k$. First recall that for the mod $2$ reduction of an $E_\infty$-algebra in spectra, the $2$-extension of a class $x$ is equal to the mod 2 reduction of ${\partial(x)\over 2}\eta$, where $\partial(-)\over 2$ is the Bockstein homomorphism and $\eta$ is the Hopf invariant one class in $\pi_1(\mathbb{S})$. 

Back to our situation, by comparing with the algebraic $K$-theory of real numbers, we first deduce that the Hurewicz image of $\eta$ in $\mathrm{K}_1(K)\cong K^{\times}$ is $-1$, which is the unique order $2$ element in $\mathrm{K}_1(K)$. It follows that under the cyclotomic trace map, the $\eta$ corresponds to $\partial(\beta)\over 2$, as the latter has order 2 as well.

Now suppose $\tilde{\beta}\in E^1_{0,2}(\tc^-(\OK);\mathbb{Z}_2)$ lifts $\beta\in E^2_{0,2}(\tc(\OK);\mathbb{F}_2)$. Then the Bockstein image $\partial(\beta)\over 2$ is detected by the class $c = (a,b)\in E^2_{-1,2}(\tc(\OK);\mathbb{Z}_2)$, where
\[
a = {\eta_R(\tilde{\beta})-\eta_L(\tilde{\beta})\over 2},\quad b = {\can(\tilde{\beta})-\varphi(\tilde{\beta})\over 2}.
\] 
By Leibniz rule, the 2-extension of $\beta^k$ is therefore equal to
\[
 k\bar{c}^2\beta^{k-1}\in E^2_{-2,2k+2}(\tc(\OK);\mathbb{F}_2).
 \] 
It follows that if $k$ is even, then the 2-extension of $\beta^k$ is trivial. Otherwise the 2-extension of $\beta^k$ is trvial if and only if $\bar{c}^2=0$.   A short computation shows that $c^2$ is represented by the cycle 
\[
(\can(a)\eta_R(b)+\eta_L(b)\varphi(a), [a|a]).
\]
By Proposition \ref{P:e2tc-tp}, we may choose $a'\in \tc^-_4(\OK/\Szz;\mathbb{Z}_2)$ such that $d(a')=[a|a]$. Thus $c^2$ is homologous to $(b',0)$, where
\[
b' = (\can(a)-\varphi(a))b + (\can-\varphi)(a') =(\eta_R - \eta_L)(b)b + (\can-\varphi)(a').
\]

Note that we may take $\tilde{\beta} = \EK(z)^2 \sigma=E_K(z)u$. It follows that $a$ has Nygaard filtration $2$ and 
\[
b =\frac{\EK(z)^2 \sigma-\varphi(E_K(z)u)}{2}=\delta(\EK(z))\sigma.
\] 
We may choose $a'$ with Nygaard filtration  $4$ (by the proof of Proposition \ref{P:e2tc-tp}). Using the argument of the proof of Lemma \ref{L:frob-e20-tc}, we deduce that the mod 2 reduction $\overline{\varphi(a')}$ has Nygaard filtration at least 4.  Thus $\overline{(\can-\varphi)(a')}$ has Nygaard filtration at least $4$. Note that $\overline{\delta(\EK(z))}$ is a polynomial of $z^2$. By Proposition \ref{P:non-zero-stem}, we deduce that if $\eK$ is odd, then $\overline{(\eta_R - \eta_L)(b)}$ has leading term $\overline{\mu}^2z_1^{2\eK-1}\sigma dz$; otherwise the leading term of $\overline{(\eta_R - \eta_L)(b)}$ is of higher degree.

Write the image of $\bar{b'}$ in $E^2_{-1, 4}(\tp(\OK); \mathbb{F}_2)$ as a $k$-linear combination of cycles of distinct leading degrees given in Proposition \ref{P:e21-tp}. Moreover, by the proof thereof, we deduce that if $\eK$ is even, then the leading degrees of these cycles are all higher than $3\eK-1$; if $\eK$ is odd, then the lowest leading degree is $3\eK-1$ (contributed by $\overline{(\eta_R - \eta_L)(b)b}$). Therefore, by the proof of Proposition \ref{P:e211-tc}, we conclude that if $\eK$ is even, then $\bar{b'}$ is homologous to $0$ in $E^2_{-2, 4}(\tc(\OK); \mathbb{F}_2)$; if $\eK$ is odd, 
note that the leading term of the lowest degree is $\bar{\mu}^2z_1^{2\eK-1}\sigma^2dz$, combining with Proposition \ref{P:e201-tc}, we deduce that $\bar{b'}$ is non-trivial in $E^2_{-2, 4}(\tc(\OK); \mathbb{F}_2)$ if and only if $1\in k$ is not in the image of $\mathrm{id}-\varphi$. That is,  if and only if $x^2-x=1$ does not split over $k$, which is equivalent to $[k:\Fp]$ being odd. Moreover, in this case, $\bar{b'}$ is homologous to $\lambda\gamma$ in $E^2_{-2, 4}(\tc(\OK); \mathbb{F}_2)$.
\end{proof} 

\begin{rem}
The problem of 2-extensions may also be treated using the norm residue isomorphism. Recall that the Hurewicz image of $\eta$ in $\mathrm{K}_1(K)\cong K^{\times}$ is $-1$. It follows that the mod 2 reduction of  $\eta^2$ corresponds to the Hilbert symbol $\{-1,-1\}_K$, which in turn corresponds to the algebra of Hamilton's quaternions, under the norm residue isomorphism. 
Thus the 2-extension of $\beta$, which is equal to the mod 2 reduction of $\eta^2$ as noted in the proof of Theorem \ref{T:evenp}, is trivial if and only if the algebra of Hamilton's quaternions  splits over $K$; the latter is equivalent to $[K:\mathbb{Q}_2]$ being even.
\end{rem}

To complete the proof of Theorem \ref{T:main}, it remains to show $d=[K(\zeta_p):K]$. 
 \begin{prop} \label{P:d}
The integer $d$ is equal to $[K(\zeta_p):K]$.
\end{prop}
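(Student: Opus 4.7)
The plan is to identify both $d$ and $[K(\zeta_p):K]$ as the order of the same subgroup of $\mathbb{F}_p^\times$ using local class field theory.

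First, I would recall that local class field theory applied to the tame abelian extension $\mathbb{Q}_p(\zeta_p)/\mathbb{Q}_p$ produces an isomorphism $\mathbb{Q}_p^\times/N_{\mathbb{Q}_p(\zeta_p)/\mathbb{Q}_p}(\mathbb{Q}_p(\zeta_p)^\times) \cong \mathbb{F}_p^\times$ sending $p^n u \mapsto \bar u$ for $u \in \mathbb{Z}_p^\times$ (the norm subgroup being $p^{\mathbb{Z}} \cdot (1+p\mathbb{Z}_p)$, as follows from the computation $N(\zeta_p - 1) = p$). By functoriality of the reciprocity map, the restriction $\mathrm{Gal}(K(\zeta_p)/K) \hookrightarrow \mathrm{Gal}(\mathbb{Q}_p(\zeta_p)/\mathbb{Q}_p)$ corresponds to the map on these quotients induced by $N_{K/\mathbb{Q}_p}: K^\times \to \mathbb{Q}_p^\times$. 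Hence $[K(\zeta_p):K]$ is the order of the image $H$ of the composite $\chi: K^\times \xrightarrow{N_{K/\mathbb{Q}_p}} \mathbb{Q}_p^\times \twoheadrightarrow \mathbb{F}_p^\times$.

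Next, I would compute $H$ explicitly. Since $K^\times = \varpi_K^{\mathbb{Z}} \cdot \mathcal{O}_K^\times$, the subgroup $H$ is generated by $\chi(\mathcal{O}_K^\times)$ and $\chi(\varpi_K)$. For $x \in \mathcal{O}_K^\times$ with reduction $\bar x \in k^\times$, the standard fact that $N_{K/K_0}$ sends $1 + \varpi_K \mathcal{O}_K$ into $1 + p \mathcal{O}_{K_0}$ (a consequence of the Eisenstein structure of $E_K(z)$ via valuation bounds on elementary symmetric functions of conjugates) yields $\overline{N_{K/K_0}(x)} = \bar x^{e_K}$ in $k^\times$; since the reduction mod $p$ of $N_{K_0/\mathbb{Q}_p}$ on $W(k)^\times$ agrees with $N_{k/\mathbb{F}_p}$, this gives $\chi(\mathcal{O}_K^\times) = (\mathbb{F}_p^\times)^{e_K}$, a subgroup of order $d_1 := (p-1)/\gcd(e_K, p-1)$. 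For the uniformizer, comparing $E_K(z) = \mu \prod_i (z - \varpi_{K,i})$ with $E_K(0) = p$ gives $\mu = (-1)^{e_K} p/N_{K/K_0}(\varpi_K)$; writing $N_{K/K_0}(\varpi_K) = p u$ with $u \in W(k)^\times$, I get $\bar u = (-1)^{e_K}/\bar\mu$ and hence $\chi(\varpi_K) = N_{k/\mathbb{F}_p}(\bar u) = (-1)^{e_K f_K}/N_{k/\mathbb{F}_p}(\bar\mu)$.

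Finally, since $\mathbb{F}_p^\times$ is cyclic, $|H|$ equals the smallest positive integer $d$ satisfying $d_1 \mid d$ and $\chi(\varpi_K)^d = 1$, i.e., $N_{k/\mathbb{F}_p}(\bar\mu)^d = (-1)^{e_K f_K d}$. To match the paper's definition of $d$, it remains to verify that $(-1)^{e_K f_K d} = 1$ whenever $d_1 \mid d$; this is a small parity check (trivial when $p = 2$ or when $e_K f_K$ is even, while if $p$ is odd with both $e_K$ and $f_K$ odd then $\gcd(e_K, p-1)$ is odd, forcing $d_1 = (p-1)/\gcd(e_K, p-1)$ and hence $d$ to be even). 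The main subtlety is the norm-reduction identity $\overline{N_{K/K_0}(x)} = \bar x^{e_K}$, which relies on the principal-unit behavior of the local-field norm.
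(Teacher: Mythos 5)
Your argument is correct, and it is genuinely different from the one in the paper. The paper splits the statement into two divisibilities: for $d\mid d'$ (with $d'=[K(\zeta_p):K]$) it computes $\mathrm{N}_{K_0/\mathbb{Q}_p}(\mu)^{d'}$ explicitly as a norm from $K_0(\zeta_p)$ involving $(1-\zeta_p)^{d'}$, and for $d'\mid d$ it reduces to the tamely ramified case and uses Hensel's lemma to show that $K(\sqrt[d_1]{\varpi_K})$ coincides with $K_0(\zeta_p)$. You instead invoke local class field theory once: since the norm group of $\mathbb{Q}_p(\zeta_p)/\mathbb{Q}_p$ is $p^{\mathbb{Z}}(1+p\mathbb{Z}_p)$, functoriality identifies $[K(\zeta_p):K]$ with the order of the subgroup $H\subseteq\mathbb{F}_p^\times$ generated by $(\mathbb{F}_p^\times)^{e_K}$ (the image of $\mathcal{O}_K^\times$, via the standard reduction $\overline{N_{K/K_0}(x)}=\bar x^{e_K}$ for totally ramified extensions) and $(-1)^{e_Kf_K}\mathrm{N}_{\k/\Fp}(\bar\mu)^{-1}$ (the image of $\varpi_K$, using $E_K(0)=p$), and the final parity check shows that the exponent of $H$ is exactly the paper's $d$. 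Your route buys a uniform treatment with no case analysis, no reduction to the tame case, and no explicit construction of auxiliary extensions, at the cost of importing the local reciprocity law; the paper's proof is longer but self-contained modulo Hensel's lemma, and its explicit Kummer-theoretic construction of $K_0(\zeta_p)$ as $K(\sqrt[d_1]{\varpi_K})$ is arguably more in the constructive spirit of the surrounding computations (cf.\ Remark \ref{R:bott}). Both arguments rest on the same essential input, namely the normalization $E_K(0)=p$ entering through $\mu=p/\mathrm{N}_{K/K_0}(-\varpi_K)$.
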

\begin{proof}
Put $d'=[K(\zeta_p):K]$. Note that $\mu=\frac{p}{\mathrm{N}_{K/K_0}(-\varpi_K)}$. We first have 
\[
p-1=[K_0(\zeta_p):K_0]\mid [K(\zeta_p):K_0]=d'e_K.
\]
Secondly,  we have
\begin{equation*}
 \begin{split}
 \mathrm{N}_{K_0/\mathbb{Q}_p}(\mu)^{d'}&=\mathrm{N}_{K_0/\mathbb{Q}_p}(\frac{p}{\mathrm{N}_{K/K_0}(-\varpi_K))})^{d'}=\frac{p^{f_Kd'}}{\mathrm{N}_{K/\mathbb{Q}_p}(-\varpi_K)^{d'}}\\
 &=\frac{\mathrm{N}_{K_0(\zeta_p)/\mathbb{Q}_p}((1-\zeta_p)^{d'})}{\mathrm{N}_{K(\zeta_p)/\mathbb{Q}_p}(-\varpi_K)}=\mathrm{N}_{K_0(\zeta_p)/\mathbb{Q}_p}(\frac{(1-\zeta_p)^{d'}}{\mathrm{N}_{K(\zeta_p)/K_0(\zeta_p)}(-\varpi_K)}).
\end{split}
\end{equation*}
This yields 
\begin{equation*}
 \begin{split}
\mathrm{N}_{\k/\Fp}(\bar{\mu})^{d'}&=\overline{ \mathrm{N}_{K_0/\mathbb{Q}_p}(\mu)^{d'}}=\overline{\mathrm{N}_{K_0(\zeta_p)/\mathbb{Q}_p}(\frac{(1-\zeta_p)^{d'}}{\mathrm{N}_{K(\zeta_p)/K_0(\zeta_p)}(-\varpi_K)}})\\
&=\mathrm{N}_{k/\Fp}(\overline{\frac{(1-\zeta_p)^{d'}}{\mathrm{N}_{K(\zeta_p)/K_0(\zeta_p)}(-\varpi_K)}})^{p-1}=1.
\end{split}
\end{equation*}
Hence $d|d'$. 

It remains to show $d'|d$. The strategy is to construct a suitable degree $d$ extension of $K$ containing $K(\zeta_p)$ as a subfield. First note that if we replace $K$ by an intermediate extension $K_0\subset K'\subset K$ and $\varpi_K$ by $-\mathrm{N}_{K/K'}(-\varpi_K)$, then $\mu$ is unchanged. Thus we may replace $K$ with its tamely ramified subextension over $K_0$. Now $K$ is of the form $K_0(\pi^{1/e_K})$ for some uniformizer $\pi$ of $K_0$. Let $d_1$ be the minimal positive integer such that $p-1\mid \eK d_1$, and write $d=d_1d_2$. We may further reduce to the case $p-1=e_Kd_1$ by replacing $K$ with $K_0(\pi^{\frac{d_1}{p-1}})$. Now replacing $K$ with its degree $d_2$ unramified extension, we reduce to the case $d=d_1$. 

Note that $\mathrm{N}_{k/\mathbb{F}_p}(\bar{\mu})^{d_1}=1$ implies that
$\bar{\mu}\in (k^\times)^{e_K}$.
By Hensel's lemma, we have
\[
\mu= \lambda^{e_K}
\]
for some $\lambda\in K_0$. Replacing $\varpi_K$ by $\lambda\varpi_K$, we may further suppose $\mu=1$; thus $E_K(z)$ becomes an Eisenstein polynomial
\[
z^{e_K}+\cdots+p.
\]
Now consider $K'=K(\sqrt[d_1]{\varpi_K})$. Over $K_0(\zeta_p)$, replacing $z$ by $(\zeta_p-1)z$, $E_K(z^{d_1})=0$ reduces to
\[
E(z)=z^{p-1}+\cdots+\frac{p}{(\zeta_p-1)^{p-1}}=0,
\]
where all intermediate coefficients have positive $p$-adic valuations. Using the minimal polynomial $\frac{(1+x)^p-1}{x}$ of $\zeta_p-1$, we get
\[
p/(\zeta_p-1)^{p-1}\equiv -1, \mod p.
\] By Hensel's lemma again, we deduce that $E(z)$, hence $E_K(z^{d_1})$, splits over $K_0(\zeta_p)$, yielding $K'\subset K_0(\zeta_p)$. Hence $K'=K_0(\zeta_p)$ as the ramification index of $K'$ is 
\[
e_Kd_1=p-1.
\]

\end{proof}

\begin{rem}\label{R:bott}
Recall that the Bott element in $\mathrm{K}_2(\OK;\mathbb{Z}/p )$ can be constructed as the Bockstein pre-image of the class in $\mathrm{K}_1(\OK)$ represented by $\zeta_{p}$. Using the cyclotomic trace map, we therefore have another way of showing that $d=1$ if and only if $\zeta_p\in K$.  Moreover, in this case, the Bott element
correponds to the class detected by $\beta$ in the descent spectral sequence.
\end{rem}
\begin{rem}
The proof of Proposition \ref{P:d} depends crucially on the fact that $E_K(0)\equiv p \mod p^2$. We note that there is a unique normalization of the minimal polynomial $E_K(z)$ of $\pK$ over $K_0$ that makes the identity $uv = E_K(z)$ hold in Theorem \ref{T:rtp}(6). That
this normalization agrees with the normalization $E_K(0)=p$ is a non-trivial fact, which is equivalent to the statement of Theorem~\ref{bokperiodicity}(4). In fact, by investigating certain lifts of the class $\beta$ in the algebraic spectral sequences modulo $p^n$ for $K=\mathbb{Q}(\zeta_{p^n})$ and all $n\geq1$, we may have another way of proving Theorem~\ref{bokperiodicity}(4) by the existence of Bott elements in $\mathrm{K}_*(\mathbb{Q}(\zeta_{p^n}))$ for all $n\geq1$.
\end{rem}

\section{Comparison with motivic cohomology}\label{S:motivic}
In this section we compare the descent spectral sequence converging to $\tc_*(\OK;\mathbb{F}_p)$ with the motivic spectral sequence converging to $\mathrm{K}_*(K;\mathbb{F}_p)$. We take $d=2$ for the illustration.

By Theorem \ref{T:e2tc}, the $E^2$-term of the spectral sequence converging to $\tc(\OK;\Fp)$ may be pictured as follows,  in which a circle or box with an arrow means a free $\Fp[\beta]$-module with a basis given by the elements below it. For our convenience, the horizontal axis denotes the total stem $i+j$ and the vertical axis denotes the index $i$. 

\begin{tikzpicture}[scale  = 1.5]
\draw[help lines] (-2,0) grid (6,2);
\draw  (0,2) circle (0.1);
\draw[->,thick] (0.1,2) -- (0.3,2);
\draw (0,1.75) node {1};

\draw (-1,1) circle (0.1);
\draw[->,thick] (-0.9,1) -- (-0.7,1);
\draw (-1,0.75) node {$\lambda$};

\draw (0.9,0.9) rectangle (1.1,1.1);
\draw[->,thick] (1.1,1) -- (1.3,1);
\draw (1,0.75) node {$\alpha^{(1)}_{i,l}, 1\leq i\leq e_K$};
\draw (1.27, 0.35) node {$1\leq l\leq f_K$};

\draw (2.9,0.9) rectangle (3.1,1.1);
\draw[->,thick] (3.1,1) -- (3.3,1);
\draw (3,0.75) node {$\alpha^{(2)}_{i,l}, 1\leq i\leq e_K$};
\draw (3.27, 0.35) node {$1\leq l\leq f_K$};

\draw (5,1) circle (0.1);
\draw[->,thick] (5.1,1) -- (5.3,1);
\draw (5,0.75) node {$\gamma$};

\draw (4,0) circle (0.1);
\draw[->,thick] (4.1,0) -- (4.3,0);
\draw (4,-0.25) node {$\lambda\gamma$};
\end{tikzpicture}

Let $\beta$ be a generator of $\mu_p^d$, which is isomorphic to $\mathbb{Z}/p$ as a $\mathrm{Gal}(\overline{K}/K)$-module.  Let $\alpha^{(1)}$ be a generator of the $\OK/p$-module 
\[
U_K/U_K^p\subset K^\times/(K^\times)^p \cong H^1_\text{\'{e}t}(K, \mu_p),
\]
where $U_K$ is the torsion free part of $\mathcal{O}_K^\times$. Let $\beta^{-1}\gamma\in H^1_\text{\'{e}t}(K, \mu_p)$ be the class represented by $\overline{\pK}\in K^\times/(K^\times)^p$. Let $\lambda$ be the element of 
\[
H^1_\text{\'{e}t}(K, \mathbb{Z}/p)= \mathrm{Hom}(\mathrm{Gal}(\overline{K}/K),\mathbb{Z}/p)\cong \mathrm{Hom}(K^\times/(K^\times)^p, \mathbb{Z}/p)
\]
corresponding to the unramified character sending Frobenius to 1. Let $\beta^{-1}\alpha^{(2)}\in H^1_\text{\'{e}t}(K, \mathbb{Z}/p)$  be a generator of the $\OK /p$-module 
$\mathrm{Hom}(U_K/U_K^p, \mathbb{Z}/p).$ It follows that $\beta^{-1}\lambda\gamma\in H^2_\text{\'{e}t}(K, \mu_p)$ corresponds to the division algebra of invariant $1\over p$ in the Brauer group. 

The \'{e}tale spectral sequence $E_{i,j}^2=H^{-i}_\text{\'{e}t}(K, \mu_p^{\otimes j}) \Rightarrow L_{K(1)}\mathrm{K}_{2j+i}(K, \Fp)$ may be pictured as follows, where a circle (resp. box) with two arrows means a free $\Fp[\beta, \beta^{-1}]$-module (resp. $(\OK/p)[\beta, \beta^{-1}]$-module) with a basis given by the elements below it. 

\begin{tikzpicture}[scale  = 1.5]
\draw[help lines] (-2,0) grid (6,2);
\draw  (0,0) circle (0.1);
\draw[->,thick] (0.1,0) -- (0.3,0);
\draw[->,thick] (-0.1,0) -- (-0.3,0);
\draw (0,-0.25) node {$\beta^{-1}\lambda\gamma$};

\draw (-1,1) circle (0.1);
\draw[->,thick] (-0.9,1) -- (-0.7,1);
\draw[->,thick] (-1.1,1) -- (-1.3,1);
\draw (-1,0.75) node {$\lambda$};

\draw (0.9,0.9) rectangle (1.1,1.1);
\draw[->,thick] (1.1,1) -- (1.3,1);
\draw[->,thick] (0.9,1) -- (0.7,1);
\draw (1,0.75) node {$\alpha^{(1)}$};

\draw (2.9,0.9) rectangle (3.1,1.1);
\draw[->,thick] (3.1,1) -- (3.3,1);
\draw[->,thick] (2.9,1) -- (2.7,1);
\draw (3,0.75) node {$\alpha^{(2)}$};

\draw (5,1) circle (0.1);
\draw[->,thick] (5.1,1) -- (5.3,1);
\draw[->,thick] (4.9,1) -- (4.7,1);
\draw (5,0.75) node {$\gamma$};

\draw (0,2) circle (0.1);
\draw[->,thick] (0.1,2) -- (0.3,2);
\draw[->,thick] (-0.1,2) -- (-0.3,2);
\draw (0,1.75) node {1};

\draw[color=red] (-1.0,2.2) -- (1.3, -0.1);
\end{tikzpicture}

Using the Bloch-Kato conjecture proved by Voevodsky \cite{V11},  the $E^2$-term of the motivic spectral sequence converging to $\mathrm{K}_*(K;\mathbb{F}_p)$ may be identified with the part to the right of the red line of the \'etale spectral sequence:

\begin{tikzpicture}[scale  = 1.5]
\draw[help lines] (-2,0) grid (6,2);
\draw  (0,2) circle (0.1);
\draw[->,thick] (0.1,2) -- (0.3,2);
\draw (0,1.75) node {1};

\draw (3.2,1) circle (0.1);
\draw[->,thick] (3.3,1) -- (3.5,1);
\draw (3.2,0.75) node {$\beta\lambda$};

\draw (0.6,0.9) rectangle (0.8,1.1);
\draw[->,thick] (0.8,1) -- (1,1);
\draw (0.6,0.75) node {$\alpha^{(1)}$};

\draw (2.6,0.9) rectangle (2.8,1.1);
\draw[->,thick] (2.8,1) -- (3,1);
\draw (2.6,0.75) node {$\alpha^{(2)}$};

\draw (1.2,1) circle (0.1);
\draw[->,thick] (1.3,1) -- (1.5,1);
\draw (1.2,0.75) node {$\beta^{-1}\gamma$};

\draw (4,0) circle (0.1);
\draw[->,thick] (4.1,0) -- (4.3,0);
\draw (4,-0.25) node {$\lambda\gamma$};
\end{tikzpicture}

One may show that $\lambda$ generates the cokernel of the cyclotomic trace map 
\[
\mathrm{K}(\Zp;\Fp) \rightarrow \tc(\Zp;\Fp).
\]
We thus see the similarity between the descent spectral sequence and motivic spectral sequence. We expect that, for certain algebraic varieties over $\OK$,  one would be able to construct some analogue of the motivic spectral sequence which determines the structure of the algebraic $K$-theory with $\Fp$-coefficients and is compatible with the descent spectral sequence via the cyclotomic trace map.

\appendix
\section{A variant of Hochschild-Kostant-Rosenberg}

The goal of this appendix is to prove the following theorem. 

\begin{thm}\label{T:AHKR}
Let $R$ be a commutative ring over $\mathbb{Z}_p$, and let $I$ be a locally complete intersection ideal of $R$. Let $A=R/I$. Suppose that $R$ is $I$-separated and $A$ is $p$-torsion free. Then  as filtered rings, the periodic cyclic homology $\mathrm{HP}_0(A/R)$ is canonically isomorphic to the completion of $D_R(I)$ with respect to the Nygaard filtration. Moreover, the Tate spectral sequence for $\mathrm{HP}_0(A/R)$ collapses at the $E^2$-term. Consequently,  
there is a canonical isomorphism of graded rings 
\[
\mathrm{HH}_*(A/R) \cong \Gamma_A( I/I^2).
\]
\end{thm}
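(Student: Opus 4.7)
The plan has three ingredients: a Zariski-local reduction, a Koszul duality computation of $\mathrm{HH}$, and a filtered comparison argument for $\mathrm{HP}_0$. First I would reduce to the case in which $I = (f_1,\dots,f_n) \subset R$ is generated by a (Koszul-)regular sequence. The ring $A = R/I$, the divided-power envelope $D_R(I)$ with its Nygaard filtration, and both $\mathrm{HH}(A/R)$ and $\mathrm{HP}(A/R)$ with their filtrations all satisfy Zariski descent in $R$, so it suffices to exhibit the required canonical filtered isomorphism locally, and the hypotheses of $I$-separatedness and $p$-torsion-freeness of $A$ are preserved under localisation.

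In the regular-sequence setting, the Koszul complex yields an equivalence $A \otimes_R^L A \simeq \Lambda_A(I/I^2[1])$ of simplicial commutative $A$-algebras, with the exterior generators placed in homological degree one. Applying Koszul duality (concretely, resolving $A$ over $\Lambda_A(I/I^2[1])$ by a two-sided bar construction) to compute $\mathrm{HH}(A/R) = A \otimes^L_{A\otimes^L_R A} A$ produces an isomorphism of graded $A$-algebras
\[
\mathrm{HH}_*(A/R) \;\cong\; \Gamma_A(I/I^2),
\]
with the divided-power generators in homological degree two. This establishes the final assertion. Concentration in even degrees is then automatic, and forces the Tate spectral sequence
\[
\mathrm{HH}_*(A/R)[\sigma^{\pm 1}] \Longrightarrow \mathrm{HP}_*(A/R)
\]
to collapse at $E^2$ for degree reasons, giving the middle assertion.

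It remains to identify $\mathrm{HP}_0(A/R)$ with $D_R(I)^\wedge_\mathcal{N}$ as a filtered ring. The Tate collapse already yields a canonical isomorphism $\mathrm{gr}_\mathcal{N} \mathrm{HP}_0(A/R) \cong \Gamma_A(I/I^2)$. The unit map $R \to \mathrm{HP}_0(A/R)$ sends $I$ into $\mathcal{N}^{\geq 1}$, and one shows that the resulting elements admit divided powers landing in the correct pieces of the Nygaard filtration, whose images in the associated graded are the tautological divided powers on $I/I^2 \subset \Gamma_A(I/I^2)$. By the universal property of $D_R(I)$ this yields a filtered ring map $D_R(I) \to \mathrm{HP}_0(A/R)$ and, after completion, a map $D_R(I)^\wedge_\mathcal{N} \to \mathrm{HP}_0(A/R)$ of complete separated filtered rings. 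Its associated graded is the identity on $\Gamma_A(I/I^2)$, so it is an isomorphism.

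The main obstacle is the construction and control of divided powers on the image of $I$ in $\mathrm{HP}_0(A/R)$, that is, the compatibility of the divided-power structure on $D_R(I)$ with the structure arising from the $\mathbb{T}$-equivariant geometry of $\mathrm{HH}$. One natural route is to invoke the motivic filtration on $\mathrm{HP}$ of Bhatt-Morrow-Scholze together with Bhatt's computation of derived de Rham cohomology for lci maps (as noted in the remark following the theorem), which produces the divided powers and the filtered identification at once. A more hands-on route analyses the Connes operator $B$ on a HKR-type model of $\mathrm{HH}$, where the divided powers on elements of $I$ emerge from iterated application of $B$ together with the $p$-torsion-freeness of $A$, which is precisely what is needed to keep the PD-structure integral.
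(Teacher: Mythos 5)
Your proposal is correct and follows essentially the same route as the paper: reduce to a regular sequence (the paper does the Zariski-descent step last rather than first, and computes $\mathrm{HH}_*$ by an explicit normalized Hochschild complex on the Koszul model $D=R[x]/(x^2)$, $d(x)=a$, rather than by Koszul duality, but the content is identical), observe even concentration forces Tate collapse, and then build the filtered map $D_R(I)\to \mathrm{HP}_0(A/R)$ whose associated graded is the identity on $\Gamma_A(I/I^2)$. The ``hands-on route'' you defer as the main obstacle is precisely the paper's central computation: iterating the Connes operator $B$ against the internal differential on the Koszul model shows $a^n$ is homologous to $m!\,a^{n-m}\otimes x^{m}$ (hence to $n!\otimes x^{\otimes n}$), which produces the divided powers, while $p$-torsion-freeness of $\mathrm{HP}_0(A/R)$ (inherited from $A$ via the collapsed Tate spectral sequence) gives the uniqueness of the extension.
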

In the following, all tensor products are taken in the derived category of $R$-modules.  
\begin{proof}
We first assume $I=(a)$ for a non-zero divisor $a\in R$. Let $D=R[x]/(x^2)$ be the commutative DG-algebra over $R$ with $|x|=1$ and $d(x) = a$. Then $D$ is a flat resolution of $A$ over $R$. Thus 
$\mathrm{HH}(A/R)$ can be computed by the normalized Hochschild complex (cf. \cite[\S1.1.4]{Lo}) as follows. Let $\bar{D} = D/R$. Set the double complex $C_*(D)$ as
$$\dots \rightarrow D\otimes \bar{D}^{\otimes2} \xrightarrow{b} D\otimes\bar{D}\xrightarrow{b} D.$$
The boundary map $b:D\otimes\bar{D}^{\otimes n}\rightarrow D\otimes\bar{D}^{\otimes n-1}$  is given by the  formula
$ \sum_{i=0}^n (-1)^i \bar{d}_i$
with $d_i:D^{\otimes n+1}\rightarrow D^{\otimes n}$, where
 $$d_0 = m\otimes \mathrm{id} \otimes\dots \otimes \mathrm{id},$$
$$d_1 =\mathrm{id} \otimes m \otimes\dots\otimes \mathrm{id},$$
$$\dots$$
$$d_n = (m\otimes \mathrm{id} \otimes\dots\otimes \mathrm{id})\circ t';$$
here $m:D\otimes D\rightarrow D$ is the multiplication and $t': D^{\otimes n+1}\rightarrow D^{\otimes n+1}$ is the cyclic permutation operator sending the last factor to the first.

A short computation shows that $b(1\otimes x^{\otimes n}) = 1\otimes x^{\otimes n-1}+(-1)^{n+n-1}1\otimes x^{\otimes n-1}=0$. 
It follows that for $n\geq0$, $\mathrm{HH}_{2n+1}(A/R) = 0$ and  $\mathrm{HH}_{2n}(A/R) \cong A$, where the latter is generated by the element represented by the cycle $1\otimes x^{\otimes n}$. In particular, the graded ring $\mathrm{HH}_{*}(A/R)$ is $p$-torsion free as $A$ is $p$-torsion free. 

Next we determine $\mathrm{HP}_*(A/R)$.  By \cite[\S 2.1.9]{Lo}, this may be computed by the double complex
\begin{equation}\label{E:hp-double}
\xymatrix{
\dots &\dots \ar[d]_b &\dots \ar[d]_b &\dots\ar[d]_b  \\
\dots & D\otimes \bar{D}^{\otimes 2}\ar[l]_{B}\ar[d]_b& D\otimes\bar{D} \ar[l]_{B}\ar[d]_b & D\ar[l]_{B} \\
\dots & D\otimes\bar{D} \ar[l]_{B}\ar[d]_b & D\ar[l]_{B} \\
\dots & D\ar[l]_{B}\\
\dots
}
\end{equation}
Here $B:D\otimes \bar{D}^{\otimes n} \rightarrow D\otimes 
\bar{D}^{\otimes n+1}$ is given by the formular
$s \circ N$, where
\[
N= 1+t+\dots +t^n
\]
with $t=(-1)^nt'$
 and $s=u\otimes \mathrm{id}:D^{\otimes n+1}\rightarrow D^{\otimes n+2}$ with $u:R\rightarrow D$ being the unit map. Note that the spectral sequence associated to the double complex \eqref{E:hp-double} is the Tate spectral sequence for $\mathrm{HP}(A/R)$, which collapses at the $E^2$-term because everything is concentrated in even degrees. It follows that the associated graded algebra of $\mathrm{HP}_0(A/R)$ with respect to the Nygaard filtration is isomorphic to $\mathrm{HH}_{*}(A/R)$. This yields that $\mathrm{HP}_0(A/R)$ is $p$-torsion free. 
  
We claim that the natural map $R\rightarrow \mathrm{HP}_0(A/R)$ extends uniquely to a map
\begin{equation}\label{E:hp}
D_I(R)\rightarrow \mathrm{HP}_0(A/R)
\end{equation}
as filtered rings. The uniqueness follows from the fact that $\mathrm{HP}_0(A/R)$ is $p$-torsion free.
For the existence, first note that the image of $a^n$ is represented by the same element in the above double complex. Since $d(a^{n-1}x) = a^n$, $b$ is trivial, $B(a^{n-1}x) = a^{n-1}\otimes x$, we get that $a^n$ is homologous to $a^{n-1}\otimes x$ up to a sign. By induction, we deduce that for $0\leq m\leq n$, $a^n$ is homologous to $m!a^{n-m}\otimes x^{m}$ up to a sign. In particular, $a^n$ is homologous to $n!\otimes x^{\otimes n}$ up to a sign. This proves the claim.
 
To proceed, first note that $R$ is $p$-torsion free by the assumption that it is $I$-separated and $A$ is $p$-torsion free. This implies that the associated graded algebra of $D_I(R)$ is isomorphic to $\Gamma_A(I/I^2)$. Now taking the associated graded of the natural map $D_I(R)\rightarrow \mathrm{HP}_0(A/R)$, we get the map
\begin{equation}\label{E:hh}
\Gamma_A(I/I^2)\cong A\langle \bar{a}\rangle\to \mathrm{HH}_*(A/R).
\end{equation}
By what we have proved, one easily checks that $\bar{a}^{[n]}$ maps to the element represented by the cycle $1\otimes x^{\otimes n}$. This implies that \eqref{E:hh} is an isomorphism of graded rings.  It follows that \eqref{E:hp} becomes an isomorphism after taking completion with respect to the Nygaard filtration on the source.

Now suppose that $I$ is generated by a regular sequence $a_1,\dots,a_n$. By the fact that $R$ is $I$-separated and $A$ is $p$-torsion free, we first deduce that $A_i=R/(a_i)$ is $p$-torsion free for all $i$. Using the previous case and the natural isomorphism $A\cong A_1\otimes\dots\otimes A_n$, 
we get the  isomorphism
$$\mathrm{HH}_{*}(A/R) \cong \mathrm{HH}_{*}(A_1/R)\otimes\dots \otimes\mathrm{HH}_{*}(A_n/R)\cong \Gamma_A(I/I^2),$$
where the second isomorphism follows from
\[
\Gamma_{A_1}((a_1)/(a_1)^2)\otimes\dots\otimes \Gamma_{A_n}((a_n)/(a_n)^2)\cong \Gamma_A(I/I^2).
\]
This implies that $\mathrm{HH}_{*}(A/R)$ is $p$-torsion free and the Tate spectral sequence for $\mathrm{HP}(A/R)$ collapses at the $E^2$-term. We thus get that $\mathrm{HP}_0(A/R)$ is $p$-torsion free. Moreover, using the natural map
\begin{equation*}
\mathrm{HP}_0(A_1/R)\otimes\dots\otimes \mathrm{HP}_0(A_n/R) \rightarrow \mathrm{HP}_0(A/R)
\end{equation*}
and the isomorphism
\[
D_{(a_1)}(R)\otimes\dots\otimes D_{(a_n)}(R)\cong D_I(R),
\]
we obtain the map $D_I(R)\to \mathrm{HP}_0(A/R)$, which uniquely extends the natural map $R\rightarrow \mathrm{HP}_0(A/R)$. By the same argument as in the previous case,  we deduce that it becomes an isomorphism after taking completion on the source.

For general $I$,  by Zariski descent and the previous case, we first have that $\mathrm{HH}_{*}(A/R)$ is concentrated on even degrees. It follows that the Tate spectral sequence for $\mathrm{HP}(A/R)$ collapses at the $E^2$-term. Similarly, we get that $\mathrm{HP}_0(A/R)$ is $p$-torsion free and $R\to \mathrm{HP}_0(A/R)$ uniquely extends to $D_I(R)\rightarrow \mathrm{HP}_0(A/R)$. Moreover, by Zariski descent and the previous case, the associated graded map $\Gamma_A(I/I^2)\to \mathrm{HH}_{*}(A/R)$ is an isomorphism. This concludes the proof. 
\end{proof}

\end{document}